\renewenvironment{proof}[1][\proofname]{%
   \par\pushQED{\qed}\normalfont%
   \topsep6\p@\@plus6\p@\relax
   \trivlist\item[\hskip\labelsep\bfseries#1\@addpunct{.}]%
   \ignorespaces
}{%
   \popQED\endtrivlist\@endpefalse
}
\newtheorem{theorem}{Theorem}
\newtheorem{proposition}[theorem]{Proposition}
 \numberwithin{theorem}{section}
 \newtheorem{corollary}[theorem]{Corollary}
\newtheorem{lemma}[theorem]{Lemma}
\newtheorem*{remark}{Remark}
\newtheorem*{claim}{Claim}
\numberwithin{equation}{section}
\newtheorem{thmx}{Theorem}
\newcommand{\R}{\mathbb{R}}
\newcommand{\N}{\mathbb{N}}
\newcommand{\cF}{\mathcal F}
\begin{document}

\author{Jieliang Hong\\  \textit{University of British Columbia}}
\title{Renormalization of local times of super-Brownian motion}
\maketitle
\begin{abstract}
    For the local time $L_t^x$ of super-Brownian motion $X$ starting from $\delta_0$, we study its asymptotic behavior as $x\to 0$. In $d=3$, we find a normalization $\psi(x)=((2\pi^2)^{-1} \log (1/|x|))^{1/2}$ such that $(L_t^x-(2\pi|x|)^{-1})/\psi(x)$ converges in distribution to standard normal as $x\to 0$. In $d=2$, we show that $L_t^x-\pi^{-1} \log (1/|x|)$ converges a.s. as $x\to 0$. We also consider general initial conditions and get some renormalization results. The behavior of the local time allows us to derive a second order term in the asymptotic behavior of a related semilinear elliptic equation.
\end{abstract}

\section{Introduction and main results}

\subsection{Introduction}
Super-Brownian motion arises as a scaling limit of critical branching random walks. Let $M_F=M_F(\R^d)$ be the space of finite measures on $(\R^d,\mathfrak{B}(\R^d))$ equipped with the topology of weak convergence of measures, and $(\Omega,\cF,\cF_t,P)$ be a filtered probability space. Let $C([0,\infty),M_F(\R^d))$ denote the space of continuous functions from $[0,\infty)$ to $M_F(\R^d)$ with the compact open topology. A $\mathit{Super}$-$\mathit{Brownian}$ $\mathit{ Motion}$ $X$ starting at $\mu\in M_F(\R^d)$ is a continuous $M_F(\R^d)$-valued strong Markov process defined on $(\Omega,\cF,\cF_t,P)$ with $X_0=\mu$ a.s. We write $X_t(\phi)$ for $\int_{\R^d} \phi(y) X_t(dy)$. It is well known that super-Brownian motion is the solution to the following $\mathit{martingale\ problem}$ (see Perkins (2002), II.5): For any $ \phi \in C_b^2(\R^d)$, 
\begin{equation} \label{e1.0}
X_t(\phi)=X_0(\phi)+M_t(\phi)+\int_0^t X_s(\frac{\Delta}{2}\phi) ds,
\end{equation} where $M_t(\phi)$ is a continuous $\cF_t$-martingale such that $M_0(\phi)=0$ and the quadratic variation of $M(\phi)$ is \[[M(\phi)]_t=\int_0^t X_s(\phi^2) ds.\]
The martingale problem uniquely determines the law $P_{X_0}$ of super-Brownian motion $X$ on $C([0,\infty),M_F(\R^d))$.\\

Local times of superprocesses have been studied by many authors (cf. Sugitani (1989), Barlow, Evans and Perkins (1991), Adler and Lewin (1992), Krone (1993), Merle (2006)). In a recent work, Mytnik and Perkins (2017) obtain the exact Hausdorff dimension of the boundary of super-Brownian motion, defined as the boundary of the set of points where the local time is positive. Now we recall that Sugitani (1989) has proved that for $d\leq 3$, there exists a random function $L_t^x$ such that for any $\phi \in C_b(\R^d)$, 
\[\int_0^t X_s(\phi) ds =\int_{\R^d}  \phi(x) L_t^x dx.\] $L_t^x$ is called the local time of $X$ at point $x\in \R^d$ and time $t\geq 0$, which is jointly lower semi-continuous and is monotone increasing in $t\geq 0$. It also can be defined as \[L_t^x:=\lim_{\varepsilon \to 0} \int_0^t X_s(p_\varepsilon^x) ds,\] where $p_\varepsilon^x(y)=p_\varepsilon(y-x)$ is the transition density of $d$-dimensional Brownian motion. The joint continuity of $L_t^x$ is given in Theorem 3 of Sugitani (1989),  which we now recall.
\begin{thmx}(Sugitani (1989)) \label{t3}
Let $d\leq 3$ and $X_0=\mu \in M_F(\R^d)$. Then there is a version of the local time $L_t^x$ which is jointly
continuous on the set of continuity points of $\mu q_t(x)$, where $q_t(x)=\int_0^t p_s(x)ds $ and $\mu q_t(x)=\int \mu(dy) \int_0^t p_s(y-x) ds$.
\end{thmx}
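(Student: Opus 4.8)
The plan is the classical route to joint continuity of local times: derive sharp moment bounds for the mollified occupation densities $L^{\eps,x}_t:=\int_0^t X_s(p_\eps^x)\,ds$ and feed them into Kolmogorov's continuity criterion, keeping every estimate localized to compact subsets of the set $G$ of continuity points of $(t,x)\mapsto\mu q_t(x)$. Fixing such a compact $K\subset G$, I would show that the fields $(t,x)\mapsto L^{\eps,x}_t$ are Cauchy in each $L^n(P_\mu)$ with their limit equal to the occupation density $L^x_t$, that the increments of $L^x_\cdot$ over $K$ satisfy moment bounds strong enough for Kolmogorov, hence that $L^x_t$ admits a jointly continuous modification on $K$; exhausting $G$ by such $K$'s then finishes the proof.

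The first ingredient is the moment formula. Iterating the martingale problem \eqref{e1.0} (equivalently, differentiating at $\theta=0$ the log-Laplace equation $\E_\mu[\exp(-\sum_i\theta_i L^{\eps,x_i}_t)]=\exp(-\mu(V^{\eps,\theta}_t))$, where $\partial_sV=\tfrac12\Delta V-\tfrac12V^2+\sum_i\theta_ip_\eps^{x_i}$ and $V|_{s=0}=0$) expresses each mixed moment $\E_\mu[\prod_i L^{\eps,x_i}_t]$ as a finite sum, indexed by binary trees whose leaves are labelled by the $x_i$, of products of heat-kernel convolutions: one time integral per internal vertex, one factor $p_\eps^{x_i}$ per leaf. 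Letting $\eps\to0$ replaces each $p_\eps^{x_i}$ by $\delta_{x_i}$; the only singular building block produced is $q_r(z)$, which is $\asymp|z|^{2-d}$ for $d=3$ and $\asymp\log(1/|z|)$ for $d=2$, and since $\int_{|z|\le1}q_r(z)^2\,dz<\infty$ \emph{precisely} when $d\le3$, every tree term remains finite — this is exactly where the hypothesis enters. Hence the limits exist in all $L^n(P_\mu)$ and the formulas pass to the limit; in particular $\E_\mu[L^x_t]=\mu q_t(x)$ and, by polarization,
\[
\E_\mu\big[(L^x_t-L^{x'}_t)^2\big]=\big(\mu q_t(x)-\mu q_t(x')\big)^2+\int\mu(dz)\int_0^t\!\!\int_{\R^d}p_{t-r}(z-y)\big(q_r(y-x)-q_r(y-x')\big)^2\,dy\,dr,
\]
with the analogous identity for a time increment $L^x_t-L^x_s$.

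The second ingredient turns these identities into increment bounds uniform over $(t,x),(t',x')\in K$. On $K$ the first term above is controlled because $(t,x)\mapsto\mu q_t(x)$ is jointly continuous there; for the second I would estimate $\int_{\R^d}(q_r(y-x)-q_r(y-x'))^2\,dy$ by splitting the $y$-integral at scale $|x-x'|$ (or via Plancherel), and the time-increment contributions using that $p_u$ is bounded for $u$ away from $0$ (the corner $t\downarrow0$, where $L^x_0=0=\mu q_0(x)$, handled separately). This yields $\E_\mu[|L^x_t-L^{x'}_{t'}|^2]\le C_K\,(|x-x'|+|t-t'|)^{\alpha}$ for some $\alpha=\alpha(d)>0$ (one may take $\alpha=1$ in $d=3$ and $\alpha$ arbitrarily close to $2$ in $d=2$). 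Since the $2n$-th moment is again a tree sum, which by the standard bound for such formulas is dominated by $C_n$ times the $n$-th power of the pair estimate, one obtains $\E_\mu[|L^x_t-L^{x'}_{t'}|^{2n}]\le C_{K,n}\,(|x-x'|+|t-t'|)^{n\alpha}$ for every $n$. Choosing $n$ with $n\alpha>d+1$ and applying Kolmogorov's criterion on the $(d+1)$-dimensional parameter set gives a jointly (locally H\"older) continuous modification of $(t,x)\mapsto L^x_t$ on $K$, and the covering argument completes the proof.

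I expect the second ingredient to be the real work: producing the convolution estimates built from $q_r$ near its singularity with the right power of the increment (and the right logarithmic correction in $d=2$), and — crucially — arranging that the constants $C_K$ deteriorate only as $K$ approaches the complement of $G$, so that Kolmogorov applies on each $K$ and the continuous pieces glue into one continuous field. Checking that the full tree-sum moment formula is genuinely controlled by powers of the pair integral is the other technical point, though a fairly mechanical one; once these and the $L^2$ convergence $L^{\eps,x}_t\to L^x_t$ are in hand, identifying the continuous modification with the occupation density is routine.
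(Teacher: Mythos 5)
First, a point of reference: the paper does not prove Theorem \ref{t3} at all — it is quoted verbatim from Sugitani (1989) as background (only its consequence \eqref{e3.1} and related facts are used later), so your proposal can only be compared with Sugitani's original argument. Your general machinery is indeed the one Sugitani uses: mollified occupation densities, moment/cumulant formulas coming from the log-Laplace equation (the recursion that reappears in Section \ref{s4.1} of this paper as \eqref{e4.2}--\eqref{e4.3}), the observation that $d\le 3$ is exactly square-integrability of $q_r$ near its singularity, and Kolmogorov's criterion.

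There is, however, a genuine gap at the step you call the ``second ingredient.'' You propose to apply Kolmogorov directly to the un-centered field $L_t^x$ on a compact subset $K$ of the continuity set of $\mu q_t(x)$, claiming $\E_\mu[|L^x_t-L^{x'}_{t'}|^{2n}]\le C_{K,n}(|x-x'|+|t-t'|)^{n\alpha}$. But your own second-moment identity contains the deterministic term $(\mu q_t(x)-\mu q_{t'}(x'))^{2n}$, and joint continuity of $\mu q_t(x)$ on $K$ gives only \emph{some} modulus of continuity, not a H\"older one: for suitable $\mu$ the modulus of $\mu q_t$ decays arbitrarily slowly, so no constant $C_K$ and exponent $\alpha>0$ of the required power type exist, and Kolmogorov cannot be applied to $L$ itself (the same problem reappears at the corner $t\downarrow 0$, where continuity of $L$ at $(0,x_0)$ is equivalent to $\mu q_t(x)\to 0$). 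The repair — and this is exactly how the theorem's hypothesis enters in Sugitani's proof, and how this paper's Theorem \ref{t5} refines it in $d=2$ — is to center: prove via the cumulant/tree moment bounds (which involve only the variance-type terms and are uniformly H\"older in $(t,x)$ for $d\le 3$) that the fluctuation $L_t^x-\mu q_t(x)$ (equivalently $L_t^x-\E_\mu L_t^x$) admits a version jointly continuous on all of $[0,\infty)\times\R^d$, and only then add back the deterministic function $\mu q_t(x)$; the sum is then continuous precisely at the continuity points of $\mu q_t(x)$, which is the statement to be proved. With that decomposition your tree-moment estimates and the $L^n$ identification of the limit with the occupation density are the right ingredients; without it, the Kolmogorov step as you set it up fails.
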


\begin{remark}
 When $d=1$, $\mu q_t(x)$ is always jointly continuous (see Proposition 3.1 in Sugitani (1989)), so the above theorem implies that there is a version of the local time $L_t^x$ that is always jointly continuous, which is also a result of Konno and Shiga (1988). When $d\geq 4$, we have $\int_0^t X_s(\cdot) ds$ is a.s. a singular measure, $\forall t>0$ and so local times do not exist. See Exercise III.5.1 of Perkins (2002) or Dawson, Iscoe and Perkins (1989) for more discussions.
\end{remark}

It is also natural to consider the case under the canonical measure $\N_{x_0}$. Theorem II.7.3(a) in Perkins (2002) gives the existence of a $\sigma$-finite measure $\N_{x_0}$ on $C([0,\infty),M_F(\R^d))$, and it is defined to be the weak limit of $N P_{\delta_{x_0}}^N (X_\cdot^N \in \cdot)$ as $N \to \infty$, where $X_\cdot^N$ under $P_{\delta_{x_0}}^N$ is the approximating branching particle system starting from a single particle at $x_0$ (see Ch.p. II.3 of Perkins (2002)). In this way, $\N_{x_0}$ describes the contribution of a cluster from a single ancestor at $x_0$ and the super-Brownian motion is then obtained by a Poisson superposition of such clusters. In fact,  we have
\[X_t=\int\nu_t\ \Xi(d\nu), t>0, \text{ has law } P_{X_0},\] where $\Xi$ is a Poisson point process with intensity $\N_{X_0}=\int \N_{x_0}(\cdot) X_0(d x_0)$ (see, e.g., Theorem II.7.3(c) in Perkins (2002)). The existence of the local time $L_t^x$ under $\N_{x_0}$ then follows from this decomposition and the existence under $P_{\delta_{x_0}}$. Therefore the local time $L_t^x$ may be decomposed as \[L_t^x=\int L_t^x(\nu)\ \Xi(d\nu)\overset{d}{=}\sum_{i} L_t^x(\nu_i).\] Perhaps surprisingly $L^x_t$ will be jointly continuous on $\{(t,x): t\geq 0, x\in \R^d\}$, $\N_{x_0}$-a.e..

\subsection{Main results}

\begin{theorem}  \label{p0}
Let $d\leq 3$. Then for all $x_0 \in \R^d$, we have $\N_{x_0}$-a.e. that $L_t^x$ is jointly continuous on $\{(t,x): t\geq 0, x\in \R^d\}$. Moreover,
we have \[ \lim_{t\downarrow 0} \sup_{x} L_t^x=0,  \N_{x_0}\text{-a.e..}\]
\end{theorem}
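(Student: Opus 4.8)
By translation invariance of the canonical measure we may take $x_0=0$ and write $\N_0:=\N_{x_0}$. The plan is to treat the ``moreover'' assertion
\[
\lim_{\delta\downarrow0}\,\sup_{x}L^x_\delta=0,\qquad \N_0\text{-a.e.},
\]
as the substantive point and to deduce the joint continuity from it, from Theorem~\ref{t3}, and from two standard properties of a single super-Brownian cluster, both valid $\N_0$-a.e.: the compact support property ($\bigcup_{s\le t}\mathrm{supp}(X_s)\subseteq B(0,R_t)$ with $R_t<\infty$ and $R_t\downarrow0$ as $t\downarrow0$), and the uniform local modulus of the mass of $X_\delta$ for fixed $\delta>0$, of order $r^2$ up to a logarithmic factor when $d=2,3$ (see Perkins (2002)).

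\emph{Joint continuity, assuming the sup statement.} Fix $\delta>0$ and condition on $\cF_\delta$. By the Markov property $(X_{\delta+r})_{r\ge0}$ is a super-Brownian motion started from $\mu=X_\delta$, so $(t,x)\mapsto L^x_t-L^x_\delta$ on $[\delta,\infty)\times\R^d$ is its local time. The uniform mass modulus forces $\mu q_\eta(x)=\int X_\delta(dy)\,q_\eta(x-y)\to0$ uniformly in $x$ as $\eta\downarrow0$ (in $d=2,3$ the $r^2$-type bound on $X_\delta(B(x,r))$ dominates the $|z|^{-(d-2)}$, resp.\ $\log(1/|z|)$, singularity of $q_\eta$); writing $\mu q_r=\mu q_\eta+\int_\eta^r P_s^\mu\,ds$, with the last term jointly continuous in $(r,x)$, one gets that $X_\delta q_r(x)$ is jointly continuous on $[0,\infty)\times\R^d$, $\N_0$-a.e. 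Theorem~\ref{t3} then gives that $L^x_t-L^x_\delta$ is jointly continuous on $[\delta,\infty)\times\R^d$. Since $L^x_t=(L^x_t-L^x_\delta)+L^x_\delta$ with $0\le L^x_\delta\le\sup_y L^y_\delta\to0$ as $\delta\downarrow0$ by the sup statement, on each compact subset of $(0,\infty)\times\R^d$ the function $L$ is a uniform limit of jointly continuous functions, hence jointly continuous there; as $L^x_0\equiv0$ and $\sup_x L^x_t\to0$, joint continuity extends to all of $[0,\infty)\times\R^d$.

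\emph{The sup statement.} Here one exploits the scaling of $\N_0$: with $Y_t(\cdot):=c^{-2}X_{c^2t}(c\,\cdot)$ the push-forward of $\N_0$ under $X\mapsto Y$ is $c^{-2}\N_0$, and $L^x_t(X)=c^{\,4-d}L^{x/c}_{t/c^2}(Y)$; taking $c=\sqrt\delta$ one obtains
\[
\N_0\!\left(\sup_x L^x_\delta>\eps\right)=\delta^{-1}\,\N_0\!\left(\sup_x L^x_1>\eps\,\delta^{-(4-d)/2}\right)
=\big(M/\eps\big)^{2/(4-d)}\,\N_0\!\left(\sup_x L^x_1>M\right),\qquad M:=\eps\,\delta^{-(4-d)/2}.
\]
Hence, given that $\N_0(\sup_x L^x_1>\eps)<\infty$ for each $\eps>0$ (which one checks from the finiteness of first moments of $L^x_\infty$ away from the origin, a modulus of continuity for $x\mapsto L^x_\infty$, and the tail of $R_\infty$), the sup statement is equivalent to the decay $\N_0(\sup_x L^x_1>M)=o(M^{-2/(4-d)})$ as $M\to\infty$ --- that is, $o(M^{-2})$ for $d=3$, $o(M^{-1})$ for $d=2$, $o(M^{-2/3})$ for $d=1$. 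With a power saving over this rate one runs a (legitimate, since the measures are finite) Borel--Cantelli argument along $\delta=2^{-n}$ and, using that $t\mapsto\sup_x L^x_t$ is nondecreasing, concludes $\limsup_{\delta\downarrow0}\sup_x L^x_\delta\le\eps$ $\N_0$-a.e.; letting $\eps\downarrow0$ finishes. (In $d=1$ this whole scheme is unnecessary: $\N_0[L^x_t]=q_t(x)\le\sqrt{2t/\pi}$ and all $\N_0$-moments of the local time are finite, so ordinary first-moment bounds suffice.)

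\emph{The main obstacle.} The crux is the tail bound for $\sup_x L^x_1$ under $\N_0$ --- equivalently, a good bound for $\sup_x L^x_\delta$ --- and its difficulty is that moments are useless near the origin: already $\N_0[L^x_\delta]=q_\delta(x)\to q_\delta(0)=\int_0^\delta p_s(0)\,ds=\infty$ as $x\to0$ when $d=2,3$, and all higher moments diverge there too, even though $L^x_\delta$ is $\N_0$-a.e.\ finite and small. This divergence is an artifact of the infinitely many tiny clusters crowding the origin in the Poisson superposition $L^x_t=\sum_i L^x_t(\nu_i)$ under $P_{\delta_0}$, and is absent for a single cluster. Extracting the a.e.\ finiteness and smallness therefore requires using the $\sigma$-finite structure of $\N_0$ directly: either working with $\N_0(1-e^{-\lambda L^x_\delta})$ and the semilinear equation $\tfrac12\Delta u=\tfrac12u^2$ --- whose isolated singularities have the subcritical order $|x|^{-2}$ in $d\le3$, so that the $u^2$ nonlinearity tames the point source --- or with suitably truncated moments combined with the scaling above. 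This is where the real work of the proof lies.
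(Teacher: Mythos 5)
There is a genuine gap: you never prove the ``moreover'' statement, and your whole architecture rests on it. After the scaling reduction you need a quantitative tail bound $\N_0(\sup_x L^x_1>M)=o(M^{-2/(4-d)})$ with a power saving (plus the preliminary fact $\N_0(\sup_x L^x_1>\eps)<\infty$), and you explicitly defer both: the sketch of how one might get them (via $\N_0(1-e^{-\lambda L^x_\delta})$ and the singular semilinear equation, or via truncated moments) is exactly the part you label ``where the real work of the proof lies,'' so the proposal stops short of a proof. The difficulty you correctly diagnose (all $\N_0$-moments of $L^x_\delta$ blow up as $x\to0$, so naive first/second moment bounds on the sup fail) is real, but the paper sidesteps it entirely rather than confronting it: no new estimate on a single cluster is ever proved. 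The paper's argument conditions at a time $\varepsilon$ under $P_{\delta_0}$, where \eqref{e3.1} together with the compact support of the occupation measure already gives, $P_{\delta_0}$-a.s., both $\sup_x(L^x_{t+\varepsilon}-L^x_\varepsilon)\to0$ as $t\downarrow0$ and joint continuity of the increments; it then writes $L^x_{t+\varepsilon}-L^x_\varepsilon$ as a Poisson superposition $\sum_{i\le N_\delta}L^x_t(X^i)$ of i.i.d.\ clusters with law $\N_{x_i}(\cdot\mid\zeta>\delta)$, and since $N_\delta=1$ occurs with positive probability and all summands are nonnegative (resp.\ the continuity passes to a single summand), the same statements hold $\N_{x_0}(\cdot\mid\zeta>\delta)$-a.e.; letting $\delta\downarrow0$ gives them $\N_{x_0}$-a.e., and the final pasting along $\delta=1/n$ is the same elementary step as yours. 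So the transfer-from-$P_{\delta_0}$ trick replaces the tail estimate you would still have to produce.

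Two smaller points. First, your deduction of joint continuity from the sup statement is sound in outline and close to the paper's assembly, but it quietly uses the Markov property of $\N_0$ at time $\delta$ and a uniform mass modulus for $X_\delta$ under $\N_0$; both are true and provable (e.g.\ by dominating a single cluster by the $P_{\delta_0}$-superposition), but they are asserted, not argued --- the paper instead gets the continuity of $L^x_t-L^x_\delta$ under $\N_{x_0}$ by the same cluster-transfer, with no appeal to Theorem \ref{t3} for the random measure $X_\delta$. Second, your scaling identity for $\N_0$ and for the local time is correct, and that reduction could in principle be completed (indeed the exponent $M^{-2/(4-d)}$ is the right critical order), but as written the Borel--Cantelli step has nothing to feed on; if you want to salvage this route, the missing tail bound is the theorem you must actually prove.
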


As is indicated in the remark after Theorem \ref{t3}, $L_t^x$ is jointly continuous for all $t\geq 0$ and $x$ in $d=1$. Now we focus on the case $X_0=\delta_0$ in $d=2$ and $d=3$. The continuity of $\mu q_t(x)$ with $\mu=\delta_0$ fails for $x=0$ and $t\geq0$, while Theorem \ref{p0} tells us that the local time is jointly continuous everywhere under the canonical measure $\N_0$. We are then interested in the asymptotic behavior of local time $L_t^x$, under the law $P_{\delta_0}$, as $x\to 0$ in $d=2$ and $3$. By Lemma 1 in Sugitani (1989), we have for any $X_0\in M_F(\R^d)$ and for any fixed $\varepsilon>0$,
\begin{equation} \label{e3.1}
L_t^x-L_\varepsilon^x \text{\ is jointly continuous on \ } \{(t,x): t\geq \varepsilon, x\in \R^d\}, \ \ P_{X_0}\text{-a.s..} 
\end{equation} 
This also follows by the Markov Property at time $\varepsilon$ and the fact that $\mu=X_\varepsilon$ will satisfy the condition on Theorem \ref{t3} that $\mu q_t(x)$ is jointly continuous for all $t$ and $x$. On the other hand we expect that when $d=2$ or $3$, the singularity in the initial condition leads to the singularity of the local time, leading to our main results below. \\

\noindent $\mathbf{Convention\ on\ Constants.}$ Constants whose value is unimportant and may change from line to line are denoted $C$, while constants whose values will be referred to later and appear initially in say, Theorem $i.j$ are denoted $c_{i.j}$.\\

\noindent $\mathbf{Notations.}$ 
If $M$ is a metric space equipped with a metric $d$, let $(\xi_t)_{t\in T}$ be a collection of $M$-valued random vectors. We denote convergence in probability $P$ by $\xi_t \xrightarrow[]{P} \xi_{t_0}$ as $t\to t_0$ if for any $\varepsilon>0$, we have \[P(d(\xi_t,\xi_{t_0})>\varepsilon) \to 0 \text{\ as  \ } t\to t_0.\] We denote weak convergence, or convergence in distribution, by $\xi_t \xrightarrow[]{d} \xi_{t_0}$ as $t\to t_0$ if for any $\phi \in C_b(M)$, \[E\phi(\xi_t) \to E\phi(\xi_{t_0}) \text{\ as  \ } t\to t_0.\]

\begin{theorem} \label{t1}
Let $c_{\ref{t1}}=1/(2\pi)$ and $\psi(x)=(2c_{\ref{t1}}^2 \log (1/|x|))^{1/2}$, and $X$ be a super-Brownian motion in $d=3$ with initial condition $X_0=\delta_0$. Then for each $0<t\leq \infty$

\begin{equation*}
\Big(X,\frac{L_t^x-c_{\ref{t1}}/|x|}{\psi(x)}\Big) \xrightarrow[]{d} \Big(X,Z\Big) \text{ as } x\to 0,
\end{equation*}
where $Z$ denotes a random variable with standard normal law which is independent of $X$. The weak convergence occurs on the space $(C([0,\infty),M_F(\R^3))\times \R )$.
\end{theorem}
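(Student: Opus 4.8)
The plan is to analyze the local time $L_t^x$ under $P_{\delta_0}$ in $d=3$ via the canonical-measure decomposition $L_t^x \overset{d}{=} \sum_i L_t^x(\nu_i)$, where the $\nu_i$ are the atoms of a Poisson point process with intensity $\N_0$. The idea is that the singularity $c_{\ref{t1}}/|x|$ in $L_t^x$ comes from the many small clusters near the origin, and a central limit theorem should govern the fluctuations. \textbf{Step 1: moment computations under $\N_0$.} I would first compute the first and second moments $\N_0(L_t^x)$ and $\N_0((L_t^x)^2)$, and more generally the moment generating function or Laplace functional of $L_t^x$ under $\N_0$, using the known connection between moments of super-Brownian motion additive functionals and the Brownian Green's function / the PDE $\frac{\Delta}{2}u = u^2$. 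One expects $\N_0(L_t^x) = q_t(x) \to (2\pi|x|)^{-1} = c_{\ref{t1}}/|x|$ as $t\to\infty$ (using $\int_0^\infty p_s(x)\,ds = (2\pi|x|)^{-1}$ in $d=3$), which identifies the centering. A careful computation of $\N_0((L_t^x)^2)$ should show it behaves like (const)$\cdot\log(1/|x|)$ as $x\to0$, which is what produces the $\psi(x)^2 = 2c_{\ref{t1}}^2\log(1/|x|)$ normalization after summing over the Poisson process — by the Poisson structure, $\mathrm{Var}_{P_{\delta_0}}(L_t^x) = \N_0((L_t^x)^2)$.

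\textbf{Step 2: a Lindeberg/characteristic-function CLT for the Poisson sum.} Having $E[L_t^x] \to c_{\ref{t1}}/|x|$ and $\mathrm{Var}(L_t^x)/\psi(x)^2 \to 1$, I would prove asymptotic normality of $(L_t^x - c_{\ref{t1}}/|x|)/\psi(x)$ by computing its characteristic function directly from the Poisson point process: $E[\exp(i\lambda L_t^x)] = \exp\big(\int (e^{i\lambda L_t^x(\nu)} - 1)\,\N_0(d\nu)\big)$, expand to second order, and show the error terms vanish. This is essentially a Lindeberg condition: one needs $\N_0(L_t^x \geq \delta\psi(x)) \to 0$ fast enough, i.e.\ control of the tail of $L_t^x$ under $\N_0$, which should follow from higher moment bounds $\N_0((L_t^x)^k) \leq C_k (\log(1/|x|))^{?}$ or from an exponential bound. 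The cleanest route may be to show $\N_0(\min(L_t^x,K)^k)$ is negligible compared to $\psi(x)^k$ for $k\geq 3$ while the truncated second moment still matches $\psi(x)^2$, together with a truncation argument removing the contribution of clusters with atypically large $L_t^x(\nu)$.

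\textbf{Step 3: joint convergence with $X$ and independence of the limit.} For the joint statement on $C([0,\infty),M_F(\R^3))\times\R$, I would use that $X$ itself is a fixed Poisson superposition of the $\nu_i$, while the normalized fluctuation of $L_t^x$ is, in the limit, built from the "infinitesimal" clusters that contribute negligibly to $X$ in the weak topology. Concretely, I would fix a level $\eta>0$, split the Poisson process into clusters with $\sup_s \nu_s(1) > \eta$ (finitely many, carrying essentially all of $X$) and the rest; show the normalized local-time fluctuation from the large clusters is $o_P(1)$ (their number is tight and each $L_t^x(\nu_i) - \N_0(L_t^x\,;\,\text{large})$ is $O_P(1/|x|)$, hence $o(\psi(x))$ — this needs care), and that the fluctuation from the small clusters is asymptotically $N(0,1)$ and asymptotically independent of the large-cluster configuration since they come from disjoint parts of the Poisson process. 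Then let $\eta\to0$. Alternatively, prove joint convergence of $(F(X), (L_t^x-c_{\ref{t1}}/|x|)/\psi(x))$ for bounded continuous $F$ by conditioning and the characteristic-function method, using that $\mathrm{Cov}(F(X), e^{i\lambda L_t^x})\to 0$.

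\textbf{Main obstacle.} The hard part will be Step 1 together with the truncation in Step 2: extracting the \emph{exact} constant $c_{\ref{t1}}^2$ from the logarithmic divergence of the second moment of $L_t^x$ under $\N_0$ (and controlling the $t$-dependence uniformly, including $t=\infty$), and simultaneously showing that the third and higher moments, or the relevant truncated moments, are of strictly smaller order than $\psi(x)^3$ so that the Poisson-sum CLT goes through. This requires sharp estimates on $\N_0((L_t^x)^k)$, which reduce to iterated integrals of the $3$-dimensional heat kernel of the form $\int q_s(\cdot)\cdots$ near $x=0$; pinning down that the $k=2$ term gives exactly $\log(1/|x|)$ with coefficient $2c_{\ref{t1}}^2$ while $k\geq 3$ gives $o((\log(1/|x|))^{k/2})$ is where the real work lies.
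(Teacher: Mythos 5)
Your route (Poisson cluster decomposition under $\N_0$ plus a cumulant/characteristic-function CLT) is genuinely different from the paper's, which never touches the canonical measure for this theorem: the paper writes $L_t^x-c_{\ref{t1}}/|x| = M_t(\phi_x)-X_t(\phi_x)$ via the Tanaka formula \eqref{e1.5}, kills $X_t(\phi_x)/\psi(x)$ a.s.\ by the mass-concentration bound of Lemma \ref{l2.1}, and gets normality of $M_t(\phi_x)/\psi(x)$ by Dubins--Schwarz once $[M(\phi_x)]_t/\psi(x)^2\to 1$ a.s., the latter coming from a new Tanaka-type identity for $\log|y-x|$ (Proposition \ref{p1}), since $\Delta_y\log|y-x|=|y-x|^{-2}$ and $\delta_0(g_x)=-\log(1/|x|)$; independence of $Z$ from $X$ is then extracted from the Markov property at a small time $\varepsilon$, \eqref{e3.1}, and Blumenthal's 0-1 law, not from a large/small cluster split. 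For fixed finite $t$ your plan is plausible: the Sugitani-type recursion the paper itself develops in Section \ref{s4.1} gives the cumulants, the second cumulant $\delta_0(v_2(t))=\int_0^t\int p_{t-s}(y)q_s(y-x)^2\,dy\,ds$ does behave like $2c_{\ref{t1}}^2\log(1/|x|)$, and the higher cumulants stay bounded (estimates of the type of Lemmas \ref{l3.3} and \ref{l5.4}), so a cumulant CLT would deliver \eqref{efinal}; what the paper's route buys instead is an almost sure statement about the quadratic variation that is reused later (Theorems \ref{p2.3}, \ref{t6}).

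There are, however, two concrete gaps. First, your Step 1 claim that $\mathrm{Var}_{P_{\delta_0}}(L_t^x)=\N_0((L_t^x)^2)\sim \mathrm{const}\cdot\log(1/|x|)$ fails at $t=\infty$: in $d=3$,
\begin{equation*}
\mathrm{Var}_{P_{\delta_0}}(L_\infty^x)=E_{\delta_0}\Big[\int_0^\infty X_s(\phi_x^2)\,ds\Big]=\int \frac{c_{\ref{t1}}^2}{2\pi|y|\,|y-x|^2}\,dy=+\infty
\end{equation*}
for every $x\neq 0$ (the integral diverges at spatial infinity), so the moment/Lindeberg machinery cannot be applied to $L_\infty^x$ as stated, and "controlling the $t$-dependence uniformly, including $t=\infty$" is not a technicality but a point where the method breaks; the standard repair is the paper's reduction of $t=\infty$ to finite $t$ via $L_\infty^x-L_\varepsilon^x=L_\zeta^x-L_\varepsilon^x\to L_\zeta^0-L_\varepsilon^0<\infty$ a.s.\ (using \eqref{e3.1} and $\zeta<\infty$), which you would need to add explicitly. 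Second, in Step 3 the parenthetical "each $L_t^x(\nu_i)-\N_0(L_t^x;\text{large})$ is $O_P(1/|x|)$, hence $o(\psi(x))$" is wrong as written: $1/|x|\gg\psi(x)=(2c_{\ref{t1}}^2\log(1/|x|))^{1/2}$, so an $O_P(1/|x|)$ bound gives nothing. What is true, and what your split actually needs, is that a cluster conditioned to be large contributes $O_P(1)$ to $L_t^x$ as $x\to 0$ — but that is precisely the joint continuity of $L_t^x$ under $\N_{x_0}$, i.e.\ Theorem \ref{p0}, together with a uniform integrability bound for $\N_0[L_t^x;\zeta>\delta]$ to show the large clusters carry only an $O(1)$ share of the centering $c_{\ref{t1}}/|x|$; none of this is supplied in the outline, whereas the paper's independence argument in Section \ref{s3.2} avoids it entirely.
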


\begin{theorem}\label{t2}
Let $c_{\ref{t2}}=1/\pi$ and $X$ be a super-Brownian motion in $d=2$ with initial condition $X_0=\delta_0$. Then with $P_{\delta_0}$-probability one, 
\[ \lim_{x\to 0} L_t^x-c_{\ref{t2}} \log\frac{1}{|x|}=c_{\ref{t2}}(X_t(g_0)-M_t(g_0)), \ \forall\ 0<t\leq \infty,\] where $g_0(y)=\log |y|$ and both terms on the right-hand side are a.s. finite.
\end{theorem}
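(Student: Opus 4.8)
The plan is to reduce the statement to a fixed time by means of an exact stochastic identity for $L^x_t$, and then to prove the required almost‑sure convergence at that fixed time by $L^2$ estimates together with a chaining argument. For the identity, recall that in $\R^2$ one has $\tfrac12\Delta\log|\cdot|=\pi\delta_0$ in the distributional sense, so the shifted function $c_{\ref{t2}}\log|\cdot-x|$ formally solves $\tfrac12\Delta\,\phi=\delta_x$. Writing $M_t(\phi)=\int_0^t\!\!\int\phi(y)\,M(ds\,dy)$ for the orthogonal martingale measure associated with \eqref{e1.0}, and using $X_0\bigl(c_{\ref{t2}}\log|\cdot-x|\bigr)=c_{\ref{t2}}\log|x|$ together with $\int_0^tX_s\bigl(\tfrac12\Delta(c_{\ref{t2}}\log|\cdot-x|)\bigr)\,ds=\int_0^tX_s(\delta_x)\,ds=L^x_t$, the martingale problem \eqref{e1.0} should give, for each fixed $x\neq0$ and $P_{\delta_0}$‑a.s.,
\[
L^x_t\;=\;c_{\ref{t2}}\log\tfrac1{|x|}\;+\;c_{\ref{t2}}\bigl(X_t(\log|\cdot-x|)-M_t(\log|\cdot-x|)\bigr),\qquad 0<t<\infty
\]
(equivalently $L^x_t=q_t(x)+\int_0^t\!\!\int q_{t-s}(y-x)\,M(ds\,dy)$ with $q_r(z)=c_{\ref{t2}}\bigl((p_r*\log|\cdot|)(z)+\log\tfrac1{|z|}\bigr)$, since $\partial_r(p_r*\log|\cdot|)=\pi p_r$). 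As $\log|\cdot-x|$ is neither bounded nor $C^2$, this must be justified by regularization: replace $c_{\ref{t2}}\log|\cdot-x|$ by $c_{\ref{t2}}\chi_R\,(p_\eta*\log|\cdot-x|)$, apply \eqref{e1.0}, and let $\eta\downarrow0$, $R\uparrow\infty$; the drift term passes to $L^x_t$ because $\tfrac12\Delta(p_\eta*\log|\cdot-x|)=\pi\,p_\eta^x$ and by the definition of the local time, the cutoff correction disappears since $X$ has a.s. compact support, and the remaining limits are controlled by the $L^2$‑isometry $\E[(M_t(\psi)-M_t(\psi'))^2]=\E\int_0^tX_s((\psi-\psi')^2)\,ds$ together with the moment bounds below.

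\emph{Finiteness and reduction to a fixed time.} From $\E[X_t(B(0,\rho))]=\int_{B(0,\rho)}p_t(y)\,dy\le\rho^2/(2t)$ one gets $\E\int_{\{|y|\le1\}}\log\tfrac1{|y|}\,X_t(dy)\le1/(4t)<\infty$, so $X_t(g_0)=\int\log|y|\,X_t(dy)$ is a.s. finite (the part $|y|>1$ is finite by compact support). From $\E\int_0^tX_s(g_0^2)\,ds=\int_0^t\!\!\int p_s(y)(\log|y|)^2\,dy\,ds$ and the scaling bound $\int p_s(y)(\log|y|)^2\,dy\le C(1+(\log s)^2)$, which is integrable near $s=0$, we get that $M_t(g_0)=\int_0^t\!\!\int\log|y|\,M(ds\,dy)$ is a well‑defined $L^2$ martingale with a.s.‑finite quadratic variation; the same bounds hold with $g_0$ replaced by $\log|\cdot-x|$. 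Now fix $\eps>0$. By \eqref{e3.1} (the Markov property at time $\eps$ and Theorem \ref{t3} applied to $X_\eps$), $L^x_t-L^x_\eps$ is jointly continuous on $\{t\ge\eps\}\times\R^2$, $P_{\delta_0}$‑a.s., so $\lim_{x\to0}(L^x_t-L^x_\eps)$ exists, is finite, and equals the $x=0$ value of that continuous function, uniformly for $t$ in compacts; moreover applying \eqref{e1.0} over $[\eps,t]$ to the (regularized) test function $c_{\ref{t2}}\log|\cdot|$, admissible now since $\int_\eps^tX_s(g_0^2)\,ds<\infty$, identifies this limit as $c_{\ref{t2}}\bigl[(X_t(g_0)-M_t(g_0))-(X_\eps(g_0)-M_\eps(g_0))\bigr]$. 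Hence, once we know
\[
\lim_{x\to0}\Bigl(L^x_\eps-c_{\ref{t2}}\log\tfrac1{|x|}\Bigr)=c_{\ref{t2}}\bigl(X_\eps(g_0)-M_\eps(g_0)\bigr)\qquad P_{\delta_0}\text{-a.s.},
\]
adding the two displays telescopes the $X_\eps,M_\eps$ terms and gives the theorem for all $0<t<\infty$ (taking $\eps=1/n$ and a countable union); the case $t=\infty$ follows because $X$ is a.s. extinct by a finite time, so both sides are eventually constant in $t$.

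\emph{The fixed‑time convergence.} By Step 1, $L^x_\eps-c_{\ref{t2}}\log\tfrac1{|x|}=c_{\ref{t2}}\bigl(X_\eps(\log|\cdot-x|)-M_\eps(\log|\cdot-x|)\bigr)$, so it suffices to show $X_\eps(\log|\cdot-x|)\to X_\eps(\log|\cdot|)$ and $M_\eps(\log|\cdot-x|)\to M_\eps(\log|\cdot|)$ as $x\to0$, $P_{\delta_0}$‑a.s. For the first, split over $\{|y|\le2|x|\}$ and $\{|y|>2|x|\}$: on the far set $\bigl|\log\tfrac{|y-x|}{|y|}\bigr|\le C|x|/|y|$ and $\int|y|^{-1}X_\eps(dy)<\infty$ a.s. (again from $\E[X_\eps(B(0,\rho))]\le\rho^2/(2\eps)$), while on the near set one uses $\int_{\{|y|\le1\}}\log\tfrac1{|y|}\,X_\eps(dy)<\infty$ a.s. and the bound $\E\int_{\{|y|\le3|x|\}}\bigl|\log|y-x|\bigr|\,X_\eps(dy)\le C_\eps|x|^2\log\tfrac1{|x|}$. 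For the second, the $L^2$‑isometry gives
\[
\E\Bigl[\bigl(M_\eps(\log|\cdot-x|)-M_\eps(\log|\cdot|)\bigr)^2\Bigr]=\int_0^\eps\!\!\int p_s(y)\Bigl(\log\tfrac{|y-x|}{|y|}\Bigr)^2dy\,ds,
\]
which I would bound by $C_\eps\,|x|^2\bigl(\log\tfrac1{|x|}\bigr)^3$ using the same near/far split in $y$ together with a split of the $s$‑integral at $s\sim|x|^2$. Each such estimate decays polynomially (up to logarithms) in $|x|$, and each comes with an analogous modulus bound $\E[(\cdot^{\,x}-\cdot^{\,x'})^2]\le C_\eps|x-x'|^2\bigl(\log\tfrac1{|x-x'|}\bigr)^{O(1)}$, so a chaining / Borel--Cantelli argument over the dyadic annuli $\{2^{-n-1}\le|x|\le2^{-n}\}$ upgrades the resulting convergence in probability to almost‑sure convergence as $x\to0$, completing the proof.

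\emph{Main obstacle.} The crux is the quantitative almost‑sure bound in the last step, i.e. estimating $\int_0^\eps\!\!\int p_s(y)\bigl(\log\tfrac{|y-x|}{|y|}\bigr)^2dy\,ds$ with a rate. The difficulty lives in the short‑time regime $s\lesssim|x|^2$: there the mean density $p_s$ of $X_s$ concentrates at scale $\sqrt s\ll|x|$ around the origin, which is exactly where the integrand has its square‑integrable but unbounded logarithmic singularity --- the very singularity responsible for the $c_{\ref{t2}}\log(1/|x|)$ term we have subtracted off. The naive estimate $p_s(y)\le p_s(0)$ loses an un‑integrable factor $1/s$; one must instead observe that on $\{|y|\le2|x|\}$ the integral behaves like $\tfrac14(\log(|x|^2/s))^2$ for $s\ll|x|^2$, which is integrable in $s$ near $0$. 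A secondary, more routine, obstacle is the regularization argument justifying the identity of Step 1, for which the moment bounds of the second step are precisely what is needed.
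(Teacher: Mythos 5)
Your architecture is the same as the paper's: a Tanaka formula $L^x_t-\tfrac1\pi\log\tfrac1{|x|}=\tfrac1\pi\bigl(X_t(g_x)-M_t(g_x)\bigr)$ proved by mollifying $g_x$, cutting off, and using $\tfrac\Delta2 P_\varepsilon g_x=\pi p^x_\varepsilon$ (the paper's Proposition \ref{p2}), then continuity in $x$ of $X_t(g_x)$ and of $M_t(g_x)$ at a fixed time, and finally a patching argument over all $t$ using \eqref{e3.1} and extinction. The genuine gap is in your ``crux'' step for the martingale term. You propose to prove $M_\varepsilon(g_x)\to M_\varepsilon(g_0)$ a.s.\ as $x\to0$ (through the continuum) from second-moment bounds only, namely $E\bigl[(M_\varepsilon(g_x)-M_\varepsilon(g_{x'}))^2\bigr]\le C_\varepsilon|x-x'|^2(\log\tfrac1{|x-x'|})^{O(1)}$, upgraded by ``chaining / Borel--Cantelli over dyadic annuli.'' With a two-dimensional parameter this is exactly the critical exponent and the upgrade fails: Kolmogorov's criterion needs $E|\Delta|^p\le C|x-x'|^{2+\beta}$ with $\beta>0$, and here $p=2$ with exponent $2$ (made worse by the logarithms); a chaining bound using Chebyshev gives, at scale $2^{-m}$ inside the annulus of radius $2^{-n}$, about $2^{2(m-n)}$ increments each of $L^2$-size $\lesssim 2^{-m}m^{O(1)}$, so the scale-$m$ contribution is of order $2^{-n}m^{O(1)}$ and the sum over $m\ge n$ diverges; Garsia--Rodemich--Rumsey with $\Psi(u)=u^2$ diverges for the same reason. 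The process is not Gaussian, so no entropy/sub-Gaussian argument rescues the $L^2$ modulus. This is precisely why the paper does not stop at the quadratic variation in expectation: it uses Burkholder--Davis--Gundy to reduce $E|M_t(g_x)-M_t(g_{x'})|^{4n}$ to $2n$-th moments of $\int_0^t\!\int X_s(dy)\,|y-x|^{-1}$, proves those via the cumulant machinery of Section \ref{s4.1} (Lemma \ref{l2.9}, following Sugitani), obtains $E|\Delta|^{4n}\le C|x-x'|^{2n}$, and applies Kolmogorov with $2n>2$ (Lemma \ref{l2.8}). Without some substitute for these higher-moment estimates your fixed-time a.s.\ convergence of the martingale term is not established.

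Two secondary points. First, your treatment of the $X_\varepsilon(\log|\cdot-x|)$ near-set term is also phrased as an expectation bound plus the same chaining device; this piece, unlike the martingale one, can be repaired pathwise exactly as in the paper's Lemma \ref{l2.7}, using the uniform mass-concentration bound of Lemma \ref{l2.1} together with Lemma \ref{l2.6}. Second, your identification of $\lim_{x\to0}(L^x_t-L^x_\varepsilon)$ with $\tfrac1\pi\bigl[(X_t(g_0)-M_t(g_0))-(X_\varepsilon(g_0)-M_\varepsilon(g_0))\bigr]$ is itself an extended Tanaka formula at the singular point $x=0$ started from $X_\varepsilon$, which needs its own regularization proof (the paper states such an extension only in a remark, proof omitted); the paper instead avoids it by proving continuity in $t$ of $X_t(g_0)$ and $M_t(g_0)$ and using monotonicity of $t\mapsto L^x_t$ via Lemma \ref{l0.1}. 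These are fixable; the missing higher-moment input for the martingale field is the substantive gap.
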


\begin{remark}
(a) The independence of $Z$ and $X$ is suggested by \eqref{e3.1} and that $(L_\varepsilon^x-c_{\ref{t1}}/|x|)/\psi(x)$ converges in distribution for all $\varepsilon>0$. We also use the same idea to prove the independence of $X$ and $Z$ in Section \ref{s3.2}.\\
(b)
The re-centering constants taking the forms of $c_{\ref{t1}}/|x|$ in $d=3$ and $c_{\ref{t2}}\log (1/|x|)$ in $d=2$ are both suggested by setting $\phi$ to be these two potential functions in the martingale problem \eqref{e1.0}. The scaling by $\psi(x)$ in Theorem \ref{t1} is necessary since the variance of the local time blows up in $d=3$, but not in $d=2$. It will become clearer in Theorem \ref{t4.1} below for the general initial condition case, where a scaling may or may not be needed for the local time in $d=3$.
\end{remark}

Compared to the a.s. convergence case in Theorem \ref{t2} when $d=2$, we also establish the following a.s. convergence result in $d=3$.
\begin{theorem}\label{p2.3}
Let $X$ be a super-Brownian motion in $d=3$ with initial condition $X_0=\delta_0$. Then for any $\alpha>0$, with $P_{\delta_0}$-probability one,
\[ \lim_{x\to 0} \frac{L_t^x-c_{\ref{t1}}/|x|}{1/|x|^\alpha}=0,\ \forall\ 0<t\leq \infty.\] 
\end{theorem}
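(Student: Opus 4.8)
The plan is to extract $L_t^x-c_{\ref{t1}}/|x|$ from the martingale problem and then control the resulting martingale uniformly in $x$ near $0$ and in $t$. Let $G(y)=c_{\ref{t1}}/|y|=(2\pi|y|)^{-1}=\int_0^\infty p_s(y)\,ds$ be the Green function of $-\tfrac12\Delta$ in $d=3$ and $G^x(y)=G(y-x)$, so that $\tfrac12\Delta G^x=-\delta_x$. Approximating $G^x$ by $C_b^2$ functions in \eqref{e1.0} (truncating the pole and mollifying) and passing to the limit — the computation underlying Theorems \ref{t1} and \ref{t2} — gives, for every $x\neq 0$,
\[
L_t^x-\frac{c_{\ref{t1}}}{|x|}=M_t(G^x)-X_t(G^x),\qquad 0<t\le\infty,\quad P_{\delta_0}\text{-a.s.},
\]
where $s\mapsto M_s(G^x)$ is the continuous martingale of \eqref{e1.0} driven by $G^x$, with $[M(G^x)]_t=\int_0^t X_s((G^x)^2)\,ds=\int(G^x)^2(y)L_t^y\,dy$, and $X_\infty(G^x)=0$ since $X$ dies out. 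As $L_\infty^y$ is supported on the a.s.\ compact range of $X$, $[M(G^x)]_\infty<\infty$ a.s.\ and $M_\infty(G^x)=\lim_{t\to\infty}M_t(G^x)$ exists. Since $|x|^{\alpha'}\le|x|^{\alpha}$ for $|x|\le1$ and $\alpha'\ge\alpha$, it suffices to fix $\alpha\in(0,1)$ and prove, $P_{\delta_0}$-a.s., that $\sup_{0<t\le\infty}|M_t(G^x)|=o(|x|^{-\alpha})$ as $x\to0$, and that $X_t(G^x)=O(1)$ as $x\to0$ for each $0<t\le\infty$; intersecting over $\alpha\in\{1/n:n\ge1\}$ then gives the theorem.

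The first assertion is the heart of the matter. Fix $K>0$; on $\{\mathrm{Range}(X)\subseteq B(0,K)\}$, whose probability tends to $1$ as $K\to\infty$, one has $M_t(G^x)=M_t(G^x\mathbf 1_{B(0,K)})$ for all $t$, and the latter is an $L^2$ martingale with $[M(G^x\mathbf 1_{B(0,K)})]_\infty=\int_{|y|\le K}(G^x)^2(y)L_\infty^y\,dy$, whose expectation is $\int_{|y|\le K}(G^x)^2(y)G(y)\,dy\asymp\log(1/|x|)$ (the logarithm coming from the shell $2|x|\le|y|\le K$, where $(G^x)^2(y)G(y)\asymp|y|^{-3}$; this is $\psi(x)^2$, consistent with Theorem \ref{t1}). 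By Burkholder--Davis--Gundy and the moment formulas for additive functionals of super-Brownian motion, for each integer $p\ge1$,
\[
\E\Bigl[\sup_{0<t\le\infty}\bigl|M_t(G^x\mathbf 1_{B(0,K)})\bigr|^{2p}\Bigr]\le C_p\,\E\Bigl[\Bigl(\int_{|y|\le K}(G^x)^2(y)L_\infty^y\,dy\Bigr)^{p}\Bigr]\le C_{p,K}\bigl(\log(1/|x|)\bigr)^{Cp},
\]
the point being that the leading ``diagram'' contributes $\bigl(\int_{|y|\le K}(G^x)^2 G\bigr)^p$ while the higher ones, which carry extra factors of the type $\int(G^x)^2\,(G*\cdots)$ and are smoothed out by the convolutions ($G*G$ is bounded on compacts in $d=3$), are no larger. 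One needs in the same way an increment bound: for $|x|,|x'|\asymp 2^{-k}$ with $|x-x'|\le 2^{-k}$,
\[
\E\Bigl[\sup_{0<t\le\infty}\bigl|M_t(G^x)-M_t(G^{x'})\bigr|^{2p}\Bigr]\le C_{p,K}\Bigl(\tfrac{|x-x'|}{|x|}\Bigr)^{\beta p}\bigl(\log(1/|x|)\bigr)^{Cp}
\]
for some $\beta>0$, which reduces to $\int_{|y|\le K}(G^x-G^{x'})^2(y)\,G(y)\,dy\lesssim(|x-x'|/|x|)^{\beta}\log(1/|x|)$.

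Granting these, for $p$ large enough (depending only on $\beta$) a Kolmogorov-type chaining argument on each dyadic annulus $A_k=\{2^{-k-1}\le|x|\le 2^{-k}\}$ produces $C_k=C_k(\omega)$ with $\sup_{x\in A_k}\sup_{0<t\le\infty}|M_t(G^x)|\le C_k$ and $\E[C_k^{2p}]\le C_{p,K}\,k^{Cp}$, so $\P(C_k>2^{k\alpha/2})\le C_{p,K}\,k^{Cp}\,2^{-k\alpha p}$, which is summable for every $\alpha>0$; Borel--Cantelli then gives, a.s.\ on $\{\mathrm{Range}(X)\subseteq B(0,K)\}$, that $\sup_{x\in A_k}\sup_t|M_t(G^x)|\le 2^{k\alpha/2}$ for all large $k$, whence $|x|^{\alpha}\sup_t|M_t(G^x)|\le 2^{-k\alpha/2}\to0$; letting $K\to\infty$ removes the localization. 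The $X$-term is easier: $\E[X_t(G^x)]=(q_\infty-q_t)(x)$ converges to a finite limit as $x\to0$ (where $q_\infty(y)=\int_0^\infty p_s(y)\,ds$), and for $t\ge\eps$ the process $(X_{t}(G^0))_{t\ge\eps}$ is a nonnegative supermartingale (since $\tfrac12\Delta G\le0$ off the origin) with $\E[X_\eps(G^0)]<\infty$ by Fatou, hence $\sup_{t\ge\eps}X_t(G^0)<\infty$ a.s.; together with a continuity-in-$x$ estimate of the same flavour as above (also part of the groundwork for Theorems \ref{t1}, \ref{t2}), $x\mapsto X_t(G^x)$ is a.s.\ bounded near $0$ uniformly over $t\ge\eps$, and letting $\eps\downarrow0$ along a sequence gives $X_t(G^x)=O(1)$ for every $t\in(0,\infty]$ on a single null set. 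Inserting both bounds into the representation yields $|x|^{\alpha}|L_t^x-c_{\ref{t1}}/|x||\to0$ for each $0<t\le\infty$. The main obstacle is the quantitative modulus-of-continuity estimate for the martingale in $x$ — equivalently the bound on $\int_{|y|\le K}(G^x-G^{x'})^2(y)G(y)\,dy$ — which requires a careful region-by-region analysis of a singular integral with two moving poles $x,x'$ near the fixed pole at the origin (splitting according to whether $y$ is within $\asymp|x-x'|$ of $[x,x']$, within $\asymp|x|$ of $0$, or away from both, and using $|\nabla G^x(y)|\lesssim|y-x|^{-2}$ on the last piece); the moment bounds, the chaining/Borel--Cantelli superstructure, the $X$-term, and the justification of the representation are comparatively routine.
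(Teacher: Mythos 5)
Your proposal is correct in outline and reaches the theorem by a genuinely different organization than the paper, while resting on the same two pillars: the Tanaka formula \eqref{e1.5} and Sugitani-type moment bounds for occupation functionals with singular integrands. The paper fixes a finite $t$, applies Kolmogorov's criterion to the \emph{weighted} field $x\mapsto|x|^{\alpha}M_t(\phi_x)$ on all of $B(0,1)$ (the weight tames the blow-up, so no annulus decomposition is needed), pins the value at $0$ by Borel--Cantelli along $x_n=2^{-n}e_1$ using the second-moment bound of Lemma \ref{l5.4}, and only afterwards upgrades from fixed $t$ to all $t\le\infty$ via the joint continuity \eqref{e3.1} and a.s.\ extinction; the needed high moments come from the finite-$t$ cumulant machinery of Section \ref{s4.1}. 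You instead take the supremum over $t$ at the outset, via BDG with the quadratic variation at $t=\infty$ after localizing on $\{\mathrm{Range}(X)\subseteq B(0,K)\}$, and then run an unweighted chaining/Borel--Cantelli argument on dyadic annuli. What your route buys is uniformity in $t$ directly (no appeal to \eqref{e3.1}) and moment bounds of the ``right'' size $(\log(1/|x|))^{Cp}$, matching the $\psi(x)$ of Theorem \ref{t1}; what the paper's route buys is that all moment estimates stay at finite $t$, where the recursion \eqref{e4.3} and Lemma \ref{l4.5} apply verbatim, and the single weighted field replaces the annulus-by-annulus bookkeeping. Two caveats on your side, neither fatal but both requiring real work of the same calibre as the paper's Lemma \ref{l5.4} and the $I$, $J$ estimates in its proof: (i) moments of the \emph{total} occupation functional $\int(G^x)^21_{B(0,K)}L^y_\infty\,dy$ and your claim that the higher ``diagrams'' are dominated by the first need to be proved (e.g.\ by monotone convergence in $t$ in the cumulant recursion together with the convolution estimates you indicate, plus a stopping-time argument for $M_t(G^x)=M_t(G^x1_{B(0,K)})$ on the localization event); (ii) the two-pole increment bound $\int(G^x-G^{x'})^2G\lesssim(|x-x'|/|x|)^{\beta}\log(1/|x|)$ and its higher-moment analogue, which you correctly identify as the main obstacle, are asserted rather than established — they play exactly the role of the paper's estimate $E_{\delta_0}[(|x|^{\alpha}M_t(\phi_x)-|x'|^{\alpha}M_t(\phi_{x'}))^{4N}]\le C|x-x'|^{2N\alpha}$. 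Finally, as in the paper, one should note that the identity \eqref{e1.5} holds a.s.\ for each fixed $x$, so passing to a limit over all $x\to0$ uses the continuity in $x$ of the three fields away from $0$ to extend it simultaneously to all $x\neq0$.
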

\begin{remark}
While Theorem \ref{t1} tells us that the re-centered local time has an Gaussian type oscillation of order $(\log(1/|x|))^{1/2}$ for $x$ near $0$ in $d=3$, the above theorem furthermore implies that with $P_{\delta_0}$-probability one, this oscillation will be killed by any polynomial decay.
\end{remark}

Let the extinction time $\zeta$ of $X$ be defined as $\zeta=\zeta_X=\inf \{t\geq 0: X_t(1)=0\}$. Then we have $L_{\infty}^x=L_\zeta^x$. We know that $\zeta<\infty$ a.s. (see Chp II.5 in Perkins (2002)), and so can use \eqref{e3.1} to see that $\lim_{x\to 0} (L_\infty^x-L_t^x)$ is finite a.s. for all $t>0$. On the other hand, Theorem \ref{t2} and Theorem \ref{p2.3} above imply that $\lim_{x\to 0} L_t^x=\infty$, $\forall 0<t\leq \infty$ with $P_{\delta_0}$-probability one in $d=2$ and $d=3$, and so we can see that the singularity in the initial condition indeed leads to the singularity of the local time after a positive time. Mytnik and Perkins (2017) use the $t=\infty$ case in their work on the dimension of the boundary of super-Brownian motion. In the meantime, it would be interesting to find functions $\bar{\psi_1}$ or $\bar{\psi_2}$ so that with $P_{\delta_0}$-probability one, for all $0<t\leq \infty$, \[\limsup_{x\to 0} \frac{L_t^x-c_{\ref{t1}}/|x|}{\bar{\psi_1}(x)}=1,  \ \text{ or }  \ \liminf_{x\to 0} \frac{L_t^x-c_{\ref{t1}}/|x|}{\bar{\psi_2}(x)}=-1,\] 
and we state it as an open problem.


\subsection{General Initial Conditions}
Now that we have the above results for the case $X_0=\delta_0$, we will then consider the general initial condition case $X_0=\mu\in M_F(\R^d)$.

\subsubsection{The case $d=3$}
The following Tanaka formula is from Theorem 6.1 in Barlow, Evans and Perkins (1991): If $\mu(\phi_x)<\infty$ with $\phi_x(y)=c_{\ref{t1}}/|y-x|$, then
\begin{equation}\label{e1.5}
L_t^x= \mu(\phi_x)+M_t(\phi_x)-X_t(\phi_x), 
\end{equation}
where $M_t(\phi_x)$ is an $\cF_t$-martingale which is defined in terms of the martingale measure associated with super-Brownian motion. In particular, we have $M_0(\phi_x)=0$ and $M_t(\phi_x)$ has quadratic variation
\begin{align}\label{e2.6.1}
[M(\phi_x)]_t=\int_0^t X_s(\phi_x^2) ds.
\end{align}
The condition $\mu(\phi_x)<\infty$ on \eqref{e1.5} suggests that we define the set of ``bad'' points by
\begin{equation}
D=\{x_0\in \R^3: \int \frac{1}{|y-x_0|} \mu(dy)=\infty\}. \label{e1.4}
\end{equation}
We show that $D$ is a Lebesgue null set and in particular $D^c$ is dense in $\R^3$ (see Lemma \ref{l5.5}). Then we can consider the behavior of the local time as $x\to x_0$ for $x\in D^c$ and $x_0\in D$. One can show that $(t_0,x_0)$ is a continuity point of $\mu q_t(x)$ for all $t_0\geq 0$ if and only if $x_0$ is a continuity point of $\int 1/|y-x| \mu(dy)$ (see Appendix B(ii)). So Theorem \ref{t3} asserts joint continuity of $L_t^x$ on $\{(t,x): t\geq 0,\ x\text{ is a continuity point of } \int 1/|y-x|\mu(dy)\}$.  Therefore the following is a partial converse to Sugitani's Theorem \ref{t3} in $d=3$:
\begin{theorem} \label{t4.1}
Let $X$ be a super-Brownian motion in $d=3$ with initial condition $X_0=\mu \in M_F(\R^3)$ and $D$ be defined as \eqref{e1.4}.  Then for any point $x_0\in D$, with $P_\mu$-probability one we have for any $t>0$, $x\mapsto L_t^x$ is discontinuous at $x_0$. Moreover, we have \[\lim_{x\in D^c, x\to x_0} L_t^x=\infty \text{ in probability.}\]
\end{theorem}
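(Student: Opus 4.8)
The plan is to leverage the Tanaka formula \eqref{e1.5}. Fix $x_0\in D$; for every $x\in D^c$ we have $\mu(\phi_x)<\infty$ with $\phi_x(y)=c_{\ref{t1}}/|y-x|$, so \eqref{e1.5}--\eqref{e2.6.1} read $L_t^x=A_x+M_t(\phi_x)-X_t(\phi_x)$, where $A_x:=\mu(\phi_x)\in(0,\infty)$ and $M_t(\phi_x)$ is a mean-zero $\cF_t$-martingale with $\E_\mu[M_t(\phi_x)^2]=\E_\mu\big[[M(\phi_x)]_t\big]=\int_0^t\E_\mu[X_s(\phi_x^2)]\,ds$. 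I will show that the recentering $A_x$ blows up as $x\to x_0$ while the fluctuation $M_t(\phi_x)-X_t(\phi_x)$ is of strictly smaller order, which forces $L_t^x\to\infty$. First I record that $A_x\to\infty$: since $\liminf_{x\to x_0}|z-x|^{-1}\ge|z-x_0|^{-1}$ for every $z$ (equal to $\infty$ when $z=x_0$), Fatou's lemma gives $\liminf_{x\to x_0}A_x\ge c_{\ref{t1}}\int|z-x_0|^{-1}\,\mu(dz)=\infty$ because $x_0\in D$; equivalently $\inf\{A_x:x\in D^c,\ 0<|x-x_0|<\eta\}\to\infty$ as $\eta\downarrow 0$.

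The technical heart is two moment bounds uniform in $x$. Since $\E_\mu[X_t(\phi_x)]=\int\mu(dz)\int p_t(y-z)\phi_x(y)\,dy$ and, in $d=3$, $\int p_t(y-z)|y-x|^{-1}\,dy\le\int p_t(w)|w|^{-1}\,dw\le C\,t^{-1/2}$ (the convolution of two radially non-increasing functions is maximal at the origin), I get $\E_\mu[X_t(\phi_x)]\le C_t\,\mu(1)$. For the bracket, $\E_\mu\big[[M(\phi_x)]_t\big]=c_{\ref{t1}}^2\int\mu(dz)\int_0^t\big(\int p_s(w)|z-x+w|^{-2}\,dw\big)\,ds$; using $\int p_s(w)|a+w|^{-2}\,dw\le C\min(|a|^{-2},s^{-1})$ one finds $\int_0^t\int p_s(w)|a+w|^{-2}\,dw\,ds\le C_t\big(1+\log_+(1/|a|)\big)$ --- the logarithm being exactly the divergence behind the $\sqrt{\log}$ scaling of Theorem \ref{t1}. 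Hence $\E_\mu\big[[M(\phi_x)]_t\big]\le C_t\big(\mu(1)+\int\log_+(1/|z-x|)\,\mu(dz)\big)$, and then $\log_+u\le\varepsilon^{-1}u^\varepsilon$ together with Hölder's inequality $\int|z-x|^{-\varepsilon}\mu(dz)\le\big(\int|z-x|^{-1}\mu(dz)\big)^{\varepsilon}\mu(1)^{1-\varepsilon}$ give, for every $\varepsilon\in(0,1)$, $\E_\mu\big[[M(\phi_x)]_t\big]\le C_{t,\varepsilon}(1+A_x^{\varepsilon})$, which is $o(A_x^2)$.

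With these in hand the two conclusions come quickly. For fixed $t\in(0,\infty)$ and $K>0$ and for $x\in D^c$ with $A_x>K$, nonnegativity of $L_t^x$ and \eqref{e1.5} give $\{L_t^x\le K\}\subseteq\{X_t(\phi_x)\ge(A_x-K)/2\}\cup\{M_t(\phi_x)\le-(A_x-K)/2\}$, so Markov's and Chebyshev's inequalities and the two bounds yield $P_\mu(L_t^x\le K)\le\frac{2C_t\mu(1)}{A_x-K}+\frac{4C_{t,\varepsilon}(1+A_x^{\varepsilon})}{(A_x-K)^2}\to0$ as $x\to x_0$ in $D^c$; this is the convergence in probability, uniformly over such $x$ (and the case $t=\infty$ follows from $L_\infty^x\ge L_t^x$). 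For the almost sure discontinuity, fix $k\in\N$: by this uniform convergence at time $1/k$ and the density of $D^c$ (Lemma \ref{l5.5}) I can pick $x_n^{(k)}\in D^c$ with $|x_n^{(k)}-x_0|<1/n$ and $P_\mu(L_{1/k}^{x_n^{(k)}}\le n)\le2^{-n}$, so Borel--Cantelli gives $L_{1/k}^{x_n^{(k)}}\to\infty$ $P_\mu$-a.s. Intersecting over $k$ and using monotonicity of $t\mapsto L_t^x$: $P_\mu$-a.s., for every $t>0$ there is a sequence $x_n\to x_0$ in $D^c$ along which $L_t^{x_n}\to\infty$, so $x\mapsto L_t^x$ is discontinuous at $x_0$.

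I expect the crux to be the bracket estimate in the second paragraph. The crude bound $\int p_s(w)|a+w|^{-2}\,dw\le C|a|^{-2}$ integrates in time only to $Ct|a|^{-2}$, and since $\int|z-x|^{-2}\mu(dz)\ge A_x^2/(c_{\ref{t1}}^2\mu(1))$ by Cauchy--Schwarz, this merely bounds $\E_\mu\big[[M(\phi_x)]_t\big]$ by something of order $A_x^2$ (or $\infty$) --- useless for Chebyshev's inequality against the diverging recentering $A_x$. One has instead to exploit the time integral to extract the genuine $\log(1/|a|)$ and then use the Hölder/interpolation step to bring the second moment down to $o(A_x^2)$. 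A minor further point is that the Tanaka formula, the bracket identity and the Borel--Cantelli events need to hold simultaneously for the countably many points $x_n^{(k)}$ and times $1/k$, which is automatic since each holds $P_\mu$-almost surely.
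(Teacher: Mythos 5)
Your proposal is correct and takes essentially the same route as the paper: Tanaka's formula \eqref{e1.5}, Fatou's lemma to get $\mu(\phi_x)\to\infty$, a first-moment bound on $X_t(\phi_x)$ and a logarithmic second-moment (bracket) bound, then Markov/Chebyshev for convergence in probability, and Borel--Cantelli plus monotonicity in $t$ for the a.s. discontinuity. The only cosmetic differences are that the paper obtains the bracket estimate from Lemma \ref{l5.4} (an It\^o computation) rather than your $\min(|a|^{-2},s^{-1})$ integration, and it simply bounds $\log^+(1/r)\le C/r$ to get $E_\mu[M_t(\phi_{x})^2]\le C(t)\mu(1)+\mu(\phi_{x})=o\big(\mu(\phi_{x})^2\big)$, which makes your H\"older interpolation step unnecessary.
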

Now that the discontinuity of $L_t^x$ is established for points in $D$, we extend Theorem \ref{t1} to such points in part (a) of the
following theorem and show that different asymptotic behavior is possible in part (b).
\begin{theorem} \label{t4}
Let $X$ be a super-Brownian motion in $d=3$ with initial condition $X_0=\mu \in M_F(\R^3)$. Let $x_0 \in D$,
\begin{enumerate}[(a)]
\item If $x_n \in D^c$ satisfies \[\int \log^+ (1/|y-x_n|) \mu(dy) \to \infty \text{\ as\ } x_n\to x_0,\] then for all $0<t\leq \infty$,
\begin{equation}
\frac{L_t^{x_n}-\int c_{\ref{t1}} /|y-x_n|\mu(dy)}{(2c_{\ref{t1}}^2 \int \mu(dy)\log^+ (1/|y-x_n|))^{1/2}} \xrightarrow[]{d} Z \text{ as } x_n \to x_0,
\end{equation}
 where $Z$ is a r.v. with standard normal law in $\R$. 
\item If $x_n \in D^c$ satisfies \[\int \log^+ (1/|y-x_n|) \mu(dy) \to \int \log^+ (1/|y-x_0|) \mu(dy)<\infty \text{\ as\ } x_n\to x_0,\] then for all $0<t\leq \infty$,
\begin{equation}
L_t^{x_n}-\int \frac{c_{\ref{t1}}}{|y-x_n|}\mu(dy)\xrightarrow[]{P_\mu} M_t(\phi_{x_0})-X_t(\phi_{x_0}) \text{ as } x_n \to x_0,
\end{equation}
where the right hand side is $P_\mu$-a.s. finite.
\end{enumerate}

\end{theorem}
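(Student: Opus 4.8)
The plan is to derive both parts from the Tanaka formula \eqref{e1.5} together with the quadratic variation identity \eqref{e2.6.1}, combined with a martingale central limit argument for (a) and a generalized dominated convergence argument for (b). Write $\phi_x(y)=c_{\ref{t1}}/|y-x|$; for $x\in D^c$, \eqref{e1.5}--\eqref{e2.6.1} give $L_t^x-\mu(\phi_x)=M_t(\phi_x)-X_t(\phi_x)$ with $[M(\phi_x)]_t=\int_0^t X_s(\phi_x^2)\,ds$. Since $\phi_x^2(z)=c_{\ref{t1}}^2|z-x|^{-2}$ and $E_\mu[X_s(f)]=\int P_s f\,d\mu$ (with $P_s$ the Brownian semigroup), one has $E_\mu[[M(\phi_x)]_t]=c_{\ref{t1}}^2\,\mu(W_x^{(t)})$ where $W_x^{(t)}(y):=\int_0^t P_s(|\cdot-x|^{-2})(y)\,ds=\int|z-x|^{-2}q_t(z-y)\,dz$. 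Using $\int_0^\infty p_s(w)\,ds=(2\pi|w|)^{-1}$ (the $3$-dimensional Green function) and a scaling analysis in the regimes $|y-x|\ll\sqrt t$, $|y-x|\asymp\sqrt t$, $|y-x|\gg\sqrt t$, I would first establish the uniform expansion
\[
W_x^{(t)}(y)=2\log^+\frac1{|y-x|}+R_t(y-x),\qquad \sup_w|R_t(w)|\le C_t,
\]
with $R_t$ continuous on $\R^3\setminus\{0\}$ and $R_t(w)\to0$ as $|w|\to\infty$. Hence $E_\mu[[M(\phi_{x_n})]_t]=\psi_n^2+O(\mu(1))$ where $\psi_n:=(2c_{\ref{t1}}^2\int\mu(dy)\log^+(1/|y-x_n|))^{1/2}$ is exactly the normalizer in (a). I also record $\sup_y P_t\phi_x(y)=\int_t^\infty p_s(0)\,ds=:C_t$, so $E_\mu[X_t(\phi_{x_n})]\le C_t\,\mu(1)$ is bounded in $n$.

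For part (a) the hypothesis says precisely that $\psi_n\to\infty$. The key step is to show $[M(\phi_{x_n})]_t/\psi_n^2\xrightarrow{P_\mu}1$, which given the mean estimate reduces to $\mathrm{Var}_\mu([M(\phi_{x_n})]_t)=o(\psi_n^4)$. The second moment formula $\mathrm{Cov}_\mu(X_{s_1}(f),X_{s_2}(f))=\int_0^{s_1\wedge s_2}\int P_r\big((P_{s_1-r}f)(P_{s_2-r}f)\big)\,d\mu\,dr$ gives $\mathrm{Var}_\mu([M(\phi_{x_n})]_t)=\int_0^t\int P_r(H_{t-r}^2)\,d\mu\,dr$ with $H_v=\int_0^v P_u(\phi_{x_n}^2)\,du=c_{\ref{t1}}^2 W_{x_n}^{(v)}\le C_t(\log^+(1/|\cdot-x_n|)+1)$, so $\mathrm{Var}_\mu([M(\phi_{x_n})]_t)\le C_t\int\big(\int(\log^+(1/|z-x_n|))^2 q_t(z-y)\,dz+1\big)\mu(dy)$. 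Since $(\log^+(1/|\cdot|))^2$ lies in $L^p_{\mathrm{loc}}(\R^3)$ for every $p<\infty$, Hölder's inequality bounds the inner integral uniformly in $y$ and $x_n$, so $\mathrm{Var}_\mu([M(\phi_{x_n})]_t)\le C_t\,\mu(1)$, which is $o(\psi_n^4)$ because $\psi_n\to\infty$. Then $N^{(n)}_s:=\psi_n^{-1}M_s(\phi_{x_n})$ is a continuous martingale whose bracket at time $t$ converges in probability to the constant $1$, so a standard martingale central limit theorem gives $\psi_n^{-1}M_t(\phi_{x_n})\xrightarrow{d}Z$; since $\psi_n^{-1}X_t(\phi_{x_n})\xrightarrow{P_\mu}0$, Slutsky's lemma and $\mu(\phi_{x_n})=\int c_{\ref{t1}}/|y-x_n|\,\mu(dy)$ finish the finite-$t$ case. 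For $t=\infty$, fix $t_0>0$ and write $L_\infty^{x_n}-\mu(\phi_{x_n})=(L_{t_0}^{x_n}-\mu(\phi_{x_n}))+(L_\infty^{x_n}-L_{t_0}^{x_n})$; by \eqref{e3.1} the second term is a jointly continuous function of $x_n$, hence converges a.s.\ to a finite limit as $x_n\to x_0$ and is negligible after dividing by $\psi_n$, and the result follows from the $t_0$ case.

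For part (b) the hypothesis $\int\log^+(1/|y-x_0|)\,\mu(dy)<\infty$ forces $\mu(\{x_0\})=0$ and gives $\mu(W_{x_0}^{(t)})=2\int\log^+(1/|y-x_0|)\,\mu(dy)+\mu(R_t(\cdot-x_0))<\infty$ for finite $t$, so $M_t(\phi_{x_0})$ is a genuine $L^2$ martingale with $[M(\phi_{x_0})]_t=\int_0^t X_s(\phi_{x_0}^2)\,ds$, and $E_\mu[X_t(\phi_{x_0})]\le C_t\mu(1)<\infty$. By linearity of the stochastic integral, $M_t(\phi_{x_n})-M_t(\phi_{x_0})=M_t(\phi_{x_n}-\phi_{x_0})$ and $E_\mu[(M_t(\phi_{x_n})-M_t(\phi_{x_0}))^2]=\mu(V_n)$ with $V_n(y):=\int_0^t P_s((\phi_{x_n}-\phi_{x_0})^2)(y)\,ds$. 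For $\mu$-a.e.\ $y$ (i.e.\ $y\neq x_0$), $V_n(y)\to0$ by dominated convergence (the integrand tends to $0$ pointwise and $\int_0^t P_s(\phi_{x_n}^2)(y)\,ds$ stays bounded along such $y$), while $0\le V_n\le G_n:=2c_{\ref{t1}}^2(W_{x_n}^{(t)}+W_{x_0}^{(t)})$, $G_n\to 4c_{\ref{t1}}^2 W_{x_0}^{(t)}$ $\mu$-a.e., and $\mu(G_n)\to 4c_{\ref{t1}}^2\mu(W_{x_0}^{(t)})$ by the continuity hypothesis $\int\log^+(1/|y-x_n|)\,\mu(dy)\to\int\log^+(1/|y-x_0|)\,\mu(dy)$ together with bounded convergence for $\mu(R_t(\cdot-x_n))$. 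Pratt's lemma then yields $\mu(V_n)\to0$, so $M_t(\phi_{x_n})\to M_t(\phi_{x_0})$ in $L^2(P_\mu)$; a simpler version of the same computation shows $\mu(P_t|\phi_{x_n}-\phi_{x_0}|)\to0$, i.e.\ $X_t(\phi_{x_n})\to X_t(\phi_{x_0})$ in $L^1(P_\mu)$. Hence $L_t^{x_n}-\mu(\phi_{x_n})=M_t(\phi_{x_n})-X_t(\phi_{x_n})\xrightarrow{P_\mu}M_t(\phi_{x_0})-X_t(\phi_{x_0})$, and $t=\infty$ is handled as in part (a) through the decomposition at $t_0$ with \eqref{e3.1}, with $M_\infty(\phi_{x_0})-X_\infty(\phi_{x_0})$ understood as the resulting a.s.\ limit.

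The main obstacle is the quadratic variation analysis underlying (a): proving the uniform expansion $W_{x_n}^{(t)}(y)=2\log^+(1/|y-x_n|)+O_t(1)$ and the uniform second-moment bound so that $[M(\phi_{x_n})]_t/\psi_n^2\to1$ in probability even when the mass of $\mu$ accumulates at $x_0$; the delicate part is that the Green-function convolution estimates must be genuinely uniform in $x_n$, for which the right input is the local $L^p$-integrability of $(\log^+(1/|\cdot|))^2$ in $\R^3$. In part (b) the only real subtlety is matching the uniform integrability required by Pratt's lemma with what the continuity hypothesis provides, and the passage to $t=\infty$ in both parts is routine once \eqref{e3.1} is in hand.
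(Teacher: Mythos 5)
Your proposal is correct, but it reaches the key estimates by a genuinely different route than the paper. For part (a), the paper reduces, exactly as you do, to showing $[M(\phi_{x_n})]_t\big/\bigl(2c_{\ref{t1}}^2\int\log^+(1/|y-x_n|)\mu(dy)\bigr)\to 1$ in probability and then reuses the Dubins--Schwarz argument of Section \ref{s3.1} (which, as you note, needs only convergence in probability of the bracket); but it identifies the bracket \emph{pathwise} via the cutoff-logarithm Tanaka formula of Proposition \ref{p5.1}, writing $\int_0^t X_s(\tfrac{\Delta}{2}\bar g_{x_n})\,ds=X_t(\bar g_{x_n})-\mu(\bar g_{x_n})-M_t(\bar g_{x_n})$ and showing the first and third terms are $o$ of the normalizer in $L^1$ resp.\ $L^2$ while $-\mu(\bar g_{x_n})$ supplies the leading term, with $\bar f,\bar h$ bounded absorbing the difference between $\Delta\bar g_{x_n}$ and $|\cdot-x_n|^{-2}$. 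You instead compute the mean and variance of the bracket directly from the first and second moment formulas of super-Brownian motion and apply Chebyshev; this avoids Proposition \ref{p5.1} entirely but puts all the weight on the two-sided, $x$-uniform expansion $\int_0^t P_s(|\cdot-x|^{-2})(y)\,ds=2\log^+(1/|y-x|)+O_t(1)$, which you only sketch. That expansion does hold: it is the equality form of Lemma \ref{l5.4} (the It\^o argument there, kept as an identity since the stochastic integral is a true $L^2$ martingale, gives $\int_0^t E|B_s-w|^{-2}ds=2E\log|B_t-w|-2\log|w|$, and the remainder $2E\log|B_t-w|-2\log^+|w|$ is bounded and continuous), and your variance bound then goes through using $q_t\le(2\pi|\cdot|)^{-1}$ and local $L^p$ integrability of $(\log^+(1/|\cdot|))^2$, so $\mathrm{Var}_\mu([M(\phi_{x_n})]_t)\le C_t\mu(1)=o(\psi_n^4)$. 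For part (b) the overall structure (reduce via \eqref{e1.5}--\eqref{e2.6.1} to $L^2$ convergence of $M_t(\phi_{x_n})$ and convergence of $X_t(\phi_{x_n})$, then a uniform-integrability argument keyed to the hypothesis) matches the paper, but the paper verifies the energy convergence \eqref{e5.14} by again taking means in Proposition \ref{p5.1} and proving $\mu(\bar g_{x_n})\to\mu(\bar g_{x_0})$ and $E_\mu|X_t(\bar g_{x_n})-X_t(\bar g_{x_0})|\to0$, and handles $X_t(\phi_{x_n})$ almost surely via the mass-concentration Lemma \ref{l2.1} and \eqref{e5.20.1}, whereas you use the same Green-function expansion plus Pratt's lemma, and an $L^1$ bound from \eqref{e5.10}; both suffice. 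Two small points in your write-up: in the inner step showing $V_n(y)\to0$ for fixed $y\neq x_0$, plain dominated convergence is not literally available since your dominating function depends on $n$ --- it is another application of Pratt (or expand the square and treat the cross term), so phrase it that way; and for $t=\infty$ your decomposition at $t_0$ via \eqref{e3.1} is exactly how the paper treats $t=\infty$ elsewhere, with the right-hand side at $t=\infty$ in (b) interpreted as the resulting limit, which is consistent with the paper's (implicit) reading. In short: the paper's route trades on machinery it has already built (Proposition \ref{p5.1}, the bounded corrections $\bar f,\bar h$, \eqref{e5.11}), while yours trades it for moment formulas and a sharp uniform heat-kernel computation; both are sound.
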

\begin{remark}
By using the same arguments in Section \ref{s3.2}, we can get the joint convergence in distribution of $X$ and the renormalized local time in Theorem \ref{t4}(a) towards $(X,Z)$, with $Z$ independent of $X$, exactly as in Theorem \ref{t1}.
\end{remark}

\subsubsection{The case $d=2$}

\begin{theorem} \label{t5}
Let $X$ be a super-Brownian motion in $d=2$ with initial condition $X_0=\mu\in M_F(\R^2)$. Then there is a jointly continuous version of
\begin{equation*}
L_t^x-\int \frac{1}{\pi} \log^+ \frac{1}{|y-x|} \mu(dy) 
\end{equation*}
on $\{(t,x): t>0, x\in \R^2\} $.
\end{theorem}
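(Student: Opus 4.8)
The plan is to reduce the $d=2$ general-$\mu$ statement to a Tanaka-type decomposition analogous to \eqref{e1.5}, following the strategy already used for Theorem \ref{t2}. Fix $x\in\R^2$ and let $g_x(y)=\log^+(1/|y-x|)$, or more precisely a smoothed potential $G_x$ agreeing with $\frac{1}{\pi}\log(1/|y-x|)$ near $x$ and bounded outside a neighbourhood; since $-\Delta \big(\frac{1}{2\pi}\log|y-x|\big)=\delta_x$ in $\R^2$, applying the martingale problem \eqref{e1.0} (carefully, since $G_x$ is not bounded $C^2$, via the usual truncation/mollification that produced \eqref{e1.5} in Barlow–Evans–Perkins) should yield
\begin{equation*}
L_t^x = \mu(G_x) + M_t(G_x) - X_t(G_x) + A_t(x),
\end{equation*}
where $A_t(x)$ collects the bounded smooth correction coming from the difference between $G_x$ and the pure logarithm, and is manifestly jointly continuous in $(t,x)$ because the correction kernel is smooth. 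Thus the whole problem is to show that $\mu(G_x)-\int\frac1\pi\log^+\frac1{|y-x|}\mu(dy)$, the martingale term $M_t(G_x)$, and $X_t(G_x)$ each admit jointly continuous versions on $\{t>0\}\times\R^2$; the first is deterministic and the issue is purely the $M$ and $X$ terms.

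The key step is a Kolmogorov-type continuity estimate. First I would handle $X_t(G_x)$: using the Markov property at a small time and the joint continuity of $L_t^x - L_\eps^x$ from \eqref{e3.1}, or directly by estimating $\E[(X_t(G_x)-X_s(G_{x'}))^{2p}]$ via the moment formulas for super-Brownian motion (the $n$-th moment of $X_t(\phi)$ is a sum over the ``historical tree'' of iterated Brownian integrals of $\phi$), one shows Hölder bounds in $(t,x)$ with enough spare regularity to apply the Kolmogorov–Chentsov criterion on compact subsets of $\{t>0\}\times\R^2$. The logarithmic singularity of $G_x$ is $L^p(\mu)$ for every $p<\infty$ and, crucially, $\log$ is square-integrable against the $2$-dimensional heat kernel, so all these moments are finite — this is exactly where $d=2$ (rather than $d=3$) is used, and it is the analogue of the finiteness of $X_t(g_0)$ and $M_t(g_0)$ asserted in Theorem \ref{t2}. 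For the martingale term one argues the same way, bounding $\E[(M_t(G_x)-M_{t'}(G_{x'}))^{2p}]$ through the Burkholder–Davis–Gundy inequality and the quadratic variation $\int_0^t X_s((G_x-G_{x'})^2)\,ds$ together with $\int_0^t X_s(G_x^2)\,ds$ for the $t$-increment; the integrand $(G_x - G_{x'})^2$ is controlled in $L^p$ uniformly, so one again gets a Hölder modulus.

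The main obstacle I anticipate is twofold. The first is making the Tanaka decomposition rigorous uniformly in $x$: \eqref{e1.5} is stated for a single fixed $x$ with $\mu(\phi_x)<\infty$, whereas in $d=2$ we need the decomposition (with the log-potential replacing $1/|y-x|$) to hold simultaneously for all $x$ and to depend measurably and continuously on $x$ — this requires revisiting the mollification argument and controlling the error terms uniformly on compacts, which is where most of the technical work sits. The second is obtaining moment bounds that are genuinely uniform in $x$ near a bad point $x_0\in\{x:\int\log^+\frac1{|y-x|}\mu(dy)=\infty\}$ — but note that in $d=2$ this integral is in fact finite for Lebesgue-a.e.\ $x$ and, since $\log^+$ grows so slowly, one expects $x\mapsto\int\log^+\frac1{|y-x|}\mu(dy)$ to be continuous (indeed the potential-theoretic singularities of $\log$ are mild enough that the centering function is continuous for every finite $\mu$), so unlike the $d=3$ case there is no genuine ``bad set'' and the continuity of the centering term should come for free from dominated convergence. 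Once the decomposition and the Kolmogorov estimates are in place, patching the continuous versions over an exhaustion of $\{t>0\}\times\R^2$ by compacts finishes the proof.
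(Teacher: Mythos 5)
Your overall strategy --- a Tanaka-type decomposition with a logarithmic potential whose difference from $\frac1\pi\log^+(1/|y-x|)$ is bounded and continuous, followed by BDG/moment/Kolmogorov estimates for the martingale and mass terms --- is sound and is the same in spirit as the paper's argument, but the paper's route differs in two ways that eliminate most of the technical work you anticipate. First, instead of re-deriving a Tanaka formula for a truncated logarithm for general $\mu$ (your ``main obstacle''), the paper invokes the Barlow--Evans--Perkins formula \eqref{e2.9} with the resolvent kernel $g_{\alpha,x}$, $\alpha>0$, valid whenever $\int\log^+(1/|y-x|)\,\mu(dy)<\infty$, and shows in Appendix C that $g_{\alpha,x}(y)-\frac1\pi\log^+(1/|y-x|)$ extends to a bounded continuous function, so the recentering is harmless. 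Moreover, no decomposition ``simultaneously in $x$'' is needed: it suffices to have the identity a.s.\ for each fixed good $x$ (good $x$ form a set of full Lebesgue measure) and then to produce continuous-in-$x$ versions of the three right-hand terms $M_t(g_{\alpha,x})$, $\int_0^t X_s(g_{\alpha,x})\,ds$, $X_t(g_{\alpha,x})$, which are defined for every $x$; the paper gets these for fixed $t$ from the kernel estimate \eqref{e4.13}, the moment bound of Lemma \ref{l2.9} with BDG, and, for $X_t(g_{\alpha,x})$, a pathwise bound via Lemma \ref{l2.1}. Second, the paper proves no joint H\"older estimate in $(t,x)$ at all: having continuity in $x$ at a fixed time $\varepsilon$, it writes $L_t^x-\mu(g_{\alpha,x})=(L_t^x-L_\varepsilon^x)+(L_\varepsilon^x-\mu(g_{\alpha,x}))$, uses the known joint continuity \eqref{e3.1} of $L_t^x-L_\varepsilon^x$ on $\{t\ge\varepsilon\}$, and lets $\varepsilon=1/n$. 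Your direct joint Kolmogorov estimates (in particular for the time increments of $X_t(G_x)$) could likely be pushed through, but they are exactly the heavy part this shortcut avoids; you mention the \eqref{e3.1} device only in passing, and it should be the main tool.

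There is also a genuine error in your discussion of the ``bad set.'' The claim that in $d=2$ the map $x\mapsto\int\log^+(1/|y-x|)\,\mu(dy)$ is finite and continuous for every finite $\mu$, so that ``there is no genuine bad set,'' is false: for $\mu=\delta_0$ the centering equals $\log^+(1/|x|)$, which blows up at $0$, and any atom (or sufficiently concentrated mass) of $\mu$ produces points where the integral is infinite or discontinuous. This is precisely the interesting case of the theorem --- compare Theorem \ref{t2} and the remark after Theorem \ref{t5} that the result gives a true renormalization where the potential kernel blows up. The mistake is not fatal to your plan, since the continuity you actually need is that of $x\mapsto\int\bigl(G_x(y)-\frac1\pi\log^+(1/|y-x|)\bigr)\mu(dy)$, whose integrand is bounded and continuous, and since the moment bounds (Lemma \ref{l2.9}) are uniform in $x$ whether or not the centering is finite; but as written your argument mislocates the difficulty, and at bad points $x_0$ you must say explicitly that the Tanaka identity is unavailable there and that the jointly continuous process is built from the right-hand-side terms, agreeing a.s.\ with $L_t^x$ minus the centering only at good $x$, which is all the statement requires. (A minor further quibble: square-integrability of the logarithm against the heat kernel holds in $d=3$ as well; what is special to $d=2$ is that $\frac1{2\pi}\log|y-x|$ is the fundamental solution, so the occupation term produced by the martingale problem is exactly $L_t^x$.)
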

For any $t>0$, $(q_t(y)-(1/\pi) \log^+ (1/|y|))$ can be extended to be a bounded continuous function on $\R^2$ by \eqref{e9.5} in Appendix C. This shows that the above theorem includes Sugitani's Theorem \ref{t3} for $t>0$, and it also gives a partial converse to Sugitani's Theorem \ref{t3} in $d=2$. The more interesting case is where the potential kernel blows up and Theorem \ref{t5} gives a true renormalization of the local time. It is easy to combine the continuity implicit in Theorem \ref{t5} with that of Theorem \ref{t3} to conclude the following Corollary:


\begin{corollary}\label{c1.0}
There is a jointly continuous version of the above renormalized local time on $\{(t,x): t>0, x\in \R^2\} \bigcup \{(0,x): x \text{ is a continuity point of } \int \log^+ (1/|y-x|) \mu(dy)\}$.
\end{corollary}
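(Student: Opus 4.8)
The plan is to patch together the jointly continuous version $F$ of the renormalized local time $L_t^x-c\,H(x)$ on the open half-space $\{(t,x):t>0\}$ supplied by Theorem \ref{t5} with the boundary behaviour at $t=0$ extracted from Sugitani's Theorem \ref{t3}; here I abbreviate $c=c_{\ref{t2}}=1/\pi$ and $H(x)=\int\log^+(1/|y-x|)\,\mu(dy)$. Let $A$ be the set of continuity points of $H$. Since $H$ is lower semicontinuous (by Fatou, as $x\mapsto\log^+(1/|y-x|)$ is), continuity of $H$ at a point $x_0\in A$ forces $H$ to be finite and bounded on some ball $B(x_0,\delta_0)$ — in particular $\mu(\{x_0\})=0$ — and, $H$ being a finite lower semicontinuous function on $B(x_0,\delta_0)$, its continuity points are dense there. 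The candidate global object is $F_t^x$ for $t>0$ together with the value $-c\,H(x_0)$ at each $(0,x_0)$ with $x_0\in A$; since $L^\cdot_0\equiv0$, this value is $L_0^{x_0}-c\,H(x_0)$. As $F$ is jointly continuous on $\{t>0\}$ and $x_0\mapsto H(x_0)$ is continuous on $A$, everything reduces to proving that for $x_0\in A$ one has $F_t^x\to-c\,H(x_0)$ as $(t,x)\to(0,x_0)$ through $t>0$, with $x$ unrestricted.

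The decisive deterministic input — the $d=2$ analogue of the fact recalled just before Theorem \ref{t4.1} — is that $(0,x_0)$ is a continuity point of $(t,x)\mapsto\mu q_t(x)$ whenever $x_0\in A$. Write $q_t(z)=c\log^+(1/|z|)+r_t(z)$; by \eqref{e9.5} $r_1$ is bounded and continuous on $\R^2$, and, $q_t$ being increasing in $t$ and radially decreasing, one gets $r_t\le r_1$ for $t\le1$ together with $\sup_{|z|\ge\rho}q_t(z)\to0$ as $t\downarrow0$ for each $\rho>0$. Splitting $\mu q_t(x)=\int q_t(y-x)\,\mu(dy)$ at $|y-x_0|=\eta$: for $|x-x_0|<\eta/2$ the far part is $\le\mu(1)\sup_{|z|\ge\eta/2}q_t(z)\to0$ as $t\downarrow0$, while the near part is $\le c\sup_{|x-x_0|<\eta}\int_{|y-x|<2\eta}\log^+(1/|y-x|)\,\mu(dy)+C\,\mu(B(x_0,\eta))$. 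Both terms of this bound vanish as $\eta\downarrow0$: $\mu(B(x_0,\eta))\downarrow\mu(\{x_0\})=0$, and for the first one a short contradiction argument works — if it stayed bounded below, Fatou would produce $x_n\to x_0$ with $\liminf_n H(x_n)\ge H(x_0)+\delta$, against continuity of $H$ at $x_0$. Hence $\mu q_t(x)\to0=\mu q_0(x_0)$, so Theorem \ref{t3} supplies a version $\widehat L$ of the local time jointly continuous at every such $(0,x_0)$ with $\widehat L^{x_0}_0=0$, and (for $t>0$, again by \eqref{e9.5}) at every $(t,x)$ with $x$ a continuity point of $H$.

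Finally I would transfer this to $F$. Fix $x_0\in A$, let $B'$ be the set of continuity points of $H$ lying in $B(x_0,\delta_0)$ — dense in $B(x_0,\delta_0)$ by the above — and fix a countable $D_0\subseteq B'$ that is dense in $B(x_0,\delta_0)$. Both $F$ and $\widehat L$ are modifications of $L_t^x=\lim_{\eps\downarrow0}\int_0^tX_s(p_\eps^x)\,ds$, so $F_t^x+c\,H(x)=L_t^x=\widehat L_t^x$ a.s. for each fixed $(t,x)$ with $t>0$; applying this over the countable set $(\Q\cap(0,\infty))\times D_0$ and using that $F$ is continuous on $B(x_0,\delta_0)\times(0,\infty)$ while $\widehat L-c\,H$ is continuous on $B'\times(0,\infty)$, one obtains $F_t^x=\widehat L_t^x-c\,H(x)$ for all $(t,x)\in B'\times(0,\infty)$, off a single null set. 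Now let $(t_n,x_n)\to(0,x_0)$ with $t_n>0$. For $x'\in B'$ with $|x'-x_0|$ and $t_n$ small, $|F^{x'}_{t_n}+c\,H(x_0)|\le|\widehat L^{x'}_{t_n}|+c\,|H(x')-H(x_0)|$, which is small by continuity of $\widehat L$ at $(0,x_0)$ and of $H$ at $x_0$; letting $x'\to x_n$ along $B'$ (valid since $B'$ is dense in $B(x_0,\delta_0)$ and $F^\cdot_{t_n}$ is continuous) the same bound passes to $F^{x_n}_{t_n}$, so $F^{x_n}_{t_n}\to-c\,H(x_0)$. This is the required limit, so the candidate object is jointly continuous on $\{t>0\}\cup\{(0,x_0):x_0\in A\}$; being $F$ for $t>0$ and $L_0^{x_0}-c\,H(x_0)$ on the boundary slice, it is a version of $L_t^x-c\,H(x)$, proving the corollary. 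I expect the main obstacle to be this $t=0$ step: establishing $\mu q_t(x)\to0$ as $(t,x)\to(0,x_0)$, where continuity of the logarithmic potential $H$ at $x_0$ — not merely its finiteness — is essential, and, hand in hand with it, carrying out the local identification of $F$ with $\widehat L-c\,H$ near $x_0$ so as to pass to the $t=0$ limit despite the possible wildness of the bad set of $\mu$ elsewhere.
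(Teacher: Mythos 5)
Your proof is correct and follows essentially the same route as the paper: the decisive step is that continuity of the logarithmic potential at $x_0$ forces joint continuity of $\mu q_t(x)$ at $(0,x_0)$ (this is the paper's Appendix C(ii) equivalence), after which Sugitani's Theorem \ref{t3} gives continuity of the local time at such boundary points and Theorem \ref{t5} covers $\{t>0\}$. The additional work you carry out -- re-deriving that potential-theoretic implication by a splitting/Fatou argument and explicitly gluing the Theorem \ref{t5} version to Sugitani's version through a countable dense set of continuity points of $H$ -- is detail the paper delegates to Appendix C(ii) and to the remark that the two continuity statements are easy to combine.
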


\subsection{Application to semilinear PDE}\label{s1.4}

Consider the super-Brownian motion with initial condition $\mu \in M_F$ in $d=3$. It has been shown (see Theorem 3.3 of Iscoe (1986) and Lemma 2.1 of Mytnik and Perkins (2017)) that for each $\lambda>0$,
 \begin{equation}\label{e1.6.1}
E_{\mu} \Big(\exp \Big({-\lambda L_\infty^x}\Big) \Big)=\exp \Big(-\int V^\lambda (y-x) \mu(dy) \Big),
\end{equation}
where $V^\lambda (x)$ is the unique solution to
\begin{equation}\label{e1.6}
\frac{\Delta}{2}  V^\lambda (x) =\frac{1}{2}(V^\lambda (x))^2-\lambda \delta_0, \  \  V^\lambda(x) > 0.
\end{equation}
Note that the above equation is interpreted in a distributional sense.\\

Such semilinear singular PDEs have been studied by a number of authors in the 80's: V\'eron (1981), Brezis, Peletier and Terman (1986), Brezis and Oswald (1987). It is known (see p. 187 in [4]) that the unique solution $V^\lambda$ is smooth in $\mathbb{R}^3\backslash {\{0\}}$, and near the origin 
\begin{equation}\label{e1.3}
\frac{V^\lambda(x)}{ \lambda/(2\pi|x|)}  \to 1 \text{ as } x \to 0.
\end{equation}
It's also shown in Remark 1(b) of Brezis and Oswald (1987) that \[|V^\lambda(x)-\lambda  \frac{1}{2\pi} \frac{1}{|x|}| \leq C(|\log|x||+1) \text{ for } x\neq 0. \]
Our aim is to find the exact second order term as $x\to 0$. Let $\mu=\delta_0$ in \eqref{e1.6.1} to see that $E_{\delta_0} (\exp ({-\lambda L_\infty^x}) )=\exp (- V^\lambda (x)  )$. A good intuition from Theorem \ref{t1} that 
\begin{equation}
\frac{L_{\infty}^x-c_{\ref{t1}}/|x|}{(2c_{\ref{t1}}^2 \log (1/|x|))^{1/2}} \xrightarrow[]{d} Z
\end{equation}
implies \[L_{\infty}^x-c_{\ref{t1}}/|x| \approx_{law} (2c_{\ref{t1}}^2 \log (1/|x|))^{1/2}  Z \text{ as } x\approx 0,\] and hence we expect that when $x$ goes to 0,
\[e^{-(V^\lambda (x)-\lambda c_{\ref{t1}}/|x|)}=E_{\delta_0} e^{-\lambda (L_\infty^x-c_{\ref{t1}}/|x|)} \approx E e^{-\lambda(2c_{\ref{t1}}^2 \log (1/|x|))^{1/2}  Z} = e^{\frac{1}{2} \lambda^2 2c_{\ref{t1}}^2 \log (1/|x|)}.\] In Section \ref{s6.2} we will show that this intuition is correct and prove the following:

\begin{theorem} \label{t6}
Let $V^\lambda(x)$ be the solution of the semilinear elliptic equation \eqref{e1.6}. Then \[\frac{V^\lambda(x)-\lambda/(2\pi|x|)}{\lambda^2  \log (1/|x|) / (4\pi^2)} \to -1, \text{ as } x \to 0 \text{ in } \R^3.\] 
\end{theorem}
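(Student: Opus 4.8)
The plan is to read off the asymptotics of $V^\lambda$ from the local-time identity \eqref{e1.6.1} with $\mu=\delta_0$, namely $E_{\delta_0}\exp(-\lambda L^x_\infty)=\exp(-V^\lambda(x))$, after subtracting the mean. In $d=3$ one has $E_{\delta_0}L^x_\infty=\int_0^\infty p_s(x)\,ds=1/(2\pi|x|)=c_{\ref{t1}}/|x|$, so $Y_x:=L^x_\infty-c_{\ref{t1}}/|x|$ is centered and $V^\lambda(x)-\lambda c_{\ref{t1}}/|x|=-\log E_{\delta_0}e^{-\lambda Y_x}$. The Tanaka formula \eqref{e1.5}, applied at time $\infty$ (where $X_\infty\equiv0$ since $X$ becomes extinct, $\zeta<\infty$ a.s.), identifies $Y_x=M_\infty(\phi_x)$, the terminal value of the continuous mean-zero martingale $M(\phi_x)$ with $\phi_x(y)=c_{\ref{t1}}/|y-x|$, whose quadratic variation is $[M(\phi_x)]_\infty=\int_0^\infty X_s(\phi_x^2)\,ds=c_{\ref{t1}}^2\int_{\R^3}L^y_\infty\,|y-x|^{-2}\,dy$ by \eqref{e2.6.1}. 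Since $\tfrac12\lambda^2\psi(x)^2=\lambda^2\log(1/|x|)/(4\pi^2)$, Theorem \ref{t6} is exactly the assertion that $\log E_{\delta_0}e^{-\lambda Y_x}=\tfrac12\lambda^2\psi(x)^2(1+o(1))$ as $x\to0$. Formally substituting the weak limit $Y_x\approx\psi(x)Z$ of Theorem \ref{t1} produces the target value $Ee^{-\lambda\psi(x)Z}=e^{\lambda^2\psi(x)^2/2}$; but since $\psi(x)\to\infty$ this is a moderate-deviations statement, not a consequence of weak convergence, so the task is to prove matching upper and lower bounds for $E_{\delta_0}e^{-\lambda Y_x}$ at the exponential scale $e^{\Theta(\psi(x)^2)}=|x|^{-\Theta(1)}$.

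For the upper bound on $V^\lambda$ — equivalently $E_{\delta_0}e^{-\lambda Y_x}\ge\exp(\tfrac12\lambda^2\psi(x)^2(1-o(1)))$ — let $\mathcal{E}_t:=\exp(-\lambda M_t(\phi_x)-\tfrac12\lambda^2[M(\phi_x)]_t)$, a nonnegative supermartingale with $\mathcal{E}_0=1$, so $e^{-\lambda Y_x}=\mathcal{E}_\infty\exp(\tfrac12\lambda^2[M(\phi_x)]_\infty)$; then on $A_x:=\{(1-\delta)\psi(x)^2\le[M(\phi_x)]_\infty\le(1+\delta)\psi(x)^2\}$ we get $E_{\delta_0}e^{-\lambda Y_x}\ge e^{(1-\delta)\lambda^2\psi(x)^2/2}E_{\delta_0}[\mathcal{E}_\infty\mathbf{1}_{A_x}]$, and it remains only to see that $E_{\delta_0}[\mathcal{E}_\infty\mathbf{1}_{A_x}]$ stays above $e^{-o(\psi(x)^2)}$. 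This follows once $[M(\phi_x)]_\infty/\psi(x)^2\to1$ in probability, both under $P_{\delta_0}$ and under the tilt by $\mathcal{E}_\infty$ (which has $E_{\delta_0}\mathcal{E}_\infty=\Theta(1)$), and this concentration I would extract from a refinement of the second-moment estimates behind Theorem \ref{t1}: one splits $[M(\phi_x)]_\infty$ into a convergent near-$x$ part equal to $\psi(x)^2+O(1)$ — note $[M(\phi_x)]_\infty$ actually has infinite mean, since $\int(\int_0^\infty p_s(y)\,ds)\,|y-x|^{-2}\,dy$ diverges at spatial infinity — and a far part which is $O_P(1)$ uniformly in small $x$ by \eqref{e3.1} and the a.s. boundedness of the range of $X$, with the near part having variance $o(\psi(x)^4)$. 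Letting $\delta\downarrow0$ gives $\limsup_{x\to0}(V^\lambda(x)-\lambda c_{\ref{t1}}/|x|)\big/\big(\tfrac12\lambda^2\psi(x)^2\big)\le-1$.

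The lower bound on $V^\lambda$ — equivalently $E_{\delta_0}e^{-\lambda Y_x}\le\exp(\tfrac12\lambda^2\psi(x)^2(1+o(1)))$ — is the main obstacle. The model step is: stopping at $T=T_x:=\inf\{t:[M(\phi_x)]_t>(1+\delta)\psi(x)^2\}$, the stopped exponential $\mathcal{E}^{T}$ is a uniformly integrable martingale (bounded stopped quadratic variation), so $E_{\delta_0}e^{-\lambda M_{T}(\phi_x)}=E_{\delta_0}[\mathcal{E}^{T}_\infty e^{\lambda^2[M(\phi_x)]_{T}/2}]\le e^{(1+\delta)\lambda^2\psi(x)^2/2}$; on $\{T=\infty\}$ this equals $E_{\delta_0}e^{-\lambda Y_x}$, and the work is to show the overshoot piece $E_{\delta_0}[e^{-\lambda Y_x};[M(\phi_x)]_\infty>(1+\delta)\psi(x)^2]$ is negligible relative to $e^{\lambda^2\psi(x)^2/2}$. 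Because of the heavy (infinite-mean) tail of $[M(\phi_x)]_\infty$, I would first localize in time: from $L^x_\infty\ge L^x_t$ one gets $E_{\delta_0}e^{-\lambda Y_x}\le E_{\delta_0}e^{-\lambda(L^x_t-c_{\ref{t1}}/|x|)}$ for every fixed $t$, and $[M(\phi_x)]_t=c_{\ref{t1}}^2\int L^y_t|y-x|^{-2}\,dy$ now integrates the locally bounded field $L^y_t$ over the a.s. bounded range of $X$ on $[0,t]$, so it has mean $\psi(x)^2+O(1)$ and stretched-exponential upper tails and $\{[M(\phi_x)]_t>(1+\delta)\psi(x)^2\}$ is super-polynomially rare in $|x|$. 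One then combines the stopped-martingale estimate for $M(\phi_x)$ up to $t\wedge T$ with a Hölder split that both absorbs this rare overshoot and handles the factor $e^{\lambda X_t(\phi_x)}$ arising from $L^x_t-c_{\ref{t1}}/|x|=M_t(\phi_x)-X_t(\phi_x)$. The step I expect to be hardest is controlling the relevant exponential moments uniformly as $x\to0$ — showing that the singularity of $\phi_x$, though of height $c_{\ref{t1}}/|x|$, sits where $X_t$ carries too little mass for $E_{\delta_0}e^{q\lambda X_t(\phi_x)}$ to blow up. Sending $x\to0$ and then $t\to\infty$ (the defect $c_{\ref{t1}}/|x|-q_t(x)=\int_t^\infty p_s(x)\,ds\le Ct^{-1/2}\to0$ uniformly) removes the $t$-dependent errors and yields $\liminf_{x\to0}(V^\lambda(x)-\lambda c_{\ref{t1}}/|x|)\big/\big(\tfrac12\lambda^2\psi(x)^2\big)\ge-1$, which with the previous paragraph proves Theorem \ref{t6}. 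As a consistency check, the limit $-\lambda^2\log(1/|x|)/(4\pi^2)$ is precisely the first-order correction obtained by linearizing \eqref{e1.6} about the harmonic profile $\lambda/(2\pi|x|)$, since $\Delta\log|x|=|x|^{-2}$ on $\R^3$ absorbs the source $\tfrac12(\lambda/(2\pi|x|))^2$.
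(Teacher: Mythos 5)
Your reduction of Theorem \ref{t6} to two-sided, exponential-scale bounds on $E_{\delta_0}e^{-\lambda Y_x}$ with $Y_x=M_\infty(\phi_x)$ is the right frame, and your third paragraph (the direction $\liminf\ge -1$) parallels the paper's argument in outline: localize to a small fixed time $t$, split off $e^{\lambda X_t(\phi_x)}$ by H\"older, and control $e^{-p\lambda M_t(\phi_x)}$ through the exponential supermartingale together with exponential moments of $\int_0^t X_s(\phi_x^2)\,ds$. The step you flag as hardest is exactly what the paper supplies in Section \ref{s6.4.1} (Lemma \ref{l6.3} through Corollary \ref{c6.8}, built on the cumulant machinery of Section \ref{s4.1}) and in Proposition \ref{c6.7}, which extracts the deterministic $\log(1/|x|)$ through $-\delta_0(\bar g_x)$; your stopping-time/overshoot variant could likely be pushed through, but only once those uniform exponential-moment estimates are actually proved.

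The genuine gap is in your second paragraph (the direction $\limsup\le -1$). After restricting to $A_x$ you must show $E_{\delta_0}[\mathcal{E}_\infty 1_{A_x}]\ge e^{-o(\log(1/|x|))}$ uniformly as $x\to 0$, and your justification assumes (a) $E_{\delta_0}\mathcal{E}_\infty=\Theta(1)$ and (b) concentration of $[M(\phi_x)]_\infty$ at $2c_{\ref{t1}}^2\log(1/|x|)$ under the measure tilted by $\mathcal{E}_\infty$. Neither follows from ``refined second-moment estimates behind Theorem \ref{t1}'': those give concentration under $P_{\delta_0}$ only, whereas under the tilt $X$ is a super-Brownian motion with killing rate $\lambda c_{\ref{t1}}/|y-x|$, singular exactly where the occupation integral accumulates, so (b) would require redoing the occupation-time analysis for the killed process. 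Worse, (a) is essentially circular: since $[M(\phi_x)]_\infty\approx 2c_{\ref{t1}}^2\log(1/|x|)$, one has $E_{\delta_0}\mathcal{E}_\infty=E_{\delta_0}\big[e^{-\lambda Y_x}e^{-\frac12\lambda^2[M(\phi_x)]_\infty}\big]$, which is of the same order as $e^{-c_{\ref{t1}}^2\lambda^2\log(1/|x|)}E_{\delta_0}e^{-\lambda Y_x}$; asserting it is $\Theta(1)$ is essentially the estimate you are trying to prove, and no soft criterion gives it since, as you note, $[M(\phi_x)]_\infty$ has infinite mean so Novikov fails at $t=\infty$. The paper avoids all of this: it conditions at a small fixed $t$, uses the Markov property and \eqref{e1.6.1} to write $E\big[e^{-\lambda(L^x_\infty-L^x_t)}\mid\cF_t\big]=e^{-X_t(V^\lambda_x)}$, applies Dawson's Girsanov theorem at finite $t$ (Novikov checked by the exponential moments above), and uses Proposition \ref{p1} to replace $\frac12\lambda^2\int_0^t X_s(\phi_x^2)\,ds$ exactly by $c_{\ref{t1}}^2\lambda^2\big(X_t(g_x)+\log(1/|x|)-M_t(g_x)\big)$, so the divergent term appears deterministically and the remaining random terms need only be $o(\log(1/|x|))$ almost surely (Jensen plus Fatou, using $\widetilde P\ll P$), with no tilted-measure concentration or lower bound on $E_{\delta_0}\mathcal{E}_\infty$ required. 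To repair your version you would either have to prove (a) and (b) directly for the killed super-Brownian motion or switch to this finite-$t$ Girsanov identity.
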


\noindent $\mathbf{Organization\ of\ the\ paper.}$ Section 2 gives the main ideas of the proofs of the main results, Theorem 1.1, 1.2 and 1.3. In fact we give a complete proof of Theorem 1.1 and present conditional proofs of Theorem 1.2 and 1.3 assuming some intermediate results. The proof of Theorem 1.2 and 1.3 will then be finished in Sections 3 and 4. The cumulants of super-Brownian motion discussed in Section 4 may be of independent interests. Here we establish moment estimates following the strategy of Sugitani. Section 5 contains the proof of Theorem 1.4. Sections 6 and 7 are devoted to the cases under general initial conditions and finally Section 8 is the application to PDE.

\section*{Acknowledgements}
This work was done as part of the author's graduate studies at the University of British Columbia. I would like to thank my supervisor, Professor Edwin Perkins, for telling me about this problem and for many helpful discussions and suggestions throughout this work. I also thank two anonymous referees for their careful reading of the manuscript and for their valuable comments which made the paper more readable.

\section{Proof of the Main Results}

\subsection{Continuity under canonical measure (Theorem \ref{p0})}

\begin{proof}[Proof of Theorem \ref{p0}] 
Fix $\varepsilon, \delta > 0$. Conditioning on $\cF_\varepsilon^X$ and on $X_\varepsilon \neq 0$, by Markov property and Theorem II 7.3(c) of Perkins (2002), our canonical cluster decomposition according to ancestors at time $\varepsilon$ implies 
\begin{equation}\label{e0.1}
X_{t+\varepsilon}=\int \nu_t \ \Xi^\varepsilon (d\nu), \ \forall\ t\geq 0,
\end{equation}
where $\Xi^\varepsilon$ is Poisson point process with intensity $\N_{X_\varepsilon}$. Let $\zeta=\zeta_X=\inf \{t\geq 0: X_t(1)=0\}$ denote the extinction time of $X$. Then for any $t\geq 0$,
\begin{align}\label{e0.3}
L_{t+\varepsilon}^x-L_\varepsilon^x=\int L_t^x(\nu) \ \Xi^\varepsilon (d\nu)\geq \int L_t^x(\nu) \ 1_{\{\zeta>\delta\}} \ \Xi^\varepsilon (d\nu)\overset{d}{=}  \sum_{i=1}^{N_\delta} L_t^x(X^i).
\end{align} 
where $N_\delta$ is a Poisson random variable of parameter $\N_{X_\varepsilon}(\zeta>\delta)=2X_\varepsilon(1)/\delta$, given $N_\delta$, $(x_i: i \leq N_\delta)$ are i.i.d. with law $X_\varepsilon/X_\varepsilon(1)$, and given $N_\delta$ and $(x_i)$ the $X^i$ are i.i.d. with law 
$\N_{x_i}(X \in \cdot | \zeta>\delta)$. Recall \eqref{e3.1} that for any $X_0\in M_F(\R^d)$ and any fixed $\varepsilon>0$, we have
\begin{equation*}
L_t^x-L_\varepsilon^x \text{\ is jointly continuous on \ } \{(t,x): t\geq \varepsilon, x\in \R^d\}, \ \ P_{X_0}\text{-a.s.} .
\end{equation*} 
Together with the compactness of the support of local times (see Corollary III.1.7 of Perkins (2002)), we have
\begin{equation}\label{e0.4}
\lim_{t\downarrow 0} \sup_x L_{t+\varepsilon}^x-L_\varepsilon^x=0, \ \ P_{\delta_0}\text{-a.s.} .
\end{equation}
Since we have $N_\delta=1$ with positive probability, we conclude from \eqref{e0.3} and \eqref{e0.4} that for $X_\varepsilon$-almost all $x_0$,
\begin{equation*}
\lim_{t\downarrow 0} \sup_x L_{t}^x=0, \ \N_{x_0}(\cdot |\zeta>\delta)\text{-a.e.}.
\end{equation*}
Since $\N_{x_0}(\zeta>\delta)<\infty$, the above holds $\N_{x_0}(\cdot, \zeta>\delta)$-a.e.. Let $\delta \downarrow 0$ to conclude that 
\begin{equation}\label{e0.5}
\lim_{t\downarrow 0} \sup_x L_{t}^x=0, \ \N_{x_0}\text{-a.e.}.
\end{equation}

Next use \eqref{e0.1} to see that for $t\geq \delta$,
\begin{align} \label{e0.6}
 L_{t+\varepsilon}^x-L_{\delta+\varepsilon}^x&=\int (L_t^x(\nu)-L_\delta^x(\nu)) \ \Xi^\varepsilon (d\nu) \nonumber \\
 &= \int  (L_t^x(\nu)-L_\delta^x(\nu)) \ 1_{\{\zeta>\delta\}}  \ \Xi^\varepsilon (d\nu)\overset{d}{=}  \sum_{i=1}^{N_\delta} (L_t^x(X^i)-L_\delta^x(X^i)).
\end{align}
The last equality is the same with the one in \eqref{e0.3}. By \eqref{e3.1}  we have \[L_{t+\varepsilon}^x-L_{\delta+\varepsilon}^x  \text{\ is jointly continuous on  } \{(t,x): t\geq \delta, x\in \R^d\}, \ \ P_{X_0}\text{-a.s.} . \] 
Since we have $N_\delta=1$ with positive probability, we conclude from \eqref{e0.6} that 
\begin{equation*}
 L_{t}^x-L_{\delta}^x  \text{\ is jointly continuous on } \{(t,x): t\geq \delta, x\in \R^d\}, \ \N_{x_0}(\cdot |\zeta>\delta)\text{-a.e.}.
 \end{equation*} 
 Since $\N_{x_0}(\zeta>\delta)<\infty$, the above holds $\N_{x_0}(\cdot,\zeta>\delta)$-a.e. and furthermore use $L_{t}^x-L_{\delta}^x =0$ for the case $\zeta\leq \delta$ to conclude
 \begin{equation} \label{e0.7}
 L_{t}^x-L_{\delta}^x  \text{\ is jointly continuous on } \{(t,x): t\geq \delta, x\in \R^d\}, \ \N_{x_0}\text{-a.e.}.
 \end{equation} 
 
 Now we are ready to finish the proof. By \eqref{e0.5}, we can choose $\omega$ outside a null set $N_1$ such that $\sup_x L_{t}^x \to 0$ as $t\downarrow 0$. By \eqref{e0.7}, we can choose $\omega$ outside a null set $N_2(\delta)$ such that $L_{t}^x-L_{\delta}^x  \text{\ is jointly continuous on } \{(t,x): t\geq \delta, x\in \R^d\}$. Now take $\delta=1/n$ and $N= \cup_{n=1}^\infty N_2(1/n) \cup N_1$ to see that for $\omega \in N^c$, if we fix any $\epsilon>0$, then for any $t>0$, we can find some $n\geq 1$ such that $\sup_x L_{1/n}^x<\epsilon$ and $1/n<t$. Note we have \[L_t^x-L_{t'}^{x'}=[(L_{t}^x-L_{1/n}^x)-(L_{t'}^{x'}-L_{1/n}^{x'})]+L_{1/n}^x-L_{1/n}^{x' }.\] Then use the joint continuity of $L_{t}^x-L_{1/n}^x$ on $\{(t,x): t\geq 1/n, x\in \R^d\}$ to see that there is some $\gamma=\gamma(\epsilon)>0$ such that $|(L_{t}^x-L_{1/n}^x)-(L_{t'}^{x'}-L_{1/n}^{x'})|<\epsilon$ if $|(t',x')- (t,x)|<\gamma$, and so conclude \[|L_t^x-L_{t'}^{x'}|\leq 3\epsilon, \text{ if } |(t',x')- (t,x)|<\gamma.\] Hence $(t,x) \mapsto L_t^x$ is jointly continuous on $\{(t,x): t>0, x\in \R^d\}$, $\N_{x_0}$-a.e.. The $t=0$ case follows immediately from $\eqref{e0.5}$.
\end{proof}

\subsection{Weak renormalization of the local times in $d=3$ (Theorem \ref{t1})}\label{s2.2}
Recall the Tanaka formula \eqref{e1.5} for the case $\mu=\delta_0$. Then for $x\neq 0$ we have 
\begin{equation} \label{e2.1}
\frac{L_t^x-c_{\ref{t1}}/|x|}{(2c_{\ref{t1}}^2 \log (1/|x|))^{1/2} }=\frac{M_t(\phi_x)}{(2c_{\ref{t1}}^2 \log (1/|x|))^{1/2} }
-\frac{X_t(\phi_x)}{(2c_{\ref{t1}}^2 \log (1/|x|))^{1/2} }. 
\end{equation}
For the second term on the right-hand side of \eqref{e2.1}, recall a result of concentration of mass from Theorem III.3.4 in Perkins (2002).
\begin{lemma}\label{l2.1}
Let $d=2$ or $3$. Then there is some constant $c_{\ref{l2.1}}(d)>0$ such that for all $X_0 \in M_F(\R^d)$, $P_{X_0}$-a.s. we have
 \[\forall \delta>0,\ \exists \  r_0(\delta,\omega)>0 \text{\ so that\ } 
\sup_{y \in \R^d, t\geq \delta} X_t(B(y,r)) \leq c_{\ref{l2.1}}(d) \bar{\psi}(r) \  \forall r\in(0,r_0), \] where $\bar{\psi}(r)=r^2(\log^+(1/r))^{4-d}$.
\end{lemma}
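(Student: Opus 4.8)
The plan is to deduce the estimate from the mass-concentration bound of Theorem III.3.4 in Perkins (2002) together with the a.s.\ finiteness of the extinction time of $X$. Theorem III.3.4 already supplies, $P_{X_0}$-a.s.\ for every $X_0\in M_F(\R^d)$ and every compact time window $[\delta,T]\subset(0,\infty)$, a bound of exactly the asserted shape: there is a dimension-dependent constant $c(d)$ (which will be the constant $c_{\ref{l2.1}}(d)$) and a random $r_1=r_1(\delta,T,\omega)>0$ with $\sup_{y\in\R^d,\ \delta\le t\le T}X_t(B(y,r))\le c(d)\,\bar{\psi}(r)$ for all $r\in(0,r_1)$, where $\bar{\psi}(r)=r^2(\log^+(1/r))^{4-d}$. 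If the cited theorem is phrased as a $\limsup$ rather than an ``eventually in $r$'' statement, one simply absorbs the slack into $c(d)$ to reach this form; and if one prefers to avoid the citation, the same bound can be produced from the log-Laplace functional of $X$, exponential Chebyshev bounds on $X_t(B(y,r))$, and a chaining argument over $y$ and over $t$, which is in essence Perkins' proof.

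The remaining step is to pass from a compact time window to all $t\ge\delta$. Here $X$ has an a.s.\ finite extinction time $\zeta=\zeta_X=\inf\{t\ge0:X_t(1)=0\}$ (Ch.\ II.5 of Perkins (2002)), and $X_t\equiv 0$ for $t\ge\zeta$, so for such $t$ there is nothing to prove. For each $k\in\N$ and each integer $n>1/k$, apply the finite-horizon version on $[1/k,n]$ and let $N_{k,n}$ be the corresponding null set; set $N=\{\zeta=\infty\}\cup\bigcup_{k\in\N}\bigcup_{n>1/k}N_{k,n}$, which is still null. Fix $\omega\notin N$ and $\delta>0$: choose $k$ with $1/k\le\delta$, then an integer $n>\max(1/k,\zeta(\omega))$, and put $r_0(\delta,\omega):=r_1(1/k,n,\omega)$. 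For every $r<r_0$ we split the supremum: over $\delta\le t\le n$ it is $\le c(d)\bar{\psi}(r)$ by the finite-horizon bound (note $[\delta,n]\subseteq[1/k,n]$), while over $t>n\ (\ge\zeta(\omega))$ every ball has zero mass. This yields the lemma. The supremum over \emph{all} $y\in\R^d$ is already part of Theorem III.3.4; alternatively it reduces to a supremum over a bounded region, since $X$ has the compact support property so that $\overline{\bigcup_{t\ge0}\mathrm{supp}(X_t)}$ is a.s.\ compact and $X_t(B(y,r))=0$ as soon as $B(y,r)$ misses this set.

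I expect the only genuine content beyond invoking Perkins' theorem to be this interchange of ``for all small $r$'' with ``uniformly over all $t\ge\delta$'': it is not mere bookkeeping, because it uses a.s.\ extinction in an essential way (for a supercritical branching mechanism the uniform-in-$t$ statement would be false), and it requires a countable intersection of exceptional sets over the pairs $(k,n)$ so that $\delta\downarrow 0$ and the time-horizon $\to\infty$ can be handled simultaneously while $c(d)$ stays fixed. A secondary, purely technical point---should the hypotheses of Theorem III.3.4 be stated for a narrower class of initial data---is to reduce the general case $X_0=\mu\in M_F(\R^d)$ to it by applying the Markov property at time $\delta/2$ and working with the (random, but still $M_F(\R^d)$-valued) law of $X_{\delta/2}$; this does not disturb the independence of $c(d)$ from $X_0$.
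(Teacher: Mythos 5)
Your proposal is correct and takes essentially the same route as the paper, which gives no argument of its own and simply quotes this statement as Theorem III.3.4 of Perkins (2002); that theorem is already stated uniformly over all $t\ge\delta$ and all centres $y\in\R^d$, so your extinction-time bridge from a compact time window (and the compact-support remark) is harmless extra care rather than a needed step.
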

For the case $d=3$ with $X_0=\delta_0$, we use Lemma \ref{l2.1} to see that with $P_{\delta_0}$-probability one, there exist some $r_0(t,\omega)\in (0,1]$ and some constant $C>0$ such that 
\begin{align*} 
&\int_{|y-x|<r_0} \frac{1}{|y-x|}  X_t(dy) \leq \sum_{n=0}^\infty \frac{2^{n+1}}{r_0}  \int 1{(\frac{r_0}{2^{n+1}} \leq |y-x|<\frac{r_0}{2^n})} X_t(dy) \\
\leq& \sum_{n=0}^\infty  \frac{2^{n+1}}{r_0}  \sup_{x \in \R^d} X_t(B(x,\frac{r_0}{2^n}))\leq \sum_{n=0}^\infty  \frac{2^{n+1}}{r_0} \cdot c_{\ref{l2.1}}(3) (\frac{r_0}{2^n})^2 \log^+(\frac{2^n}{r_0})  \leq C.
\end{align*}
and hence 
\begin{align} \label{e2.2}
\int \frac{1}{|y-x|} X_t(dy) \leq \frac{1}{r_0} X_t(1)+\int_{|y-x|<r_0} \frac{1}{|y-x|}  X_t(dy)\leq \frac{1}{r_0} X_t(1)+C.
\end{align}
Therefore
\begin{equation}
\frac{X_t(\phi_x)}{(2c_{\ref{t1}}^2 \log (1/|x|))^{1/2} }\xrightarrow[]{\text{ a.s. }} 0, \text{ as } x\to 0.\label{e2.3}
\end{equation} 

\begin{lemma} \label{l2.2}
For any $0<t<\infty$,
\begin{equation*}
\frac{M_t(\phi_x)}{(2c_{\ref{t1}}^2 \log (1/|x|))^{1/2}} \xrightarrow[]{d} Z \text{ as } x\to 0,
\end{equation*}
where $Z$ is standard normal on the line.
\end{lemma}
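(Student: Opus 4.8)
The plan is to deduce the statement from the central limit theorem for continuous martingales, so that the whole problem reduces to showing that the (suitably normalized) quadratic variation of $M(\phi_x)$ converges in probability to a constant. Fix $t\in(0,\infty)$ and put $\sigma(x)^2=2c_{\ref{t1}}^2\log(1/|x|)$. For $x\neq 0$ the process $s\mapsto M_s(\phi_x)/\sigma(x)$, $s\in[0,t]$, is a continuous $\cF_s$-martingale, null at $0$, whose quadratic variation at time $t$ is $[M(\phi_x)]_t/\sigma(x)^2=\sigma(x)^{-2}\int_0^t X_s(\phi_x^2)\,ds$ by \eqref{e2.6.1}. By the central limit theorem for continuous (local) martingales --- which in this setting follows from the Dubins--Schwarz representation together with Brownian scaling, or from a localization argument applied to the complex exponential local martingale $\exp\!\big(i\theta M_s(\phi_x)/\sigma(x)+\tfrac{\theta^2}{2}\sigma(x)^{-2}[M(\phi_x)]_s\big)$ --- it is enough to prove
\[
\frac{1}{2c_{\ref{t1}}^2\log(1/|x|)}\int_0^t X_s(\phi_x^2)\,ds\ \xrightarrow[]{P}\ 1\qquad\text{as }x\to 0 .
\]
I would establish this by computing the mean and variance of $\int_0^t X_s(\phi_x^2)\,ds$ and invoking Chebyshev's inequality.

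\textbf{First moment.} Using $E_{\delta_0}[X_s(f)]=\int p_s(y)f(y)\,dy$, $\phi_x^2(y)=c_{\ref{t1}}^2|y-x|^{-2}$ and Fubini,
\[
E\Big[\int_0^t X_s(\phi_x^2)\,ds\Big]=c_{\ref{t1}}^2\int_{\R^3}\frac{q_t(y)}{|y-x|^2}\,dy,\qquad q_t(y)=\int_0^t p_s(y)\,ds .
\]
In $d=3$ one has $\int_0^\infty p_s(y)\,ds=(2\pi|y|)^{-1}=c_{\ref{t1}}/|y|$, so on $\{|y|<1\}$ we may write $q_t(y)=c_{\ref{t1}}/|y|-r_t(y)$ with $0\le r_t\le C_t$ bounded, and hence $\int_{|y|<1}r_t(y)|y-x|^{-2}\,dy\le C_t\int_{|z|<2}|z|^{-2}\,dz<\infty$; moreover $\int_{|y|\ge 1}q_t(y)|y-x|^{-2}\,dy\le 4\int_{\R^3}q_t=4t$ for $|x|<1/2$. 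The remaining term $c_{\ref{t1}}^3\int_{|y|<1}|y|^{-1}|y-x|^{-2}\,dy$ is computed via the substitution $y=|x|z$: it equals $c_{\ref{t1}}^3\int_{|z|<1/|x|}|z|^{-1}|z-e|^{-2}\,dz$ with $e=x/|x|$, and since the integrand behaves like $|z|^{-3}$ at infinity this is $c_{\ref{t1}}^3\big(4\pi\log(1/|x|)+O(1)\big)$. Because $4\pi c_{\ref{t1}}=2$, we conclude $E[\int_0^t X_s(\phi_x^2)\,ds]=2c_{\ref{t1}}^2\log(1/|x|)+O_t(1)$; this is precisely where the normalization and the value $c_{\ref{t1}}=1/(2\pi)$ enter.

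\textbf{Variance and conclusion.} By the second-moment (branching) formula for super-Brownian motion from $\delta_0$, for $s\le r$,
\[
\mathrm{Cov}\big(X_s(f),X_r(f)\big)=\int_0^{s}\!\int_{\R^3} p_u(z)\,(P_{s-u}f)(z)\,(P_{r-u}f)(z)\,dz\,du,\qquad P_v g(z)=\int p_v(w-z)g(w)\,dw,
\]
so that $\mathrm{Var}(\int_0^t X_s(\phi_x^2)\,ds)=\int_0^t\!\!\int_0^t\mathrm{Cov}(X_s(\phi_x^2),X_r(\phi_x^2))\,ds\,dr$. The key estimate is the uniform bound $(P_v\phi_x^2)(z)\le C(v+|z-x|^2)^{-1}$ for all $v>0,\ z,x\in\R^3$, a direct Gaussian computation using that $|\cdot|^{-2}$ is locally integrable in $\R^3$ (and that the Newtonian-type average of $|\cdot|^{-2}$ against a centered Gaussian is maximized at the singularity). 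Substituting this into the covariance formula and integrating out the two time variables via $\int_0^{t}\frac{ds'}{s'+|z-x|^2}\le\log(1+t/|z-x|^2)$ reduces the variance to $C\int_{\R^3} q_t(z)\big(\log(1+t/|z-x|^2)\big)^2\,dz$; splitting this into the regions $|z-x|\ge\mathrm{const}$, $|x|\le|z-x|\le 1$ and $|z-x|\le|x|$, and using $q_t(z)\le c_{\ref{t1}}/|z|$, shows it is $O_t(1)$. Hence $\mathrm{Var}(\int_0^t X_s(\phi_x^2)\,ds)=o\big((\log(1/|x|))^2\big)$, and combined with the first-moment estimate and Chebyshev's inequality this gives the convergence in probability displayed above, which completes the proof. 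The substantive point is this last step: proving the uniform heat-kernel bound on $P_v\phi_x^2$ and then checking that the variance is negligible against $(\log(1/|x|))^2$. What makes it go through --- and marks the difference from $d=2$ --- is that each inverse-square singularity $|z-x|^{-2}$ responsible for the $\log(1/|x|)$ growth of the mean is, after the time integration, softened to a merely logarithmic singularity in the variance, which remains integrable against $q_t(z)\sim c_{\ref{t1}}/|z|$ near the origin; the first-moment computation, though elementary, is what pins down the exact normalizing constant.
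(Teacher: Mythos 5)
Your proposal is correct, but it reaches the key fact by a genuinely different route than the paper. The paper never computes moments of the quadratic variation: it applies the martingale problem to $g_x(y)=\log|y-x|$, whose Laplacian in $d=3$ is $|y-x|^{-2}$, to obtain the exact identity of Proposition \ref{p1}, $\tfrac12\int_0^t\int|y-x|^{-2}X_s(dy)\,ds=X_t(g_x)-\delta_0(g_x)-M_t(g_x)$; since $-\delta_0(g_x)=\log(1/|x|)$ and $x\mapsto X_t(g_x)$, $x\mapsto M_t(g_x)$ are continuous (Lemmas \ref{l2.7} and \ref{l2.8}), this gives $[M(\phi_x)]_t/(2c_{\ref{t1}}^2\log(1/|x|))\to 1$ \emph{almost surely} (\eqref{e2.5}), after which the Dubins--Schwarz/time-change step is essentially identical to yours --- and, as the paper's own argument only uses $P_{\delta_0}(|\tau_x(t)-1|>\gamma)\to 0$, your weaker convergence in probability is indeed sufficient there. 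Your substitute for Proposition \ref{p1} is a direct first- and second-moment computation of $\int_0^t X_s(\phi_x^2)\,ds$ via $E_{\delta_0}X_s(dy)=p_s(y)\,dy$ and the standard covariance formula, plus Chebyshev; I checked the details and they hold: the Green-function expansion of $q_t$ with $4\pi c_{\ref{t1}}=2$ pins down the mean as $2c_{\ref{t1}}^2\log(1/|x|)+O_t(1)$, the kernel bound $P_v\phi_x^2(z)\le C(v+|z-x|^2)^{-1}$ follows from scaling together with Lemma \ref{l3.3}, and the resulting variance bound $C\int q_t(z)\big(\log(1+t/|z-x|^2)\big)^2dz=O_t(1)$ is uniform in small $x$ (the mild point to record is that $M(\phi_x)$ is a true $L^2$ martingale, which \eqref{e1.5}--\eqref{e2.6.1} and your first-moment bound give). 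What each approach buys: yours is self-contained at the level of this single lemma --- no Tanaka-type identity for $\log|y-x|$, no continuity of $x\mapsto M_t(g_x)$ --- at the price of only convergence in probability of the normalized quadratic variation; the paper's identity costs the analytic work of Section \ref{s3.3} but yields the a.s. statement and is reused elsewhere (its $d=2$ analogue Proposition \ref{p2} drives Theorem \ref{t2}, and Proposition \ref{p1} reappears in the Girsanov step of Section \ref{s6.2}), so within the paper it is not dispensable, whereas for Lemma \ref{l2.2} alone your moment route is a perfectly valid and arguably more elementary alternative.
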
 

\noindent With the above lemma, we are ready to turn to the
\begin{proof}[Proof of Theorem \ref{t1}]

 For $0<t<\infty$, by \eqref{e2.1}, \eqref{e2.3} and the above lemma, we may apply Theorem 25.4 in Billingsley (1995) to get 
 \begin{align}\label{efinal}
     \frac{L_t^{x}-c_{\ref{t1}}/|x|}{(2c_{\ref{t1}}^2 \log (1/|x|))^{1/2}} \xrightarrow[]{d} Z \text{ as } x\to 0.
 \end{align}

For $t=\infty$, recall the extinction time $\zeta=\zeta_X=\inf \{t\geq 0: X_t(1)=0\}$ of $X$, we have $L_{\infty}^x=L_\zeta^x$ and $0<\zeta<\infty$ a.s.. Fix $\varepsilon>0$. If $\zeta\leq \varepsilon$, then $L_{\infty}^x-L_\varepsilon^x=0$ for all $x$. If $\zeta>\varepsilon$, then it follows that
\[
\lim_{x\to 0} L_\zeta^x-L_\varepsilon^x=L_\zeta^0-L_\varepsilon^0<\infty,\quad P_{\delta_0}\text{-a.s.}
\]
by \eqref{e3.1} with $t=\zeta$. So we conclude that  
 \begin{align}\label{efinal1}
\lim_{x\to 0}\frac{L_\zeta^x-L_\varepsilon^x}{(2c_{\ref{t1}}^2 \log (1/|x|))^{1/2}}=0,\quad P_{\delta_0}\text{-a.s.}.
 \end{align}
Now using \eqref{efinal} with $t=\varepsilon$ and \eqref{efinal1}, we apply Theorem 25.4 in Billingsley (1995) to get
\[\frac{L_\infty^{x}-c_{\ref{t1}}/|x|}{(2c_{\ref{t1}}^2 \log (1/|x|))^{1/2}}=\frac{L_\zeta^{x}-L_\varepsilon^{x}}{(2c_{\ref{t1}}^2 \log (1/|x|))^{1/2}}+\frac{L_\varepsilon^{x}-c_{\ref{t1}}/|x|}{(2c_{\ref{t1}}^2 \log (1/|x|))^{1/2}} \xrightarrow[]{d} Z \text{ as } x\to 0.\]
The proof of the joint convergence in distribution of $X$ and the renormalized local time above towards $(X,Z)$, with $Z$ independent of $X$, will be given in Section \ref{s3.2}.
\end{proof}

In order to prove Lemma \ref{l2.2}, we observe that $[M(\phi_x)]_t/(2c_{\ref{t1}}^2 \log (1/|x|))$ is the quadratic variation of martingale $M_t(\phi_x)/(2c_{\ref{t1}}^2 \log (1/|x|) )^{1/2}$. By using the Dubins-Schwarz theorem (see Revuz and Yor (1994), Theorem V1.6 and V1.7), with an enlargement of the underlying probability space, we can construct some Brownian motion $B^{x} (t)$ in $\R$ depending on $x$ such that
\begin{equation} \label{e2.5.1}
\frac{M_t(\phi_x)}{(2c_{\ref{t1}}^2 \log (1/|x|))^{1/2}}=B^{x} \Big(\frac{[M(\phi_x)]_t}{2c_{\ref{t1}}^2 \log (1/|x|) }\Big).
\end{equation}
In Section \ref{s3.1} we will prove that 
\begin{equation}
\frac{[M(\phi_x)]_t}{2c_{\ref{t1}}^2 \log (1/|x|)} \xrightarrow[]{\text{ a.s.}} 1,  \label{e2.5}
\end{equation}
 and show that 
\[\frac{M_t(\phi_x)}{(2c_{\ref{t1}}^2 \log (1/|x|))^{1/2}}=B^{x} \Big(\frac{[M(\phi_x)]_t}{2c_{\ref{t1}}^2 \log (1/|x|)}\Big) \xrightarrow[]{\text{ d}} Z.  \]
In order to prove \eqref{e2.5}, we recall from \eqref{e2.6.1} that 
\begin{align*}
[M(\phi_x)]_t=\int_0^t ds \int \frac{c_{\ref{t1}}^2}{|y-x|^2}X_s(dy),
\end{align*}
and the key observation is that in $d=3$,
\begin{equation}
\Delta_y \log|y-x| =\frac{1}{|y-x|^2} \text{ \ for \ } y\neq x. \label{e2.6}
\end{equation}
\noindent $\mathbf{Notation.}$ Throughout the paper, we define \[g_x(y):=\log|y-x| \text{ \ for \ } y\in \R^d\backslash\{x\}. \]
Then the martingale problem \eqref{e1.0} suggests the following:
\begin{proposition} \label{p1}
Let $d=3$ and $x\neq 0$ in $\R^3$. Then we have $P_{\delta_0}$-a.s. that 
\begin{equation*}
\frac{1}{2} \int_0^t \int \frac{1}{|y-x|^2} X_s(dy) ds= X_t(g_x)-\delta_0(g_x)- M_t(g_x), \ \forall t\geq 0,
\end{equation*}
where $X_t(g_x)$ is continuous in $t$ and $M_t(g_x)$ is a continuous $L^2$ martingale.
\end{proposition}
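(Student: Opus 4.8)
The plan is to apply the martingale problem \eqref{e1.0} to a suitable smooth approximation of the function $g_x(y) = \log|y-x|$. Since $g_x$ is not bounded (it blows up at $y = x$ and grows at infinity), it is not in $C_b^2(\R^3)$, so one cannot simply substitute $\phi = g_x$ directly. First I would fix $x \neq 0$ and introduce a cutoff: for $0 < \eps < |x|/2$ and large $R$, let $g_x^{\eps,R}$ be a $C_b^2$ function that agrees with $g_x$ on the annulus $\{\eps \le |y-x| \le R\}$ (say, equal to $\log|y-x|$ there), is smooth and bounded near $y = x$, and is constant for $|y|$ large. Applying \eqref{e1.0} to $\phi = g_x^{\eps,R}$ gives
\[
X_t(g_x^{\eps,R}) = \delta_0(g_x^{\eps,R}) + M_t(g_x^{\eps,R}) + \int_0^t X_s\Big(\tfrac{\Delta}{2} g_x^{\eps,R}\Big)\,ds.
\]
On the region where $g_x^{\eps,R} = \log|y-x|$ we have $\frac{\Delta}{2} g_x^{\eps,R}(y) = \frac{1}{2|y-x|^2}$ by \eqref{e2.6}, so the integral term splits into the desired main term $\frac12 \int_0^t \int \frac{1}{|y-x|^2} X_s(dy)\,ds$ (restricted to the annulus) plus error terms supported on $\{|y-x| < \eps\}$ and $\{|y| > R\}$.

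The next step is to control all the error terms as $\eps \downarrow 0$ and $R \uparrow \infty$. For the large-$R$ part: super-Brownian motion from $\delta_0$ has compact support (Corollary III.1.7 of Perkins (2002)), so $P_{\delta_0}$-a.s. there is a (random) $R_0$ with $X_s$ supported in $B(0,R_0)$ for all $s \le t$; hence for $R > R_0$ the large-$|y|$ contributions to both $X_t(g_x^{\eps,R})$ and the integral vanish identically, and $\delta_0(g_x^{\eps,R}) = \log|x| = \delta_0(g_x)$ once $R$ is large. For the small-$\eps$ part: the contribution to the $\Delta$-integral from $\{|y-x| < \eps\}$ is bounded, using the concentration-of-mass bound Lemma \ref{l2.1} (exactly the computation in \eqref{e2.2}), by a constant times $\int_0^t X_s(\{|y-x|<\eps\})/\eps^2\,ds$, which one shows tends to $0$ a.s. as $\eps \to 0$ via $X_s(B(x,r)) \le c_{\ref{l2.1}}(3)\, r^2 \log^+(1/r)$; similarly $X_t(\{|y-x|<\eps\}\,|\log|y-x||) \to 0$ and $\int_0^t \int \frac{1}{|y-x|^2} 1_{\{|y-x|<\eps\}} X_s(dy)\,ds$ is finite (indeed the monotone limit as $\eps \to 0$ exists). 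Thus $X_t(g_x)$ is well-defined and finite, the integral $\int_0^t \int \frac{1}{|y-x|^2} X_s(dy)\,ds$ is finite $P_{\delta_0}$-a.s. for all $t$, and passing to the limit in the displayed identity yields
\[
\tfrac12 \int_0^t \int \frac{1}{|y-x|^2} X_s(dy)\,ds = X_t(g_x) - \delta_0(g_x) - M_t(g_x),
\]
where $M_t(g_x) := \lim_{\eps,R} M_t(g_x^{\eps,R})$.

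Finally I would verify the regularity claims: that $t \mapsto X_t(g_x)$ is continuous and that $M_t(g_x)$ is a continuous $L^2$ martingale with the stated quadratic variation $\int_0^t X_s(g_x^2)\,ds$ (or rather, identify its bracket). Continuity of $X_t(g_x)$ follows because the right-hand side is continuous once we know the integral term is continuous (it is, being an increasing continuous function of $t$ by dominated convergence using the $\eps$-uniform bound) and $M_t(g_x)$ is continuous. For the martingale property, the approximating martingales $M_t(g_x^{\eps,R})$ form a Cauchy sequence in $L^2$: by the martingale-problem orthogonality, $\E[(M_t(g_x^{\eps,R}) - M_t(g_x^{\eps',R'}))^2] = \E \int_0^t X_s((g_x^{\eps,R}-g_x^{\eps',R'})^2)\,ds$, and one checks $\E\int_0^t X_s(g_x^2 1_{\{|y-x|<\eps\}})\,ds \to 0$ using the first-moment formula $\E_{\delta_0} X_s(f) = \int p_s(y) f(y)\,dy$ together with the local integrability of $(\log|y-x|)^2$ near $x$ and of the Gaussian tail near infinity. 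Passing to the $L^2$ limit (and a continuous version via Doob's inequality) gives the continuous $L^2$ martingale $M_t(g_x)$. I expect the main obstacle to be the bookkeeping of the two independent cutoffs simultaneously — in particular making the $\eps \to 0$ estimates genuinely uniform in $R$ (resp. using compact support to kill $R$ first, then send $\eps \to 0$), and checking that the pointwise a.s. limit of the $M_t(g_x^{\eps,R})$ coincides with the $L^2$ limit so that the final identity holds simultaneously for all $t \ge 0$ (which requires a continuity-in-$t$ argument rather than a fixed-$t$ one).
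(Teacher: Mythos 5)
Your overall scheme (truncate $g_x$ near the singularity and at infinity, apply \eqref{e1.0}, kill the outer cutoff by compact support, then send $\eps\to0$) is reasonable and parallels the paper's proof, but the one estimate on which everything hinges is not justified and, as stated, fails. Any $C_b^2$ cap that agrees with $\log|y-x|$ at $|y-x|=\eps$ has Laplacian of size $\eps^{-2}$ on $B(x,\eps)$, so the error in the drift term is indeed of order $\eps^{-2}\int_0^t X_s(B(x,\eps))\,ds$; but Lemma \ref{l2.1} only gives $X_s(B(x,\eps))\le c_{\ref{l2.1}}(3)\,\eps^2\log^+(1/\eps)$ (for $s\ge\delta$, $\eps<r_0(\delta,\omega)$), which yields $\eps^{-2}\int_0^t X_s(B(x,\eps))\,ds\le C\,t\,\log^+(1/\eps)\to\infty$. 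So the assertion that this error ``tends to $0$ a.s.\ via $X_s(B(x,r))\le c r^2\log^+(1/r)$'' is not a valid deduction: the concentration-of-mass bound is a factor of order $\eps\log(1/\eps)$ too weak here. This is the heart of the proposition --- $1/|y-x|^2$ is exactly at the critical integrability for the occupation measure, which is why the computation \eqref{e2.2} (where the weight is only $1/|y-x|$) does not transfer. A secondary, smaller gap: the a.s.\ finiteness of $\int_0^t\int|y-x|^{-2}X_s(dy)\,ds$, which you need for your monotone/dominated convergence step, is asserted but not proved (it follows, e.g., from $E_{\delta_0}X_s(dy)=p_s(y)\,dy$ and Lemma \ref{l5.4}).

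The estimate you actually need is $\int_0^t X_s(B(x,\eps))\,ds=O(\eps^3)$, which would make the cap error $O(\eps)$; for $X_0=\delta_0$ this holds because the occupation measure has density $L^y_t$ which, by Sugitani's Theorem \ref{t3}, has a version jointly continuous (hence locally bounded) on $\{y\ne 0\}$, so $\int_0^t X_s(B(x,\eps))\,ds=\int_{B(x,\eps)}L_t^y\,dy\le C(\omega)\eps^3$ for $\eps<|x|/2$. With that substitute your route can be completed. The paper instead smooths with the heat semigroup: it applies \eqref{e1.0} to $P_\varepsilon g_x\cdot\chi_N$, removes $\chi_N$ by compact support, and uses Lemma \ref{l3.2} to identify the drift as $P_\varepsilon(|\cdot-x|^{-2})$, which is dominated by $C|y-x|^{-2}$ uniformly in $\varepsilon$ (Lemma \ref{l3.3}); the convergence of the drift term is then carried out in $L^1(P_{\delta_0})$ via the first-moment formula and dominated convergence, with a.s.\ convergence along a subsequence uniformly on compact time intervals, so no delicate a.s.\ small-ball occupation estimate is ever needed.
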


\noindent The proof of Proposition \ref{p1} is involved and hence is deferred to Section \ref{s3.3}.

\subsection{Strong renormalization of the local times in $d=2$ (Theorem \ref{t2})}\label{s2.3}

We notice that in $d=2$, $\Delta g_x= 2\pi \delta_x$ holds in a distributional sense. Then again the martingale problem \eqref{e1.0} will suggest that
\begin{equation}\label{e2.8}
X_t(g_x)=\delta_0(g_x)+M_t(g_x)+\pi L_t^x. 
\end{equation}
We will show that this intuition is correct and the proof indeed is very similar to that of Proposition \ref{p1}. In Section \ref{s3.3},  the proofs of Proposition \ref{p1} and the following one will be given.
\begin{proposition}(Tanaka formula for d=2) \label{p2}
Let $d=2$ and $x\neq 0$ in $\R^2$. Then we have $P_{\delta_0}$-a.s. that
\begin{equation}\label{e2.7}
L_t^x-\frac{1}{\pi} \log \frac{1}{|x|}=\frac{1}{\pi} \Big[X_t(g_x)-M_t(g_x)\Big], \ \forall t\geq 0,
\end{equation}
where $X_t(g_x)$ is continuous in $t$ and $M_t(g_x)$ is a continuous $L^2$ martingale.
\end{proposition}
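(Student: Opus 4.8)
The plan is to apply the martingale problem \eqref{e1.0} with the test function $g_x$, which fails to lie in $C_b^2(\R^2)$ for two reasons: it has a logarithmic singularity at $x$ (so that $\tfrac12\Delta g_x=\pi\delta_x$ only distributionally) and it grows at infinity. I would first dispose of the growth by noting that the range $\bigcup_{s\ge 0}\operatorname{supp}(X_s)$ is a.s.\ bounded (Corollary III.1.7 of Perkins (2002)): multiplying $g_x$ by a smooth cutoff $\chi_R$ equal to $1$ on $B(0,R)$ gives a function in $C_b^2(\R^2)$, and for a.e.\ $\omega$ one may choose $R=R(\omega)$ so large that $X_s$ never charges the region where $\chi_R\ne 1$, so that the cutoff is invisible in \eqref{e1.0} for all $t$. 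Thus it suffices to regularize near $x$. For $0<\varepsilon<|x|$ I would take $g_x^\varepsilon$ to be a smooth radial modification of $g_x$ that equals $g_x$ off $B(x,\varepsilon)$ and whose Laplacian is a nonnegative smooth density $2\pi\rho_\varepsilon^x$; this is possible, e.g.\ by letting $g_x^\varepsilon$ be the Newtonian potential of a smooth radial probability density supported in $B(x,\varepsilon)$, which agrees with $g_x$ outside that ball by the mean-value property, so that $\rho_\varepsilon^x\ge 0$ is an approximate identity at $x$ with $\int\rho_\varepsilon^x=1$. Applying \eqref{e1.0} to $\chi_R g_x^\varepsilon$ and using that $X_s$ stays in $B(0,R)$ gives, $P_{\delta_0}$-a.s.\ for all $t\ge 0$,
\[
X_t(g_x^\varepsilon)=\log|x|+M_t(g_x^\varepsilon)+\pi\int_0^t X_s(\rho_\varepsilon^x)\,ds,
\]
where we used $\varepsilon<|x|$ to get $g_x^\varepsilon(0)=g_x(0)=\log|x|$, and $M_t(g_x^\varepsilon)$ is a.s.\ independent of the choice of large $R$ since the quadratic variation of the difference of two such martingales vanishes.

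Next I would let $\varepsilon\downarrow 0$ termwise. For the drift term, the occupation-density property $\int_0^t X_s(\phi)\,ds=\int\phi(y)L_t^y\,dy$ gives $\int_0^t X_s(\rho_\varepsilon^x)\,ds=\int\rho_\varepsilon^x(y)L_t^y\,dy$; since $x\ne 0$, the map $y\mapsto L_t^y$ is continuous near $x$ (Sugitani's Theorem \ref{t3}, or \eqref{e3.1}), so this converges to $L_t^x$, and because the integral is nondecreasing in $t$ with continuous limit $L_\cdot^x$, the convergence is uniform on compact $t$-sets by Dini. For $X_t(g_x^\varepsilon)$ I would use the mass-concentration Lemma \ref{l2.1}: the bound $X_s(B(x,r))\le C r^2(\log^+(1/r))^2$ makes $\int_{B(x,\delta)}|\log|y-x||\,X_s(dy)$ small uniformly for $s\in[\eta,t]$, while for $s\le\eta$ the support of $X_s$ stays near $0$, hence away from $x$; this yields $\sup_{s\le t}|X_s(g_x)|<\infty$ a.s., continuity of $t\mapsto X_t(g_x)$, and $X_t(g_x^\varepsilon)\to X_t(g_x)$ uniformly on $[0,t]$.

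The substantial step is the martingale term. The $L^2$-isometry of the martingale measure of $X$ provides, for $\phi\in L^2(q_t\,dy)$ with $q_t(y)=\int_0^t p_s(y)\,ds$, a continuous $L^2$-martingale $M_\cdot(\phi)$ with $[M(\phi)]_t=\int_0^t X_s(\phi^2)\,ds$ and $E[M_t(\phi)^2]=\int\phi(y)^2 q_t(y)\,dy$. I would check that $g_x\in L^2(q_t\,dy)$: near $y=x$ (which is away from $0$) $q_t$ is bounded and $(\log|y-x|)^2$ is locally integrable; near $0$, $g_x$ is bounded while $q_t$ behaves like $(1/\pi)\log^+(1/|y|)$ (the fact recalled after Theorem \ref{t5}), which is integrable; at infinity $q_t$ has Gaussian decay. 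Moreover $g_x^\varepsilon\to g_x$ in $L^2(q_t\,dy)$, because $g_x^\varepsilon-g_x$ is supported on the shrinking ball $B(x,\varepsilon)$, on which $|g_x^\varepsilon|\le C+\log(1/\varepsilon)$, so $\int_{B(x,\varepsilon)}(g_x^\varepsilon)^2 q_t\,dy\le C\varepsilon^2(\log(1/\varepsilon))^2\to 0$, while $\int_{B(x,\varepsilon)}g_x^2 q_t\,dy\to 0$ by integrability of $g_x^2$ near $x$. Hence $M_\cdot(g_x^\varepsilon)$ is Cauchy in $L^2$; by Doob's inequality it converges, uniformly on $[0,t]$ and a.s.\ along a subsequence, to a continuous $L^2$-martingale $M_\cdot(g_x)$ with $[M(g_x)]_t=\int_0^t X_s(g_x^2)\,ds$. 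Passing to the limit in the displayed identity gives $X_t(g_x)=\log|x|+M_t(g_x)+\pi L_t^x$ for all $t\ge 0$, $P_{\delta_0}$-a.s., which rearranges to \eqref{e2.7}.

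I expect the martingale convergence to be the main obstacle: one must produce a genuine $L^2$-martingale $M_\cdot(g_x)$ (not merely a local martingale) carrying the claimed occupation-integral quadratic variation, and arrange all three convergences to be locally uniform in $t$ so that the Tanaka identity holds simultaneously for every $t\ge 0$ off a single null set. All the integrability inputs come from the $d=2$ size of $q_t$ near $0$ and from Lemma \ref{l2.1}; running the identical scheme in $d=3$, where now $\tfrac12\Delta g_x=\tfrac12|y-x|^{-2}$ is an honest locally integrable function and $\tfrac12\Delta g_x^\varepsilon\to\tfrac12|y-x|^{-2}$ in $L^1_{\mathrm{loc}}$, proves Proposition \ref{p1}.
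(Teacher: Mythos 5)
Your argument is correct and follows the same overall skeleton as the paper---regularize $g_x$, apply the martingale problem \eqref{e1.0}, and pass to the limit in the four terms using the compact range, the mass-concentration Lemma \ref{l2.1}, the small-time support of $X_s$, and Doob's inequality for the martingale part---but the regularization itself is genuinely different. The paper smooths with the heat semigroup: it applies \eqref{e1.0} to $P_\varepsilon g_x\cdot\chi_N$, removes the cutoff by letting $N\to\infty$ (the step from \eqref{e1.7} to \eqref{e3.5}), uses the exact identity $\tfrac{\Delta}{2}P_\varepsilon g_x=\pi p_\varepsilon^x$ (Lemma \ref{l4.1}) so that the drift is $\pi\int_0^t X_s(p_\varepsilon^x)\,ds$, and identifies its limit with $\pi L_t^x$ by citing Theorem 6.1 of Barlow--Evans--Perkins; the price is the global smoothing estimates of Section \ref{s3.3.1} (items (i)--(iii), resting on Lemmas \ref{l2.6}, \ref{l3.3}, \ref{l4.2}). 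You instead take the logarithmic potential of a radial mollifier, so the error $g_x^\varepsilon-g_x$ is supported in $B(x,\varepsilon)$, $\delta_0(g_x^\varepsilon)=\log|x|$ is exact, the $X_t$- and $M_t$-errors are controlled by local bounds of the type already used for \eqref{e2.2}, and the drift becomes $\pi\int\rho_\varepsilon^x(y)L_t^y\,dy$, identified with $\pi L_t^x$ via the occupation-density formula and Sugitani's continuity of $L$ away from the origin (Theorem \ref{t3}); this trades the external convergence result of BEP91 and the heat-kernel estimates for the mean-value property and locally supported errors, and is arguably more self-contained. Two points deserve a sentence more care but are not gaps: the removal of the spatial cutoff with random $R(\omega)$ should be justified as you indicate (a continuous local martingale is a.s.\ constant on intervals where its quadratic variation is constant, so $M_t(\chi_R g_x^\varepsilon)$ stabilizes once the range lies in $\{\chi_R=1\}$), which parallels the paper's $N\to\infty$ argument; and the membership $g_x\in L^2(q_t\,dy)$ at infinity needs the Gaussian-type decay $q_t(y)=\tfrac{1}{2\pi}\int_{|y|^2/2t}^\infty u^{-1}e^{-u}\,du$ rather than mere boundedness of $q_t$---this does hold, so your construction of the $L^2$-martingale $M_\cdot(g_x)$ with $[M(g_x)]_t=\int_0^t X_s(g_x^2)\,ds$ goes through exactly as you describe.
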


\begin{remark}
Barlow, Evans and Perkins (1991) gives the following Tanaka formula for general initial condition $X_0=\mu$ in $d=2$: If $\int \mu(dy) \log^{+} (1/|y-x|)<\infty$, then
\begin{equation} \label{e2.9}
X_t(g_{\alpha,x})=\mu(g_{\alpha,x})+M_t(g_{\alpha,x})+\alpha \int_0^t X_s(g_{\alpha,x}) ds-L_t^x,\ \forall t\geq 0, P_{\mu}\text{-a.s.,}
\end{equation}
 where 
\begin{equation}
g_{\alpha,x}(y):=\int_0^\infty e^{-\alpha t} p_t(x-y) dt, \text{\  for } \alpha>0, x,y\in \R^2.
\end{equation}
 We can see that $g_{\alpha,x}$ is not well defined for $\alpha=0$ in the case $d=2$ and our result effectively extends this Tanaka formula to the $\alpha=0$ case. This extended Tanaka formula \eqref{e2.7} can be generalized to any compactly supported $\mu \in M_F(\R^2)$ such that $\int \mu(dy) \log^+ (1/|y-x|)<\infty$:
\[L_t^x=\frac{1}{\pi} \Big[X_t(g_x)-M_t(g_x)-\mu(g_x)\Big], \forall t\geq 0, P_{\mu}\text{-a.s.,}\]
The proof is similar to those of Proposition \ref{p1}, Proposition \ref{p2} and Proposition \ref{p5.1}. The idea is to find the appropriate dominating function by using the compact support of $\mu$ to control the $\log^+ (|y-x|)$ part and by using Lemma \ref{l5.3} to control the $\log^+ (1/|y-x|)$ part. We will not give the proof in this paper.
\end{remark}
With Proposition \ref{p2} in hand, Theorem \ref{t2} would follow if we could establish the continuity of $X_t(g_x)$ and $M_t(g_x)$ in $x$ for any fixed $t>0$. Now we consider the two cases $d=2$ and $d=3$ with $X_0=\delta_0$.
\begin{lemma}\label{l2.6}
For any  $u,v \in \R^d\backslash {\{0\}}$,
\begin{equation*}
\Big|\log |u|-\log |v|\Big|  \leq |u-v|^{1/2} (|u|^{-1/2}+|v|^{-1/2}).
\end{equation*}
\end{lemma}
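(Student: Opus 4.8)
This is an elementary inequality, and the plan is to reduce it to a one-variable statement and then close with a short Cauchy--Schwarz (or calculus) estimate.

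\textbf{Step 1: reduction to scalars.} Both sides are symmetric in $u$ and $v$, so I may assume $|u|\ge|v|>0$ and drop the absolute value on the left. The left-hand side depends on $u,v$ only through $|u|$ and $|v|$, while the right-hand side is an increasing function of $|u-v|$ (the factor $|u|^{-1/2}+|v|^{-1/2}$ being positive). By the reverse triangle inequality $|u-v|\ge |u|-|v|\ge 0$, hence $|u-v|^{1/2}\ge(|u|-|v|)^{1/2}$, so it suffices to prove, for all reals $a\ge b>0$,
\[
\log a-\log b\ \le\ (a-b)^{1/2}\bigl(a^{-1/2}+b^{-1/2}\bigr).
\]

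\textbf{Step 2: the scalar estimate.} Write $\log a-\log b=\log(a/b)=\int_b^a t^{-1}\,dt$ and apply Cauchy--Schwarz to the integrand $1\cdot t^{-1}$:
\[
\int_b^a \frac{dt}{t}\ \le\ \Bigl(\int_b^a dt\Bigr)^{1/2}\Bigl(\int_b^a \frac{dt}{t^2}\Bigr)^{1/2}=(a-b)^{1/2}\Bigl(\frac1b-\frac1a\Bigr)^{1/2}=\frac{a-b}{\sqrt{ab}}.
\]
Since $a-b\le a$ gives $(a-b)^{1/2}\le a^{1/2}$, it follows that
\[
\frac{a-b}{\sqrt{ab}}=(a-b)^{1/2}\cdot\frac{(a-b)^{1/2}}{\sqrt{ab}}\ \le\ (a-b)^{1/2}\cdot\frac{a^{1/2}}{\sqrt{ab}}=(a-b)^{1/2}\,b^{-1/2}\ \le\ (a-b)^{1/2}\bigl(a^{-1/2}+b^{-1/2}\bigr),
\]
which is the desired bound. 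Alternatively one may bypass the integral representation and check directly that $g(x):=x^{1/2}-x^{-1/2}-\log x$ satisfies $g(1)=0$ and $g'(x)=(x^{1/2}-1)^2/(2x^{3/2})\ge 0$, so that $\log x\le (x-1)/\sqrt{x}$ for $x\ge 1$; applying this with $x=a/b$ recovers $\log(a/b)\le (a-b)/\sqrt{ab}$ and the proof finishes as above.

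\textbf{Main obstacle.} There is essentially none here: the inequality is soft. The only point deserving a moment's care is the direction of the reduction in Step 1 --- namely that replacing $|u-v|$ by the possibly smaller quantity $|u|-|v|$ weakens the right-hand side, which is why it is enough to treat the scalar case. After that, Step 2 is a one-line Cauchy--Schwarz (or a one-variable calculus) computation.
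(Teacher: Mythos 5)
Your proof is correct and follows essentially the same route as the paper: Cauchy--Schwarz applied to $\log a-\log b=\int_b^a t^{-1}\,dt$, followed by the triangle inequality $\big||u|-|v|\big|\le|u-v|$ to pass from the scalar case to the vector one. The only cosmetic difference is that the paper bounds $\int_b^a t^{-2}\,dt\le b^{-1}$ directly, while you evaluate the integral exactly and then use $(a-b)^{1/2}\le a^{1/2}$; both yield the same bound $(a-b)^{1/2}b^{-1/2}\le(a-b)^{1/2}(a^{-1/2}+b^{-1/2})$.
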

\begin{proof}
Let $0<r_1<r_2$. Then by Cauchy-Schwartz, 
\[ \log r_2-\log r_1=\int_{r_1}^{r_2} x^{-1} dx \leq \Big[\int_{r_1}^{r_2} x^{-2} dx\Big]^{1/2} (r_2-r_1)^{1/2}\leq r_1^{-1/2} (r_2-r_1)^{1/2}.\]
The proof follows by replacing $r_1,r_2$ with $|u|,|v|$ and a triangle inequality.
\end{proof}

\begin{lemma}\label{l2.7}
Let $d=2$ or $3$. Then for any $t>0$, with $P_{\delta_0}$ probability one, $x\mapsto X_t(g_x)$ is continuous for all $x\in \R^d$.
\end{lemma}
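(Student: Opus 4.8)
The plan is to show that $x \mapsto X_t(g_x)$ is a.s. continuous by establishing a uniform modulus of continuity on compact sets via a Kolmogorov-type argument applied to moments, using the Tanaka-type decompositions already available. First I would fix $t>0$ and a large ball $B = B(0,R) \subset \R^d$ and restrict attention to $x, x' \in B$. Writing $g_x(y) - g_{x'}(y) = \log|y-x| - \log|y-x'|$, Lemma \ref{l2.6} gives the pointwise bound
\begin{equation*}
|g_x(y) - g_{x'}(y)| \leq |x-x'|^{1/2}\bigl(|y-x|^{-1/2} + |y-x'|^{-1/2}\bigr),
\end{equation*}
so that
\begin{equation*}
|X_t(g_x) - X_t(g_{x'})| \leq |x-x'|^{1/2}\Bigl(\int |y-x|^{-1/2} X_t(dy) + \int |y-x'|^{-1/2} X_t(dy)\Bigr).
\end{equation*}
Thus it suffices to control $\sup_{x\in B} \int |y-x|^{-1/2} X_t(dy)$, which is finite $P_{\delta_0}$-a.s. by exactly the same dyadic-decomposition argument used to prove \eqref{e2.2} (applied with the even milder singularity $|y-x|^{-1/2}$ in place of $|y-x|^{-1}$, so the series $\sum_n 2^{n/2}(r_0/2^n)^2 \log^+(2^n/r_0)$ converges even more comfortably), together with the $P_{\delta_0}$-a.s. compactness of $\mathrm{supp}(X_t)$.

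Once this a.s. finite random constant $K(\omega) = \sup_{x\in B}\int |y-x|^{-1/2} X_t(dy)$ is in hand, the bound above reads $|X_t(g_x) - X_t(g_{x'})| \leq 2K(\omega)|x-x'|^{1/2}$, which is a uniform $\tfrac12$-Hölder modulus on $B$; letting $R \uparrow \infty$ through a countable sequence then gives continuity on all of $\R^d$, $P_{\delta_0}$-a.s. (For the values $x$ where the integral defining $X_t(g_x)$ could a priori fail to be absolutely convergent, namely $x$ in the support of $X_t$, the same finiteness estimate shows $\int ||y-x|^{-1/2}\,|g_x(y)| X_t(dy)$ is still finite once one also handles the $\log^+|y-x|$ part using compact support of $X_t$, so $X_t(g_x)$ is well defined everywhere.)

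The only real subtlety — and the step I would be most careful about — is the uniformity in $x$ of the estimate from \eqref{e2.2}. The argument there produces, for each fixed $x$, a random radius $r_0$; but Lemma \ref{l2.1} gives a \emph{single} random $r_0(\delta,\omega)$ valid simultaneously for all centers $y \in \R^d$ and all times $t \geq \delta$, so in fact the constant $C$ in \eqref{e2.2} can be taken independent of $x$. Combined with $\sup_{x}$ of the outer $\tfrac{1}{r_0}X_t(1)$ term (which does not depend on $x$ at all), this yields the uniform bound $K(\omega) < \infty$, and the rest is the routine Hölder-continuity conclusion. Alternatively, if one prefers an $L^p$-moment route, one could estimate $\E_{\delta_0}[|X_t(g_x) - X_t(g_{x'})|^{p}]$ for large even $p$ using the cumulant/moment machinery referenced in Section 4 and invoke the Kolmogorov continuity criterion, but the pathwise argument above is cleaner and self-contained given Lemmas \ref{l2.1} and \ref{l2.6}.
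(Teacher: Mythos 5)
Your proposal is correct and follows essentially the same route as the paper: the pointwise bound from Lemma \ref{l2.6}, combined with the uniform-in-center mass estimate from Lemma \ref{l2.1} (as in the derivation of \eqref{e2.2}, applied to $|y-x|^{-1/2}$), giving an a.s.\ Hölder-$\tfrac12$ modulus for $x\mapsto X_t(g_x)$. The uniformity issue you flag is resolved exactly as you suggest, since Lemma \ref{l2.1} provides a single random $r_0$ valid for all centers simultaneously.
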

\begin{proof}
Fix any $x,x'\in \R^d$. Similar to the derivation of \eqref{e2.2}, we use Lemma \ref{l2.1} to see that with $P_{\delta_0}$-probability one, there is some $r_0(\delta,\omega)\in (0,1]$ and some constant $C(d)>0$ such that  for all $x\in \R^d$, \[\int X_t(dy) \frac{1}{|y-x|^{1/2}} \leq \frac{1}{(r_0)^{1/2}} X_t(1)+C.\]  Then use Lemma \ref{l2.6} to get 
\begin{align*}
|X_t(g_x)-X_t(g_{x'})|&\leq |x-x'|^{1/2} \int  \big(\frac{1}{|y-x|^{1/2}}+\frac{1}{|y-x'|^{1/2}}\big) X_t(dy)\\
&\leq 2|x-x'|^{1/2} \Big(\frac{1}{(r_0)^{1/2}} X_t(1)+C\Big).
\end{align*}
Note that $X_t(1)<\infty$ a.s.. Let $|x-x'|\to 0$ to conclude $|X_t(g_x)-X_t(g_{x'})| \to 0$ a.s..
\end{proof}

\begin{lemma} \label{l2.8}
Let $d=2$ or $3$. Then for any $t>0$, under $P_{\delta_0}$ there exists a version of $M_t(g_x)$ that is continuous in $x\in \R^d$.
\end{lemma}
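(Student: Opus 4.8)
The plan is to build the continuous version by a Kolmogorov argument, so the whole task reduces to controlling the spatial increments $M_t(g_x)-M_t(g_{x'})$. By linearity of the stochastic integral against the martingale measure, $M_t(g_x)-M_t(g_{x'})=M_t(g_x-g_{x'})$, and for fixed $x,x'$ this is a continuous $L^2$ martingale in $t$ with square function $[M(g_x-g_{x'})]_t=\int_0^t X_s((g_x-g_{x'})^2)\,ds$ (finite in expectation, as checked below). Hence, by the Burkholder--Davis--Gundy inequality, for every integer $p\ge 1$,
\[
E_{\delta_0}\big[|M_t(g_x)-M_t(g_{x'})|^{2p}\big]\le C_p\, E_{\delta_0}\Big[\Big(\int_0^t X_s\big((g_x-g_{x'})^2\big)\,ds\Big)^{p}\Big].
\]
Squaring the bound of Lemma \ref{l2.6} (with $u=y-x$, $v=y-x'$, so $|u-v|=|x-x'|$) gives $(g_x(y)-g_{x'}(y))^2\le 2|x-x'|\big(|y-x|^{-1}+|y-x'|^{-1}\big)$, so, writing $I^z_t:=\int_0^t X_s(|\cdot-z|^{-1})\,ds$, the right-hand side above is at most $C_p\,|x-x'|^{p}\,E_{\delta_0}\big[(I^x_t+I^{x'}_t)^p\big]$.

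So everything comes down to the estimate
\[
\sup_{z\in\overline{B(0,R)}}E_{\delta_0}\big[(I^z_t)^p\big]<\infty\qquad\text{for each }R>0,\ t>0,\ p\ge 1.
\]
I would obtain this from the moment (cumulant) formula for super-Brownian motion developed in Section~4 (see also Sugitani (1989) and Iscoe (1986)): writing $E_{\delta_0}[(I^z_t)^p]=\int_{[0,t]^p}E_{\delta_0}\big[\prod_{i=1}^p X_{s_i}(|\cdot-z|^{-1})\big]\,ds$, the $p$-fold moment is a finite sum, indexed by binary tree shapes on $p$ leaves, of iterated integrals of the Brownian transition densities against $p$ copies of the kernel $|\cdot-z|^{-1}$. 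Since $|\cdot|^{-1}\in L^1_{\mathrm{loc}}(\R^d)$ for $d\le 3$ (this is where $d\le3$ enters essentially), and convolutions of $|\cdot-z|^{-1}$ with bounded integrable densities stay bounded, every such term is finite; and because these bounds are invariant under translating $z$ they are uniform for $z$ in a compact set. The same computation with $p=1$ also gives $E_{\delta_0}\int_0^t X_s(g_x^2)\,ds=\int q_t(y)(\log|y-x|)^2\,dy<\infty$ uniformly for $x$ in compacts, which is precisely what makes every $M_t(g_x)$ — including $M_t(g_0)$ with $g_0(y)=\log|y|$ — an honest $L^2$ martingale and validates the bracket identity used above.

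Combining the two displays, $E_{\delta_0}\big[|M_t(g_x)-M_t(g_{x'})|^{2p}\big]\le C_{R,t,p}\,|x-x'|^{p}$ for all $x,x'\in\overline{B(0,R)}$. Taking $p=d+1$ makes the exponent $p$ strictly exceed $d$, so Kolmogorov's continuity criterion yields a modification of $x\mapsto M_t(g_x)$ that is (Hölder) continuous on $\overline{B(0,R)}$; since for each fixed $x$ this modification equals $M_t(g_x)$ a.s., the modifications obtained for different $R$ agree on overlaps and glue to a single version continuous on all of $\R^d$, as claimed. In particular $x=0$ needs no separate treatment: the moment bound is uniform over a neighbourhood of $0$, so the continuous version automatically extends across the origin.

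I expect the main obstacle to be exactly the uniform moment estimate $\sup_{z\in\overline{B(0,R)}}E_{\delta_0}[(I^z_t)^p]<\infty$ — pushing the integrable-but-genuine singularity of $|\cdot-z|^{-1}$ through the $p$-fold moment formula and verifying that none of the tree-indexed iterated integrals degenerate as $z$ ranges over a neighbourhood of $0$; this is where the restriction $d\le 3$ is used and where the bookkeeping of Section~4 is needed. As a consistency check (and an alternative route away from the origin) one can rewrite $M_t(g_x)=X_t(g_x)-\log|x|-\tfrac12\int_0^t X_s(|\cdot-x|^{-2})\,ds$ for $x\ne 0$ in $d=3$ via Proposition \ref{p1}, respectively $M_t(g_x)=X_t(g_x)-\pi L^x_t-\log|x|$ in $d=2$ via Proposition \ref{p2}, and deduce continuity on $\R^d\setminus\{0\}$ from Lemma \ref{l2.7} together with a dominated-convergence argument for the remaining $x$-dependent term; but that identity is useless at $x=0$, so the Kolmogorov argument is the one to carry out in full.
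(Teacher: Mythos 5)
Your argument is essentially the paper's own proof: a Burkholder--Davis--Gundy bound on $E_{\delta_0}[|M_t(g_x)-M_t(g_{x'})|^{2p}]$, the elementary estimate of Lemma \ref{l2.6} to reduce to moments of $\int_0^t X_s(|\cdot-z|^{-1})\,ds$, the uniform moment bound (which is exactly Lemma \ref{l2.9}, proved in Section \ref{s4.1} via the Sugitani-style cumulant machinery you invoke), and then Kolmogorov's continuity criterion with the moment order taken large enough. The only cosmetic difference is that the paper's Lemma \ref{l2.9} gives the bound uniformly over all $x\in\R^d$ rather than over compacts, which changes nothing in the conclusion.
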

\noindent For each $n\geq 1$, by Burkholder-Davis-Gundy Inequality, there exists some $C_n>0$ such that 
\begin{equation}
E_{\delta_0}\Big[\big| M_t(g_x)-M_t(g_{x'})\big|^{4n}\Big] \leq C_n 
E_{\delta_0}\Big[\Big( \int_0^t ds \int X_s(dy) (g_x(y)-g_{x'}(y))^2\Big)^{2n}\Big] \label{e2.11}
\end{equation}
for any $x,x'\in \R^d$. By Lemma \ref{l2.6}, we have 
\begin{equation}
(g_x(y)-g_{x'}(y))^2 \leq 2|x-x'| (\frac{1}{|y-x|}+\frac{1}{|y-x'|}).\label{e2.12}
\end{equation}
 Then 
\begin{align}
 E_{\delta_0} \Big[\big| & M_t(g_{x})-M_t(g_{x'})\big|^{4n}\Big]   \leq C_n (2|x-x'|)^{2n} 2^{2n} \nonumber\\
&\times E_{\delta_0}\Big[\big( \int_0^t ds \int X_s(dy) \frac{1}{|y-x|}\big)^{2n}+\big( \int_0^t ds \int X_s(dy) \frac{1}{|y-x'|}\big)^{2n}\Big]. \label{e2.13}
\end{align}
By using moment estimates from Sugitani (1989), we have the following lemma:

\begin{lemma}\label{l2.9}
Let $d=2$ or $3$ and $X_0=\mu \in M_F(\R^d)$. Fix any $t>0$. Then for each $n\geq 1$, there exists some $C=C(t,n,d,\mu(1))>0$ such that for all $x\in \R^d$
\[E_{\mu} \Big[\big( \int_0^t ds \int X_s(dy) \frac{1}{|y-x|}\big)^{2n}\Big] \leq C<\infty.\]
\end{lemma}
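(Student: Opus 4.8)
The plan is to bound the cumulants of $I_t^x:=\int_0^t X_s(\psi_x)\,ds$, where $\psi_x(y):=1/|y-x|$, uniformly in $x$, and then read off the $2n$-th moment bound from the moment--cumulant formula. Since $\psi_x$ is unbounded (though locally integrable in $d\le 3$), the moment formulas for super-Brownian motion are not directly applicable, so I would first run everything with the bounded truncations $\psi_x^{(N)}:=\psi_x\wedge N$ and $I_t^{x,(N)}:=\int_0^t X_s(\psi_x^{(N)})\,ds$, obtain bounds uniform in $N$, and pass to the limit by monotone convergence at the end. Throughout, $P_r$ denotes the Brownian transition semigroup and $p_r$ its density.

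The one quantitative input is a heat-kernel estimate, uniform in $x$: for every $r>0$,
\[
\sup_{y\in\R^d}P_r\psi_x(y)=\int p_r(v)\,|v|^{-1}\,dv=c_d\,r^{-1/2},\qquad c_d:=\int p_1(w)\,|w|^{-1}\,dw<\infty\ \ (d\le 3).
\]
Indeed $P_r\psi_x(y)=(p_r*|\cdot|^{-1})(y-x)$, and writing $|v|^{-1}=\int_0^\infty\mathbf 1_{\{|v|<1/a\}}\,da$ together with the fact that $u\mapsto\int_{B(u,\rho)}p_r$ is maximised at $u=0$ (because $p_r$ is radially nonincreasing) shows that $p_r*|\cdot|^{-1}$ is maximised at the origin; the Brownian scaling $p_r(v)=r^{-d/2}p_1(v/\sqrt r)$ then produces the exponent $-1/2$ and the finiteness of $c_d$ in $d\le 3$. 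Hence, with $\psi_x$ replaced by any $\psi_x^{(N)}\le\psi_x$,
\[
\sup_{x,y,N}\int_0^s P_r\psi_x^{(N)}(y)\,dr\le 2c_d\,s^{1/2}=:K(t),\qquad 0\le s\le t.
\]

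Next I would record the cumulant recursion. Setting $\kappa_m^{(N)}(t,y):=\N_y\big[(I_t^{x,(N)})^m\big]$, repeated differentiation of the log-Laplace functional (equivalently, the moment formula for super-Brownian motion — both valid since $\psi_x^{(N)}$ is bounded) gives $\kappa_1^{(N)}(t,y)=\int_0^t P_r\psi_x^{(N)}(y)\,dr$ and, for $m\ge 2$,
\[
\kappa_m^{(N)}(t,y)=\frac12\sum_{j=1}^{m-1}\binom{m}{j}\int_0^t P_{t-s}\big(\kappa_j^{(N)}(s,\cdot)\,\kappa_{m-j}^{(N)}(s,\cdot)\big)(y)\,ds,
\]
while $E_\mu\big[(I_t^{x,(N)})^m\big]=\sum_{\pi\in\mathcal P_m}\prod_{B\in\pi}\int\kappa_{|B|}^{(N)}(t,y)\,\mu(dy)$, the sum running over set-partitions $\pi$ of $\{1,\dots,m\}$ (all summands nonnegative, since each $\kappa_{|B|}^{(N)}\ge 0$). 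As $I_s^{x,(N)}$ is nondecreasing in $s$, so is each $\kappa_m^{(N)}(s,y)$; using $\|P_{t-s}h\|_\infty\le\|h\|_\infty$ in the recursion and putting $\hat K_m(t):=\sup_{x,N,y}\kappa_m^{(N)}(t,y)$, one gets $\hat K_1(t)\le K(t)<\infty$ and $\hat K_m(t)\le\tfrac{t}{2}\sum_{j=1}^{m-1}\binom mj\hat K_j(t)\hat K_{m-j}(t)$, so $\hat K_m(t)<\infty$ for every $m$ by induction. Therefore $E_\mu\big[(I_t^{x,(N)})^m\big]\le\sum_{\pi\in\mathcal P_m}\prod_{B\in\pi}\hat K_{|B|}(t)\,\mu(1)$, uniformly in $x$ and $N$, and letting $N\to\infty$ with monotone convergence ($I_t^{x,(N)}\uparrow I_t^x$ since $\psi_x^{(N)}\uparrow\psi_x$ and $X_s\ge 0$) yields $E_\mu\big[(I_t^x)^{2n}\big]\le\sum_{\pi\in\mathcal P_{2n}}\prod_{B\in\pi}\hat K_{|B|}(t)\,\mu(1)=:C(t,n,d,\mu(1))$, which is the claim.

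The part that genuinely needs care — everything else is routine — is the legitimacy of invoking the moment/cumulant machinery for the singular weight $\psi_x$: a priori $E_\mu[(I_t^x)^m]$ could be infinite (finiteness of these moments is essentially the content of the lemma), so the truncation $\psi_x^{(N)}$, the bounds uniform in $N$ furnished by the cumulant recursion together with the $O(r^{-1/2})$ heat-kernel estimate, and the final monotone passage to the limit are the load-bearing steps. An alternative that bypasses the cumulant formalism is to bound an exponential moment directly: for $\theta\le\theta_0(t):=(2K(t)t)^{-1}$ a short bootstrap on the mild equation $U_s=\theta\int_0^s P_r\psi_x\,dr+\tfrac12\int_0^s P_{s-r}(U_r^2)\,ds$ gives $\sup_{s\le t,\,y}U_s(y)\le 2\theta K(t)$, whence (again via the truncations and the exponential duality for super-Brownian motion) $E_\mu[\exp(\theta_0(t)I_t^x)]\le\exp(2\theta_0(t)K(t)\mu(1))$, and the moment bound follows from $y^{2n}\le(2n)!\,\theta_0(t)^{-2n}e^{\theta_0(t)y}$.
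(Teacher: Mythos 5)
Your proposal is correct, and at its core it is the same argument the paper uses: control the cumulants of $\int_0^t X_s(f_x)\,ds$, $f_x(y)=1/|y-x|$, through the quadratic recursion they satisfy, feed in the uniform-in-$x$ estimate $\int_0^t\sup_y P_sf_x(y)\,ds\le C\sqrt t$, and convert cumulant bounds into moment bounds. The differences are only in the packaging. For the singular weight, the paper mollifies and localizes ($f_x^\varepsilon=P_\varepsilon f_x$ times the cutoff $\chi_N$) so as to quote Sugitani's identity \eqref{e4.2} verbatim, and then passes to the limit by dominated convergence using the bounds of Lemma \ref{l4.3}; you truncate ($f_x\wedge N$), obtain bounds uniform in $N$, and finish by monotone convergence, which avoids the limiting bookkeeping of Lemma \ref{l4.4} but requires the moment/cumulant formulas for bounded (not compactly supported) integrands --- a standard fact, and your fallback via exponential duality and the mild equation covers it. For the cumulant-to-moment step, the paper works with the centered variable and Sugitani's Lemma 3.1 (Lemma \ref{l4.5}) and then undoes the centering, while you work directly with uncentered moments via the set-partition moment--cumulant formula for the $\N_y$-moments $\kappa_m$, which is slightly cleaner; your recursion for $\kappa_m$ is the correct canonical-measure analogue of the paper's recursion \eqref{e4.3} for $v_n$. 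Finally, your kernel estimate $\sup_y P_rf_x(y)=c_d\,r^{-1/2}$ (layer-cake/rearrangement plus scaling) is a pointwise-in-time sharpening of the paper's Lemma \ref{l4.2}, which is proved there by a Bessel-process argument; either suffices after integrating in time. Two trivial slips, neither a gap: in your final bound the factor should be $\mu(1)^{|\pi|}$ rather than $\mu(1)$ (harmless, since the constant may depend on $\mu(1)$ arbitrarily), and $c_d<\infty$ fails in $d=1$, which is irrelevant since the lemma concerns $d=2,3$.
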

\noindent The proof of Lemma \ref{l2.9} will be given in Section \ref{s4.1}. With Lemma \ref{l2.9} in hand, we can proceed to the
\begin{proof}[Proof of Lemma \ref{l2.8}] 
By using Lemma \ref{l2.9} and \eqref{e2.13}, we have \[E_{\delta_0} \Big[\big| M_t(g_x)-M_t(g_{x'})\big|^{4n}\Big] \leq C_n (2|x-x'|)^{2n} 2^{2n}  C=C(t,n,d) |x-x'|^{2n}.\] By taking $n$ large enough we may apply Kolmogorov's continuity criterion to obtain a continuous version of $M_t(g_x)$ in $x$. 
\end{proof}
 \noindent Before proceeding to the proof of Theorem \ref{t2}, we state the following lemma:
\begin{lemma}\label{l0.1}
Let $h_x(t)$ be a non-decreasing function on $\{t>0\}$ for each $x\neq 0$. If $\lim_{x\to 0} h_x(q)=h_0(q)$
for all rational $q>0$, where $h_0(t)$ is continuous on $\{t>0\}$, then $\lim_{x\to 0} h_x(t)=h_0(t)$ holds for all $t>0$.
\end{lemma}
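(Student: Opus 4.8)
The plan is a standard sandwiching argument that trades monotonicity in the $t$-variable for density of $\Q$ together with continuity of the limiting function. Fix $t>0$ and let $\eps>0$. Since $h_0$ is continuous at $t$, choose $\delta>0$ so that $|h_0(q)-h_0(t)|<\eps$ whenever $|q-t|<\delta$, and then pick rational numbers $q_1,q_2$ with $t-\delta<q_1<t<q_2<t+\delta$ (possible because $\Q$ is dense in $\R$). Because each $h_x$ (for $x\neq 0$) is non-decreasing on $\{t>0\}$, we have the pointwise bounds
\[
h_x(q_1)\ \le\ h_x(t)\ \le\ h_x(q_2),\qquad x\neq 0.
\]

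Next I would pass to the limit $x\to 0$ in this chain of inequalities. The hypothesis gives $h_x(q_i)\to h_0(q_i)$ for $i=1,2$, so that
\[
\liminf_{x\to 0} h_x(t)\ \ge\ h_0(q_1)\ >\ h_0(t)-\eps,\qquad
\limsup_{x\to 0} h_x(t)\ \le\ h_0(q_2)\ <\ h_0(t)+\eps .
\]
Since $\eps>0$ was arbitrary, $\liminf_{x\to 0}h_x(t)=\limsup_{x\to 0}h_x(t)=h_0(t)$, i.e.\ $\lim_{x\to 0}h_x(t)=h_0(t)$, which is the desired conclusion.

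There is essentially no obstacle here; the only points that require a moment's care are that the limit is taken along the continuum $x\to 0$ rather than along a sequence --- which is why I phrase everything through $\liminf$ and $\limsup$ instead of extracting subsequences --- and that one must choose rational approximants on \emph{both} sides of $t$. Note that one never needs $h_0$ itself to be monotone (although it is, being a pointwise limit on $\Q$ of non-decreasing functions): continuity of $h_0$ at the single point $t$ is all that is used.
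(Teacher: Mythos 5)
Your proof is correct, and it is precisely the elementary density/monotonicity sandwiching argument that the paper invokes (the paper simply cites "the elementary density argument in Helly's selection theorem" without writing it out). Nothing further is needed.
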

\begin{proof}
This follows by the elementary density argument in Helly's selection theorem.
\end{proof}
\noindent Now we are ready to turn to the
\begin{proof}[Proof of Theorem \ref{t2}]
Proposition \ref{p2}, Lemma \ref{l2.7} and Lemma \ref{l2.8} imply that for any $t>0$, we have $P_{\delta_0}$-a.s. that $L_t^x-(1/\pi) \log (1/|x|) \to M_t(g_0)-X_t(g_0)$ as $x\to 0$.  Let $q_n$ be all the rationals in $\{t>0\}$ and then choose $\omega$ outside a null set $N$ such that for all $n$, we have $L_{q_n}^x-(1/\pi) \log (1/|x|) \to M_{q_n}(g_0)-X_{q_n}(g_0)$ as $x\to 0$. One can check that for any $T>0$, as $\varepsilon \downarrow 0$,
\[\sup_{ t \leq T} |M_t(P_\varepsilon g_0) -M_t(g_0)|\xrightarrow[]{L^2}  0, \text{ and } \sup_{0<t \leq T} |X_t(P_\varepsilon g_0) -X_t(g_0)|\to 0.\] The proof will be given in (ii) and (iii) of Section \ref{s3.3.1}. Therefore $M_t(g_0)$ and $X_t(g_0)$ are continuous on $\{t>0\}$. Note that for each $x\neq 0$, $t\mapsto L_{t}^x-(1/\pi) \log (1/|x|)$ is a non-decreasing function on $\{t>0\}$. So use Lemma \ref{l0.1} to conclude that for all $t>0$, we have $L_t^x-(1/\pi) \log (1/|x|) \to M_t(g_0)-X_t(g_0)$ as $x\to 0$ . The $t=\infty$ case follows since the extinction time $\zeta<\infty$, $P_{\delta_0}$-a.s..  
\end{proof}

\section{Remaining Proof of Renormalization in $d=3$ (Theorem \ref{t1})}

\subsection{Convergence in distribution}\label{s3.1}
In Section \ref{s2.2} we have reduced the proof of the convergence in distribution of the renormalized local time in Theorem \ref{t1} to the proof of Lemma \ref{l2.2}. Now we will finish the
\begin{proof}[Proof of Lemma \ref{l2.2}]
Proposition \ref{p1} and \eqref{e2.6.1} imply
\begin{equation*}
[M(\phi_x)]_t=2   c_{\ref{t1}}^2 \Big(X_t(g_x)-\delta_0(g_x)- M_t(g_x)\Big).
\end{equation*}
Note that $\delta_0(g_x)=-\log (1/|x|)$. Then
\begin{eqnarray*}
 & & \frac{[M(\phi_x)]_t-2c_{\ref{t1}}^2 \log (1/|x|) }{2c_{\ref{t1}}^2 \log (1/|x|)}= \frac{2c_{\ref{t1}}^2 \Big(X_t(g_x)- M_t(g_x)\Big)}{2c_{\ref{t1}}^2 \log (1/|x|)} \xrightarrow[]{a.s.} 0 \text{ as } x \to 0.
\end{eqnarray*} 
The a.s. convergence follows from Lemma \ref{l2.7} and Lemma \ref{l2.8}. Hence we have shown that 
\begin{equation}
\tau_x(t):=\frac{[M(\phi_x)]_t}{2c_{\ref{t1}}^2 \log (1/|x|)} \xrightarrow[]{a.s.} 1, \text{ as } x\to 0.
\end{equation}
Recall from \eqref{e2.5.1} that we can find some Brownian motion $B^{x} (t)$ such that
\[
\frac{M_t(\phi_x)}{(2c_{\ref{t1}}^2 \log (1/|x|))^{1/2}}=B^{x} \Big(\frac{[M(\phi_x)]_t}{2c_{\ref{t1}}^2 \log (1/|x|)}\Big):=B_{\tau_x(t)}^{x}.
\]
Let $h$ be a bounded and uniformly continuous function on $\R$. Let $\varepsilon>0$ and choose $\delta>0$ such that $|h(x)-h(y)|<\varepsilon$ holds for any $x,y \in \R$ with $|x-y|<\delta$. Then
\begin{equation*}
E_{\delta_0} |h(B_{\tau_x(t)}^{x})-h(B_1^{x})| \leq \varepsilon+ 2 \|h\|_{\infty} \cdot P_{\delta_0}(|B_{\tau_x(t)}^{x}-B_1^{x}|>\delta).\\
\end{equation*}
If $\gamma>0$, then
\begin{align*}
   P_{\delta_0}(|B_{\tau_x(t)}^{x}-B_1^{x}|>\delta) \leq& P_{\delta_0}(|B_{\tau_x(t)}^{x}-B_1^{x}|>\delta,|\tau_x(t)-1|<\gamma)+P_{\delta_0}(|\tau_x(t)-1|>\gamma)\\
 \leq& P_{\delta_0}(\sup_{|s-1| \leq \gamma} |B_s^{x}-B_1^{x}|> \delta)+P_{\delta_0}(|\tau_x(t)-1|>\gamma)\\
  =& P(\sup_{|s-1| \leq \gamma} |B_s-B_1|> \delta)+P_{\delta_0}(|\tau_x(t)-1|>\gamma)\\
  <& \ \varepsilon+P_{\delta_0}(|\tau_x(t)-1|>\gamma), \text{ if we pick } \gamma \text{ small enough.}
\end{align*}
Since $\tau_x(t)$ converges a.s. to 1 by (3.1), for $|x|$ small enough, we have $P_{\delta_0}(|\tau_x(t)-1|>\gamma)<\varepsilon$ and hence
 \[E_{\delta_0}|h(B_{\tau_x(t)}^{x})-h(B_1^{x})|\leq \varepsilon+ 2 \|h\|_{\infty} 2 \varepsilon.\] 
 Therefore
\begin{equation*}
\frac{M_t(\phi_{x})}{(2c_{\ref{t1}}^2 \log (1/|x|))^{1/2}}=B_{\tau_x(t)}^{x} \xrightarrow[]{d} Z \text{\ as \ } x\to 0,
\end{equation*}
and the proof is complete.
\end{proof}

\subsection{Independence of $X$ and $Z$}\label{s3.2}
Throughout this section we write $E$ for $E_{\delta_0}$ for simplicity (suppressing the dependence on initial condition $\delta_0$).  Fix $0<t< \infty$ and a sequence $x_n \to 0$. Let $Z_t^{x_n}=(L_t^{x_n}-c_{\ref{t1}}/|x_n|)/(2c_{\ref{t1}}^2\log 1/|x_n|)^{1/2}$. By tightness of each component in $(X,Z^{x_n}_t)$, we clearly have tightness of $(X,Z^{x_n}_t)$ as $x_n\to 0$, so it suffices to show all weak limit points coincide. By taking a subsequence we may assume that $(X,Z^{x_n}_t)$ converges weakly to $(X,Z)$. Let $(X,Z)$ be defined on $(\widetilde{\Omega},\widetilde{\cF}_t,\widetilde{P})$ where $X$ is super-Brownian motion and $Z$ is standard normal under $\widetilde{P}$. For any $0<t_1<t_2<\cdots<t_m$, let $\phi_0: \R\to \R$ and $\psi_i: M_F \to \R$, $1\leq i\leq m$ be bounded continuous.  We have \[\lim_{n\to\infty} E\Big[\psi_1(X_{t_1})\cdots \psi_m(X_{t_m}) \phi_0(Z^{x_n}_t)\Big]=\widetilde{E}\Big[\psi_1(X_{t_1})\cdots \psi_m(X_{t_m}) \phi_0(Z)\Big]\] since we assume that $(X,Z^{x_n}_t)$ converges weakly to $(X,Z)$.\\

 Pick $\varepsilon>0$ such that $\varepsilon<t_1$ and $\varepsilon<t$. Let $n\to \infty$, by \eqref{e3.1} we get 
 \begin{equation}
  Z_t^{x_n}-Z_\varepsilon^{x_n}=\frac{L_t^{x_n}-L_\varepsilon^{x_n}}{(2c_{\ref{t1}}^2 \log (1/|x_n|))^{1/2}} \to 0\text{ a.s.},
 \end{equation}
 and hence $(0,Z_t^{x_n}-Z_\varepsilon^{x_n}) \to (0,0)$ a.s.. By Theorem 25.4 in Billingsley (1999)  \[(X,Z_\varepsilon^{x_n})=(X,Z_t^{x_n})-(0,Z_t^{x_n}-Z_\varepsilon^{x_n})\xrightarrow[]{d} (X,Z). \]
 Therefore  since $Z_\varepsilon^{x_n} \in \cF_\varepsilon^{X}$,
\begin{eqnarray*}
 I &:=& \widetilde{E} \Big[\psi_1(X_{t_1})\cdots \psi_m(X_{t_m})  \cdot \phi_0(Z)\Big]= \lim_{n\to \infty} E \Big[\psi_1(X_{t_1})\cdots \psi_m(X_{t_m})  \cdot \phi_0(Z_\varepsilon^{x_n})\Big]\\
  &=& \lim_{n\to \infty} E \Big[E \Big( \psi_1(X_{t_1})\cdots \psi_m(X_{t_m}) \big| \cF_\varepsilon^X \Big) \cdot \phi_0(Z_\varepsilon^{x_n})\Big]\\
  &=&\lim_{n\to \infty} E \Big[E_{X_\varepsilon} \Big( \psi_1(X_{t_1-\varepsilon})\cdots \psi_m(X_{t_m-\varepsilon}) \Big) \cdot \phi_0(Z_\varepsilon^{x_n})\Big].
\end{eqnarray*}
Define \[F_\varepsilon(\nu):=E_\nu\Big(\psi_1(X_{t_1-\varepsilon})\cdots \psi_m(X_{t_m-\varepsilon})\Big)\] for $\nu\in M_F$. We claim  $F_\varepsilon \in C_b(M_F)$. For $m=1$ we have \[F_\varepsilon(\nu)=E_\nu \Big( \psi_1(X_{t_1-\varepsilon})\Big).\]
By Theorem II.5.1 in Perkins (2002), if $T_t \psi(\nu) \equiv E_\nu \psi(X_t)$, then $T_t: C_b(M_F) \to C_b(M_F)$ so $F_\varepsilon = T_{t_1-\varepsilon} \psi_1 \in C_b(M_F)$ since $\psi_1 \in C_b(M_F)$. For $m=2$, 
\begin{eqnarray*}
F_\varepsilon(\nu)= E_\nu \Big[  \psi_{1}(X_{t_{1}-\varepsilon}) E_\nu \Big(\psi_2(X_{t_2-\varepsilon}) \big| \cF_{t_{1}-\varepsilon}^X \Big)\Big]= E_\nu \Big[  \psi_{1}(X_{t_{1}-\varepsilon}) \bar{P}_{t_2-t_{1}} \psi_2(X_{t_{1}-\varepsilon})  \Big].
\end{eqnarray*}
It is then reduced to the case $m=1$ with $\widetilde \psi_1=\psi_1 \bar{P}_{t_2-t_{1}} \psi_2$. The general case follows by a simple induction in $m$.\\

Therefore by the weak convergence of $(X,Z_\varepsilon^{x_n})$ to $(X,Z)$, we have \[ I=\lim_{n\to \infty} E \Big[F_\varepsilon(X_\varepsilon) \cdot \phi_0(Z_\varepsilon^{x_n})\Big]=\widetilde{E} \Big[F_\varepsilon(X_\varepsilon) \cdot \phi_0(Z)\Big].\]
Blumenthal 0-1 law implies that $\widetilde{\cF}_{0+}^X:=\bigcap_{s>0} \widetilde{\cF}_{s}^X$ is trivial and so the martingale convergence theorem gives us that as $\varepsilon \to 0$, \[F_\varepsilon(X_\varepsilon)=\widetilde{E}  \Big( \psi_1(X_{t_1})\cdots \psi_m(X_{t_m})  \big| \widetilde{\cF}_\varepsilon^X \Big) \xrightarrow[]{L^1 } \widetilde{E}  \Big( \psi_1(X_{t_1})\cdots \psi_m(X_{t_m}) \Big).\] Therefore
\begin{align*}
 & \widetilde{E} \Big[\psi_1(X_{t_1})\cdots \psi_m(X_{t_m})  \cdot \phi_0(Z)\Big]= I= \lim_{\varepsilon \to 0 }\widetilde{E} \Big[ F_\varepsilon(X_\varepsilon) \cdot \phi_0(Z)\Big]\\
 =& \widetilde{E} \Big[\widetilde{E}  \Big( \psi_1(X_{t_1})\cdots \psi_m(X_{t_m}) \Big) \cdot  \phi_0(Z)\Big]= \widetilde{E} \Big( \psi_1(X_{t_1})\cdots \psi_m(X_{t_m}) \Big) \cdot \widetilde{E} \phi_0(Z).
\end{align*}

The above functionals are a determining class on $C([0,\infty),M_F) \times \R$ and so we get weak
convergence of $(X,Z^x_t)$ to $(X,Z)$ where the latter are independent.

\subsection{Proofs of Proposition \ref{p1} and Proposition \ref{p2}}\label{s3.3}

We first consider $d=2$ or $d=3$. The standard mollifier $\eta\in C^\infty(\R^d)$ is defined by 
\begin{equation}
\eta(x):=C(d) \exp\Big(\frac{1}{|x|^2-1}\Big) 1_{|x|<1 },\label{e4.8}
\end{equation}
the constant $C(d)$ selected such that $\int_{\R^d} \eta dx=1$. For any $N\geq 1$, if $\chi_{N}$ is the convolution of $\eta$ and the indicator function of the ball $B(0,N)$, then
\begin{equation}\label{e4.8.1}
\chi_{N} (x)=\int_{\R^d} 1_{\{|x-y|<N\}} \eta (y) dy=\int_{|y|<1} 1_{\{|x-y|<N\}} \eta(y) dy.
\end{equation}
One can check that $\chi_{N}$ is a $C^{\infty}$ function with support in $B(0,N+1)$, and 
\[\chi_{N} (x)=\int_{|y|<1} 1_{\{|x-y|<N\}} \eta (y) dy=\int_{|y|<1} \eta(y) dy=1 \text{\ for \ }|x|<N-1.\]
 Let $x\neq 0$. Recall that $g_x(y)=\log|y-x|$. Let $(P_t)$ be the Markov semigroup of $d$-dimensional Brownian motion, then for any $\varepsilon>0$, $P_\varepsilon g_x$ is a $C^{\infty}$ function and in particular $P_{\varepsilon} g_{x} \cdot \chi_N \in C_b^2(\R^d)$, so the martingale problem \eqref{e1.0} implies that $P_{\delta_0}$-a.s. we have
 \begin{equation}\label{e1.7}
 X_t(P_{\varepsilon} g_{x} \cdot \chi_N)=\delta_0(P_{\varepsilon} g_{x} \cdot \chi_N)+ M_t(P_{\varepsilon} g_{x} \cdot \chi_N)+\int_0^t X_s(\frac{\Delta}{2} (P_{\varepsilon} g_{x} \cdot \chi_N))  ds,  \ \forall t\geq 0,
 \end{equation}
 where $M_t(P_{\varepsilon} g_{x} \cdot \chi_N)$ is a martingale with quadratic variation \[[M(P_{\varepsilon} g_{x} \cdot \chi_N)]_t=\int_0^t X_s\Big((P_{\varepsilon} g_{x} \cdot \chi_N)^2\Big) ds.\] One can check that $P_{\varepsilon} g_{x} \cdot \chi_N \uparrow P_{\varepsilon} g_{x}$ as $N \uparrow \infty$. Then use monotone convergence theorem to see that $\delta_0(P_{\varepsilon} g_{x} \cdot \chi_N) \to \delta_0(P_{\varepsilon} g_{x})$. By the compactness of the support of super-Brownian motion (see Corollary III.1.7 of Perkins (2002)), we have with $P_{\delta_0}$-probability one, for $N(\omega)$ large enough, $\int_0^\infty X_s(B(0,N)^c) ds=0$ and so obtain \[\sup_{t\leq T} \Big|X_t(P_{\varepsilon} g_{x} \cdot \chi_N)- X_t(P_{\varepsilon} g_{x})\Big|\to 0 \text{ as } N \to \infty,\]  and \[\sup_{t\leq T}\Big|\int_0^t X_s(\frac{\Delta}{2} P_{\epsilon} g_{x} \cdot \chi_N)  ds - \int_0^t X_s(\frac{\Delta}{2} P_{\epsilon} g_{x})  ds\Big| \to 0 \text{ as } N \to \infty.\]
 On the other hand, we can use Dominated Convergence Theorem to get
 \[E_{\delta_0}\Big[\Big(\sup_{t\leq T}|M_t(P_{\varepsilon} g_{x} \cdot \chi_N) - M_t(P_{\varepsilon} g_{x})|\Big)^2\Big] \to 0 \text{ as } N \to \infty.\] Take appropriate subsequence $N_k \to \infty$ to conclude with $P_{\delta_0}$-probability one,
  \begin{equation}\label{e3.5}
 X_t(P_{\varepsilon} g_x)=\delta_0(P_{\varepsilon} g_x)+ M_t(P_{\varepsilon} g_x)+\int_0^t X_s(\frac{\Delta}{2} P_{\varepsilon} g_x) ds,  \ \forall t\geq 0,
 \end{equation}
 where $M_t(P_{\varepsilon} g_x)$ is a martingale with quadratic variation \[[M(P_{\varepsilon} g_x)]_t=\int_0^t X_s\Big((P_{\varepsilon} g_x)^2\Big) ds.\] Now we want to let $\varepsilon \downarrow 0$ in  \eqref{e3.5} to establish a.s. convergence.

\subsubsection{Prelimineries}\label{s3.3.1}
\begin{lemma}\label{l3.3}
Let $d\geq 1$, for any $0<\alpha<d$, there exists a constant $C=C(\alpha,d)>0$ such that for any $x\neq 0$ in $\R^d$ and $t>0$, 
\[
\int_{\R^d} p_t(y) \frac{1}{|y-x|^\alpha} dy\leq C \frac{1}{|x|^\alpha}.
\]
\end{lemma}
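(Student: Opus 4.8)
The statement to prove is that for $0<\alpha<d$ there is a constant $C=C(\alpha,d)$ with
$\int_{\R^d} p_t(y)\,|y-x|^{-\alpha}\,dy \le C\,|x|^{-\alpha}$
for all $x\neq 0$ and all $t>0$. The plan is to reduce to the worst case by scaling and then treat that case by a simple dyadic decomposition of the integration region.

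First I would normalize using Brownian scaling. Writing $p_t(y)=t^{-d/2}p_1(y/\sqrt t)$ and substituting $y=\sqrt t\,z$, one gets
$\int_{\R^d} p_t(y)|y-x|^{-\alpha}\,dy = t^{-\alpha/2}\int_{\R^d} p_1(z)\,|z-x/\sqrt t|^{-\alpha}\,dz$.
So with $w=x/\sqrt t$ and $F(w):=\int p_1(z)|z-w|^{-\alpha}\,dz$, the claim becomes $t^{-\alpha/2}F(w)\le C\,|x|^{-\alpha}=C\,t^{-\alpha/2}|w|^{-\alpha}$, i.e.\ it suffices to prove the single bound $F(w)\le C\,|w|^{-\alpha}$ for all $w\neq 0$. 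This is now a purely analytic inequality about a fixed Gaussian.

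To establish $F(w)\le C|w|^{-\alpha}$ I would split the $z$-integral according to whether $z$ is near $w$ or not. On the region $|z-w|\ge |w|/2$ we trivially bound $|z-w|^{-\alpha}\le (2/|w|)^{\alpha}$ and integrate $p_1$ to get a contribution $\le C|w|^{-\alpha}$. On the region $|z-w|<|w|/2$ we have $|z|\ge |w|/2$, so $p_1(z)\le p_1$-tail bound $\le C e^{-|w|^2/8}\le C'(\text{anything})$; more precisely $p_1(z)\le (2\pi)^{-d/2}$ uniformly, and the local singularity is integrable since $\alpha<d$: $\int_{|z-w|<|w|/2}|z-w|^{-\alpha}\,dz = c_d \int_0^{|w|/2} r^{d-1-\alpha}\,dr = c_d'\,|w|^{d-\alpha}$. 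Combined with $p_1(z)\le C e^{-|w|^2/8}$ on this region, the contribution is $\le C e^{-|w|^2/8}|w|^{d-\alpha}$, which is bounded by $C'|w|^{-\alpha}$ for all $w$ (it is bounded for $|w|$ small since $d-\alpha>0$ makes it go to $0$, wait — for $|w|$ small one instead uses $p_1(z)\le(2\pi)^{-d/2}$ and notes $|w|^{d-\alpha}\le |w|^{-\alpha}$ when $|w|\le 1$; for $|w|\ge 1$ the exponential kills the polynomial). Adding the two pieces gives $F(w)\le C|w|^{-\alpha}$.

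The only mild subtlety — not really an obstacle — is bookkeeping the small-$|w|$ versus large-$|w|$ regimes so that a single constant works uniformly; this is handled by the two elementary bounds $p_1(z)\le (2\pi)^{-d/2}$ (good for $|w|$ small) and Gaussian decay $p_1(z)\le Ce^{-|w|^2/8}$ on the near region (good for $|w|$ large), together with $\alpha<d$ guaranteeing integrability of the singularity. I expect the whole argument to be a few lines.
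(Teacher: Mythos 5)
Your proposal is correct and follows essentially the same route as the paper: split at radius $|x|/2$ about the singularity, bound the far region trivially by $(|x|/2)^{-\alpha}$, and on the near region use that $|y|\ge |x|/2$ makes the Gaussian small together with integrability of $r^{d-1-\alpha}$. The only cosmetic difference is that you first rescale to $t=1$ and then distinguish small and large $|w|$, whereas the paper keeps $t$ and absorbs it via $\sup_{t>0} t^{-d/2}e^{-\delta^2/(2t)}\le C(d)\delta^{-d}$; both handle the uniformity in $t$ equally well.
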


\begin{proof}
Let $\delta=|x|/2$. Then
\begin{align*}
\int_{\R^d} p_t(y) \frac{1}{|y-x|^\alpha} dy \leq \frac{1}{\delta^\alpha}+  \int_{|y-x|<\delta} p_t(y) \frac{1}{|y-x|^\alpha} dy\leq \frac{1}{\delta^\alpha}+(\frac{1}{2\pi t})^{d/2} e^{-\frac{\delta^2}{2t}} \int_0^\delta \frac{1}{r^\alpha}\ C(d) r^{d-1} dr.
\end{align*}
Note that
\[(\frac{1}{2\pi t})^{d/2} e^{-\frac{\delta^2}{2t}} \leq \sup_{t> 0} (\frac{1}{2\pi t})^{d/2} e^{-\frac{\delta^2}{2t}} =C(d) \frac{1}{\delta^d}.\]
Therefore
\begin{equation*}
\int_{\R^3} p_t(y) \frac{1}{|y-x|^\alpha} dy \leq  \frac{1}{\delta^\alpha}+ C(d) \frac{1}{\delta^d} \cdot \frac{C(d)}{d-\alpha} \delta^{d-\alpha} =C(\alpha,d) \frac{1}{|x|^\alpha}.
\end{equation*}
\end{proof}
\begin{lemma} \label{l4.2}
Let $d>1$. Then 
\begin{equation*}
\int_0^t \int \frac{1}{|y-x|} p_s(y) dy ds  \leq  \frac{2\sqrt{d}}{d-1} \sqrt{t},  \ \ \  \forall x\in \R^d.
\end{equation*}
\end{lemma}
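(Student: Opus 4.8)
The plan is to apply Fubini's theorem and then, for each fixed time $s$, bound the spatial integral $\int_{\R^d}\frac{1}{|y-x|}p_s(y)\,dy$ uniformly over $x\in\R^d$. The key observation is that as a function of $x$ this integral is the convolution $\big(\tfrac{1}{|\cdot|}*p_s\big)(x)$ of two non-negative, radially symmetric and radially non-increasing functions, hence is itself radially non-increasing, and in particular attains its maximum at $x=0$. (Alternatively one invokes the Riesz rearrangement inequality directly.) Thus for every $x$,
\[
\int_0^t\int_{\R^d}\frac{1}{|y-x|}p_s(y)\,dy\,ds\le\int_0^t\int_{\R^d}\frac{1}{|y|}p_s(y)\,dy\,ds,
\]
and it remains to estimate the right-hand side.

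Next I would evaluate $\int_{\R^d}\frac{1}{|y|}p_s(y)\,dy$ explicitly. Passing to polar coordinates, using the substitution $r=\sqrt{2s}\,\rho$ and the identity $\int_0^\infty\rho^{d-2}e^{-\rho^2}\,d\rho=\tfrac12\Gamma\big(\tfrac{d-1}{2}\big)$ together with the surface area $\tfrac{2\pi^{d/2}}{\Gamma(d/2)}$ of the unit sphere in $\R^d$, one obtains the clean identity
\[
\int_{\R^d}\frac{1}{|y|}p_s(y)\,dy=\frac{1}{\sqrt{2s}}\,\frac{\Gamma\big(\tfrac{d-1}{2}\big)}{\Gamma\big(\tfrac d2\big)}.
\]
(The radial integral near $0$ converges precisely because $d>1$, which is where the hypothesis enters.) Integrating in $s$ over $[0,t]$ and using $\int_0^t s^{-1/2}\,ds=2\sqrt t$ yields the bound $\sqrt{2t}\,\Gamma\big(\tfrac{d-1}{2}\big)/\Gamma\big(\tfrac d2\big)$ for the original double integral.

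Finally I would control the Gamma ratio using the log-convexity of $\Gamma$: since $\log\Gamma$ is convex, writing $\tfrac{d+1}{2}=\tfrac12\cdot\tfrac d2+\tfrac12\cdot\big(\tfrac d2+1\big)$ gives $\Gamma\big(\tfrac{d+1}{2}\big)\le\Gamma\big(\tfrac d2\big)^{1/2}\Gamma\big(\tfrac d2+1\big)^{1/2}=\sqrt{d/2}\,\Gamma\big(\tfrac d2\big)$. Combining this with $\Gamma\big(\tfrac{d+1}{2}\big)=\tfrac{d-1}{2}\,\Gamma\big(\tfrac{d-1}{2}\big)$ yields $\Gamma\big(\tfrac{d-1}{2}\big)/\Gamma\big(\tfrac d2\big)\le\sqrt{2d}/(d-1)$, so the bound becomes $\sqrt{2t}\cdot\sqrt{2d}/(d-1)=\tfrac{2\sqrt d}{d-1}\sqrt t$, as claimed. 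The main obstacle is the uniformity in $x$: one must recognize that the worst case is $x=0$, which rests on the elementary but essential fact that the convolution of two radially non-increasing functions is again radially non-increasing (equivalently, Riesz rearrangement). Everything else is a direct Gaussian computation together with the standard log-convexity estimate for $\Gamma$, so no further difficulty is expected.
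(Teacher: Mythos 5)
Your proof is correct, but it follows a genuinely different route from the paper. The paper's argument is probabilistic and takes two lines: writing $\rho_s=|x+B_s|$ as a $d$-dimensional Bessel process, the SDE $\rho_t=|x|+\beta_t+\frac{d-1}{2}\int_0^t\rho_s^{-1}\,ds$ is integrated in expectation, and the bound $E(|x+B_t|-|x|)\le E|B_t|\le\sqrt{d}\sqrt{t}$ immediately gives $\int_0^t E(1/|B_s-x|)\,ds\le\frac{2\sqrt d}{d-1}\sqrt t$ uniformly in $x$, with no need to identify the worst-case $x$; the constant $\sqrt d$ enters only through $E|B_t|\le\sqrt{dt}$. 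You instead argue analytically: reduce to $x=0$ via the fact that the convolution of two symmetric decreasing functions is symmetric decreasing (layer-cake plus monotonicity of $|B(0,r)\cap B(x,\rho)|$ in $|x|$, or Riesz rearrangement), compute the Gaussian integral exactly to get $\int_0^t\int|y|^{-1}p_s(y)\,dy\,ds=\sqrt{2t}\,\Gamma\bigl(\tfrac{d-1}{2}\bigr)/\Gamma\bigl(\tfrac d2\bigr)$, and then recover the stated constant from log-convexity of $\Gamma$; all steps check out (the polar-coordinate identity, the recursion $\Gamma\bigl(\tfrac{d+1}{2}\bigr)=\tfrac{d-1}{2}\Gamma\bigl(\tfrac{d-1}{2}\bigr)$, and the estimate $\Gamma\bigl(\tfrac{d-1}{2}\bigr)/\Gamma\bigl(\tfrac d2\bigr)\le\sqrt{2d}/(d-1)$ are all right), and the hypothesis $d>1$ enters for you through integrability of $r^{d-2}$ at the origin just as it enters for the paper through the positive drift coefficient $(d-1)/2$. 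What your approach buys is the sharp uniform constant $\sqrt{2}\,\Gamma\bigl(\tfrac{d-1}{2}\bigr)/\Gamma\bigl(\tfrac d2\bigr)$, of which the stated $2\sqrt d/(d-1)$ is a slight relaxation; what the paper's buys is brevity and the avoidance of the rearrangement step, since the stochastic-calculus identity handles all $x$ simultaneously.
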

\begin{proof}
Let $B$ be a $d$-dimensional Brownian motion starting at 0 and $\rho_t = |x + B_t|$ be a $d$-dimensional Bessel process starting at $|x|$. Then for a standard Brownian motion $\beta$, we have the sde
\[\rho_t = |x| + \beta_t + ((d-1)/2) \int_0^t 1/\rho_s ds.\]
Take means in the above and use $E(|x + B_t|-|x|)\leq E(|B_t|)\leq \sqrt{d}\sqrt{t}$. The result follows since
$E(1/\rho_s) = E(1/|B_s-x|)$.
\end{proof}
Now we proceed to the convergence of each term in \eqref{e3.5}, except for the last, as $\varepsilon \downarrow 0$. Note that we are in the case $d=2$ or $d=3$. All the constants showing below in (i)-(iii) depend only on $d$.
\begin{enumerate}[(i)]
\item Let $B$ be a $d$-dimensional Brownian motion starting at $0$. Let $\varepsilon \downarrow 0$ to see that
 \begin{align*}
 & \Big|\delta_0(P_\varepsilon g_x)-\delta_0(g_x)\Big|\leq E\Big(\Big|\log |B_\varepsilon-x|-\log |x|\Big|\Big)\leq  E \Big[\frac{|B_\varepsilon|^{1/2}}{|x|^{1/2}}\Big]+E\Big[\frac{|B_\varepsilon|^{1/2}}{|B_\varepsilon-x|^{1/2}}\Big] \\
\leq & |x|^{-1/2} \Big(E|B_\varepsilon|\Big)^{1/2} + \Big(E|B_\varepsilon|\Big)^{1/2}\cdot \Big(C |x|^{-1}\Big)^{1/2} \leq C |x|^{-1/2} \varepsilon^{1/4} \to 0. 
\end{align*}
The second inequality is by Lemma \ref{l2.6} and the third inequality is by Lemma \ref{l3.3}.
\item We know from the proof of (i) that for $y-x\neq 0$, 
\begin{equation}
\int p_\varepsilon(z)  \Big|\log|z-(y-x)| -\log|y-x|\Big| dz \leq C|y-x|^{-1/2} \varepsilon^{1/4} . \label{e3.7}
\end{equation}
By Doob's inequality, for any $T>0$ we have
\begin{align*}
 &E_{\delta_0} \Big[\Big(\sup_{t\leq T} |M_t(P_\varepsilon g_x)-M_t(g_x)|\Big)^2\Big] \leq 4 E_{\delta_0} \Big[ \int_0^T X_s\Big((P_\varepsilon g_x-g_x)^2\Big) ds \Big]\\
\leq& C^2 \varepsilon^{1/2} \cdot \int_0^T  \int p_s(y) \frac{1}{|y-x|} dy  ds \leq CT^{1/2} \varepsilon^{1/2} \to 0,
\end{align*}
the second inequality by \eqref{e3.7} and that $E_{\delta_0} X_t(dy)=p_t(y) dy$ by Lemma 2.2 of Konno and Shiga (1988) and the last inequality by Lemma \ref{l4.2}. Take a subsequence $\varepsilon_n \downarrow 0$ to obtain \[\sup_{t\leq T} |M_t(P_{\varepsilon_n} g_x)-M_t(g_x)| \to 0, \ P_{\delta_0}\text{-a.s..} \]

\item
By \eqref{e3.7}, for any $T>0$ we have
\begin{align}\label{e3.00}
\sup_{t\leq T} \Big|X_t(P_\varepsilon g_x)-X_t(g_x)\Big|\leq  C \varepsilon^{1/4} \sup_{t\leq T} \int |y-x|^{-1/2} X_t(dy).
\end{align}
By Corollary III.1.5 in Perkins (2001), with $P_{\delta_0}$-probability one, there is some $\delta'(\omega) \in (0,1]$ such that for all $0<t<\delta'$, the closed support of $X_t$ is within the region $\{y: |y|<3(t \log (1/t))^{1/2}\}$. Then pick $\delta<\delta'$ small enough such that $3(\delta \log (1/\delta))^{1/2} <|x|/2$ and hence
\begin{equation}\label{e3.01}
\sup_{t\leq \delta} \int |y-x|^{-1/2} X_t(dy) \leq 2^{1/2} |x|^{-1/2} \sup_{t\leq \delta} X_t(1).
\end{equation} 
On the other hand, similar to the derivation of \eqref{e2.2}, we use Lemma \ref{l2.1} to see that with $P_{\delta_0}$-probability one, there is some $r_0(\delta,\omega)\in (0,1]$ and some constant $C(d)>0$ such that
\begin{equation}\label{e3.02}
\sup_{\delta \leq t\leq T} \int |y-x|^{-1/2} X_t(dy) \leq r_0^{-1/2} \sup_{\delta \leq t\leq T} X_t(1)+ C.
\end{equation} 
Therefore by \eqref{e3.01} and \eqref{e3.02}, with $P_{\delta_0}$-probability one we have
\[\sup_{t\leq T} \int |y-x|^{-1/2} X_t(dy) \leq 2^{1/2} |x|^{-1/2} \sup_{t\leq \delta} X_t(1)+r_0^{-1/2} \sup_{\delta \leq t\leq T} X_t(1)+ C,\] and use \eqref{e3.00} to conclude as $\varepsilon \downarrow 0$,  \[\sup_{t\leq T} \Big|X_t(P_\varepsilon g_x)-X_t(g_x)\Big| \to 0,\ P_{\delta_0}\text{-a.s.} .\]
\end{enumerate}

\subsubsection{Proof of Proposition \ref{p1}}
Now we are in the case $d=3$. The tricky part about the last term $\int_0^t X_s\big(\frac{\Delta}{2} P_{\varepsilon} g_x) ds$ in \eqref{e3.5} is that we are taking the Laplacian of convolution. By using the following lemma, we can interchange the Laplacian with the convolution.
\begin{lemma}\label{l3.2}
In $\R^3$, for any $\varepsilon>0$ and $y\neq x$, we have \[\Delta_y P_\varepsilon g_x(y)= \int p_\varepsilon(y-z) \frac{1}{|z-x|^2}dz.\]
\end{lemma}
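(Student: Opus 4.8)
The plan is to exploit the asymmetry between the two factors in the convolution $P_\varepsilon g_x(y)=\int_{\R^3}p_\varepsilon(y-z)\,\log|z-x|\,dz$: the Gaussian kernel $p_\varepsilon$ is smooth with all derivatives decaying rapidly, whereas $g_x(z)=\log|z-x|$ grows only logarithmically and has a merely locally integrable singularity at $z=x$. First I would differentiate under the integral sign, placing the Laplacian on the smooth factor. Since $|\Delta_y p_\varepsilon(y-z)|$ is bounded, uniformly for $y$ in a compact set, by a constant multiple of a Gaussian in $y-z$, while $\bigl|\log|z-x|\bigr|\le C(1+|z|)+\mathbf 1_{\{|z-x|<1\}}\bigl|\log|z-x|\bigr|$, the product is dominated by a fixed integrable function; hence $P_\varepsilon g_x\in C^\infty$ and $\Delta_y P_\varepsilon g_x(y)=\int_{\R^3}\Delta_y p_\varepsilon(y-z)\,\log|z-x|\,dz$. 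Because $p_\varepsilon(y-z)$ depends on $(y,z)$ only through $y-z$, I may replace $\Delta_y p_\varepsilon(y-z)$ by $\Delta_z p_\varepsilon(y-z)$ inside this integral.

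The second step is to move the Laplacian back onto $g_x$ via Green's identity, handling the singularity at $z=x$ by excision. I would write the integral as $\lim_{\delta\downarrow0}\int_{|z-x|>\delta}\Delta_z p_\varepsilon(y-z)\,\log|z-x|\,dz$, which is legitimate by dominated convergence since the integrand lies in $L^1(\R^3)$. On $\{\delta<|z-x|<R\}$ both factors are smooth, so Green's second identity applies; letting $R\to\infty$ kills the outer boundary term because $p_\varepsilon$ and $\nabla p_\varepsilon$ decay like a Gaussian while $\log R$ and $1/R$ grow at most polynomially. On the region $\{|z-x|>\delta\}$ we have $\Delta_z\log|z-x|=|z-x|^{-2}$ by \eqref{e2.6}, so the interior term equals $\int_{|z-x|>\delta}p_\varepsilon(y-z)\,|z-x|^{-2}\,dz$, which converges as $\delta\downarrow0$ to $\int_{\R^3}p_\varepsilon(y-z)\,|z-x|^{-2}\,dz$, finite by the change of variables $w=y-z$ and Lemma \ref{l3.3} with $\alpha=2<3$. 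What remains is to show the inner boundary integral over the sphere $S_\delta=\{|z-x|=\delta\}$ vanishes: it equals $\int_{S_\delta}\bigl(p_\varepsilon(y-z)\,\partial_n\log|z-x|-\log|z-x|\,\partial_n p_\varepsilon(y-z)\bigr)\,dS(z)$, and since $\bigl|\partial_n\log|z-x|\bigr|=1/\delta$ on $S_\delta$, the first contribution is $O(\delta^{-1})\cdot\sup_{S_\delta}p_\varepsilon(y-\cdot)\cdot4\pi\delta^2=O(\delta)$ and the second is $O(|\log\delta|)\cdot\sup_{S_\delta}|\nabla p_\varepsilon(y-\cdot)|\cdot4\pi\delta^2=O(\delta^2|\log\delta|)$; both tend to $0$.

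The step I expect to be the crux — and the only place where $d=3$ is used in an essential way — is precisely this last boundary estimate: it is the assertion that, unlike in $d=2$ (where $\Delta\log|\cdot|=2\pi\delta_0$), no Dirac mass is produced at $z=x$ in $\R^3$, because the $O(\delta^2)$ surface area of a small sphere beats the $O(\delta^{-1})$ normal derivative of the logarithm. Everything else is routine Gaussian-tail bookkeeping. A slightly slicker packaging of the second step would be to record once and for all that $\Delta\log|\cdot-x|=|\cdot-x|^{-2}$ as tempered distributions on $\R^3$ (whose proof is exactly the excision argument above) and then pair this identity with the Schwartz function $p_\varepsilon(y-\cdot)$.
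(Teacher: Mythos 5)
Your proposal is correct and follows essentially the same route as the paper's Appendix A proof (itself modeled on Evans): differentiate under the integral, transfer the Laplacian onto $g_x$ by excising a ball $B(x,\delta)$ and integrating by parts, kill the spherical boundary terms with the same $O(\delta)$ and $O(\delta^2|\log\delta|)$ estimates, and pass to the limit in the remaining term via Lemma \ref{l3.3} and dominated convergence. The only cosmetic differences are that the paper treats the small-ball contribution $I_\delta$ directly and performs two successive single integrations by parts rather than invoking Green's second identity on an annulus with an auxiliary outer radius $R$.
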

\begin{proof}
See Appendix A for the proof.
\end{proof}
\noindent Now we will turn to the
\begin{proof}[Proof of Proposition \ref{p1}]
 For any $T>0$ we have
\begin{align*}
& E_{\delta_0} \Big( \sup_{t\leq T} \Big|\int_0^t X_s (\frac{\Delta}{2} P_{\varepsilon} g_x) ds-\frac{1}{2}\int_0^t  \int \frac{1}{|y-x|^2} X_s(dy) ds \Big| \Big)\\
\leq& \frac{1}{2} \ E_{\delta_0} \Big(\int_0^T  \int  \Big| \Delta_y P_{\varepsilon} g_x(y)-\frac{1}{|y-x|^2}\Big| \ X_s(dy) ds\Big) \\
=& \frac{1}{2} \int_0^T \int  \Big|\int p_\varepsilon(y-z) \frac{1}{|z-x|^2}dz-\frac{1}{|y-x|^2}\Big| \ p_s(y) dy ds.
\end{align*}
We use Lemma \ref{l3.2} for the equality above. 
\begin{claim}
\begin{equation}\label{e3.9}
\Big|\int p_\varepsilon(y-z) \frac{1}{|z-x|^2}dz-\frac{1}{|y-x|^2}\Big| \to 0 \text{ as } \varepsilon\to 0\text{ for } y\neq x . 
\end{equation}
\end{claim}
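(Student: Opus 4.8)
The plan is to read \eqref{e3.9} as a standard mollification statement. The function $f(z):=1/|z-x|^2$ is continuous at every $z\neq x$, and $\int p_\varepsilon(y-z)f(z)\,dz$ is its convolution against the Gaussian kernel; this integral is finite for every $\varepsilon>0$ by Lemma \ref{l3.3} (with $\alpha=2<d=3$). Since $y\neq x$, the point $y$ is a continuity point of $f$, so we expect $\int p_\varepsilon(y-z)f(z)\,dz\to f(y)$ as $\varepsilon\downarrow0$. The only nonstandard feature is that $f$ is neither bounded nor integrable, so the tail of the convolution cannot be handled by a crude bound; Lemma \ref{l3.3} is exactly the tool for that tail.

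First I would substitute $w=y-z$ and use $\int p_\varepsilon(w)\,dw=1$ to rewrite the left side of \eqref{e3.9} as
\[
\Big|\int p_\varepsilon(w)\Big(\tfrac{1}{|a-w|^2}-\tfrac{1}{|a|^2}\Big)\,dw\Big|,\qquad a:=y-x\neq0.
\]
Fix $\eta>0$. Since $w\mapsto 1/|a-w|^2$ is continuous at $w=0$, choose $\rho\in(0,|a|/2)$ with $\big|1/|a-w|^2-1/|a|^2\big|<\eta$ whenever $|w|<\rho$. Splitting the integral at $|w|=\rho$ gives
\[
\Big|\int p_\varepsilon(w)\Big(\tfrac{1}{|a-w|^2}-\tfrac{1}{|a|^2}\Big)\,dw\Big|\le \eta+\int_{|w|\ge\rho}p_\varepsilon(w)\tfrac{1}{|a-w|^2}\,dw+\tfrac{1}{|a|^2}\int_{|w|\ge\rho}p_\varepsilon(w)\,dw.
\]
The third term tends to $0$ since $\int_{|w|\ge\rho}p_\varepsilon(w)\,dw\to0$ as $\varepsilon\downarrow0$. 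For the second term, note that on $\{|w|\ge\rho\}$ one has $p_\varepsilon(w)=2^{3/2}e^{-|w|^2/(4\varepsilon)}p_{2\varepsilon}(w)\le 2^{3/2}e^{-\rho^2/(4\varepsilon)}p_{2\varepsilon}(w)$, so by Lemma \ref{l3.3} applied to $p_{2\varepsilon}$,
\[
\int_{|w|\ge\rho}p_\varepsilon(w)\tfrac{1}{|a-w|^2}\,dw\le 2^{3/2}e^{-\rho^2/(4\varepsilon)}\int_{\R^3}p_{2\varepsilon}(w)\tfrac{1}{|a-w|^2}\,dw\le 2^{3/2}e^{-\rho^2/(4\varepsilon)}\,\frac{C}{|a|^2},
\]
which tends to $0$ as $\varepsilon\downarrow0$ because the Gaussian factor beats the constant. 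Hence $\limsup_{\varepsilon\downarrow0}$ of the left side of \eqref{e3.9} is at most $\eta$, and letting $\eta\downarrow0$ proves the claim.

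The step I expect to be the only genuine obstacle is the middle (tail) term: on $\{|w|\ge\rho\}$ the function $1/|a-w|^2$ still blows up at $w=a$, so it cannot be bounded by a constant there. The remedy above is to peel off a factor $e^{-\rho^2/(4\varepsilon)}$ of Gaussian decay, leaving a genuine Gaussian density $p_{2\varepsilon}$ against which Lemma \ref{l3.3} furnishes a uniform $O(|a|^{-2})$ bound; everything else is routine.
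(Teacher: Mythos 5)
Your argument is correct, but it follows a different route from the paper's. The paper does not split the integral: it writes the difference as $E\big(\big|\,|B_\varepsilon-w|^{-2}-|w|^{-2}\big|\big)$ with $w=y-x$, factors it algebraically as $\big||B_\varepsilon-w|-|w|\big|\cdot\frac{|B_\varepsilon-w|+|w|}{|B_\varepsilon-w|^2|w|^2}$, bounds $\big||B_\varepsilon-w|-|w|\big|\le|B_\varepsilon|$, and then controls the two resulting terms by H\"older's inequality (exponents $5$ and $5/4$) combined with Lemma \ref{l3.3}; the convergence comes from the factor $\big(E|B_\varepsilon|^5\big)^{1/5}\to 0$, so the paper in fact obtains a quantitative bound of order $|w|^{-3}\varepsilon^{1/2}$. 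You instead exploit continuity of $w\mapsto|a-w|^{-2}$ at $w=0$, split at radius $\rho$, and kill the far region by peeling off the Gaussian factor $e^{-\rho^2/(4\varepsilon)}$ and applying Lemma \ref{l3.3} to $p_{2\varepsilon}$ (valid since the constant there is uniform in $t$). Your version is more elementary—no H\"older—but purely qualitative, giving convergence for each fixed $y\neq x$ without a rate; that is all the claim needs, since in the proof of Proposition \ref{p1} the uniform domination $(C+1)|y-x|^{-2}$ is supplied separately by Lemma \ref{l3.3} before the dominated convergence step. Both proofs ultimately lean on Lemma \ref{l3.3} to tame the singularity at $z=x$; they just deploy it differently.
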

\begin{proof}
For $w=y-x \neq 0$, 
\begin{align*}
 & \Big|\int p_\varepsilon(y-z) \frac{1}{|z-x|^2}dz-\frac{1}{|y-x|^2}\Big| \leq  E\bigg(\Big|\frac{1}{|B_\varepsilon-w|^2}-\frac{1}{|w|^2}\Big|\bigg)\nonumber\\
 =E \Big(&\Big||B_\varepsilon-w|-|w|\Big| \cdot \frac{|B_\varepsilon-w|+|w|}{|B_\varepsilon-w|^2 |w|^2}\Big) \leq E\Big( \frac{|B_\varepsilon|}{|B_\varepsilon-w|^2 |w|}\Big)+E\Big( \frac{|B_\varepsilon|}{|B_\varepsilon-w|\ |w|^2}\Big).
\end{align*}
For the first term above, we use Holder's inequality with $1/p=1/5$ and $1/q=4/5$ to get 
\begin{align*}
 E\Big(|B_\varepsilon|\cdot \frac{1}{|B_\varepsilon-w|^2 |w|}\Big)&\leq \frac{1}{|w|} \cdot \Big(E(|B_\varepsilon|^5)\Big)^{1/5} \cdot \Big(E\big(\frac{1}{|B_\varepsilon-w|^{5/2} }\big)\Big)^{4/5}\\
&\leq \frac{1}{|w|} \Big(E(|B_\varepsilon|^5)\Big)^{1/5} \Big(C |w|^{-5/2}\Big)^{4/5}\to 0 \text{ as } \varepsilon\to 0.
\end{align*}
The last inequality is by Lemma \ref{l3.3}. Similarly the second term converges to $0$.
\end{proof}
By Lemma \ref{l3.3}, for all $\varepsilon>0$ we have \[\Big|\int p_\varepsilon(y-z) \frac{1}{|z-x|^2}dz-\frac{1}{|y-x|^2}\Big| \leq (C+1) \frac{1}{|y-x|^2},\] which is integrable w.r.t $\int_0^T ds  p_s(y) dy$ by Lemma \ref{l3.3}. Then use Dominated Convergence Theorem and \eqref{e3.9} to conclude as $\varepsilon \downarrow 0$, \[\int_0^T ds \int_{\R^3} p_s(y) dy \Big|\int p_\varepsilon(y-z) \frac{1}{|z-x|^2}dz-\frac{1}{|y-x|^2}\Big|  \to 0,\] and hence \[\sup_{t\leq T} \Big(\int_0^t X_s\big(\frac{\Delta}{2} P_{\varepsilon} g_x) ds-\frac{1}{2}\int_0^t  \int \frac{1}{|y-x|^2} X_s(dy) ds\  \Big) \xrightarrow[]{L^1}  0.\]  Take a subsequence $\varepsilon_n \downarrow 0$ to obtain \[\sup_{t\leq T} \Big(\int_0^t X_s\big(\frac{\Delta}{2} P_{\varepsilon_n} g_x) ds-\frac{1}{2}\int_0^t  \int \frac{1}{|y-x|^2} X_s(dy) ds\  \Big) \to 0, \ P_{\delta_0}\text{-a.s..} \]
The proof of Proposition \ref{p1} follows by \eqref{e3.5} in $d=3$ and the a.s. convergence (i)-(iii) already established in Section \ref{s3.3.1} if we take appropriate subsequence $\varepsilon_{n_k} \downarrow 0$.
\end{proof}

\subsubsection{Proof of Proposition \ref{p2}}
Now we are in the case $d=2$. For the last term $\int_0^t X_s\big(\frac{\Delta}{2} P_{\varepsilon} g_x) ds$, compared to the $d=3$ case in Lemma \ref{l3.2}, we have the following result from Theorem 1 in Chp. 2.2 of Evans (2010).
 \begin{lemma}\label{l4.1}
In $\R^2$, for any fixed $\varepsilon>0$, we have \[\frac{\Delta}{2} P_{\varepsilon} g_x(y)=\pi p_\varepsilon^x(y).\]
\end{lemma}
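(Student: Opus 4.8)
The plan is to reduce the claim to the classical fact that $\log|\cdot|$ is a multiple of the fundamental solution of Laplace's equation in $\R^2$ and then convolve it with the Gaussian $p_\varepsilon$. First I would remove the point $x$ by translation: writing $g_0(z):=\log|z|$ one has $g_x(y)=g_0(y-x)$, and the substitution $z\mapsto u+x$ gives
\[
P_\varepsilon g_x(y)=\int_{\R^2}p_\varepsilon(y-z)\log|z-x|\,dz=\int_{\R^2}p_\varepsilon\big((y-x)-u\big)\,g_0(u)\,du=(p_\varepsilon * g_0)(y-x).
\]
Thus it suffices to prove $\Delta(p_\varepsilon * g_0)=2\pi p_\varepsilon$ on all of $\R^2$ and then evaluate at $y-x$.

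Next I would identify $g_0$ with the fundamental solution. In $\R^2$ the fundamental solution of Laplace's equation is $\Phi(z)=-\tfrac{1}{2\pi}\log|z|$, so $g_0=-2\pi\Phi$ and hence $-\tfrac{1}{2\pi}(p_\varepsilon * g_0)=\Phi * p_\varepsilon$ is exactly the logarithmic potential of $p_\varepsilon$. Applying Theorem 1 of Chapter 2.2 of Evans (2010) to $f=p_\varepsilon$ gives $\Phi * p_\varepsilon\in C^2(\R^2)$ and $-\Delta(\Phi * p_\varepsilon)=p_\varepsilon$, i.e. $\Delta(p_\varepsilon * g_0)=2\pi p_\varepsilon$. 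Translating back, $\Delta_y P_\varepsilon g_x(y)=\big(\Delta(p_\varepsilon * g_0)\big)(y-x)=2\pi p_\varepsilon(y-x)=2\pi p_\varepsilon^x(y)$, whence $\tfrac{\Delta}{2}P_\varepsilon g_x(y)=\pi p_\varepsilon^x(y)$; since $p_\varepsilon * g_0$ is the convolution of a Gaussian with a locally integrable function it is smooth, so the identity holds for every $y\in\R^2$.

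The only genuinely technical point — and hence the main obstacle — is that Evans' theorem is stated for $C^2_c$ data while $p_\varepsilon$ is merely Schwartz. There are two painless routes around it. The analytic route is approximation: cut $p_\varepsilon$ off to a compactly supported $C^\infty$ function and let the cutoff escape to infinity, controlling the error by the rapidly decaying Gaussian tails of $p_\varepsilon$ and its derivatives. The softer route, which I would probably adopt, is purely distributional: since $p_\varepsilon$ is a Schwartz mollifier and $g_x$ a tempered distribution with $\Delta g_x=2\pi\delta_x$ (as recorded just before Proposition~\ref{p2}), convolution commutes with $\Delta$, so $\Delta(p_\varepsilon * g_x)=p_\varepsilon * (2\pi\delta_x)=2\pi p_\varepsilon^x$, and $p_\varepsilon * g_x=P_\varepsilon g_x$ by the computation above. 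Equivalently, one may differentiate under the integral sign in $\int p_\varepsilon(y-z)\log|z-x|\,dz$ — legitimate because the two-dimensional log singularity is integrable and $p_\varepsilon$ together with all its derivatives decays faster than any polynomial — and then move $\Delta_y$ onto $p_\varepsilon$, or test $\Delta g_x=2\pi\delta_x$ against the Schwartz function $z\mapsto p_\varepsilon(y-z)$.
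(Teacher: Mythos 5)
Your proposal is correct and takes essentially the same route as the paper, which simply invokes Theorem 1 of Chapter 2.2 of Evans (2010) for the identity $\Delta\big(p_\varepsilon * g_0\big)=2\pi p_\varepsilon$ and translates by $x$. Your additional care about $p_\varepsilon$ being Schwartz rather than compactly supported --- patched either by a cutoff approximation or by testing the distributional identity $\Delta g_x=2\pi\delta_x$ against $z\mapsto p_\varepsilon(y-z)$ --- addresses a point the paper leaves implicit, and either patch is valid.
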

\begin{proof}[Proof of Proposition \ref{p2}]
By taking a subsequence $\varepsilon_n$ goes to $0$, we know from Theorem 6.1 in [1] that for any $T>0$, $\sup_{t\leq T} |\int_0^t X_s(p_{\varepsilon_n}^x) ds-L_t^x|\to 0$, $P_{\delta_0}$-a.s.. The proof of Proposition \ref{p2} follows by \eqref{e3.5} in $d=2$ and the a.s. convergence (i)-(iii) already established in Section \ref{s3.3.1} if we take appropriate subsequences $\varepsilon_{n_k} \downarrow 0$.
\end{proof}


\section{Cumulants of super-Brownian motion}\label{s4.1}
In Section \ref{s2.3} we have reduced the proof of Theorem \ref{t2} to the proof of Lemma \ref{l2.9}, which will be given at the end of this section. Note that we are in the case $d=2$ or $3$ with initial condition $X_0=\mu$. We know from (3.30) and (3.31) in Sugitani (1989) that for $\phi \geq 0 $ continuous with compact support,
\begin{equation}
E_{\mu} \Big[\exp{ \Big(\theta \int_0^t X_s(\phi) ds- \theta \int_0^t \mu(P_s \phi) ds \Big)} \Big]=\exp{\Big(2\sum_{n=2}^{\infty} (\frac{\theta}{2})^n \mu\big(v_n(t)\big)\Big)} \label{e4.2}
\end{equation}
where $v_n(t), n \geq 2$ are given by 
\begin{equation}
\begin{cases}
 v_1(t,z)=\int_0^t P_s \phi (z) ds,\\
 v_n(t,z)=\sum_{k=1}^{n-1} \int_0^t P_{t-s}\Big(v_k(s) v_{n-k}(s)\Big)(z) ds. \label{e4.3}
\end{cases}
\end{equation}
For $v_n, n\geq 1$ we have the following estimates.
\begin{lemma}\label{l4.3}
Let $d=2$ or $3$. For any nonnegative measurable function $\phi$, let $v_n$ be defined as in \eqref{e4.3}. If there exist some constants $r, \alpha, \beta \geq 0$ such that for all $t\geq 0$ and $z\in \R^{d}$, we have $|v_1(t,z)| \leq r ((t+\alpha)^{1/2}+\beta)$.  Then there exist some positive constants $c_{n}$ such that
\begin{equation}
|v_{n}(t, z)|\leq c_{n} r^n (t+\alpha)^{(n-1)/2} ((t+\alpha)^{1/2}+\beta)^{2n-1} \label{e4.7}
\end{equation}
holds for every $t\geq 0$, $z\in \R^{d}$ and $n\geq 1$ .
\end{lemma}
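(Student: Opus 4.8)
The plan is to prove \eqref{e4.7} by induction on $n$, the only structural inputs being that the Brownian semigroup $(P_t)$ is positivity preserving, monotone, and satisfies $P_t 1 = 1$. Since $\phi\ge 0$, the recursion \eqref{e4.3} shows inductively that $v_n(t,z)\ge 0$ for all $n\ge 1$, $t\ge 0$, $z\in\R^d$, so throughout one may drop the absolute values. The base case $n=1$ is exactly the hypothesis $|v_1(t,z)|\le r((t+\alpha)^{1/2}+\beta)$, which is \eqref{e4.7} with $c_1=1$.

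For the inductive step, fix $n\ge 2$ and assume \eqref{e4.7} holds for all indices $1,\dots,n-1$. For each $1\le k\le n-1$ and each $s\ge 0$, multiplying the bounds for $v_k(s,\cdot)$ and $v_{n-k}(s,\cdot)$ and collecting exponents (using $\tfrac{k-1}{2}+\tfrac{n-k-1}{2}=\tfrac{n-2}{2}$ and $(2k-1)+(2(n-k)-1)=2n-2$) gives
\[
v_k(s,w)\,v_{n-k}(s,w)\le c_k c_{n-k}\, r^n (s+\alpha)^{(n-2)/2}\big((s+\alpha)^{1/2}+\beta\big)^{2n-2}
\]
for every $w\in\R^d$. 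The key point is that the right-hand side is a constant in $w$; since $P_{t-s}$ is monotone and preserves constants, applying it leaves this bound unchanged, so $P_{t-s}\big(v_k(s)v_{n-k}(s)\big)(z)$ is dominated by the same expression.

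It then remains to integrate in $s\in[0,t]$ and sum over $k$ in the recursion for $v_n$. Bounding the increasing factor $((s+\alpha)^{1/2}+\beta)^{2n-2}$ by its value at $s=t$ and using
\[
\int_0^t (s+\alpha)^{(n-2)/2}\,ds=\frac{2}{n}\big((t+\alpha)^{n/2}-\alpha^{n/2}\big)\le \frac{2}{n}(t+\alpha)^{n/2}\le \frac{2}{n}(t+\alpha)^{(n-1)/2}\big((t+\alpha)^{1/2}+\beta\big)
\]
yields
\[
v_n(t,z)\le \Big(\frac{2}{n}\sum_{k=1}^{n-1} c_k c_{n-k}\Big) r^n (t+\alpha)^{(n-1)/2}\big((t+\alpha)^{1/2}+\beta\big)^{2n-1},
\]
so the induction closes with $c_n:=\frac{2}{n}\sum_{k=1}^{n-1} c_k c_{n-k}$, which is finite for every $n$. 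There is no genuine obstacle here: the argument is a routine induction, and the only places needing a little care are the exponent bookkeeping in the product step and the elementary inequality $(t+\alpha)^{n/2}\le (t+\alpha)^{(n-1)/2}((t+\alpha)^{1/2}+\beta)$ (valid since $\beta\ge 0$) used to match the target form. For the later application in Lemma \ref{l2.9} only the finiteness of each $c_n$ is relevant, so no sharper control of the constants is pursued.
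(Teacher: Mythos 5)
Your proof is correct and follows essentially the same induction as the paper: bound the product $v_k v_{n-k}$ by a constant in space, use that the heat semigroup preserves constants, and integrate in time; the only (cosmetic) difference is that you integrate $(s+\alpha)^{(n-2)/2}$ exactly while the paper pulls it out at $s=t$, so you get $c_n=\tfrac{2}{n}\sum_{k=1}^{n-1}c_kc_{n-k}$ instead of the paper's $c_n=\sum_{k=1}^{n-1}c_kc_{n-k}$. Since your constants are dominated by the paper's, this has no effect on the later applications (e.g.\ the generating-function bound in Proposition \ref{p6.6} still goes through).
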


\begin{proof}
The case $n=1$ follows by letting $c_1=1$, so it suffices to show \eqref{e4.7} in the case $n\geq 2$. Assuming it holds for every $1\leq k\leq n-1$, then 
\begin{align*}
|v_{n}(t,z)| &\leq \sum_{k=1}^{n-1} c_k c_{n-k} r^n \int_0^t \int p_{t-s}(z-y) (s+\alpha)^{(n-2)/2}   ((s+\alpha)^{1/2}+\beta)^{2n-2} dy ds \\
&\leq \sum_{k=1}^{n-1} c_k c_{n-k} r^n (t+\alpha)^{(n-2)/2}  \int_0^t  ((s+\alpha)^{1/2}+\beta)^{2n-2} ds\\
&\leq \sum_{k=1}^{n-1} c_k c_{n-k} r^n (t+\alpha)^{(n-1)/2} ((t+\alpha)^{1/2}+\beta)^{2n-1} .
\end{align*}
Let $c_n=\sum_{k=1}^{n-1} c_kc_{n-k}$ to get \eqref{e4.7}.
\end{proof}

If $\phi(y)=f_x(y):=1/|y-x|$, then $f_x$ is not continuous everywhere and not compactly supported. However, \eqref{e4.2} holds for $\int_0^t X_s(f_x) ds- \int_0^t \mu(P_s f_x)ds$ in a weak sense. If $X$ is a random variable, following Sugitani (1989) we say that 
\begin{equation} \label{e4.4}
E\Big[\exp\big(\theta X\big)\Big]=\exp\Big(\sum_{n=1}^{\infty} a_n \theta^n\Big)
\end{equation}
$\mathbf{holds\ formally}$ if $E|X|^n <\infty$ and \[E(X^n)=\frac{d^n}{d\theta^n} \Bigg(\exp\Big(\sum_{k=1}^n\ a_k \theta^k \Big)\Bigg)\Bigg|_{\theta=0}\] for every $n\geq 1$. Note that if \eqref{e4.4} actually holds, then it holds formally. We will then prove the following lemma:

\begin{lemma}\label{l4.4}
Let $d=2$ or $3$. For all $x\in \R^d$, let $f_x(y)=1/|y-x|$. Then for all $t\geq 0$, we have the following holds formally:
\begin{equation} \label{e4.62}
E_{\mu} \Big[\exp{ \Big(\theta \int_0^t X_s(f_x) ds- \theta \int_0^t \mu(P_s f_x) ds \Big)} \Big]=\exp{\Big(2\sum_{n=2}^{\infty} (\frac{\theta}{2})^n \mu\big(v_n^x(t)\big)\Big)}, 
\end{equation}
 where \[v_1^x(t,z)=\int_0^{t} ds \int p_s(z-y) \frac{1}{|y-x|} dy,\] and for $n\geq 2$ 
\begin{equation}
v_n^x(t,z)=\sum_{k=1}^{n-1} \int_0^t ds \int p_{t-s}(z-y) (v_k^x(s,y) v_{n-k}^x (s,y)) dy.\label{e4.61}
\end{equation}
\end{lemma}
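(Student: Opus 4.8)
The plan is to derive \eqref{e4.62} from Sugitani's exact identity \eqref{e4.2} by approximating $f_x$ from below by nonnegative continuous compactly supported functions, passing to the limit, and using Lemma~\ref{l4.3} to control all moments uniformly along the approximation. Fix a nested family of cut-offs $\rho_m\in C_c(\R^d)$ with $0\le\rho_m\le\rho_{m+1}\le 1$, $\rho_m\equiv 1$ on $B(0,m)$ and $\mathrm{supp}\,\rho_m\subset B(0,m+1)$, and set $\phi_m:=(f_x\wedge m)\,\rho_m$; then each $\phi_m\ge 0$ is continuous with compact support and $\phi_m\uparrow f_x$ pointwise on $\R^d\setminus\{x\}$ (a product of nonnegative nondecreasing sequences). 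Write $Y_m:=\int_0^t X_s(\phi_m)\,ds-\int_0^t\mu(P_s\phi_m)\,ds$ and $Y:=\int_0^t X_s(f_x)\,ds-\int_0^t\mu(P_sf_x)\,ds$. A translation reduces $v_1^x(t,z)=\int_0^t P_sf_x(z)\,ds$ to the $z=0$ instance of Lemma~\ref{l4.2}, giving $v_1^x(t,z)\le \tfrac{2\sqrt d}{d-1}\sqrt t$ for every $z$; in particular $E_\mu\big[\int_0^t X_s(f_x)\,ds\big]=\mu(v_1^x(t))<\infty$, so $Y$ is a.s.\ finite, and since $\int_0^t X_s(\phi_m)\,ds$ and $\int_0^t\mu(P_s\phi_m)\,ds$ increase to finite limits, $Y_m\to Y$ a.s.

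I would next show that the cumulant coefficients converge. Let $v_n^{(m)}$ be given by \eqref{e4.3} with $\phi=\phi_m$, and $v_n^x$ by \eqref{e4.61}. Since $0\le\phi_m\uparrow f_x$, an induction on $n$ using the monotonicity of the recursion yields $0\le v_n^{(m)}\uparrow v_n^x$ pointwise. As $v_1^{(m)}\le v_1^x\le \tfrac{2\sqrt d}{d-1}\sqrt t$ uniformly in $m$, Lemma~\ref{l4.3} (with $\alpha=\beta=0$, $r=\tfrac{2\sqrt d}{d-1}$) supplies constants $c_n$ independent of $m$ with $v_n^{(m)}(t,z)\le v_n^x(t,z)\le c_n r^n t^{(3n-2)/2}$; hence $\mu(v_n^{(m)}(t))\uparrow\mu(v_n^x(t))\le c_n r^n t^{(3n-2)/2}\mu(1)<\infty$, and the series $2\sum_{n\ge2}(\theta/2)^n\mu(v_n^x(t))$ has positive radius of convergence.

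With the truncation in place, \eqref{e4.2} applies to $\phi_m$ and gives $E_\mu[e^{\theta Y_m}]=H_m(\theta):=\exp\big(2\sum_{n\ge2}(\theta/2)^n\mu(v_n^{(m)}(t))\big)$ for $|\theta|$ small; consequently all moments of $Y_m$ are finite and $E_\mu[Y_m^n]=H_m^{(n)}(0)=P_n\big(\mu(v_2^{(m)}(t)),\dots,\mu(v_n^{(m)}(t))\big)$, where $P_n$ is the $m$-independent polynomial with \emph{nonnegative} coefficients equal to $n!$ times the coefficient of $\theta^n$ in $\exp(\sum_{k\ge2}a_k\theta^k)$. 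Since the arguments are nonnegative and dominated by $\mu(v_k^x(t))<\infty$, monotonicity of $P_{2\ell}$ gives $\sup_m E_\mu[Y_m^{2\ell}]\le P_{2\ell}(\mu(v_2^x(t)),\dots)<\infty$ for every $\ell\ge1$. Now fix $n$ and pick $\ell$ with $2\ell>n$: then $\{|Y_m|^n\}_m$ is bounded in $L^{2\ell/n}$ with $2\ell/n>1$, hence uniformly integrable, and combined with $Y_m\to Y$ a.s.\ this yields $E_\mu|Y|^n<\infty$ and $E_\mu[Y_m^n]\to E_\mu[Y^n]$. On the other hand $H_m^{(n)}(0)=P_n(\mu(v_2^{(m)}(t)),\dots)\to P_n(\mu(v_2^x(t)),\dots)=H^{(n)}(0)$, where $H(\theta):=\exp\big(2\sum_{n\ge2}(\theta/2)^n\mu(v_n^x(t))\big)$. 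Hence $E_\mu[Y^n]=H^{(n)}(0)$ for all $n$, which is exactly the statement that \eqref{e4.62} holds formally (in the sense of \eqref{e4.4}).

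The main obstacle is the interchange of the limit $m\to\infty$ with the $n$-th moments: monotone convergence disposes cleanly of the nonlinear recursion for the $v_n$'s and of the coefficients $\mu(v_n^{(m)}(t))$, but $Y_m$ is a difference of two increasing quantities and is not itself monotone, so one cannot simply take monotone limits of its moments; instead one must extract equi-integrability from the uniform bounds $\sup_m E_\mu[Y_m^{2\ell}]<\infty$, which ultimately rest on Lemma~\ref{l4.3} and Lemma~\ref{l4.2}. The remaining points — that the recursion preserves nonnegativity and monotonicity, and the bookkeeping with the nonnegative-coefficient polynomials $P_n$ arising from $\exp(\sum a_k\theta^k)$ — are routine.
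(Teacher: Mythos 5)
Your proposal is correct, but it follows a genuinely different route from the paper. The paper approximates $f_x$ by heat--kernel mollification: it applies \eqref{e4.2} to $P_\varepsilon f_x\cdot\chi_N$, lets $N\to\infty$ by monotone convergence, shows $v_n^{\varepsilon,x}\to v_n^x$ by dominated convergence (using the explicit time--shift $v_1^{\varepsilon,x}(t,z)=\int_\varepsilon^{t+\varepsilon}\int p_s(z-y)|y-x|^{-1}\,dy\,ds$ and the Lemma \ref{l4.3} bounds), gets finiteness of $E_\mu[(\int_0^t X_s(f_x)ds)^n]$ by Fatou, and then passes to the limit in the moments by dominated convergence, the domination coming from Lemma \ref{l3.3}, i.e.\ $P_\varepsilon f_x\le C f_x$. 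You instead approximate monotonically from below by truncation-and-cutoff, so the convergence $v_n^{(m)}\uparrow v_n^x$ and $\mu(v_n^{(m)}(t))\uparrow\mu(v_n^x(t))$ is pure monotone convergence, and you replace the domination/Fatou step by uniform even-moment bounds $\sup_m E_\mu[Y_m^{2\ell}]<\infty$, obtained from the nonnegativity of the cumulant-to-moment polynomials $P_n$ together with the Lemma \ref{l4.2}--\ref{l4.3} bounds on $v_n^x$, and then conclude by uniform integrability (Vitali) since $Y_m\to Y$ a.s. This buys you independence from Lemma \ref{l3.3} and from any pointwise domination of the approximants by $f_x$ (so the argument would extend to kernels where such a comparison is awkward), at the price of the bookkeeping with the positive-coefficient polynomials, which is essentially the same mechanism the paper invokes via Lemma \ref{l4.5}(i) in the proof of Lemma \ref{l2.9}. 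Two small points: the positive radius of convergence of $\sum_n(\theta/2)^n\mu(v_n^x(t))$, which you assert, follows from $c_n=\sum_{k=1}^{n-1}c_kc_{n-k}$ having generating function $\tfrac12-(\tfrac14-\theta)^{1/2}$ (as the paper notes later in Proposition \ref{p6.6}), and is in any case not strictly needed since the formal identity \eqref{e4.4} only involves the truncated sums; and your parenthetical that \eqref{e4.2} holds for $|\theta|$ small is harmless because only its formal (moment) content is used, exactly as in the paper.
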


\begin{proof}
For any $0<\varepsilon<1$, let \[f_x^\varepsilon(z):= P_\varepsilon f_x(z)= \int p_\varepsilon (z-y) \frac{1}{|y-x|} dy.\] Then $f_x^\varepsilon \in C_b(\R^d)$. For any $N\geq 1$, recall from \eqref{e4.8.1} that $\chi_N$ is a $C^{\infty}$ function and $\chi_N \uparrow 1$. Then $f_x^\varepsilon \cdot \chi_N$ is continuous with compact support and hence \eqref{e4.2} holds for $\int_0^t X_s(f_x^\varepsilon \cdot \chi_N) ds -\int_0^t \mu(P_s (f_x^\varepsilon \cdot \chi_N)) ds $ and in particular it holds formally. Let $N \to \infty$, then $f_x^\varepsilon \cdot \chi_N \uparrow f_x^\varepsilon$. By monotone convergence theorem, we have formally \eqref{e4.2} for $\int_0^t X_s(f_x^\varepsilon) ds -\int_0^t \mu(P_s f_x^\varepsilon) ds $, that is to say,
\begin{align} \label{e4.81}
E_{\mu} \Big[ \Big( \int_0^t X_s(f_x^\varepsilon) ds -\int_0^t \mu(P_s f_x^\varepsilon) ds  \Big)^n\Big]=  \frac{d^n}{d\theta^n} \bigg(\exp\Big(2 \sum_{k=2}^n (\frac{\theta}{2})^n \mu(v_n^{\varepsilon,x}(t))\Big) \bigg) \bigg|_{\theta=0}
\end{align}
where 
\[v_1^{\varepsilon,x}(t, z) =\int_0^t ds \int p_s(z-y) f_x^\varepsilon(y) dy=\int_\varepsilon^{t+\varepsilon} ds \int p_s(z-y) \frac{1}{|y-x|} dy  ,\] and 
\[v_n^{\varepsilon,x}(t,z) =\sum_{k=1}^{n-1} \int_0^t ds \int p_{t-s}(z-y) (v_k^{\varepsilon,x}(s,y) v_{n-k}^{\varepsilon,x} (s,y)) dy. \] 
By Lemma \ref{l4.2}, for $0<\varepsilon<1$, 
\begin{equation} \label{e4.91}
v_1^{\varepsilon,x}(t, z)\leq \int_0^{t+1} \int p_s(z-y) \frac{1}{|y-x|} dy ds \leq \frac{2d^{1/2} }{d-1} (t+1)^{1/2}:=r (t+1)^{1/2},
\end{equation} 
and
\begin{equation}\label{e4.6}
v_1^x(t,z)=\int_0^t \int \frac{1}{|y-x|} p_s(z-y) dy ds \leq  r t^{1/2}. 
\end{equation}
Then Lemma \ref{l4.3} applies and we have
\begin{equation}\label{e4.9}
v_n^{\varepsilon,x}(t,z) \leq c_n r^n (t+1)^{(3n-2)/2},\ \forall n\geq 1,
\end{equation}
and
\begin{equation}\label{e4.92}
v_n^{x}(t,z) \leq c_n r^n t^{(3n-2)/2},\ \forall n\geq 1. 
\end{equation}
 Therefore by Dominated Convergence Theorem, we have $v_1^{\varepsilon,x}(t, z) \to v_1^x(t,z)$ as $\varepsilon \to 0$. For $n=2$, since for each $0<s<t$, we have $(v_1^{\varepsilon,x}(s,y) )^2 \to (v_1^x(s,y) )^2$ as $\varepsilon \to 0$ and $(v_1^{\varepsilon,x}(s,y))^2 \leq r^2 (t+1)$ by \eqref{e4.91}, which is integrable with respect to $\int_0^t ds \int p_{t-s} (z-y) dy$. Hence by Dominated Convergence Theorem \[v_2^{\varepsilon,x}(t, z) =\int_0^t ds \int p_{t-s}(z-y) (v_1^{\varepsilon,x}(s,y) )^2 dy \to \int_0^t \int p_{t-s}(z-y) (v_1^x(s,y) )^2 dy= v_2^x(t,z).\] 
By a simple induction on $n$ and by using \eqref{e4.9}, for every $n\geq 1$ we have
 \[v_n^{\varepsilon,x}(t, z) \to v_n^x(t,z) \text{\ as \ } \varepsilon \to 0.\] 
Therefore for each $t\geq 0$ and $n\geq 1$, by \eqref{e4.81} and Dominated Convergence Theorem we have
\begin{align} 
& \lim_{\varepsilon \to 0} E_{\mu} \Big[ \Big( \int_0^t X_s(f_x^\varepsilon) ds -\int_0^t \mu(P_s f_x^\varepsilon) ds \Big)^n\Big]\nonumber= \lim_{\varepsilon \to 0} \frac{d^n}{d\theta^n} \bigg(\exp\Big(2 \sum_{k=2}^n (\frac{\theta}{2})^n \mu(v_n^{\varepsilon,x}(t))\Big) \bigg) \bigg|_{\theta=0}\nonumber \\
= & \frac{d^n}{d\theta^n} \bigg(\exp\Big(2 \sum_{k=1}^n (\frac{\theta}{2})^n \mu(v_n^x(t))\Big) \bigg) \bigg|_{\theta=0} <\infty. \label{e4.10}
\end{align}
We know by \eqref{e4.92} that the above term is finite. By \eqref{e4.91}, for all $0<\varepsilon<1$ 
\begin{align}\label{e4.101}
\int_0^t \mu(P_s f_x^\varepsilon) ds=\mu(v_1^{\varepsilon,x}(t)) \leq r(t+1)^{1/2} \mu(1).
\end{align}
Then \eqref{e4.10} implies \[\lim_{\varepsilon \to 0} E_{\mu} \Big[ \Big(\int_0^t X_s(f_x^\varepsilon) ds \Big)^n\Big] <\infty.\] By Fatou's lemma
\begin{align} \label{e4.102}
E_{\mu} \Big[ \Big( \int_0^t X_s(f_x) ds \Big)^n\Big]\leq \liminf_{\varepsilon \to 0} E_{\mu} \Big[\Big( \int_0^t X_s(f_x^\varepsilon) ds \Big)^n\Big]<\infty. 
\end{align}
For all $0<\varepsilon<1$, by Lemma \ref{l3.3} we have \[f_x^\varepsilon(y)=\int p_\varepsilon(y-z) \frac{1}{|z-x|} dz \leq C \frac{1}{|y-x|}=Cf_x(y) \text{\ for \ } y\neq x ,\] so by \eqref{e4.101} 
\begin{align*}
\Big| \Big( \int_0^t X_s(f_x^\varepsilon) ds -\int_0^t \mu(P_s f_x^\varepsilon) ds\Big)^n\Big| \leq 2^n \Big(C \int_0^t X_s( f_x) ds\Big)^n +2^n (r (t+1)^{1/2} \mu(1))^n, 
\end{align*} where the right-hand side is integrable w.r.t. $E_\mu$ by \eqref{e4.102}. Therefore Dominated Convergence Theorem implies
\begin{align*}
E_{\mu} \Big[ \Big( \int_0^t X_s(f_x) ds -\int_0^t \mu(P_s f_x) ds \Big)^n\Big]=& \lim_{\varepsilon \to 0} E_{\mu} \Big[ \Big( \int_0^t X_s(f_x^\varepsilon) ds -\int_0^t \mu(P_s f_x^\varepsilon) ds \Big)^n\Big]\\
= & \frac{d^n}{d\theta^n} \bigg(\exp\Big(2 \sum_{k=1}^n (\frac{\theta}{2})^n \mu(v_n^x(t))\Big) \bigg) \bigg|_{\theta=0} \text{\ (by \eqref{e4.10}). }
\end{align*}
Then \eqref{e4.62} holds formally and the proof is complete. 
\end{proof}

The following lemma is from Lemma 3.1 in Sugitani (1989).
\begin{lemma}\label{l4.5}
Let $X$ be a random variable such that \eqref{e4.4} holds formally.
\begin{enumerate}[(i)]
 \item If for some integer $N$ there exists $r,b>0$ such that \[|a_n|\leq br^n, \text{ for } 1\leq n \leq 2N,\] then there exists $C=C(b,N)>0$ such that \[E(X^{2N})\leq C r^{2N}.\]
 \item If $\sum_{n=1}^{\infty} a_n \theta_0^n$ converges for some $\theta_0>0$, then \[E\Big[\exp(|\theta X|)\Big]<\infty \text { \ for }  |\theta|<\theta_0.\]
 \end{enumerate}
\end{lemma}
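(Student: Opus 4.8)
The plan is to reduce both parts to estimates on the Taylor coefficients of $\exp(g(\theta))$, where $g(\theta):=\sum_{k\ge 1}a_k\theta^k$. Write $\exp(g(\theta))=\sum_{m\ge 0}b_m\theta^m$; since the $a_k$ with $k>n$ do not affect the coefficient of $\theta^n$, the statement ``\eqref{e4.4} holds formally'' is precisely the identity $E(X^n)=n!\,b_n$ for every $n\ge 1$. Differentiating $\exp(g)$ and comparing coefficients of $\theta^{n-1}$ yields the convolution recursion
\[
n b_n=\sum_{k=1}^n k a_k b_{n-k},\qquad b_0=1,
\]
and this recursion is the common engine for both parts.

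For (i), I would plug $|a_k|\le b r^k$ ($1\le k\le 2N$) into the recursion: since $k\le n$,
\[
n|b_n|\le \sum_{k=1}^n k|a_k|\,|b_{n-k}|\le b\,n\sum_{k=1}^n r^k|b_{n-k}|,
\]
so that $c_n:=r^{-n}|b_n|$ (with $c_0=1$) satisfies $c_n\le b\sum_{j=0}^{n-1}c_j$ for $1\le n\le 2N$. A short induction --- the step using $\sum_{j=1}^{n-1}b(1+b)^{j-1}=(1+b)^{n-1}-1$ --- gives $c_n\le b(1+b)^{n-1}$, hence
\[
E(X^{2N})=(2N)!\,b_{2N}\le (2N)!\,b(1+b)^{2N-1}r^{2N},
\]
which is the assertion with $C(b,N):=(2N)!\,b(1+b)^{2N-1}$.

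For (ii), the estimate from (i) is too lossy --- it carries a factor $(1+b)^n$ and would only give exponential integrability on a strictly smaller disc --- so here I would argue analytically. Convergence of $\sum a_n\theta_0^n$ forces $\limsup_n|a_n|^{1/n}\le\theta_0^{-1}$, so $g$, and hence $e^{g}$, is holomorphic on $\{|z|<\theta_0\}$; therefore the series $\sum b_n z^n$ has radius of convergence at least $\theta_0$, and for each fixed $\theta_1<\theta_0$ there is $M=M(\theta_1)$ with $|b_n|\le M\theta_1^{-n}$. Thus $E(X^{2N})\le M\,(2N)!\,\theta_1^{-2N}$. Lyapunov's inequality with $2N:=2\lceil n/2\rceil\in\{n,n+1\}$ then gives $E|X|^n\le (E X^{2N})^{n/(2N)}\le M'\,(n+1)!\,\theta_1^{-n}$, where $M':=\max(1,M)$ and I used the crude bound $\big((n+1)!\big)^{n/(n+1)}\le (n+1)!$. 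Finally, by Tonelli,
\[
E\big[e^{|\theta X|}\big]=\sum_{n\ge 0}\frac{|\theta|^n}{n!}\,E|X|^n\le M'\sum_{n\ge 0}(n+1)\big(|\theta|/\theta_1\big)^n<\infty
\]
for every $|\theta|<\theta_1$, and letting $\theta_1\uparrow\theta_0$ covers all $|\theta|<\theta_0$.

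The only genuinely delicate point is this transition from (i) to (ii): the constant $C(b,N)$ produced by the elementary recursion behaves like $(2N)!$ times $(1+b)^{2N}$, and that geometric factor would shrink the disc of exponential integrability. Replacing it by the analyticity-based bound $|b_n|\le M(\theta_1)\theta_1^{-n}$ removes the geometric factor, after which Lyapunov's inequality and a trivial factorial estimate close the argument with room to spare. Everything else is bookkeeping.
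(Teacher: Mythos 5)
Your proof is correct. Both halves check out: the identification $E(X^n)=n!\,b_n$ from the definition of ``holds formally'' is right (the coefficient of $\theta^n$ in the formal exponential only involves $a_1,\dots,a_n$), the convolution recursion $nb_n=\sum_{k=1}^n k a_k b_{n-k}$ is the correct engine, the induction giving $|b_n|\le b(1+b)^{n-1}r^n$ is sound, and in (ii) the passage through analyticity of $e^{g}$ on $\{|z|<\theta_0\}$, Cauchy estimates $|b_{2N}|\le M(\theta_1)\theta_1^{-2N}$, Lyapunov's inequality to control odd absolute moments by even ones, and Tonelli to sum the exponential series is all valid, with the final $\theta_1\uparrow\theta_0$ step covering the whole disc. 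Note, however, that there is no in-paper proof to compare against: the paper states this lemma and attributes it to Lemma 3.1 of Sugitani (1989) without reproducing an argument, so your write-up serves as a self-contained proof of the cited result. Your route --- treating the $b_n$ simultaneously as formal coefficients (for the moment identities) and as genuine Taylor coefficients of a holomorphic function (for the sharp growth bound in (ii)) --- is a natural one, and your own remark about why the crude bound from (i) cannot be recycled for (ii) (the extra $(1+b)^n$ factor would shrink the disc of exponential integrability) correctly identifies the one place where a genuinely different estimate is needed.
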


\noindent Now we will finish the 
\begin{proof}[Proof of Lemma \ref{l2.9}]
We will show that the assumptions in Lemma \ref{l4.5}(i) hold for the case $X=\int_0^t X_s(f_x) ds -\int_0^t\mu(P_s f_x) ds$. 
By \eqref{e4.92} we have  \[2 (\frac{1}{2})^n \mu (v_n^x(t)) \leq c_n \frac{1}{2^{n-1}} t^{(3n-2)/2} r^n\mu(1):=b_n r^n \mu(1).\]
Pick $N\geq 1$. Let $b=\max_{1\leq n\leq 2N} b_n$. Then \[\Big|2 (\frac{1}{2})^n \mu (v_n^x(t))\Big|\leq b \mu(1) r^n, \text{ for } 1\leq n \leq 2N. \] 
So by Lemma \ref{l4.5}(i), there exists some $C=C(b\mu(1),N)=C(t,N,\mu(1))>0$ such that 
 \begin{align*}
 E_{\mu}\Big[ \Big(\int_0^t X_s(f_x) ds -\int_0^t \mu(P_s f_x) ds \Big)^{2N} \Big] \leq   C r^{2N}.
 \end{align*}
By \eqref{e4.6}, we have
\[\int_0^t \mu(P_s f_x) ds=\mu(v_1^{x}(t)) \leq rt^{1/2} \mu(1)\]
and hence
\begin{align*}
E_{\mu} \Big[ \Big(\int_0^t X_s(f_x) ds\Big)^{2N} \Big] \leq   2^{2N} C r^{2N}+2^{2N}(rt^{1/2}\mu(1))^{2N}=C(t,N,d,\mu(1)). 
\end{align*}
So the proof is complete.
\end{proof}

\section{Rate of Convergence in $d=3$ (Theorem \ref{p2.3})}

This section completes the proof of Theorem \ref{p2.3}. Before proceeding to the proof , we state the following lemma:
\begin{lemma}\label{l5.4}
Let $d\geq 3$ .Then for any $x\neq 0$ in $\R^d$ and $t\geq 0$ , \[\int_0^t \int p_s(y) \frac{1}{|y-x|^2} dy ds  \leq \frac{2}{d-2} \Big(\log^+ \frac{1}{|x|}+1+\sqrt{d}\sqrt{t} \Big).\]
\end{lemma}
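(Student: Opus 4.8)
The plan is to adapt the Bessel-process argument used in the proof of Lemma \ref{l4.2}. Writing $B$ for a $d$-dimensional Brownian motion started at $0$, the quantity in question equals $\int_0^t E\big(|B_s-x|^{-2}\big)\,ds=\int_0^t E\big(\rho_s^{-2}\big)\,ds$, where $\rho_s:=|x+B_s|$ is a $d$-dimensional Bessel process started at $|x|$ (it has the same law as $|B_s-x|$) and, for a standard Brownian motion $\beta$, satisfies $\rho_t=|x|+\beta_t+\frac{d-1}{2}\int_0^t\rho_s^{-1}\,ds$. The starting point is the identity $\Delta_y\log|y-x|=(d-2)/|y-x|^2$ for $y\ne x$ — the general-dimension analogue of \eqref{e2.6} — which together with It\^o's formula should give
\[\log\rho_t=\log|x|+\int_0^t\frac{1}{\rho_s}\,d\beta_s+\frac{d-2}{2}\int_0^t\frac{1}{\rho_s^2}\,ds.\]

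To make this rigorous and exploit it, I would localize with $T_n:=\inf\{s:\rho_s\le 1/n\}$ (since $d\ge 2$, $\rho$ never reaches $0$, so $T_n\uparrow\infty$ a.s.). On $[0,t\wedge T_n]$ the integrand $\rho_s^{-1}$ is bounded, so the stochastic integral is a true $L^2$ martingale and taking expectations yields
\[\frac{d-2}{2}E\Big[\int_0^{t\wedge T_n}\frac{1}{\rho_s^2}\,ds\Big]=E[\log\rho_{t\wedge T_n}]-\log|x|.\]
Now I would bound the right-hand side uniformly in $n$: by concavity of $\log$, $E[\log\rho_{t\wedge T_n}]\le\log E[\rho_{t\wedge T_n}]$, and $E[\rho_{t\wedge T_n}]=E|x+B_{t\wedge T_n}|\le|x|+\big(E|B_{t\wedge T_n}|^2\big)^{1/2}=|x|+\big(d\,E[t\wedge T_n]\big)^{1/2}\le|x|+\sqrt{d}\sqrt{t}$. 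Letting $n\to\infty$, monotone convergence on the left gives
\[\frac{d-2}{2}E\Big[\int_0^t\frac{1}{\rho_s^2}\,ds\Big]\le\log\big(|x|+\sqrt{d}\sqrt{t}\big)-\log|x|=\log\Big(1+\frac{\sqrt{d}\sqrt{t}}{|x|}\Big).\]
(In particular the left side is finite, which could alternatively be read off from Lemma \ref{l3.3} with $\alpha=2<d$, the condition $\alpha<d$ being exactly where $d\ge 3$ is needed.)

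Finally, a short case split finishes the estimate. If $|x|>1$ then $\log^+(1/|x|)=0$ and the right side above is $\le\log(1+\sqrt d\sqrt t)$; if $|x|\le 1$ then $1\le 1/|x|$, so $\log(1+\sqrt d\sqrt t/|x|)\le\log\big((1+\sqrt d\sqrt t)/|x|\big)=\log(1+\sqrt d\sqrt t)+\log^+(1/|x|)$. In either case the right side is at most $\log^+(1/|x|)+\log(1+\sqrt d\sqrt t)\le\log^+(1/|x|)+1+\sqrt d\sqrt t$, using $\log(1+a)\le a$. Multiplying through by $2/(d-2)$ gives the claimed bound.

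The only step needing genuine care is the localized It\^o computation — justifying the SDE for $\rho$ and that the stochastic integral has zero mean — but this is precisely the template of Lemma \ref{l4.2}, and since the bound on $E[\rho_{t\wedge T_n}]$ is uniform in $n$ the passage to the limit is immediate; the rest is elementary.
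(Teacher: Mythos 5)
Your proposal is correct and follows essentially the same route as the paper: both rest on It\^o's formula applied to $\log|B_s-x|$ (equivalently, the identity $\Delta_y\log|y-x|=(d-2)/|y-x|^2$), taking expectations to identify $\frac{d-2}{2}E\big[\int_0^t|B_s-x|^{-2}ds\big]$ with $E\log|B_t-x|-\log|x|$, and then an elementary case split in $|x|$ together with $E|B_t|\le\sqrt{d}\sqrt{t}$. The only differences are technical: the paper regularizes with the mollified logarithm $\log(|y-x|^2+\varepsilon)$ and passes to the limit by Fatou, and bounds the terminal term via $\log r\le r$, whereas you localize with the stopping times $T_n$ and use Jensen's inequality; both are valid and yield the stated bound.
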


\begin{proof}
For $\varepsilon>0$ and $x\neq 0$ in $\R^d$, let $h_{\varepsilon,x} (y)=\log(|y-x|^2+\varepsilon)$. Then \[\nabla h_{\varepsilon,x} (y)=\frac{2(y-x)}{|y-x|^2+\varepsilon}\] and \[\Delta h_{\varepsilon,x} (y)=\frac{(2d-4)|y-x|^2+2d \varepsilon}{(|y-x|^2+\varepsilon)^{2}}.\]
Let $B_t$ be a $d$-dimensional Brownian motion starting at 0. By Ito's Lemma,
\[\log(|B_t-x|^2+\varepsilon)=\log(|x|^2+\varepsilon)+\int_0^t \frac{2(B_s-x)}{|B_s-x|^2+\varepsilon} \cdot dB_s+  \int_0^t \frac{(d-2)|B_s-x|^2+d \varepsilon}{(|B_s-x|^2+\varepsilon)^{2}} ds.\]
Let $H_s=\frac{2(B_s-x)}{|B_s-x|^2+\varepsilon}$, then 
\[M_t^\varepsilon:=\int_0^t H_s \cdot dB_s  \text{\ is a continuous local martingale.}\] 
Note that
\[E[(M^\varepsilon_t)^2]\leq E \int_0^t \frac{4|B_s-x|^2}{(|B_s-x|^2+\varepsilon)^2}ds \leq 4\varepsilon^{-2} E \int_0^t |B_s-x|^2ds<\infty.\] 
Then $M^\varepsilon$ is an $L^2$ martingale. Now take means to see that
\begin{align}\label{e5.01}
E \log(|B_t-x|^2+\varepsilon)=\log(|x|^2+\varepsilon)+  \int_0^t E \ \frac{(d-2)|B_s-x|^2+d \varepsilon}{(|B_s-x|^2+\varepsilon)^{2}} ds.
\end{align}
By Fatou's Lemma,
\begin{align*}
 & \int_0^t E \ \frac{d-2 }{|B_s-x|^2} ds\leq  \liminf_{\varepsilon \to 0} \int_0^t  \  E \ \frac{(d-2)|B_s-x|^2+d \varepsilon}{(|B_s-x|^2+\varepsilon)^{2}} ds\\
=& \liminf_{\varepsilon \to 0} \Big[E \log(|B_t-x|^2+\varepsilon)-\log(|x|^2+\varepsilon)\Big]= E \log(|B_t-x|^2)-\log(|x|^2).
\end{align*}
The first equality is by \eqref{e5.01} and the last equality follows from $0\leq \log(|x|^2+\varepsilon) -\log(|x|^2)\leq 2 \sqrt{\varepsilon}/|x|$. If $|x|>1$, then \[ E \log(|B_t-x|)-\log |x|  \leq E \log (|B_t|+|x|)-\log |x| \leq E \frac{|B_t|}{|x|} \leq E|B_t|.\]
If $|x|<1$, then \[E \log(|B_t-x|)-\log |x|  \leq E|B_t-x|+ \log^+ \frac{1}{|x|}\leq E|B_t|+1+ \log^+ \frac{1}{|x|}.\] 
Therefore
\begin{align*}
\int_0^t E \ \frac{d-2}{|B_s-x|^2} ds \leq E \log(|B_t-x|^2)-\log(|x|^2) \leq 2 \Big( E|B_t|+1+ \log^+ \frac{1}{|x|} \Big),
\end{align*}
and the result follows since $E|B_t| \leq \sqrt{d}\sqrt{t}$.
\end{proof}
\noindent Now we will turn to the
\begin{proof}[Proof of Theorem \ref{p2.3}]
We claim it suffices to show that for any $0<\alpha<1$ and any fixed $t>0$, we have $|x|^\alpha (L_t^x-c_{\ref{t1}}/|x|) \to 0$ as $x\to 0$, $P_{\delta_0}$-a.s.. To see this, note that \eqref{e3.1} implies for any $t\geq \delta$, we have $L_t^x-L_\delta^x \to L_t^0-L_\delta^0$ as $x\to 0$. Therefore by choosing $\omega$ outside a null set $N(\delta)$, we have 
\[\lim_{x\to 0} |x|^\alpha (L_t^x-c_{\ref{t1}}/|x|) =\lim_{x\to 0} |x|^\alpha (L_\delta^x-c_{\ref{t1}}/|x|)+\lim_{x\to 0} |x|^\alpha (L_t^x-L_\delta^x)=0, \ \forall t\geq \delta,\]
and this implies \[\lim_{x\to 0} |x|^\alpha (L_t^x-c_{\ref{t1}}/|x|)=0, \ \forall t>0, P_{\delta_0}\text{-a.s..}\]
The $t=\infty$ case follows since the extinction time $\zeta<\infty$, $P_{\delta_0}$-a.s.. Fix any $t>0$. Recall the Tanaka formula \eqref{e1.5} that
\begin{equation}
L_t^x-c_{\ref{t1}}/|x|=M_t(\phi_x)-X_t(\phi_x),
\end{equation}
where $\phi_x(y)=c_{\ref{t1}}/|y-x|$. By using \eqref{e2.2}, we have $P_{\delta_0}$-a.s. that \[|x|^{\alpha} X_t(\phi_x) \leq c_{\ref{t1}} |x|^{\alpha} (r_0^{-1} X_t(1)+C) \to 0 \text{\ as  } x\to 0.\] Therefore the proof of Theorem \ref{p2.3} follows if we show that for any $t>0$,  \[|x|^\alpha M_t(\phi_{x}) \to 0 \text{\ as  } x\to 0, \ P_{\delta_0}\text{-a.s..} \] 
By taking a subsequence, e.g. $\{x_n=(1/2^n,0,0)\}$, that goes to $0$, we have 
\[P_{\delta_0}\Big(\big||x_n|^\alpha M_t(\phi_{x_n})\big|>\frac{1}{2^{n\alpha/2}}\Big) \leq \frac{C(t)+n\alpha \log 2}{2^{n\alpha}}\]
by using Lemma \ref{l5.4}. Hence 
\[|x_n|^\alpha M_t(\phi_{x_n}) \to 0 \text{\  as  } n\to \infty, \ P_{\delta_0}\text{-a.s..}  \]  by Borel-Cantelli Lemma. So it suffices to show that there is a jointly continuous version of $|x|^\alpha M_t(\phi_{x})$ on $B(0,1)=\{x\in \R^3 :|x|<1\}$. \\

Fix any $x,x' \in B(0,1)$. Without loss of generality we may assume $|x|\leq |x'|$ and $|x'|>0$. Then 
\begin{align}\label{e9.4}
&E_{\delta_0}\Big((|x|^\alpha M_t(\phi_x)-|x'|^\alpha M_t(\phi_{x'}))^2\Big)=c_{\ref{t1}}^2 E_{\delta_0}\Big( \int_0^t \Big(\frac{|x|^\alpha}{|y-x|}-\frac{|x'|^\alpha}{|y-x'|}\Big)^2X_s(dy) ds\Big)\nonumber\\
&\leq 2c_{\ref{t1}}^2 \int_0^t \int \Big(\frac{|x|^\alpha}{|y-x|}-\frac{|x|^\alpha}{|y-x'|}\Big)^2 p_s(y) dy ds+2c_{\ref{t1}}^2 \int_0^t \int \Big(\frac{|x|^\alpha}{|y-x'|}-\frac{|x'|^\alpha}{|y-x'|}\Big)^2 p_s(y) dy ds\nonumber\\
&=:  2c_{\ref{t1}}^2 (I+J).
\end{align}
By Lemma \ref{l5.4} we have
\begin{align*}
J&=(|x|^\alpha-|x'|^\alpha)^2 \int_0^t \int \frac{1}{|y-x'|^2} p_s(y) dy ds \leq |x-x'|^{2\alpha} 2(\log^+ \frac{1}{|x'|}+1+\sqrt{3t})\\
&\leq |x-x'|^{\alpha} (2|x'|)^\alpha 2(\log^+ \frac{1}{|x'|}+1+\sqrt{3t}) \leq 4|x-x'|^{\alpha} (1/\alpha+1+\sqrt{3t}),
\end{align*}
the last by $|x'|<1$ and $|x'|^\alpha \log^{+} (1/|x'|) \leq 1/\alpha$.\\

Now we deal with $I$. Since $I=0$ if $|x|=0$, we may assume $x\neq 0$. 
Note that for any $0<\gamma<1$,
\begin{align}\label{e5.20.1}
\Big|\frac{1}{|y-x_n|}-\frac{1}{|y-x_0|}\Big| \leq& |x_n-x_0|^{\gamma} \frac{||y-x_n|-|y-x_0||^{1-\gamma}}{|y-x_n||y-x_0|} \nonumber\\
\leq &  |x_n-x_0|^{\gamma} \Big(\frac{1}{|y-x_0|^{1+\gamma}}+\frac{1}{|y-x_n|^{1+\gamma}}\Big).
\end{align}
Let $\gamma=\alpha/2$ in \eqref{e5.20.1} to see that
\begin{align*}
I&\leq |x|^{2\alpha} |x-x'|^\alpha \int_0^t \int \Big(\frac{1}{|y-x|^{2+\alpha}}+\frac{1}{|y-x'|^{2+\alpha}}\Big) p_s(y) dy ds.
\end{align*}
Use similar Ito's Lemma arguments proving Lemma \ref{l5.4} above to conclude for any $0<\alpha<1$ and $x\neq 0$ in $\R^3$,
\[ \int_0^t \int \frac{1}{|y-x|^{2+\alpha}}p_s(y) dy ds \leq \frac{2}{\alpha(1-\alpha)}|x|^{-\alpha}. \]
Therefore
\begin{align*}
I&\leq |x|^{2\alpha} |x-x'|^\alpha C(\alpha) (|x|^{-\alpha}+|x'|^{-\alpha}) \leq C(\alpha) |x-x'|^\alpha,
\end{align*}
the last follows since we assumed $|x|\leq |x'|\leq 1$. Therefore \eqref{e9.4} becomes
\begin{align}\label{e9.1}
E_{\delta_0} \Big( \int_0^t \Big(\frac{|x|^\alpha}{|y-x|}-\frac{|x'|^\alpha}{|y-x'|}\Big)^2 X_s(dy) ds\Big) \leq r|x-x'|^\alpha (t^{1/2}+\beta),
\end{align}
where $r=8\sqrt{3} c_{\ref{t1}}^2$ and $\beta=\beta(\alpha)$ with $\alpha \in (0,1)$.  Let $\phi(y)=(|x|^\alpha/|y-x|-|x'|^\alpha/|y-x'|)^2$ in \eqref{e4.2}, then the above gives $|v_1(t,z)| \leq r|x-x'|^\alpha (t^{1/2}+\beta)$ holds for all $z$ and $t\geq 0$. Apply Lemma \ref{l4.3} to get 
\begin{align}\label{e9.0}
|v_n(t,z)|\leq c_n r^n |x-x'|^{n\alpha} t^{(n-1)/2} (t^{1/2}+\beta)^{2n-1}.
\end{align}
By using the same arguments in proving Lemma \ref{l4.4}, one can show that the following holds formally:
 \begin{align*}
&E_{\delta_0} \Big[\exp{ \Big(\theta  \int_0^t \Big(\frac{|x|^\alpha}{|y-x|}-\frac{|x'|^\alpha}{|y-x'|}\Big)^2 X_s(dy) ds- \theta  \int_0^t \Big(\frac{|x|^\alpha}{|y-x|}-\frac{|x'|^\alpha}{|y-x'|}\Big)^2 p_s(y)dy ds \Big) } \Big]\\
&=\exp{\Big(2\sum_{n=2}^{\infty} (\frac{\theta}{2})^n \delta_0 \big(v_n(t)\big)\Big)}. 
 \end{align*}
  By \eqref{e9.0}, Lemma \ref{l4.5}(i) implies 
 \begin{align*}
 &E_{\delta_0} \Big[\Big(  \int_0^t \Big(\frac{|x|^\alpha}{|y-x|}-\frac{|x'|^\alpha}{|y-x'|}\Big)^2 X_s(dy) ds-   \int_0^t \Big(\frac{|x|^\alpha}{|y-x|}-\frac{|x'|^\alpha}{|y-x'|}\Big)^2 p_s(y)dy ds \Big)^{2N}\Big]\\
 & \leq C(t,N,\alpha) |x-x'|^{2N\alpha}
  \end{align*}
 and by \eqref{e9.1} 
 \begin{align*}
 E_{\delta_0} \Big[\Big( \int_0^t \Big(\frac{|x|^\alpha}{|y-x|}-\frac{|x'|^\alpha}{|y-x'|}\Big)^2 X_s(dy) ds\Big)^{2N}\Big] &\leq 2^{2N} \Big(C(t,N,\alpha) |x-x'|^{2N\alpha}+(r|x-x'|^\alpha (t^{1/2}+\beta))^{2N}\Big)\\
 &=C(t,N,\alpha) |x-x'|^{2N\alpha}.
  \end{align*}
 By the Burkholder-Davis-Gundy Inequality, there exists some $C_N>0$ such that for all $x,x' \in B(0,1)$,
 \begin{align*}
 E_{\delta_0}\Big((|x|^\alpha M_t(\phi_x)-|x'|^\alpha M_t(\phi_{x'}))^{4N}\Big)&\leq C_N (c_{\ref{t1}})^{4N} E_{\delta_0}\Big( \int_0^t \Big(\frac{|x|^\alpha}{|y-x|}-\frac{|x'|^\alpha}{|y-x'|}\Big)^2 X_s(dy) ds\Big)^{2N}\\
 &\leq C(t,N,\alpha) |x-x'|^{2N\alpha}.
 \end{align*}
Take $N$ large enough to apply Kolmogorov's continuity criterion and so obtain a continuous version of $|x|^\alpha M_t(\phi_x)$ on $x \in B(0,1)$.
\end{proof}

\section{General Initial Condition in $d=2$}
Now we are in the case $d=2$. Recall the Tanaka formula \eqref{e2.9} that if $\int\log^+ (1/|y-x|) \mu(dy) <\infty$, then 
\begin{equation}
L_t^x -\mu(g_{\alpha,x})=M_t(g_{\alpha,x})+\alpha \int_0^t X_s(g_{\alpha,x}) ds-X_t(g_{\alpha,x}),\label{e4.11} 
\end{equation}
where $\alpha>0$ and
\begin{equation*}
g_{\alpha,x} (y)=\int_0^\infty e^{-\alpha t} p_t(y-x) dt. 
\end{equation*}
We prove in Appendix C(i) that $g_{\alpha,x}(y)-(1/\pi) \log^+ (1/|y-x|)=f_{\alpha}(y-x)$ where $f_\alpha$ defined in \eqref{e9.2} can be extended to be a bounded continuous function on $\R^2$. Hence
\begin{equation}\label{e4.111} 
x\mapsto \int (g_{\alpha,x}(y)-\frac{1}{\pi} \log^+ \frac{1}{|y-x|})  \mu(dy) \text{\ is continuous on } \R^2.
\end{equation}
So the joint continuity of $L_t^x-\mu(g_{\alpha,x})$ would prove that there is a jointly continuous version of \[L_t^x - \int \frac{1}{\pi} \log^+ \frac{1}{|y-x|} \mu(dy)=\Big(L_t^x -\mu(g_{\alpha,x})\Big)+\int (g_{\alpha,x}(y)-\frac{1}{\pi} \log^+ \frac{1}{|y-x|})  \mu(dy)\] on $\{(t,x): t>0, x\in \R^2\} \bigcup \{(0,x): x$ is a continuity point of $\int \log^+ (1/|y-x|) \mu(dy)\}$.\\

By (3.44) from Sugitani (1989), for any $0<\gamma\leq 1$, there exists some $c=c(\gamma)>0$ such that 
\begin{equation}
|p_t(x)-p_t(y)| \leq c t^{-\gamma/2} |x-y|^\gamma (p_{2t}(x)+p_{2t}(y)), \ t>0, \ x,y \in \R^d. \label{e4.12}
\end{equation}
Then a simple calculation will give us
\begin{align} \label{e4.13}
&|g_{\alpha,x}(y)-g_{\alpha,x'}(y)| \leq c(\gamma)\  |x-x'|^\gamma \int_0^\infty e^{-\alpha t}  t^{-\gamma/2}  (p_{2t}(y-x)+p_{2t}(y-x')) dt\nonumber\\
\leq& c(\gamma)\  |x-x'|^\gamma \Big(\int_0^\infty t^{-\gamma/2}  \frac{1}{4\pi t} e^{-\frac{|y-x|^2}{4t}} dt+\int_0^\infty t^{-\gamma/2} \frac{1}{4\pi t} e^{-\frac{|y-x'|^2}{4t}}  dt\Big)\nonumber\\
=& c(\gamma)\  |x-x'|^\gamma \Big(\int_0^\infty \frac{1}{|y-x|^\gamma} \frac{4^{\gamma/2}}{4\pi} \frac{1}{s^{1-(\gamma/2)}}e^{-s}ds+\int_0^\infty \frac{1}{|y-x'|^\gamma}  \frac{4^{\gamma/2}}{4\pi} \frac{1}{s^{1-(\gamma/2)}} e^{-s}ds\Big)\nonumber\\
=& C(\gamma)\ |x-x'|^\gamma (\frac{1}{|y-x|^\gamma}+\frac{1}{|y-x'|^\gamma}).
\end{align}
Now we proceed to the
\begin{proof}[Proof of Theorem \ref{t5}]
Fix any $t>0$.
\begin{enumerate}[(i)]
\item $M_t(g_{\alpha,x})$: Let $\gamma=1/2$ and $C(1/2)$ be as in \eqref{e4.13}. Then an argument similar to the derivation of \eqref{e2.13} shows that
\begin{align*}
 E_{\mu} \Big[\big|  M_t&(g_{\alpha,x})-M_t(g_{\alpha,x'})\big|^{4n}\Big]   \leq C_n (C(1/2)|x-x'|)^{2n} 2^{2n} \nonumber\\
&\times E_{\mu}\Big[\big( \int_0^t ds \int X_s(dy) \frac{1}{|y-x|}\big)^{2n}+\big( \int_0^t ds \int X_s(dy) \frac{1}{|y-x'|}\big)^{2n}\Big]\nonumber\\
\leq   C_n &(C|x-x'|)^{2n} 2^{2n} C(t,n,\mu(1)) \hfill \text{\ \  (Lemma \ref{l2.9}). }
\end{align*}
Therefore there exists a continuous version of $M_t(g_{\alpha,x})$ in $x$.

\item $\int_0^t X_s(g_{\alpha,x}) ds$: Let $\gamma=1$ in \eqref{e4.13}. Then for each $n\geq 1$, 
\begin{align*}
  E_{\mu}\Big[\big| \int_0^t &X_s(g_{\alpha,x}) ds-\int_0^t X_s(g_{\alpha,x'}) ds\big|^{2n}\Big] \leq  (C|x-x'|)^{2n} 2^{2n}\\
  &\times E_{\mu}\Big[(\int_0^t ds \int X_s(dy) \frac{1}{|y-x|})^{2n} + (\int_0^t ds \int X_s(dy) \frac{1}{|y-x'|})^{2n} \Big]\\
\leq  (C&|x-x'|)^{2n} 2^{2n} \cdot C(t,n,\mu(1)) \hfill \text{\ \  (Lemma \ref{l2.9}). }
\end{align*}
Therefore there exists a continuous version of $\int_0^t X_s(g_{\alpha,x}) ds$ in $x$.

\item $X_t(g_{\alpha,x})$: By using \eqref{e4.13} with $\gamma=1$, we have 
\begin{align*}
|X_t(g_{\alpha,x})-X_t(g_{\alpha,x'})| &\leq C|x-x'| \int X_t(dy) (\frac{1}{|y-x|}+\frac{1}{|y-x'|}).
\end{align*}
Note that with $P_\mu$-probability one there exist some $r_0(t,\omega)\in (0,1]$ and some constant $C>0$ such that for all $x$,  \[\int X_t(dy) \frac{1}{|y-x|} \leq \frac{1}{r_0} X_t(1)+\int_{|y-x|<r_0} X_t(dy) \frac{1}{|y-x|} \leq \frac{1}{r_0} X_t(1)+C.\]
Then 
\[|X_t(g_{\alpha,x})-X_t(g_{\alpha,x'})|  \leq |x-x'| \cdot C(r_0)(X_t(1)+1),\] and the continuity of $x \mapsto  X_t(g_{\alpha,x})$ follows.
\end{enumerate}
Combining (i), (ii) and (iii) above, for any fixed $\varepsilon>0$, we have established that there is a $P_\mu$-a.s. continuous version of $L_\varepsilon^x-\mu(g_{\alpha,x})$ in $x$. Then use \eqref{e3.1} to conclude that $(L_t^x-\mu(g_{\alpha,x}))-(L_\varepsilon^x-\mu(g_{\alpha,x}))=L_t^x-L_\varepsilon^x$ is jointly continuous on $\{(t,x): t\geq \varepsilon, x\in \R^2\}$. Therefore by choosing $\omega$ outside a null set $N(\varepsilon)$, we can see that there is a jointly continuous version of $L_t^x-\mu(g_{\alpha,x})$ on $\{(t,x): t\geq \varepsilon, x\in \R^2\}$. Now take $\varepsilon=1/n$ and $N=\cup_{n=1}^\infty N(1/n)$ to see that for $\omega \in N^c$, there is a jointly continuous version of $L_t^x-\mu(g_{\alpha,x})$ on $\{(t,x): t>0, x\in \R^2\}$.
\end{proof}

\begin{proof}[Proof of Corollary \ref{c1.0}]
For points $(0,x)$ such that $x$ is a continuity point of $\int \log^+ (1/|y-x|) \mu(dy)$, it follows from Appendix C(ii) that $(0,x)$ is a joint continuity point of $\mu q_t(x)$. By Theorem \ref{t3}, we have $(0,x)$ is a joint continuity point of $L_t^x$. Therefore such a point $(0,x)$ is a joint continuity point of $L_t^x-\int (1/\pi) \log^+ (1/|y-x|) \mu(dy)$. 
\end{proof}

\section{General Initial Condition in $d=3$}

\subsection{Smooth Cutoff of Logarithm }
Now we are in the case $d=3$. Recall $\eta(x)$ defined as in \eqref{e4.8}. For each $\varepsilon>0$, set 
$\eta_\varepsilon(x):=\frac{1}{\varepsilon^3} \eta\Big(\frac{x}{\varepsilon}\Big) $ such that
the function $\eta_\varepsilon$ are $C^{\infty}$ and satisfy $\int_{\R^3} \eta_\varepsilon dx=1$ with support in $B(0,\varepsilon)$. 
If $\chi_{1/2}$ is the convolution of $\eta_{1/4}$ and the indicator function of the ball $B(0,3/4)$, then
\[\chi_{1/2} (x)=\int_{\R^3} 1_{\{|x-y|<3/4\}} \eta_{1/4}(y) dy=\int_{|y|<1/4} 1_{\{|x-y|<3/4\}} \eta_{1/4}(y) dy.\]
One can check that $0\leq \chi_{1/2}\leq 1$ and $\chi_{1/2}$ is a $C^{\infty}$ function with support in $B(0,1)$, and $\chi_{1/2} (x) \equiv1$ if $|x|<1/2$. Now define $\bar{g}_x(y)\equiv \log |y-x| \cdot \chi_{1/2}(y-x) \text{\ for\ } y\neq x.$ By definition we have 
\begin{equation}\label{e5.2}
0\leq -\bar{g}_x(y) \leq \log^+(1/|y-x|), \  y\neq x, 
\end{equation} 
and 
\begin{align}\label{e5.1}
-\bar{g}_x(y) = \log^+ (1/|y-x|), \ 0<|y-x|<1/2.
\end{align}
By \eqref{e2.6} and \eqref{e5.1}, one can check that there is some constant $C\geq 1$ such that
\begin{align}\label{e5.3}
|\Delta\bar{g}_x(y)| \leq C\frac{1}{|y-x|^2}, \ y\neq x.
\end{align}
 \noindent Define
\begin{equation}
\bar{f} (y):=
\begin{cases} 
-\bar{g}_0(y)-\log^+ (1/|y|),  & \mbox{if } y\neq 0, \\
0, & \mbox{if } y=0,\\
\end{cases} \label{e5.31}
\end{equation}
and
\begin{equation}
\bar{h} (y):=
\begin{cases} 
\Delta \bar{g}_0(y)-1/|y|^2,  & \mbox{if } y\neq 0, \\
0, & \mbox{if } y=0.\\
\end{cases} \label{e5.4}
\end{equation}
By using \eqref{e5.1}, one can check both $\bar{f}$ and $\bar{h}$ are in $C_b(\R^3)$.
\begin{proposition}\label{p5.1}
Let $X$ be a super-Brownian motion in $d=3$ with initial condition $\mu \in M_F(\R^3)$. For any $x\in \R^3$ with $\int \mu(dy) \log^{+} (1/|y-x|) <\infty$, we have $P_{\mu}$-a.s. 
\begin{align} \label{e5.5}
 X_t(\bar{g}_x)=\mu(\bar{g}_x)+M_t(\bar{g}_x)+ \int_0^t X_s(\frac{\Delta}{2} \bar{g}_x ) ds, \ \forall t\geq 0,
\end{align}
where $ X_t(\bar{g}_x)$ is continuous in $t$ and $M_t(\bar{g}_x)$ is a continuous $L^2$ martingale.
\end{proposition}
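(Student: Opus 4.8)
\emph{The plan.} I would follow the scheme of the proof of Proposition~\ref{p1}: apply the martingale problem \eqref{e1.0} to a smooth approximation of $\bar g_x$ and let the approximation parameter tend to $0$. Two features make things lighter than in Proposition~\ref{p1}: $\bar g_x$ is already compactly supported, so no cutoff at infinity is needed, and only the logarithmic singularity at $x$ must be removed, which is cleanest to do directly. So for $0<\varepsilon<1$ put $\bar g_x^{(\varepsilon)}(y):=\tfrac12\log(|y-x|^2+\varepsilon^2)\,\chi_{1/2}(y-x)$, which is $C^\infty$ with support in $B(x,1)$, hence lies in $C_b^2(\R^3)$; moreover $\bar g_x^{(\varepsilon)}(y)\to\bar g_x(y)$ as $\varepsilon\downarrow0$ for $y\neq x$ and $|\bar g_x^{(\varepsilon)}|\le\log^+(1/|\cdot-x|)+C$ by \eqref{e5.2}. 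Applying \eqref{e1.0} gives, $P_\mu$-a.s.\ and for all $t\ge0$,
\[
X_t(\bar g_x^{(\varepsilon)})=\mu(\bar g_x^{(\varepsilon)})+M_t(\bar g_x^{(\varepsilon)})+\int_0^t X_s\big(\tfrac{\Delta}{2}\bar g_x^{(\varepsilon)}\big)\,ds,
\]
with $[M(\bar g_x^{(\varepsilon)})]_t=\int_0^t X_s((\bar g_x^{(\varepsilon)})^2)\,ds$. The plan is then to pass $\varepsilon\downarrow0$ along a subsequence, term by term.

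\emph{The deterministic and martingale terms.} Since $\mu$ has no atom at $x$ (the hypothesis forbids it) and $|\bar g_x^{(\varepsilon)}|$ is dominated by the $\mu$-integrable function $\log^+(1/|\cdot-x|)+C$, dominated convergence gives $\mu(\bar g_x^{(\varepsilon)})\to\mu(\bar g_x)$. The limiting integrand $\bar g_x$ is admissible for the martingale measure: $\int_0^t X_s(\bar g_x^2)\,ds\le 4\int_0^t X_s(1/|\cdot-x|)\,ds<\infty$ $P_\mu$-a.s.\ by Lemma~\ref{l2.9}, and $E_\mu[M_t(\bar g_x)^2]=\int_0^t\mu P_s(\bar g_x^2)\,ds\le 4\int_0^t\mu P_s(1/|\cdot-x|)\,ds\le C\sqrt{t}\,\mu(1)$ by Lemma~\ref{l4.2} (after translating), so $M_t(\bar g_x)$ is a continuous $L^2$ martingale. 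Because $\bar g_x^{(\varepsilon)}-\bar g_x=\tfrac12\log(1+\varepsilon^2/|\cdot-x|^2)\,\chi_{1/2}(\cdot-x)$ and $\log(1+u)\le Cu^{1/4}$, one has $(\bar g_x^{(\varepsilon)}-\bar g_x)^2\le C\varepsilon/|\cdot-x|$, hence by Doob's $L^2$ inequality and the first moment formula
\[
E_\mu\Big[\sup_{t\le T}\big|M_t(\bar g_x^{(\varepsilon)})-M_t(\bar g_x)\big|^2\Big]\le 4\int_0^T\mu P_s\big((\bar g_x^{(\varepsilon)}-\bar g_x)^2\big)\,ds\le C\varepsilon\int_0^T\mu P_s(1/|\cdot-x|)\,ds\le C\varepsilon\sqrt{T}\,\mu(1)\longrightarrow0
\]
by Lemma~\ref{l4.2}; along a subsequence, $M_\cdot(\bar g_x^{(\varepsilon)})\to M_\cdot(\bar g_x)$ uniformly on $[0,T]$, $P_\mu$-a.s.

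\emph{The drift term --- the main point.} A direct computation in the spirit of the proof of Lemma~\ref{l5.4} gives, for $y\neq x$, $\Delta\bar g_x^{(\varepsilon)}(y)=\chi_{1/2}(y-x)\,\frac{|y-x|^2+3\varepsilon^2}{(|y-x|^2+\varepsilon^2)^2}$ plus two terms that are smooth and supported in $\{\tfrac12\le|y-x|\le1\}$; consequently $\Delta\bar g_x^{(\varepsilon)}(y)\to\Delta\bar g_x(y)$ as $\varepsilon\downarrow0$ --- the pointwise Laplacian, which by \eqref{e2.6} and \eqref{e5.1}--\eqref{e5.4} carries no singular part at $x$ and satisfies $|\Delta\bar g_x|\le C/|\cdot-x|^2$ --- while $|\Delta\bar g_x^{(\varepsilon)}(y)|\le C/|y-x|^2$ uniformly in $\varepsilon\in(0,1)$. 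Therefore, by the first moment formula,
\[
E_\mu\Big[\sup_{t\le T}\Big|\int_0^t X_s\big(\tfrac{\Delta}{2}\bar g_x^{(\varepsilon)}\big)\,ds-\int_0^t X_s\big(\tfrac{\Delta}{2}\bar g_x\big)\,ds\Big|\Big]\le\tfrac12\int_0^T\!\!\int\!\mu(dz)\!\int\! p_s(z-y)\,\big|\Delta\bar g_x^{(\varepsilon)}(y)-\Delta\bar g_x(y)\big|\,dy\,ds,
\]
and the integrand tends to $0$ pointwise and is dominated by $C\,p_s(z-y)/|y-x|^2$, whose integral over $\R^3\times\R^3\times[0,T]$ is, by Lemma~\ref{l5.4} (translated), at most $C\int\mu(dz)\big(\log^+(1/|z-x|)+1+\sqrt{3T}\big)<\infty$ --- finite \emph{exactly} because of the hypothesis $\int\mu(dy)\log^+(1/|y-x|)<\infty$. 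Dominated convergence now gives convergence of the drift term, uniformly on $[0,T]$ and $P_\mu$-a.s.\ along a subsequence; the same bound shows $\int_0^t X_s(\tfrac{\Delta}{2}\bar g_x)\,ds$ is $P_\mu$-a.s.\ absolutely convergent and continuous in $t$ (with value $0$ at $t=0$).

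\emph{Conclusion and the obstacle.} Taking a common subsequence $\varepsilon_n\downarrow0$, the right-hand side of the $\varepsilon$-equation converges uniformly on $[0,T]$ to $\mu(\bar g_x)+M_t(\bar g_x)+\int_0^t X_s(\tfrac{\Delta}{2}\bar g_x)\,ds$, hence so does $X_t(\bar g_x^{(\varepsilon_n)})$; on the other hand, for each fixed $t\ge0$, $X_t(\bar g_x^{(\varepsilon_n)})\to X_t(\bar g_x)$ by dominated convergence, using $|\bar g_x^{(\varepsilon)}|\le\log^+(1/|\cdot-x|)$ and $X_t(\log^+(1/|\cdot-x|))<\infty$ $P_\mu$-a.s.\ (at $t=0$ by hypothesis; for $t>0$ because $\mu P_t(\log^+(1/|\cdot-x|))\le C(t)\mu(1)<\infty$ since $p_t$ is bounded). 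So \eqref{e5.5} holds for all $t$ in a countable dense set, and by continuity of its right-hand side for every $t\ge0$; letting $T\uparrow\infty$ finishes the proof. I expect the drift term to be the only genuine difficulty: one must recognize that $\Delta\bar g_x$ has no singular part at $x$ --- a feature special to $d=3$, where $\log|\cdot|$ is not the Newtonian kernel --- and exhibit a dominating function for the $\varepsilon$-limit that is integrable against $\mu$ under the sole assumption on $\mu$, which is precisely what the $\log^+$-estimate of Lemma~\ref{l5.4}, with its built-in time integration, supplies; everything else is routine given Lemmas~\ref{l2.9} and \ref{l4.2}.
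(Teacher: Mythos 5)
Your route is genuinely different from the paper's and in several respects lighter: you regularize the kernel directly, taking $\bar g_x^{(\varepsilon)}(y)=\tfrac12\log(|y-x|^2+\varepsilon^2)\,\chi_{1/2}(y-x)\in C_b^2(\R^3)$, instead of mollifying with the heat semigroup ($P_\varepsilon\bar g_x$) as the paper does. This removes the need for the interchange Lemma \ref{l5.2} and for the smoothing estimate \eqref{e5.6} proved in Appendix B, and your explicit computation of $\Delta\bar g_x^{(\varepsilon)}$, with the bound $|\Delta\bar g_x^{(\varepsilon)}|\le C/|y-x|^2$ uniform in $\varepsilon$, feeds into the same dominated-convergence argument for the drift term that the paper uses, via Lemma \ref{l5.4} and the hypothesis $\int\log^+(1/|y-x|)\,\mu(dy)<\infty$. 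Your handling of $\mu(\bar g_x^{(\varepsilon)})$ and of the martingale term (Doob's inequality, the first-moment formula and Lemma \ref{l4.2}, with the correct bound $(\bar g_x^{(\varepsilon)}-\bar g_x)^2\le C\varepsilon/|\cdot-x|$) is sound and parallels the paper.

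The gap is in the last step, exactly where the paper has to work hardest (its step (iv): the splitting into $t\le\delta$ and $\delta\le t\le T$, the Fatou argument \eqref{e5.04}, and weak continuity of $X$). You prove $X_t(\bar g_x^{(\varepsilon_n)})\to X_t(\bar g_x)$ only for each fixed $t$, with a null set depending on $t$ (through the a.s.\ finiteness of $X_t(\log^+(1/|\cdot-x|))$), and then assert the identity for all $t$ ``by continuity of the right-hand side''. Continuity of the right-hand side does not transfer the identity from a countable dense set of times to all times: for that you would need continuity (indeed finiteness) of $t\mapsto X_t(\bar g_x)$ at every time, which is part of the conclusion being proved; weak continuity of $X_t$ only yields lower semicontinuity of $t\mapsto X_t(-\bar g_x)$ (via the truncations $(-\bar g_x)\wedge k$), i.e.\ one inequality. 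Fortunately your own mollifier repairs this in one line: $\varepsilon\mapsto\bar g_x^{(\varepsilon)}$ is monotone, with $\bar g_x^{(\varepsilon)}\downarrow\bar g_x$ pointwise as $\varepsilon\downarrow0$, so monotone convergence gives $X_t(\bar g_x^{(\varepsilon_n)})\downarrow X_t(\bar g_x)\in[-\infty,\infty)$ for every $t$ and every $\omega$ simultaneously; combined with the a.s.\ uniform convergence of the right-hand side, this identifies $X_t(\bar g_x)$ as the finite, continuous limit for all $t\ge0$ at once, yielding \eqref{e5.5} and the continuity claim with no density argument and no $t$-dependent null sets. With that substitution your proof is complete and bypasses the paper's most delicate step.
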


Note that the proof of Proposition \ref{p5.1} is very similar to that of Proposition \ref{p2}. Since we have $P_{\varepsilon} \bar{g}_x\in C_b^2(\R^3)$, it follows from the martingale problem \eqref{e1.0} that with $P_\mu$-probability one, for all $t\geq 0$ we have
\begin{equation} \label{e5.02}
X_t(P_{\varepsilon} \bar{g}_x)=\mu(P_{\varepsilon} \bar{g}_x)+ M_t(P_{\varepsilon} \bar{g}_x)+\int_0^t X_s\Big(\frac{\Delta}{2} P_{\varepsilon} \bar{g}_x\Big) ds,
\end{equation}
where $M_t(P_{\varepsilon} \bar{g}_x)$ is a martingale with quadratic variation \[[M(P_{\varepsilon} \bar{g}_x)]_t=\int_0^t X_s\Big((P_{\varepsilon} \bar{g}_x)^2\Big) ds.\]
Before proceeding to the proof of Proposition \ref{p5.1}, we state some preliminary results.
\subsubsection{Preliminaries}

\begin{lemma}\label{l5.2}
Let $d=3$. Then for any fixed $\varepsilon>0$ and $y\neq x$, we have 
\[\Delta_y P_\varepsilon \bar{g}_x(y)= \int p_\varepsilon(y-z) \Delta_z \bar{g}_x(z) dz. \]
\end{lemma}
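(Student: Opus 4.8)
I would reduce the identity to Lemma \ref{l3.2} by peeling off the genuinely singular part of $\bar g_x$. Recall $g_x(z)=\log|z-x|$ and set
\[
w_x(z):=g_x(z)\big(1-\chi_{1/2}(z-x)\big),
\]
so that $\bar g_x=g_x-w_x$ (as locally integrable functions; the single point $z=x$ is irrelevant). Since $1-\chi_{1/2}$ vanishes identically on $B(0,1/2)$, the function $w_x$ vanishes identically on $B(x,1/2)$, while on $\{|z-x|>1/4\}$ it is a product of two $C^\infty$ functions; hence $w_x\in C^\infty(\R^3)$. Moreover $|w_x(z)|\le|\log|z-x||$ for all $z$, and $\Delta w_x$ is bounded on $\R^3$: it vanishes near $x$, is continuous on the compact annulus $\{1/2\le|z-x|\le 1\}$, and equals $|z-x|^{-2}$ for $|z-x|\ge 1$. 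In particular $P_\varepsilon w_x(y)$, $P_\varepsilon g_x(y)$, $P_\varepsilon\bar g_x(y)$ and $\int p_\varepsilon(y-z)\Delta_z w_x(z)\,dz$ are all absolutely convergent (the integrands have only a $\log$ or $|z-x|^{-2}$ singularity, integrable in $\R^3$, and polynomial growth, killed by the Gaussian tails of $p_\varepsilon$).

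First I would dispose of the smooth piece $w_x$. Because $w_x$ is $C^\infty$ with at most logarithmic growth and bounded Laplacian, while $p_\varepsilon$ and all of its derivatives are rapidly decreasing, one may differentiate twice under the integral sign and then integrate by parts (Green's identity on all of $\R^3$ — no interior boundary since $w_x$ is smooth, and no contribution at infinity thanks to the Gaussian decay of $p_\varepsilon$ and $\nabla p_\varepsilon$) to get
\[
\Delta_y P_\varepsilon w_x(y)=\int p_\varepsilon(y-z)\,\Delta_z w_x(z)\,dz,\qquad y\in\R^3.
\]
This is the routine measure-theoretic step.

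For the singular piece $g_x$ I would invoke Lemma \ref{l3.2}: $\Delta_y P_\varepsilon g_x(y)=\int p_\varepsilon(y-z)\,|z-x|^{-2}\,dz$ for $y\neq x$. Subtracting, and using $\bar g_x=g_x-w_x$ together with $\Delta_z g_x(z)=|z-x|^{-2}$ for $z\neq x$ (that is, \eqref{e2.6}), we obtain for $y\neq x$
\[
\Delta_y P_\varepsilon \bar g_x(y)=\int p_\varepsilon(y-z)\Big(\frac{1}{|z-x|^2}-\Delta_z w_x(z)\Big)\,dz=\int p_\varepsilon(y-z)\,\Delta_z\bar g_x(z)\,dz,
\]
which is the assertion; the final integral converges absolutely by \eqref{e5.3} and the local integrability of $|z-x|^{-2}$ in $\R^3$, so no cancellation between the two pieces is being exploited.

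The substantive input is thus exactly Lemma \ref{l3.2}, and the only thing requiring care is the growth/decay bookkeeping that justifies differentiating under the integral and discarding the boundary term at infinity when integrating by parts against $p_\varepsilon$. If one prefers not to cite Lemma \ref{l3.2}, the same conclusion can be reached directly: excise $\{|z-x|<\delta\}$, apply Green's identity on its complement against $z\mapsto p_\varepsilon(y-z)$, and let $\delta\downarrow 0$; the two boundary integrals on $\{|z-x|=\delta\}$ are $O\big(\delta^2\log(1/\delta)\big)$ and $O(\delta)$, hence vanish — and this is precisely where $d=3$ and the logarithmic (rather than $|z-x|^{-1}$) nature of the singularity of $\bar g_x$ enter.
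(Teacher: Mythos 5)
Your argument is correct, but it is organized differently from the paper's. The paper proves Lemma \ref{l5.2} by simply re-running the excision argument of Lemma \ref{l3.2} (Appendix A) with $g_x$ replaced by $\bar g_x$: one cuts out $B(x,\delta)$, integrates by parts twice against $p_\varepsilon(y-\cdot)$, and checks that the boundary terms on $\{|z-x|=\delta\}$ vanish — which is immediate because $\bar g_x\equiv g_x$ on $B(x,1/2)$, so the near-singularity bookkeeping is word-for-word the same, and the situation at infinity is even easier since $\bar g_x$ is compactly supported; this is your closing "alternative" route. Your main route instead treats Lemma \ref{l3.2} as a black box and reduces to it via the decomposition $\bar g_x=g_x-w_x$ with $w_x=g_x\,(1-\chi_{1/2}(\cdot-x))$ globally $C^\infty$, of logarithmic growth, with bounded gradient and Laplacian, for which differentiation under the integral plus Green's identity on all of $\R^3$ (Gaussian decay of $p_\varepsilon,\nabla p_\varepsilon$ killing the terms at infinity) gives $\Delta_y P_\varepsilon w_x=\int p_\varepsilon(y-\cdot)\Delta w_x$; subtracting and using $\Delta g_x=|\cdot-x|^{-2}$, i.e.\ \eqref{e2.6}, together with the absolute convergence guaranteed by \eqref{e5.3} and Lemma \ref{l3.3}, yields the claim. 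The trade-off is minor: your reduction avoids repeating the $\delta\downarrow 0$ boundary-term analysis but requires the (routine) justification of interchanging $\Delta_y$ with the integral and of the integration by parts over the unbounded domain for the log-growing $w_x$, whereas the paper's route avoids the auxiliary function altogether and, because $\bar g_x$ has compact support, has no issue at infinity at all. Both are complete proofs, and your identification of where $d=3$ and the logarithmic nature of the singularity enter (the $O(\delta^2\log(1/\delta))$ and $O(\delta)$ boundary terms) matches the substance of the paper's argument.
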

\begin{proof}
The proof is similar to that of Lemma \ref{l3.2}.
\end{proof}

\begin{lemma}\label{l5.3}
For $d \geq 1$, there exists a constant $C=C(d)>0$ such that for any $x\neq 0$ in $\R^d$ and $t>0$ , \[\int p_t(y) \log^+ \frac{1}{|y-x|} dy \leq C(1+\log^+ \frac{1}{|x|}).\]
\end{lemma}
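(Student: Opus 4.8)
The plan is to mimic the proof of Lemma~\ref{l3.3}, splitting the integral at radius $\delta:=|x|/2$ centred at the point $x$. On the outer region $\{|y-x|\ge\delta\}$ the monotonicity of $\log^+$ gives $\log^+(1/|y-x|)\le\log^+(1/\delta)=\log^+(2/|x|)\le\log 2+\log^+(1/|x|)$, so, integrating $p_t$ to $1$, this part of the integral is at most $\log 2+\log^+(1/|x|)$.

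For the inner region $\{|y-x|<\delta\}$, I would first note that $|y|\ge|x|-|y-x|>\delta$, whence $p_t(y)\le(2\pi t)^{-d/2}e^{-\delta^2/(2t)}\le C(d)\,\delta^{-d}$, the last step being exactly the $t$-uniform Gaussian bound $\sup_{t>0}(2\pi t)^{-d/2}e^{-\delta^2/(2t)}=C(d)\delta^{-d}$ already used in Lemma~\ref{l3.3}. Passing to polar coordinates (with the surface-area factor absorbed into $C(d)$),
\[
\int_{|y-x|<\delta}p_t(y)\log^+\frac{1}{|y-x|}\,dy\le C(d)\,\delta^{-d}\int_0^\delta\log^+\frac1r\,r^{d-1}\,dr .
\]
If $\delta\ge1$ the inner integral equals the constant $\int_0^1\log(1/r)r^{d-1}dr=1/d^2$ and $\delta^{-d}\le1$, so this term is bounded by $C(d)$ while $\log^+(1/|x|)=0$; if $\delta<1$, integration by parts gives $\int_0^\delta\log(1/r)r^{d-1}dr=\frac{\delta^d}{d}\log(1/\delta)+\frac{\delta^d}{d^2}$, so the term equals $\frac1d\log\frac1\delta+\frac1{d^2}\le C(d)\bigl(1+\log^+\frac1{|x|}\bigr)$, using $\log^+(1/\delta)\le\log 2+\log^+(1/|x|)$. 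Adding the two regional bounds yields the assertion.

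There is no real obstacle here: everything reduces to the elementary radial integral above. The only points that require care are the case distinction $\delta\ge1$ versus $\delta<1$, so that $\log^+$ is handled correctly near $r=0$, and checking that every constant depends on $d$ alone and not on $t$ or $x$ — which is guaranteed because the Gaussian-maximum estimate is uniform in $t$, exactly as in Lemma~\ref{l3.3}.
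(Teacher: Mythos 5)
Your proof is correct and follows exactly the route the paper intends: the paper's proof of Lemma \ref{l5.3} is simply declared ``similar to that of Lemma \ref{l3.3}'', i.e.\ splitting at radius $\delta=|x|/2$, using $\int p_t\le 1$ outside and the $t$-uniform bound $\sup_{t>0}(2\pi t)^{-d/2}e^{-\delta^2/(2t)}=C(d)\delta^{-d}$ together with the radial integral inside, which is precisely what you do. Your case distinction $\delta\ge 1$ versus $\delta<1$ and the inequality $\log^+(2/|x|)\le\log 2+\log^+(1/|x|)$ correctly handle the $\log^+$ cutoff, so nothing is missing.
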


\begin{proof}
The proof is similar to that of Lemma \ref{l3.3}.
\end{proof}

\subsubsection{Proof of Proposition \ref{p5.1}}

\begin{enumerate}[(i)]
\item  Compared to \eqref{e3.7}, we prove in Appendix B(i) that there is some constant $C>0$ such that for any $y\neq x$ and any $0<\varepsilon<1$,
\begin{equation}\label{e5.6}
\int p_\varepsilon(y-z) |\bar{g}_x(z) -\bar{g}_x(y)| dz \leq C|y-x|^{-1/2} \varepsilon^{1/4}.
\end{equation}
By \eqref{e5.2} and Lemma \ref{l5.3}, for all $\varepsilon>0$ we have 
\begin{equation}
\int p_\varepsilon(y-z) |\bar{g}_x(z) -\bar{g}_x(y)| dz \leq C(1+\log^+ (1/|y-x|)),\label{e5.7}
\end{equation}
which is integrable w.r.t $\mu(dy)$ by assumption. Dominated Convergence Theorem implies \[\int \mu(dy)   \int p_\varepsilon(y-z) |\bar{g}_x(z) -\bar{g}_x(y)| dz \to 0 \text{\ as \ } \varepsilon \to 0,\] and it follows that $\mu(P_\varepsilon \bar{g}_x) \to \mu(\bar{g}_x)$.

\item By using \eqref{e5.6},  it follows by the same argument in proving (ii) in Section \ref{s3.3.1} that for any $T>0$, there is some subsequence $\varepsilon_n \downarrow 0$ such that \[\sup_{t\leq T} |M_t(P_{\varepsilon_n} \bar{g}_x)-M_t(\bar{g}_x)| \to 0, \ P_{\mu}\text{-a.s..} \]

\item For any $T>0$ we have
\begin{align*}
& E_{\mu} \Big(\sup_{t\leq T}\Big|\int_0^t X_s\big(\frac{\Delta}{2} P_{\varepsilon} \bar{g}_x) ds-\int_0^t X_s(\frac{\Delta}{2} \bar{g}_x)ds \  \Big|\Big)\leq E_{\mu} \Big(\int_0^T X_s\big(|\frac{\Delta}{2} P_{\varepsilon}  \bar{g}_x-\frac{\Delta}{2} \bar{g}_x|\big)ds \  \Big) \\
=& \frac{1}{2} \int \mu(dw) \int_0^T ds \int p_s(w-y)  \Big|\int p_\varepsilon(y-z) \Delta  \bar{g}_x(z) dz-\Delta \bar{g}_x(y)\Big| dy.
\end{align*}
The last inequality is by Lemma \ref{l5.2}. Recall $\bar{h}$ defined as in \eqref{e5.4}. For $y\neq x$, we have
\begin{equation} \label{e5.81}
\Delta \bar{g}_x(y)-\frac{1}{|y-x|^2}=\bar{h}(y-x) \text{\ where\ } \bar{h} \in C_b(\R^3).
\end{equation} Then Dominated Convergence Theorem implies that as $\varepsilon \to 0$, \[\Big|\int p_\varepsilon(y-z) \bar{h}(z-x) dz- \bar{h}(y-x)\Big|\leq E |\bar{h}(B_\varepsilon-(y-x))-\bar{h}(y-x)| \to 0.\]
Together with \eqref{e3.9} we have for $y\neq x$,
\begin{align*}
\Big|\int p_\varepsilon(y-z) \Delta \bar{g}_x(z) dz-\Delta \bar{g}_x(y)\Big| &\leq \Big|\int p_\varepsilon(y-z) \frac{1}{|z-x|^2} dz- \frac{1}{|y-x|^2}\Big|\\
&+\Big|\int p_\varepsilon(y-z) \bar{h}(z-x) dz- \bar{h}(y-x)\Big| \to 0 \text{ as } \varepsilon \to 0. 
\end{align*}

By \eqref{e5.3} and Lemma \ref{l3.3}, for $y\neq x$ we have
\begin{align*}
\Big|\int p_\varepsilon(y-z) \Delta  \bar{g}_x(z) dz-\Delta \bar{g}_x(y)\Big|& \leq C\int p_\varepsilon(y-z) \frac{1}{|z-x|^{2}} dz+C\frac{1}{|y-x|^{2}}\leq C\frac{1}{|y-x|^{2}},
\end{align*}
 which is integrable w.r.t. $\int \mu(dw) \int_0^T ds  p_s(w-y) dy$ by Lemma \ref{l5.4} and the assumption on $\mu$. Therefore Dominated Convergence Theorem implies
 \[\int \mu(dw) \int_0^T ds \int p_s(w-y) dy \Big|\int p_\varepsilon(y-z) \Delta  \bar{g}_x(z) dz-\Delta \bar{g}_x(y)\Big| \to 0\text{\ as \ } \varepsilon \to 0,\]
 and hence \[ \sup_{t\leq T} \int_0^t X_s\big(|\frac{\Delta}{2} P_{\varepsilon} \bar{g}_x-\frac{\Delta}{2} \bar{g}_x|) ds  \xrightarrow[]{L^1} 0.\] Take a subsequence $\varepsilon_n \downarrow 0$ to obtain
\[ \sup_{t\leq T}\Big|\int_0^t X_s\big(\frac{\Delta}{2} P_{\varepsilon} \bar{g}_x) ds-\int_0^t X_s(\frac{\Delta}{2} \bar{g}_x)ds \  \Big|\to 0, \ P_{\mu}\text{-a.s..} \]
\item Fix any $T>0$. Set $0<\delta<T$, which will be chosen small enough below. Then we have \[\sup_{t\leq T} |X_t(P_{\varepsilon} \bar{g}_x)-X_t(\bar{g}_x)| \leq \sup_{t\leq \delta} |X_t(P_{\varepsilon} \bar{g}_x)-X_t(\bar{g}_x)|+\sup_{\delta\leq t\leq T} |X_t(P_{\varepsilon} \bar{g}_x)-X_t(\bar{g}_x)|. \] For the second term on the right-hand side, we recall from \eqref{e3.02} that with $P_{\mu}$-probability one, for any $\delta>0$, there is some $r_0(\delta,\omega)\in (0,1]$ and some constant $C>0$ such that for any $T>0$,
\begin{equation*}
\sup_{\delta \leq t\leq T} \int |y-x|^{-1/2} X_t(dy) \leq r_0^{-1/2} \sup_{\delta \leq t\leq T} X_t(1)+ C.
\end{equation*} 
Therefore by \eqref{e5.6} we have
\begin{align*}
\sup_{\delta \leq t\leq T} |X_t(P_{\varepsilon} \bar{g}_x)-X_t(\bar{g}_x)| &\leq C\varepsilon^{1/4} \sup_{\delta \leq t\leq T} \int |y-x|^{-1/2} X_t(dy) \\
&\leq C\varepsilon^{1/4} (r_0^{-1/2} \sup_{\delta \leq t\leq T} X_t(1)+ C) \to 0  \text{ as } \varepsilon \to 0.
\end{align*} 
Now we deal with $t\leq \delta$. Let $\beta_k \downarrow 0$ satisfy $\mu \big(\{y: |y-x|=\beta_k\}\big)=0$. Then use \eqref{e5.6} to get
\begin{align*}
\sup_{t\leq \delta} |X_t(P_{\varepsilon} \bar{g}_x)-X_t(\bar{g}_x)| \leq& C\varepsilon^{1/4} \beta_k^{-1/2} \sup_{t\leq \delta}  X_t(1)\\
&+\sup_{t\leq \delta} \int |P_{\varepsilon} \bar{g}_x(y)-\bar{g}_x(y)| 1_{|y-x|\leq \beta_k} X_t(dy).
\end{align*}
By Lemma \ref{l5.3}, for $|y-x|\leq \beta_k<1/4$ we have $\int p_{\varepsilon}(y-z) \log^{+} (1/|z-x|) dz \leq C(1+\log^+ (1/|y-x|))\leq 2C \log^+ (1/|y-x|)$. Hence by \eqref{e5.2} and \eqref{e5.1}, for $|y-x|\leq \beta_k<1/4$ we have
\[|P_{\varepsilon} \bar{g}_x(y)-\bar{g}_x(y)|\leq C\log^+ (1/|y-x|) = C  |\bar{g}_x(y)|.\] So
\begin{align}
&\sup_{t\leq \delta} \int |P_{\varepsilon} \bar{g}_x(y)-\bar{g}_x(y)| 1_{|y-x|\leq \beta_k} X_t(dy)\leq C \sup_{t\leq \delta} \int |\bar{g}_x(y)| 1_{|y-x|\leq \beta_k} X_t(dy)\nonumber\\
\leq &C \sup_{t\leq \delta} \Big(\int |\bar{g}_x(y)| 1_{|y-x|\leq\beta_k} X_t(dy)-\int |\bar{g}_x(y)|1_{|y-x|\leq\beta_k} \ \mu(dy)\Big)+C \int |\bar{g}_x(y)|1_{|y-x|\leq\beta_k} \mu(dy). \label{e9.7}
\end{align} 
Use Dominated Convergence Theorem to get  \[\int |\bar{g}_x(y)| 1_{|y-x|\leq \beta_k} \ \mu(dy) \leq \int  \log^+(1/|y-x|) 1_{|y-x|\leq \beta_k} \ \mu(dy)\to 0 \text{ as } \beta_k \to 0. \]
To handle the first term on the right hand side of \eqref{e9.7} we may apply Fatou's lemma in \eqref{e5.02} and use the convergence established in (i) to see that for all $t\leq \delta$,
\begin{align} \label{e5.04}
&X_t(|\bar{g}_x|)-\mu(|\bar{g}_x|)=X_t(-\bar{g}_x)-\mu(-\bar{g}_x) \leq \liminf_{\varepsilon_n \to 0} \Big[M_t(-P_{\varepsilon_n} \bar{g}_x )+ \int_0^t X_s(-\frac{\Delta}{2} P_{\varepsilon_n} \bar{g}_x ) ds \Big]\nonumber \\
=& M_t(-\bar{g}_x)+\int_0^t  X_s(-\frac{\Delta}{2} \bar{g}_x) ds\leq \sup_{t\leq \delta} M_t(|\bar{g}_x|)+\int_0^\delta X_s(|\frac{\Delta}{2} \bar{g}_x|) ds.
\end{align} 
For the last equality, we use the a.s. convergence established in (ii) and (iii) above. Then
\begin{align*}
&\sup_{t\leq \delta} (\int |\bar{g}_x(y)| 1_{|y-x|\leq\beta_k} X_t(dy)-\int |\bar{g}_x(y)|1_{|y-x|\leq\beta_k} \ \mu(dy))\\
\leq& \sup_{t\leq \delta} (X_t(|\bar{g}_x|)-\mu(|\bar{g}_x|)) +\sup_{t\leq \delta} (\int |\bar{g}_x(y)| 1_{|y-x|>\beta_k} X_t(dy)-\int |\bar{g}_x(y)| 1_{|y-x|>\beta_k} \mu(dy)).
\end{align*} 
Let $\delta \downarrow 0$. Then the first term converges to $0$ by \eqref{e5.04}. The second term follows by the weak continuity of $X_t$ with the choice of $\{\beta_k\}$, in a way such that the set of discontinuity points of the bounded function $|\bar{g}_x(y)| 1_{|y-x|>\beta_k}$ is $\mu$-null. Let $\varepsilon \to 0$ to conclude that
\[ \sup_{t\leq T} |X_t(P_{\varepsilon} \bar{g}_x)-X_t(\bar{g}_x)| \to 0, \ P_{\mu}\text{-a.s..} \]
\end{enumerate}

The proof of Proposition \ref{p5.1} follows by the a.s. convergence (i)-(iv) and \eqref{e5.02}. $\hfill\blacksquare$

\subsection{Proof of Theorem \ref{t4.1}}
Recall from \eqref{e1.4} that \[D=\{x_0\in \R^3: \int \frac{1}{|y-x_0|} \mu(dy)=\infty  \}.\]

\begin{lemma}\label{l5.5}
Let $D$ be defined as in \eqref{e1.4}. Then $D$ is a Lebesgue null set in $\R^3$ and in particular $D^c$ is dense in $\R^3$.
\end{lemma}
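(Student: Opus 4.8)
The plan is to recognize $\int \frac{1}{|y-x_0|}\,\mu(dy)$ as the Newtonian potential $U\mu(x_0)$ of the finite measure $\mu$, and to show $U\mu<\infty$ for Lebesgue-almost every $x_0$ by a Tonelli argument over balls; density of $D^c$ is then immediate.

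First I would fix $R>0$ and, since $(x,y)\mapsto |y-x|^{-1}$ is nonnegative and jointly measurable, apply Tonelli to write
\begin{equation*}
\int_{B(0,R)}\Big(\int \frac{1}{|y-x|}\,\mu(dy)\Big)\,dx=\int\Big(\int_{B(0,R)}\frac{1}{|y-x|}\,dx\Big)\,\mu(dy).
\end{equation*}
Next I would bound the inner integral by a constant depending only on $R$, uniformly in $y$. If $|y|\le 2R$ then $B(0,R)\subseteq B(y,3R)$, so $\int_{B(0,R)}|y-x|^{-1}\,dx\le \int_{|z|<3R}|z|^{-1}\,dz=2\pi(3R)^2$; if instead $|y|>2R$ then $|y-x|\ge |y|-R>R$ for every $x\in B(0,R)$, so $\int_{B(0,R)}|y-x|^{-1}\,dx\le R^{-1}|B(0,R)|=\tfrac{4}{3}\pi R^2$. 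In either case the inner integral is at most some $C(R)<\infty$, whence
\begin{equation*}
\int_{B(0,R)}\Big(\int \frac{1}{|y-x|}\,\mu(dy)\Big)\,dx\le C(R)\,\mu(1)<\infty.
\end{equation*}
Therefore $\int \frac{1}{|y-x|}\,\mu(dy)<\infty$ for Lebesgue-a.e.\ $x\in B(0,R)$; letting $R\uparrow\infty$ (or taking a countable union of null sets) shows this holds for Lebesgue-a.e.\ $x\in\R^3$, i.e.\ $D$ is a Lebesgue null set.

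Finally, since $D$ is Lebesgue null, $D^c$ has full Lebesgue measure in $\R^3$ and hence intersects every nonempty open set (each of which has positive Lebesgue measure), so $D^c$ is dense. There is essentially no serious obstacle in this argument; the only point needing a little care is that $\mu$ need not be compactly supported, so one must bound $\int_{B(0,R)}|y-x|^{-1}\,dx$ uniformly in $y$, which is exactly why it is convenient to split into the cases $|y|\le 2R$ and $|y|>2R$.
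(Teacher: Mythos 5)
Your proof is correct and follows essentially the same route as the paper: a Tonelli argument showing the Newtonian potential of the finite measure $\mu$ is locally integrable, hence finite Lebesgue-a.e., with density of $D^c$ following from full measure. The only cosmetic difference is that you truncate the spatial domain (integrating $x$ over $B(0,R)$ and exhausting $\R^3$), while the paper truncates the kernel (splitting $1/|y-x|$ at $|y-x|=1$ so the Fubini integral over all of $\R^3$ is already finite); both work equally well.
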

\begin{proof}
By Fubini's Theorem, we have \[\int dx \int \frac{1}{|y-x|} 1_{\{|y-x|<1\}} \mu(dy)= 2\pi \mu(1)<\infty,\] and it follows that  $\int \frac{1}{|y-x|} 1_{\{|y-x|<1\}} \mu(dy)<\infty \text{\ for Lebesgue a.a. } x\in \R^3.$ Therefore \[\int \frac{1}{|y-x|}\mu(dy)\leq \mu(1)+\int \frac{1}{|y-x|} 1_{\{|y-x|<1\}} \mu(dy)<\infty \text{\ for Lebesgue a.a. } x\in \R^3,\]  that is to say, $D^c$ has full measure. Hence $D^c$ is dense. 
\end{proof}
\noindent Now we turn to the
\begin{proof}[Proof of Theorem \ref{t4.1}]
Fix any $x_0\in D$. Since $D^c$ is dense, there exists some sequence $x_n\in D^c$ such that $x_n\to x_0$ as $n\to \infty$. Fix any such sequence. By Fatou's lemma
\begin{equation}
\liminf_{n\to\infty} \int \frac{1}{|y-x_n|} \mu(dy) =\infty. \label{e5.9}
\end{equation}
 Recall the Tanaka formula \eqref{e1.5} and recall that $\phi_x(y)=c_{\ref{t1}}/|y-x|$. For $x_n \in D^c$, since $\mu(\phi_{x_n})<\infty,$ we have 
 \begin{align}\label{e5.161}
  L_t^{x_n}-\mu(\phi_{x_n})=M_t(\phi_{x_n})-X_t(\phi_{x_n}).
 \end{align}
 Then
\begin{align*}
 P_{\mu}\Big(L_t^{x_n}<\frac{1}{2} \mu(\phi_{x_n})\Big)&\leq P_{\mu}\Big(|L_t^{x_n}-\mu(\phi_{x_n})|>\frac{1}{2} \mu(\phi_{x_n})\Big) \\
&\leq  \ P_{\mu}\Big(|M_t(\phi_{x_n})|>\frac{1}{4} \mu(\phi_{x_n})\Big)+P_{\mu}\Big(|X_t(\phi_{x_n})|>\frac{1}{4} \mu(\phi_{x_n})\Big).
\end{align*}
Note that 
\begin{align*}
E_{\mu}\Big(|X_t(\phi_{x_n})|\Big)=\int \mu(dy) \int p_t(y-z) \frac{c_{\ref{t1}}}{|z-x|} dz.
\end{align*}
Fix any $t>0$. For $x, y\in \R^3$, use the fact $1/|y-x|=\int_0^\infty 2\pi p_s(y-x) ds$ to see that 
\begin{align}
 &\int p_t(y) \frac{1}{|y-x|} dy= \int p_t(y) dy  \int_0^\infty 2\pi p_s(y-x) ds = \int_0^\infty  2\pi p_{t+s}(x)ds\nonumber\\
  \leq& \int_0^\infty  2\pi p_{t+s}(0)ds=\int p_t(y) dy  \int_0^\infty 2\pi p_s(y) ds = \int p_t(y)\frac{1}{|y|} dy = C(t) <\infty. \label{e5.10}
\end{align}
Therefore we have
\begin{equation}
E_{\mu}\Big(|X_t(\phi_{x_n})|\Big) \leq c_{\ref{t1}} C(t) \mu(1),\label{e5.11}
\end{equation}
and \[P_{\mu}\Big(|X_t(\phi_{x_n})|>\frac{1}{4} \mu(\phi_{x_n})\Big)\leq \frac{4c_{\ref{t1}} C(t) \mu(1)}{ \mu(\phi_{x_n})}.\]
Next use Lemma \ref{l5.4} to get
\begin{align}\label{e9.21}
&E_\mu \Big[M_t^2(\phi_{x_n})\Big]= E_\mu \Big[\int_0^t X_s(\phi_{x_n}^2) ds\Big]=\int \mu(dy) \int_0^t ds \int p_s(y-z) \frac{c_{\ref{t1}}^2}{|z-x_n|^2} dz\nonumber \\
\leq& 2 c_{\ref{t1}}^2 \int \mu(dy) \Big((3t)^{1/2}+1+\log^+ (1/|y-x_n|)\Big) \leq C(t) \mu(1)+\mu(\phi_{x_n}),
\end{align}
and it follows that
\[P_{\mu}\Big(|M_t(\phi_{x_n})|>\frac{1}{4} \mu(\phi_{x_n})\Big) \leq \frac{16(C(t) \mu(1)+\mu(\phi_{x_n}))}{(\mu(\phi_{x_n}))^2}. \]
Together we have
\[ P_{\mu}\Big(L_t^{x_n}<\frac{1}{2} \mu(\phi_{x_n})\Big)\leq  \frac{4c_{\ref{t1}} C(t) \mu(1)}{ \mu(\phi_{x_n})}+ \frac{16(C(t) \mu(1)+\mu(\phi_{x_n}))}{(\mu(\phi_{x_n}))^2} \to 0 \text{\ by\ } \eqref{e5.9}.\]
Hence $L_t^{x_n} \to \infty$ in probability since $\mu(\phi_{x_n})\to \infty$. Take a subsequence $x_{n_k} \to x_0$ to get $L_t^{x_{n_k}} \to \infty, P_\mu$-a.s. Therefore by choosing $\omega$ outside a null set $N(t)$, we have $L_t^{x_{n_k}} \to \infty$ as $x_{n_k} \to x_0$. Now take $t=1/n$ and $N=\cup_{n=1}^\infty N(1/n)$ to see that for $\omega \in N^c$, we have $L_t^{x_{n_k}} \to \infty$ as $x_{n_k} \to x_0$ for all $t>0$ since $L_t^x$ is monotone in $t$. So we conclude that with $P_\mu$-probability one,  we have for all $t>0$, $x\mapsto L_t^x$ is discontinuous at $x_0$.
\end{proof}

\subsection{Proof of Theorem \ref{t4}}
\subsubsection{Proof of Theorem \ref{t4}(a)}
The proof is similar to that of Theorem \ref{t1}. Use \eqref{e5.11} to see that \[\frac{X_t(\phi_{x_n})}{(2c_{\ref{t1}}^2 \int \mu(dy) \log^+(1/|y-x_n|))^{1/2}} \xrightarrow[]{L^1} 0,  \] and hence convergence in probability follows. Then use the arguments in Section \ref{s2.2} to see that the proof of Theorem \ref{t4}(a) can be reduced to the proof of \[\frac{[M(\phi_{x_n})]_t}{2c_{\ref{t1}}^2 \int \log^+(1/|y-x_n|) \mu(dy) } \xrightarrow[]{P_\mu} 1.\] Note that \[[M(\phi_{x_n})]_t=c_{\ref{t1}}^2 \int_0^t ds \int X_s(dy) \frac{1}{|y-x_n|^2}. \] Recall from \eqref{e5.81} that 
\begin{equation}\label{e5.16}
\Big|\Delta \bar{g}_{x_n} (y)-\frac{1}{|y-x_n|^2}\Big|=|\bar{h}(y-x_n)| \leq \|\bar{h}\|_\infty<\infty. 
\end{equation} Therefore it suffices to show
\begin{equation}
\frac{\int_0^t X_s(\frac{\Delta}{2} \bar{g}_{x_n}) ds}{ \int \mu(dy) \log^+(1/|y-x_n|)} \xrightarrow[]{P_\mu} 1.
\end{equation}
Since \[\int \mu(dy) \log^+ (1/|y-x_n|) \leq \int \mu(dy) \frac{1}{|y-x_n|}<\infty,\] we may use Proposition \ref{p5.1} to get 
\begin{equation}\label{e5.16.1}
\int_0^t X_s(\frac{\Delta}{2} \bar{g}_{x_n}) ds=X_t(\bar{g}_{x_n})-\mu(\bar{g}_{x_n})-M_t(\bar{g}_{x_n}).
\end{equation}
 Note that by \eqref{e5.11}, we have 
 \begin{align*}
  E_{\mu}\Big( |X_t(\bar{g}_{x_n})| \Big) \leq E_{\mu}\Big[ \int X_t(dy) \frac{1}{|y-x_n|} \Big] \leq C(t) \mu(1).
 \end{align*} 
 Hence
  \[\frac{X_t(\bar{g}_{x_n})}{ \int \mu(dy) \log^+(1/|y-x_n|)} \xrightarrow[]{L^1} 0. \]
 Next we observe that 
\begin{align*}
E_{\mu}\Big[M_t^2(\bar{g}_{x_n})\Big] =& E_{\mu}\Big[\int_0^t X_s((\bar{g}_{x_n})^2) ds\Big] =\int \mu(dy) \int_0^t ds \int p_s(y-z) (\bar{g}_{x_n}(z))^2 dz\nonumber\\
\leq & \int \mu(dy) \int_0^t ds \int p_s(y-z) \frac{1}{|z-x_n|} dz\leq  (3t)^{1/2} \mu(1),
\end{align*}
the last by Lemma \ref{l4.2}. Therefore  \[\frac{M_t(\bar{g}_{x_n})}{ \int \mu(dy) \log^+(1/|y-x_n|)} \xrightarrow[]{L^2} 0. \]
Recall the bounded continuous function $\bar{f}$ defined as in \eqref{e5.31}. Then we have
 \begin{equation} \label{e5.13}
\Big|-\bar{g}_{x_n}(y) -\log^+ (1/|y-x_n|) \Big|= |\bar{f}(y-x_n)|\leq \|\bar{f}\|_\infty<\infty,
 \end{equation}
 and it follows that \[\lim_{n\to \infty} \frac{-\mu(\bar{g}_{x_n})}{\int \mu(dy) \log^+ (1/|y-x_n|) }=1+\lim_{n\to \infty} \frac{\int \bar{f}(y-x_n)\mu(dy) }{\int \mu(dy) \log^+ (1/|y-x_n|) }= 1.\] Now use \eqref{e5.16.1} to conclude
  \begin{align*}
 \frac{\int_0^t X_s(\frac{\Delta}{2} \bar{g}_{x_n}) ds}{ \int \mu(dy) \log^+(1/|y-x_n|)}=\frac{X_t(\bar{g}_{x_n})-\mu(\bar{g}_{x_n})-M_t(\bar{g}_{x_n})}{\int \mu(dy) \log^+(1/|y-x_n|)} \xrightarrow[]{P_\mu} 1,
 \end{align*}
 and so the proof is complete.

\subsubsection{Proof of Theorem \ref{t4}(b)}
Use \eqref{e5.20.1} with $\gamma=1/2$ to see that
\[
\Big|X_t(\phi_{x_n})-X_t(\phi_{x_0})\Big| \leq c_{\ref{t1}}  |x_n-x_0|^{1/2} \int \Big(\frac{1}{|y-x_0|^{3/2}}+\frac{1}{|y-x_n|^{3/2}}\Big) X_t(dy).
\]
 Similar to the derivation of \eqref{e2.2}, we use Lemma \ref{l2.1} to see that with $P_{\mu}$-probability one, there exist some $r_0(t,\omega)\in (0,1]$ and some constant $C>0$ such that for all $x\in \R^3$,\[\int X_t(dy) \frac{1}{|y-x|^{3/2}} \leq \frac{1}{(r_0)^{3/2}} X_t(1)+\int_{|y-x|<r_0} X_t(dy) \frac{1}{|y-x|^{3/2}} \leq \frac{1}{(r_0)^{3/2}} X_t(1)+C.\] 
 Then it follows that
 \[\Big|X_t(\phi_{x_n})-X_t(\phi_{x_0})\Big| \leq c_{\ref{t1}} |x_n-x_0|^{1/2} \cdot 2\Big(\frac{1}{(r_0)^{3/2}} X_t(1)+C \Big) \to 0 \text{\ as } x_n \to x_0, \] that is to say \[X_t(\phi_{x_n}) \xrightarrow[]{a.s.}  X_t(\phi_{x_0}).  \]
By \eqref{e5.161} we can see that the proof of Theorem \ref{t4}(b) is now reduced to the proof of convergence of $M_t(\phi_{x_n})$ to $M_t(\phi_{x_0})$ in probability . In fact, we will show the following $L^2$ convergence:
\begin{align*}
 &E_{\mu} \Big[\Big(M_t(\phi_{x_n})-M_t(\phi_{x_0})\Big)^2\Big] = E_{\mu} \Big[\int_0^t X_s((\phi_{x_n}-\phi_{x_0})^2) ds\Big]\\
 =&  \int \mu(dy) \int_0^t ds \int p_s(y-z)  (\phi_{x_n}(z)-\phi_{x_0}(z))^2 dz \to 0 \text{\ as } x_n \to x_0.
\end{align*}
For this I claim it suffices to show that when $x_n\to x_0$,
\begin{equation}
\int \mu(dy) \int_0^t ds \int p_s(y-z) \phi_{x_n}^2(z) dz \to \int \mu(dy) \int_0^t ds \int p_s(y-z) \phi_{x_0}^2(z) dz.\label{e5.14}
 \end{equation}
To see this first note that by Lemma \ref{l5.4}, \[\int \mu(dy) \int_0^t ds \int p_s(y-z) \phi_{x_0}^2(z) dz \leq 2 c_{\ref{t1}}^2 \int \mu(dy) \Big( (3t)^{1/2}+1+\log^{+} (1/|y-x_0|)\Big)<\infty. \] Hence the right hand side of \eqref{e5.14} is finite.
With respect to the measure  $\int \mu(dy) \int_0^t ds p_s(y-z) dz$,  by assuming \eqref{e5.14}, we use $\phi_{x_n}^2(z)  \to \phi_{x_0}^2(z)$ to get the uniform integrability of $\{\phi_{x_n}^2\}$.   Then \[(\phi_{x_n}-\phi_{x_0})^2 \leq 2 (\phi_{x_n})^2+2 (\phi_{x_0})^2 \text{\ is uniformly integrable} . \] Since $(\phi_{x_n}-\phi_{x_0})^2 \to 0$ as $n\to \infty$, by its uniform integrability we have  \[\int \mu(dy) \int_0^t ds \int p_s(y-z)  (\phi_{x_n}(z)-\phi_{x_0}(z))^2 dz \to 0.\]

In order to prove \eqref{e5.14}, we recall from \eqref{e5.81} that for $z\neq x$,
 \[\Delta \bar{g}_{x} (z)-\frac{1}{|z-x|^2}=\bar{h}(z-x), \text{\ where }  \bar{h} \in C_b(\R^3).\]  Then use Dominated Convergence Theorem to get
\[\int \mu(dy) \int_0^t ds \int p_s(y-z) \bar{h}(z-x_n) dz \to \int \mu(dy) \int_0^t ds \int p_s(y-z) \bar{h}(z-x_0) dz.\]
Therefore it suffices to show
\begin{equation}
\int \mu(dy) \int_0^t ds \int p_s(y-z) \Delta \bar{g}_{x_n} (z) dz \to \int \mu(dy) \int_0^t ds \int p_s(y-z) \Delta \bar{g}_{x_0} (z) dz. \label{e5.15}
 \end{equation}

In order to prove \eqref{e5.15}, take means in Proposition \ref{p5.1} to get
\begin{align*}
& \Big| \int \mu(dy) \int_0^t ds \int p_s(y-z) \frac{\Delta}{2} \bar{g}_{x_n}(z) dz - \int \mu(dy) \int_0^t ds \int p_s(y-z) \frac{\Delta}{2}  \bar{g}_{x_0}(z) dz\Big|\\
\leq& \Big|\mu(\bar{g}_{x_n})-\mu(\bar{g}_{x_0})\Big|+E_{\mu} \Big|X_t(\bar{g}_{x_n})-X_t(\bar{g}_{x_0})\Big|.
\end{align*}
Recall from \eqref{e5.13} that for $y\neq x$,  \[-\bar{g}_{x}(y)-\log^+ (1/|y-x|)=\bar{f}(y-x),   \text{\ where }  \bar{f} \in  C_b(\R^3).\] 
Apply Dominated Convergence Theorem to get \[ \int \bar{f}(y-x_n) \mu(dy) \to \int \bar{f}(y-x_0) \mu(dy),\] and then $\mu(\bar{g}_{x_n}) \to \mu(\bar{g}_{x_0})$ follows since we have assumed in Theorem \ref{t4}(b) that  \[\int \mu(dy) \log^+ (1/|y-x_n|) \to \int \mu(dy) \log^+ (1/|y-x_0|) .\]
For the second term, we have \[E_{\mu} \Big|X_t(\bar{g}_{x_n})-X_t(\bar{g}_{x_0})\Big| \leq \int \mu(dy) \int p_t(y-z) | \bar{g}_{x_n}(z)-\bar{g}_{x_0}(z)|  dz.\] It suffices to show \[\int \mu(dy) \int p_t(y-z) | \bar{g}_{x_n}(z)|  dz \to \int \mu(dy) \int p_t(y-z) | \bar{g}_{x_0}(z)|  dz\] by the same uniform integrability arguments above. We have
\begin{align*}
& \Big| \int p_t(y-z) | \bar{g}_{x_n}(z)|  dz - \int p_t(y-z) |\bar{g}_{x_0}(z)|  dz\Big| \\
 =& \Big| \int p_t(x_n-z) | \bar{g}_{0}(z-y)|  dz - \int p_t(x_0-z) |\bar{g}_{0}(z-y)|  dz\Big| \\
 \leq & C t^{-1/2} |x_n-x_0|  \int (p_{2t}(x_n-z)+ p_{2t}(x_0-z))\ | \bar{g}_{0}(z-y)| dz \ \text{\ (by \eqref{e4.12})} \\
  \leq & C t^{-1/2} |x_n-x_0|  \int (p_{2t}(x_n-z)+ p_{2t}(x_0-z)) \frac{1}{|z-y|} dz \ \text{\ (by \eqref{e5.2})} \\
    \leq & C t^{-1/2} |x_n-x_0| \cdot C(t) \text{\ \ (by \eqref{e5.10}).}
\end{align*}
Therefore 
\begin{align*}
& \Big| \int \mu(dy) \int p_t(y-z) | \bar{g}_{x_n}(z)|  dz - \int \mu(dy) \int p_t(y-z) |\bar{g}_{x_0}(z)|  dz\Big| \\
\leq & C t^{-1/2} |x_n-x_0| \cdot C(t) \mu(1) \to 0 \text{\ as } x_n \to x_0.
\end{align*}
Combining the above we have proved \eqref{e5.15} and the proof is now complete. $\hfill\blacksquare$

\section{Application to PDE}\label{s6.2}

Let $V^\lambda(x)$ be the solution to \eqref{e1.6}. In order to prove Theorem \ref{t6}, we need to show that the upper bound coincides with the lower bound as $x\to 0$ and we will prove the following two lemmas:

\begin{lemma}(Lower bound)\label{l6.1}
\[\liminf_{x\to 0} \frac{V^\lambda(x)- \lambda c_{\ref{t1}}/|x|}{c_{\ref{t1}}^2 \lambda^2 \log (1/|x|)} \geq -1 . \]
\end{lemma}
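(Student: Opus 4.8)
The plan is to convert the statement into an upper bound on a Laplace transform. Since $e^{-V^\lambda(x)}=E_{\delta_0}[e^{-\lambda L_\infty^x}]$ by \eqref{e1.6.1} with $\mu=\delta_0$, Lemma \ref{l6.1} is equivalent to: for every $\epsilon>0$, $E_{\delta_0}[e^{-\lambda(L_\infty^x-c_{\ref{t1}}/|x|)}]\le |x|^{-(1+\epsilon)c_{\ref{t1}}^2\lambda^2}$ for $|x|$ small. First I would pass to a finite horizon: by monotonicity of $t\mapsto L_t^x$ and $\lambda>0$ we have $E_{\delta_0}[e^{-\lambda(L_\infty^x-c_{\ref{t1}}/|x|)}]\le E_{\delta_0}[e^{-\lambda(L_1^x-c_{\ref{t1}}/|x|)}]$, and since $c_{\ref{t1}}/|x|-q_1(x)=\int_1^\infty p_s(x)\,ds$ (here $q_t(x)=\int_0^t p_s(x)\,ds$ and $q_\infty(x)=c_{\ref{t1}}/|x|$ in $d=3$) stays bounded as $x\to 0$, it suffices to bound $E_{\delta_0}[e^{-\lambda(L_1^x-q_1(x))}]$. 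Moreover the scaling relation $V^{\lambda}(x)=a^2V^{\lambda/a}(ax)$ for $a>0$ — forced by uniqueness in \eqref{e1.6} and $\delta_0(a\,\cdot)=a^{-3}\delta_0$ in $\R^3$ — lets me deduce the lemma for all $\lambda>0$ from the lemma for $\lambda$ below any fixed threshold, so I may assume $\lambda$ small.

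The key tool is a cumulant expansion for $L_1^x$. Since $L_1^x=\lim_{\delta\downarrow0}\int_0^1 X_s(p_\delta^x)\,ds$, I would repeat the argument of Lemma \ref{l4.4} with $\phi=p_\delta^x$ in place of $P_\varepsilon f_x$: here $v_1^{\delta,x}(t,z)=\int_0^t P_sp_\delta^x(z)\,ds=q_{t+\delta}(z-x)-q_\delta(z-x)\le q_\infty(z-x)=c_{\ref{t1}}/|z-x|$ uniformly in $\delta$, which gives the domination needed to take $\delta\to0$ and obtain (at least formally, and rigorously for $\lambda$ small, via Lemmas \ref{l4.3} and \ref{l4.5})
\[ E_{\delta_0}\big[e^{-\lambda(L_1^x-q_1(x))}\big]=\exp\Big(2\sum_{n\ge2}(-\lambda/2)^n\,w_n^x(1,0)\Big), \]
where $w_n^x$ satisfies the recursion \eqref{e4.3} started from $w_1^x(t,z)=q_t(z-x)$.

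The heart of the matter is a separation of scales in the cumulants $w_n^x(1,0)$: the $n=2$ term grows logarithmically with the correct constant, while $w_n^x(1,0)$ for $n\ge3$ stays bounded as $x\to 0$. For $n=2$, using $q_s(y-x)\le c_{\ref{t1}}/|y-x|$ and Lemma \ref{l5.4} (with $d=3$, $t=1$),
\[ w_2^x(1,0)=\int_0^1\!\!\int p_{1-s}(y)\,(q_s(y-x))^2\,dy\,ds\le c_{\ref{t1}}^2\!\int_0^1\!\!\int p_v(y)\,\frac{1}{|y-x|^2}\,dy\,dv\le 2c_{\ref{t1}}^2\log\frac{1}{|x|}+C. \]
For $n\ge3$ I would prove $|w_n^x(1,0)|\le C_n$ uniformly in $|x|\le1$ by induction, using the recursion together with $q_s(z)\le c_{\ref{t1}}/|z|$, $w_2^x(s,y)\le C(\log^+(1/|y-x|)+1)$, the elementary bound $\int p_v(z)\,|z-w|^{-\alpha}\,dz\le C(\alpha)v^{-\alpha/2}$ for $0<\alpha<3$ (uniform in $w$), and the inequalities $\log^+(1/r)\le C_\sigma r^{-\sigma}$, $(\log^+(1/r))^2\le C_\sigma r^{-\sigma}$; the point is that every product $w_k^x w_{n-k}^x$ occurring in $w_n^x$ (for $n\ge3$) has a pointwise bound of the form $C|\cdot-x|^{-\alpha}$ with $\alpha<2$, hence is time-integrable over $[0,1]$ and, after convolution with $p_{1-s}$, bounded uniformly in $x$ and $z$; the recursion then also gives $C_n\le D^n$ for some $D$. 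Summing, for $\lambda<2/D$ one gets $2\sum_{n\ge2}(-\lambda/2)^nw_n^x(1,0)\le \tfrac{\lambda^2}{2}(2c_{\ref{t1}}^2\log(1/|x|)+C)+2\sum_{n\ge3}(\lambda/2)^nC_n=c_{\ref{t1}}^2\lambda^2\log(1/|x|)+C(\lambda)$, so that $E_{\delta_0}[e^{-\lambda(L_1^x-q_1(x))}]\le C(\lambda)|x|^{-c_{\ref{t1}}^2\lambda^2}\le|x|^{-(1+\epsilon)c_{\ref{t1}}^2\lambda^2}$ for $|x|$ small. Combined with the finite-horizon reduction and the scaling reduction this yields $\liminf_{x\to0}(V^\lambda(x)-\lambda c_{\ref{t1}}/|x|)/(c_{\ref{t1}}^2\lambda^2\log(1/|x|))\ge-(1+\epsilon)$ for every $\epsilon>0$, which is the claim.

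The hard part will be the uniform-in-$x$ boundedness of the higher cumulants $w_n^x(1,0)$ for $n\ge3$: the induction must track precisely that the logarithmic divergence enters only at level $n=2$ (the borderline case $\alpha=2$ in $d=3$ of Lemma \ref{l5.4}), and that convolving the singular kernel $w_1^x\sim|\cdot-x|^{-1}$ against a bounded or $\log$-type function and integrating over a \emph{finite} time interval regularizes it to a quantity bounded independently of $x$. This separation is exactly what pins the second-order coefficient in Theorem \ref{t6} at $c_{\ref{t1}}^2\lambda^2=\lambda^2/(4\pi^2)$; a softer argument that merely bounds the variance of $L_1^x$ would lose the constant. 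A secondary technical nuisance is justifying the cumulant identity near the boundary of its radius of convergence, which is the reason for reducing to small $\lambda$ by scaling at the outset.
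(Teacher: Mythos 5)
Your proposal is correct in outline and in most details, but it proves the lower bound by a genuinely different mechanism than the paper. The paper never expands the Laplace transform of the local time: it starts from the Tanaka formula $L_t^x-c_{\ref{t1}}/|x|=M_t(\phi_x)-X_t(\phi_x)$, splits off $X_t(\phi_x)$ by H\"older and Corollary \ref{c6.4}, controls $E[\exp(-p\lambda M_t(\phi_x))]$ through the exponential supermartingale \eqref{e6.5} plus another H\"older step, and then converts the resulting exponential moment of $\int_0^t X_s(\phi_x^2)\,ds$ into the factor $\exp(2kc_{\ref{t1}}^2\log(1/|x|))$ via the smooth-cutoff identity of Proposition \ref{c6.7}, finishing with Proposition \ref{p6.6}, Corollary \ref{c6.8}, and $p\downarrow 1$; all of this is done by taking the time horizon $t$ small rather than $\lambda$ small. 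You instead compute the log-Laplace functional of $L_1^x$ directly through the Sugitani-type cumulant recursion started from the singular kernel $w_1^x(t,z)=q_t(z-x)$, and isolate the $\log(1/|x|)$ divergence in the second cumulant while showing the higher cumulants at the origin stay bounded with geometric growth. This is sound: your second-cumulant bound via Lemma \ref{l5.4} gives exactly $2c_{\ref{t1}}^2\log(1/|x|)+C$, your smoothing estimate $\sup_u\int p_v(z-u)|z|^{-\alpha}dz\le C(\alpha)v^{-\alpha/2}$ with $\alpha<2$ does make the induction for $n\ge 3$ close, the PDE scaling $V^{a\lambda}(x)=a^2V^{\lambda}(ax)$ is valid (by uniqueness in \eqref{e1.6}) and does reduce to small $\lambda$, and the constants drop out of the liminf, so you even get $-1$ rather than $-(1+\epsilon)$. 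Two caveats: Lemma \ref{l4.3} cannot be cited as stated, since its hypothesis requires $v_1$ bounded uniformly in $z$, which fails for $q_t(\cdot-x)$ — your own induction with the singular bounds $w_1\le c_{\ref{t1}}/|\cdot-x|$, $w_2\le C(\log^++1)$ is what actually has to be written out (and the geometric bound $C_n\le D^n$ needs the Catalan-type generating-function argument as in Proposition \ref{p6.6}); and the passage $\delta\to 0$ from $\phi=p_\delta^x$ to the local time needs, besides the uniform domination $v_1^{\delta,x}\le c_{\ref{t1}}/|\cdot-x|$, the a.s.\ (subsequential) convergence $\int_0^1X_s(p^x_\delta)\,ds\to L_1^x$ together with uniform integrability from the uniform moment bounds, exactly mirroring Lemma \ref{l4.4} — doable, but it is new material relative to the paper. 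What your route buys is an essentially exact Laplace-transform asymptotic for small $\lambda$ (and, with it, the sharp constant without the $p\downarrow1$ limit); what the paper's route buys is that it avoids any new cumulant estimates, recycling Proposition \ref{p5.1} and soft exponential-moment bounds, and it needs no scaling in $\lambda$ because shrinking $t$ plays that role.
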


\begin{lemma}(Upper bound)\label{l6.2}
\[\limsup_{x\to 0} \frac{V^\lambda(x)- \lambda c_{\ref{t1}}/|x|}{c_{\ref{t1}}^2 \lambda^2 \log (1/|x|)} \leq -1 . \]
\end{lemma}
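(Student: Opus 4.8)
The plan is to route the problem through the Laplace transform and then through a moderate‑deviation lower bound for the martingale in the Tanaka formula. By \eqref{e1.6.1} with $\mu=\delta_0$ we have $E_{\delta_0}\big(e^{-\lambda L_\infty^x}\big)=e^{-V^\lambda(x)}$, hence $e^{-(V^\lambda(x)-\lambda c_{\ref{t1}}/|x|)}=E_{\delta_0}\big(e^{-\lambda(L_\infty^x-c_{\ref{t1}}/|x|)}\big)$, and Lemma \ref{l6.2} is equivalent to: for every $\eta\in(0,1)$,
\[
\liminf_{x\to0}\ \frac{1}{\log(1/|x|)}\,\log E_{\delta_0}\!\Big(e^{-\lambda(L_\infty^x-c_{\ref{t1}}/|x|)}\Big)\ \ge\ (1-\eta)\,c_{\ref{t1}}^2\lambda^2,
\]
since dividing by $c_{\ref{t1}}^2\lambda^2$ and letting $\eta\downarrow0$ gives the claim. (The companion Lemma \ref{l6.1} is the easy direction: Jensen gives $V^\lambda(x)\le\lambda c_{\ref{t1}}/|x|$ at once, and the sharp lower constant there comes from a cumulant/second‑moment bound together with the supermartingale inequality $E_{\delta_0}[e^{-\lambda M_\zeta(\phi_x)}e^{-\frac{\lambda^2}{2}[M(\phi_x)]_\zeta}]\le 1$ and the concentration of $[M(\phi_x)]_\zeta$ discussed below.) Write $\sigma^2(x):=2c_{\ref{t1}}^2\log(1/|x|)$; using $e^{-\lambda Y}\ge e^{\lambda a}\mathbf 1_{\{Y\le -a\}}$, it suffices to establish the moderate‑deviation bound
\[
P_{\delta_0}\big(L_\infty^x-c_{\ref{t1}}/|x|\le -a_x\big)\ \ge\ \exp\!\big(-\tfrac{\lambda^2}{2}(1-\eta)\sigma^2(x)-o(\sigma^2(x))\big),\qquad a_x:=\lambda(1-\eta)\sigma^2(x),
\]
because then $\lambda a_x-\tfrac{\lambda^2}{2}(1-\eta)\sigma^2(x)=\tfrac{\lambda^2}{2}(1-\eta)\sigma^2(x)=(1-\eta)c_{\ref{t1}}^2\lambda^2\log(1/|x|)$.

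The starting point is the Tanaka formula with $\mu=\delta_0$: since $\zeta<\infty$ and $X_\zeta=0$, \eqref{e1.5} gives $L_\infty^x-c_{\ref{t1}}/|x|=M_\zeta(\phi_x)$, and Proposition \ref{p1} at $t=\zeta$ together with \eqref{e2.6.1} gives $[M(\phi_x)]_\zeta=\int_0^\zeta X_s(\phi_x^2)\,ds=2c_{\ref{t1}}^2\big(\log(1/|x|)-M_\zeta(g_x)\big)$. The first step is to upgrade Lemmas \ref{l2.7}--\ref{l2.8} from a fixed time to the random time $\zeta$ (via the Markov property at a small fixed $\varepsilon$, $\zeta<\infty$ a.s.\ and \eqref{e3.1}), obtaining $M_\zeta(g_x)/\log(1/|x|)\to0$ in $P_{\delta_0}$‑probability, hence $[M(\phi_x)]_\zeta/\sigma^2(x)\to1$ in probability, together with the \emph{quantitative} statement that $P_{\delta_0}\big(\,|[M(\phi_x)]_\zeta/\sigma^2(x)-1|>\kappa\,\big)$ is smaller than every power of $|x|$ for small $|x|$. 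Granting this, I would run a truncated exponential tilt: with $T_x:=\inf\{t:[M(\phi_x)]_t\ge(1-\eta)\sigma^2(x)\}$ and $\mathcal E_t:=\exp\!\big(-\lambda M_{t\wedge T_x}(\phi_x)-\tfrac{\lambda^2}{2}[M(\phi_x)]_{t\wedge T_x}\big)$, the stopped bracket is bounded by $(1-\eta)\sigma^2(x)$, so $\mathcal E$ is a uniformly integrable martingale (Novikov) and $Q:=\mathcal E_\infty P_{\delta_0}\sim P_{\delta_0}$, under which (Dawson--Girsanov) $X$ runs up to $T_x$ as an SBM with extra killing rate $\lambda\phi_x$ and $M_t(\phi_x)+\lambda[M(\phi_x)]_t$ is a $Q$‑martingale for $t\le T_x$. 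On $\{T_x<\zeta\}$, whose $Q$‑probability $\to1$ by the same tail estimate, $[M(\phi_x)]_{T_x}=(1-\eta)\sigma^2(x)$ deterministically, so $M_{T_x}(\phi_x)+a_x$ is a time‑changed Brownian motion run for the deterministic time $(1-\eta)\sigma^2(x)$; since conditioning on the near‑sure event $\{T_x<\zeta\}$ perturbs the law by $o(1)$ in total variation, $M_{T_x}(\phi_x)$ is within $o(1)$ in total variation of $\mathcal N\big(-a_x,(1-\eta)\sigma^2(x)\big)$, whence $Q\big(-a_x-\sigma(x)\le M_{T_x}(\phi_x)\le -a_x,\ T_x<\zeta\big)\ge q_0>0$ for all small $|x|$. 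Inverting the tilt on this event ($\mathcal E_\infty^{-1}=\exp(\lambda M_{T_x}(\phi_x)+\tfrac{\lambda^2}{2}(1-\eta)\sigma^2(x))\ge e^{-\lambda(a_x+\sigma(x))+\frac{\lambda^2}{2}(1-\eta)\sigma^2(x)}$),
\[
P_{\delta_0}\big(M_{T_x}(\phi_x)\le -a_x\big)\ \ge\ q_0\,e^{-\frac{\lambda^2}{2}(1-\eta)\sigma^2(x)-\lambda\sigma(x)},
\]
and $\lambda\sigma(x)=O\big((\log(1/|x|))^{1/2}\big)=o(\sigma^2(x))$. To pass from $M_{T_x}(\phi_x)$ to $M_\zeta(\phi_x)=L_\infty^x-c_{\ref{t1}}/|x|$ one notes that, conditionally on $\mathcal F_{T_x}$ (Markov property), the increment $M_\zeta(\phi_x)-M_{T_x}(\phi_x)$ has bracket $[M(\phi_x)]_\zeta-(1-\eta)\sigma^2(x)$, which is $\le 2\eta\sigma^2(x)$ off a super‑polynomially rare set, so it is $o(\sigma^2(x))$ with conditional probability $\ge\tfrac12$ on at least half the mass of $\{M_{T_x}(\phi_x)\le -a_x\}$; intersecting yields $P_{\delta_0}\big(L_\infty^x-c_{\ref{t1}}/|x|\le -a_x+o(\sigma^2(x))\big)\ge\tfrac14 P_{\delta_0}\big(M_{T_x}(\phi_x)\le -a_x\big)$, which is the required estimate.

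The main obstacle — the only genuinely technical point — is the super‑polynomial concentration of $[M(\phi_x)]_\zeta=\int_0^\zeta X_s(\phi_x^2)\,ds$ around its typical value $\sigma^2(x)$, used three times above (convergence in probability, the $Q$‑probability of $\{T_x<\zeta\}$, and the transfer step). It cannot come from moments, since $[M(\phi_x)]_\zeta$ has \emph{infinite} $P_{\delta_0}$‑mean: $\int_0^\infty\!\!\int p_s(y)\,c_{\ref{t1}}^2|y-x|^{-2}\,dy\,ds=(2\pi)^{-1}\!\int c_{\ref{t1}}^2|y|^{-1}|y-x|^{-2}\,dy=\infty$ (so the normalization $\psi$ of Theorem \ref{t1} is not $\sqrt{\operatorname{Var}}$). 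One must instead control the genuinely heavy tails of this occupation integral — equivalently, the two‑sided tail of $M_\zeta(g_x)$ — quantitatively as $x\to0$, and this is exactly what the cumulant machinery of Section \ref{s4.1} supplies: the formal Laplace identity of Lemma \ref{l4.4} and the bounds of Lemmas \ref{l4.3}, \ref{l4.5} and \ref{l5.4}, applied to the test function $\phi_x^2=c_{\ref{t1}}^2/|\cdot-x|^2$ in place of $f_x$ and, via the exponential tail of $\zeta$, extended from a fixed horizon to $t=\zeta$, give exponential moments of $\int_0^\zeta X_s(\phi_x^2)\,ds$ up to a threshold shrinking only like $1/\log(1/|x|)$ — precisely strong enough to beat every power of $|x|$. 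Everything else is bookkeeping with Jensen's inequality, Girsanov's theorem and the Dubins--Schwarz time change \eqref{e2.5.1}.
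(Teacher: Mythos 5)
Your reduction to a moderate-deviation lower bound is a legitimate strategy in principle, but the estimate you build it on is false, so the argument has a genuine gap at its core. You assume, and use three times, that $P_{\delta_0}\bigl(\,|[M(\phi_x)]_\zeta/\sigma^2(x)-1|>\kappa\,\bigr)$ is smaller than every power of $|x|$. It is not. By Proposition \ref{p1} (extended to $t=\zeta$, where $X_\zeta=0$) one has $[M(\phi_x)]_\zeta-\sigma^2(x)=-2c_{\ref{t1}}^2M_\zeta(g_x)$, and by Lemmas \ref{l2.7}--\ref{l2.8} together with \eqref{e3.1} this converges to the \emph{fixed}, a.s.\ finite random variable $-2c_{\ref{t1}}^2M_\zeta(g_0)$. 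That variable is heavy-tailed: its quadratic variation $\int_0^\zeta X_s(g_0^2)\,ds$ has infinite mean, because lifetime occupation functionals of SBM started at $\delta_0$ have stable-$\tfrac12$-type polynomial tails (indeed $P_{\delta_0}(\zeta>t)=1-e^{-2/t}\sim 2/t$, so your appeal to ``the exponential tail of $\zeta$'' is also factually wrong, and no lifetime functional such as $\int_0^\zeta X_s(\phi_x^2)\,ds$ has a finite exponential moment at any fixed $\theta>0$; the cumulant bounds of Section \ref{s4.1} are fixed-horizon bounds whose constants blow up in $t$). Consequently the deviation probability above is of order a power of $1/\log(1/|x|)$, enormously larger than any power of $|x|$. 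Even the weaker bound you sketch, $E\exp\bigl(\theta_x\int_0^\zeta X_s(\phi_x^2)\,ds\bigr)\le C$ with $\theta_x\asymp 1/\log(1/|x|)$, would give via Chernoff only an $O(1)$ bound, not a vanishing one. This kills the quantitative steps: the target event $\{M_{T_x}(\phi_x)\le -a_x\}$ has probability of order $|x|^{(1-\eta)c_{\ref{t1}}^2\lambda^2}$, so in your final transfer step the ``super-polynomially rare'' bad-bracket event must have probability smaller than this power of $|x|$, whereas in truth it is only logarithmically small; likewise ``$Q(T_x<\zeta)\to1$ by the same tail estimate'' is unsupported, since equivalence of $Q$ and $P$ with an $x$-dependent density does not transfer even correct quantitative $P$-bounds.

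For comparison, the paper's proof avoids any deviation estimate for lifetime quantities. It fixes a small deterministic $t$, uses the Markov property and \eqref{e1.6.1} with $\mu=X_t$ to write $E\bigl(e^{-\lambda(L_\infty^x-L_t^x)}\mid\cF_t\bigr)=e^{-X_t(V^\lambda_x)}$, discards $X_t(\phi_x)\ge0$, and inserts the exact Girsanov density $R_t=\exp\bigl(-\lambda M_t(\phi_x)-\tfrac{\lambda^2}{2}\int_0^t X_s(\phi_x^2)\,ds\bigr)$, rewriting the compensator through Proposition \ref{p1} so that the factor $\exp\bigl(c_{\ref{t1}}^2\lambda^2\log(1/|x|)\bigr)$ comes out exactly; after the change of measure it only needs Jensen's inequality, Fatou, the a.s.\ continuity of $x\mapsto X_t(g_x)$ and $x\mapsto M_t(g_x)$, and $X_t(V^\lambda_x)/\log(1/|x|)\to0$ a.s. The sole exponential-moment input is Novikov's condition at the small fixed time $t$, which is exactly what Section 7.1 provides. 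If you want to salvage your tilt-and-stop scheme, you would have to replace the false concentration claim by an argument controlling the post-$T_x$ bracket under the conditional (tilted) law itself, which is a substantial piece of work not present in your proposal.
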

\noindent Recall from Section \ref{s1.4} that we have
 \begin{equation*}
E_{\delta_0} \Big(\exp ({-\lambda L_\infty^x}) \Big)=\exp (- V^\lambda (x)  ),
\end{equation*}
suggesting that we study the exponential moments of super-Brownian motion $X$ starting from $\delta_0$ in $d=3$. The proofs of Lemma \ref{l6.1} and \ref{l6.2} will then follow in Section \ref{s6.4} and \ref{s6.5}. 

\subsection{Exponential Moments}\label{s6.4.1}
In this section we give some exponential estimates of super-Brownian motion $X$ starting from $\delta_0$ in $\R^3$. The following lemma is from Lemma III.3.6 in Perkins (2002).
\begin{lemma}\label{l6.3}
If $f\geq 0$ is Borel measurable such that $G(f,t):=\int_0^t \sup_x P_s f(x) ds <2$, then 
\begin{equation}
E_{\delta_0}\Big[\exp\Big(X_t(f)\Big)\Big] \leq \exp \Big\{\delta_0(P_t f)(1-\frac{G(f,t)}{2})^{-1}\Big\}<\infty.
\end{equation}
\end{lemma}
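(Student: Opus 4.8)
The plan is to express the exponential moment through a nonlinear integral equation and then push a \emph{time-dependent} a priori bound through its Picard iteration. Write $u(t,x):=\log E_{\delta_x}\big[\exp(X_t(f))\big]\in[0,\infty]$. The log-Laplace functional of $X$ (Perkins (2002), II.5) has mild form $V_t\phi=P_t\phi-\tfrac12\int_0^t P_{t-s}((V_s\phi)^2)\,ds$ for $\phi\ge0$, and its extension to exponential moments (obtained rigorously below via truncation) suggests that, whenever $u$ is finite, it solves
\begin{equation*}
u(t,x)=P_tf(x)+\tfrac12\int_0^t P_{t-s}\big(u(s,\cdot)^2\big)(x)\,ds .
\end{equation*}
Two elementary facts will be used repeatedly. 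Since each $P_r$ is an $L^\infty$-contraction, $s\mapsto\|P_sf\|_\infty$ is non-increasing, so with $A(s):=\sup_xP_sf(x)$ we have $G(f,s)=\int_0^sA(r)\,dr$, $A=G'(f,\cdot)$ a.e., and $\|P_tf\|_\infty\le G(f,t)/t<\infty$; and $(P_sf)^2\le A(s)\,P_sf$ pointwise, whence $P_{t-s}\big((P_sf)^2\big)\le A(s)\,P_{t-s}(P_sf)=A(s)\,P_tf$ by the semigroup property.

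The core estimate is that $u(t',y)\le\psi(t')\,P_{t'}f(y)$ for all $t'\le t$ and $y\in\R^3$, where $\psi(s):=\big(1-G(f,s)/2\big)^{-1}$, which is finite for $s\le t$ (as $G(f,t)<2$), increasing, and $\psi(0)=1$. To see this let $u^{(0)}\equiv0$ and $u^{(k+1)}(t',x):=P_{t'}f(x)+\tfrac12\int_0^{t'}P_{t'-s}\big(u^{(k)}(s,\cdot)^2\big)(x)\,ds$; this iteration is non-negative and monotone, so $u^{(k)}\uparrow u^\ast$, the minimal non-negative solution of the displayed equation. Assuming inductively that $u^{(k)}(s,y)\le\psi(s)P_sf(y)$ for all $s\le t$, the two facts above give
\begin{align*}
u^{(k+1)}(t',x)&\le P_{t'}f(x)+\tfrac12\int_0^{t'}\psi(s)^2\,P_{t'-s}\big((P_sf)^2\big)(x)\,ds\\
&\le P_{t'}f(x)\Big(1+\tfrac12\int_0^{t'}\psi(s)^2A(s)\,ds\Big)=\psi(t')\,P_{t'}f(x),
\end{align*}
since, substituting $G=G(f,s)$,
\begin{equation*}
\int_0^{t'}\psi(s)^2A(s)\,ds=\int_0^{G(f,t')}\frac{dG}{(1-G/2)^2}=\frac{2}{1-G(f,t')/2}-2=2\big(\psi(t')-1\big).
\end{equation*}
Thus $u^{(k)}\le\psi(\cdot)\,P_{(\cdot)}f$ for every $k$, hence $u^\ast(t',y)\le\psi(t')P_{t'}f(y)<\infty$. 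The time-dependence of the bound (through $\psi(s)\to1$ as $s\downarrow0$) is precisely what lets the induction close for the entire range $G(f,t)<2$ rather than only for small $G$, where a naive uniform-in-time ansatz would suffice.

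It remains to identify $u=u^\ast$ — in particular to know that $u(t,\cdot)<\infty$ — and this is the only substantive technical point; it is where Perkins's branching-particle construction is used. The scheme: for bounded, compactly supported $f$ the integral equation and the finiteness of $E_{\delta_x}[\exp(X_t(f))]$ when $G(f,t)$ is small follow from the moment/cumulant expansion of $X_t(f)$ (the analogue of \eqref{e4.2}, whose series converges absolutely there); a continuation-in-$\lambda$ argument applied to $\lambda f$ for $\lambda\in[0,1]$, together with the uniform a priori bound $u_\lambda(t,\cdot)\le(1-\lambda G(f,t)/2)^{-1}\lambda P_tf$ and monotone convergence, rules out blow-up and, via a Gronwall uniqueness argument for the integral equation, extends the identity $u=u^\ast$ to all $G(f,t)<2$; finally the truncation is removed by taking $f_n=(f\wedge n)1_{B(0,n)}\uparrow f$ and applying monotone convergence, using $G(f_n,t)\uparrow G(f,t)$ and $P_tf_n(x)\uparrow P_tf(x)<\infty$. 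Specializing $x=0$ yields $E_{\delta_0}\big[\exp(X_t(f))\big]\le\exp\{\delta_0(P_tf)(1-G(f,t)/2)^{-1}\}<\infty$. So the main obstacle is not the estimate — a clean induction once the ansatz $\psi(s)P_sf$ is in hand — but the rigorous derivation of the exponential-moment identity $E_{\delta_x}[\exp(X_t(f))]=\exp(u(t,x))$ for general non-negative (possibly unbounded) $f$.
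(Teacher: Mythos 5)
The paper itself gives no proof of Lemma \ref{l6.3}: it is quoted verbatim from Lemma III.3.6 of Perkins (2002). So your proposal has to stand on its own, and for the most part it does. The heart of your argument --- the Picard iteration for the mild equation $u=P_\cdot f+\tfrac12\int_0^\cdot P_{\cdot-s}(u_s^2)\,ds$ together with the time-dependent ansatz $u^{(k)}(s,\cdot)\le\psi(s)P_sf$, $\psi(s)=(1-G(f,s)/2)^{-1}$ --- is correct and clean: the pointwise inequality $(P_sf)^2\le A(s)P_sf$, the semigroup identity $P_{t-s}P_s=P_t$, and the change of variables $\int_0^{t'}\psi(s)^2A(s)\,ds=2(\psi(t')-1)$ (legitimate because $G(f,\cdot)=\int_0^\cdot A$ is absolutely continuous and stays below $2$ on $[0,t]$) close the induction exactly as you claim, and the normalization $\tfrac12 u^2$ matches the martingale problem \eqref{e1.0}, so the constant $(1-G/2)^{-1}$ comes out right.

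The soft spot is the one you flag yourself: relating $E_{\delta_0}[\exp(X_t(f))]$ to the minimal solution $u^\ast$. As written, the ``continuation-in-$\lambda$ plus Gronwall'' step is close to circular: the a priori bound $u_\lambda\le(1-\lambda G(f,t)/2)^{-1}\lambda P_tf$ is a bound on the \emph{minimal solution} of the integral equation with datum $\lambda f$, and it only controls the true cumulant function once you already know the latter satisfies (or is dominated by a solution of) that equation on the whole interval of finiteness --- which is what is being proved. A cleaner way to close this, entirely parallel to the machinery the paper does develop (Section 4 and Lemmas \ref{l4.5}, \ref{l6.5}), is to note that your Picard limit has the series representation $u^\ast=2\sum_{n\ge1}(1/2)^n w_n$, where $w_1=P_\cdot f$ and $w_n=\sum_{k=1}^{n-1}\int_0^\cdot P_{\cdot-s}(w_kw_{n-k})\,ds$ are exactly the cumulant kernels of $X_t(f)$ (the analogue of \eqref{e4.3} at the terminal time). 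For bounded compactly supported $f$ the cumulant identity holds formally by the standard moment recursions; your bound shows $2\sum_n(\theta_0/2)^n\delta_0(w_n(t))<\infty$ for some $\theta_0>1$ (take $\theta_0$ with $\theta_0G(f,t)<2$), so Lemma \ref{l4.5}(ii) gives finiteness of the exponential moment at $\theta=1$ and Lemma \ref{l6.5} upgrades the formal identity to the actual one, yielding $E_{\delta_0}[\exp(X_t(f))]=\exp(\delta_0(u^\ast(t)))\le\exp\{\delta_0(P_tf)\psi(t)\}$; truncation $f_n\uparrow f$ and monotone convergence then remove the boundedness assumption as you indicate. With that substitution your proof is complete and is a legitimate self-contained alternative to citing Perkins.
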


\begin{corollary}\label{c6.4}
For any $\theta>0$, there exists some $t_0>0$ such that for all $0<t<t_0$
\begin{equation}
E_{\delta_0}\Big[\exp\Big(\theta \int \frac{1}{|y-x|} X_t(dy)\Big)\Big]\leq C<\infty, \ \forall x\in \R^3 \label{e6.2}
\end{equation}
 for some constant $C=C(t,\theta)>0$.
\end{corollary}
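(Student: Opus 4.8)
The plan is to apply Lemma~\ref{l6.3} directly to the (Borel) function $f_x(y):=\theta/|y-x|$, for which $X_t(f_x)=\theta\int \frac{1}{|y-x|}X_t(dy)$. The only hypothesis to verify is that $G(f_x,t)=\int_0^t\sup_z P_sf_x(z)\,ds<2$ once $t$ is small, and --- since the statement demands a constant not depending on $x$ --- that the resulting bound is uniform in $x\in\R^3$.

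First I would estimate $P_sf_x(z)$ uniformly in \emph{both} $z$ and $x$. Using the representation $\frac{1}{|w|}=\int_0^\infty 2\pi p_r(w)\,dr$ in $\R^3$ (the identity already exploited for \eqref{e5.10}) together with the semigroup property and the elementary inequality $p_u(w)\le p_u(0)=(2\pi u)^{-3/2}$, one gets $P_sf_x(z)=\theta\int_0^\infty 2\pi p_{s+r}(z-x)\,dr\le\theta\int_0^\infty 2\pi p_{s+r}(0)\,dr= c\,\theta\,s^{-1/2}$, where $c=\sqrt{2/\pi}$ is an explicit constant. Integrating in $s$ then gives $G(f_x,t)\le 2c\,\theta\sqrt t$, which is $<2$ as soon as $t<t_0:=\pi/(2\theta^2)$; and taking $z=0$ in the same estimate yields $\delta_0(P_tf_x)=\theta P_tf_x(0)\le c\,\theta\,t^{-1/2}$, again uniformly in $x$.

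With these two bounds in hand, Lemma~\ref{l6.3} applies for every $0<t<t_0$ and gives
\[
E_{\delta_0}\Big[\exp\Big(\theta\int\frac{1}{|y-x|}X_t(dy)\Big)\Big]=E_{\delta_0}\big[\exp(X_t(f_x))\big]\le\exp\Big\{\delta_0(P_tf_x)\big(1-\tfrac12 G(f_x,t)\big)^{-1}\Big\}\le\exp\Big\{\frac{c\,\theta\,t^{-1/2}}{1-c\,\theta\sqrt t}\Big\}=:C(t,\theta),
\]
which is finite and independent of $x$, as required. I do not expect any genuine obstacle here: the lemma does all the work, and the one point deserving care is that $f_x$ carries two roles --- the centre $x$ of the singularity and the point at which $P_s$ is evaluated --- so the estimates must be uniform in the former; this is exactly what the bound $p_u(\cdot-x)\le p_u(0)$ provides. (One could instead try to bound $P_sf_x$ via Lemma~\ref{l3.3} with $\alpha=1$, but that estimate blows up near $x$ and is useless for controlling $\sup_z$, so the $p_u\le p_u(0)$ route is the right one.)
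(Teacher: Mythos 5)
Your proposal is correct and follows essentially the same route as the paper: apply Lemma \ref{l6.3} to $f_x(y)=\theta/|y-x|$, and obtain the $x$-uniform bound $\sup_z P_sf_x(z)\le \theta\int p_s(y)|y|^{-1}dy$ via the representation $1/|w|=\int_0^\infty 2\pi p_r(w)\,dr$ and $p_u(\cdot)\le p_u(0)$, which is exactly the computation in \eqref{e5.10}. The only cosmetic difference is that you evaluate $\int_0^t\int p_s(y)|y|^{-1}dy\,ds$ explicitly as an $s^{-1/2}$ integral, whereas the paper bounds it by $\theta(3t)^{1/2}$ via Lemma \ref{l4.2}; both yield $G(f_x,t)<2$ for small $t$ uniformly in $x$.
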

\begin{proof}
Let $f(y)=\theta/|y-x|$. Then by \eqref{e5.10}
\begin{align*}
 \delta_0(P_t f)=\int p_t(y) \frac{\theta}{|y-x|} dy\leq C(t) \theta <\infty.
\end{align*}
Next
\begin{align*}
 G(f,t)=\int_0^t  \sup_z \int p_s(z-y) \frac{\theta}{|y-x|} dy  ds= \theta  \int_0^t \int p_s(y)\frac{1}{|y|} dy ds   \leq  \theta (3t)^{1/2}.
\end{align*}
 The second equality follows from \eqref{e5.10} and the last inequality follows from Lemma \ref{l4.2}. Pick $t$ small enough such that $\theta (3t)^{1/2}<2$. Then \eqref{e6.2} holds by Lemma \ref{l6.3}.
\end{proof}

Now let's consider the exponential moment of the weighted occupation measure $\int_0^t X_s(\cdot) ds$. The following lemma will be useful.
\begin{lemma}\label{l6.5}
Let $X$ be a random variable such that \eqref{e4.4} holds formally. If $\sum_{n=1}^{\infty} a_n \theta_0^n$ converges for some $\theta_0>0$, then the following holds for $|\theta|<\theta_0$:
\begin{equation*} 
E\Big[\exp\big(\theta X\big)\Big]=\exp\Big(\sum_{n=1}^{\infty} a_n \theta^n\Big).
\end{equation*} 
\end{lemma}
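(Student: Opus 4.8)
The plan is to upgrade the formal identity to a genuine one by showing that both sides of \eqref{e4.4} are represented by absolutely convergent power series on the disc $\{|\theta|<\theta_0\}$ and that these two power series have identical coefficients.

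First I would invoke Lemma \ref{l4.5}(ii), whose hypothesis is precisely that $\sum_{n\ge 1} a_n\theta_0^n$ converges, to obtain $E[\exp(|\theta X|)]<\infty$ for every $|\theta|<\theta_0$. By Tonelli's theorem this says $\sum_{n\ge 0}\frac{E(|X|^n)}{n!}|\theta|^n<\infty$, so the partial sums $\sum_{k=0}^N \frac{(\theta X)^k}{k!}$ are dominated in absolute value by $\exp(|\theta X|)\in L^1$, and the dominated convergence theorem gives
\[
E\big[\exp(\theta X)\big]=\sum_{n=0}^\infty \frac{E(X^n)}{n!}\,\theta^n,\qquad |\theta|<\theta_0,
\]
with the series converging absolutely. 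Note that $E|X|^n<\infty$ for all $n$ is already part of the assumption that \eqref{e4.4} holds formally, so this step is legitimate.

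Next I would analyze the right-hand side. Since $\sum_{k\ge 1}a_k\theta_0^k$ converges, the power series $g(\theta):=\sum_{k\ge 1}a_k\theta^k$ has radius of convergence at least $\theta_0$, hence defines a function whose Taylor series at $0$ converges to it on $\{|\theta|<\theta_0\}$; composing with $\exp$, the function $\exp(g(\theta))$ likewise equals its Taylor series, say $\sum_{n\ge 0}b_n\theta^n$, on the same disc. The elementary observation driving the proof is that for each fixed $n$ the $n$-th derivative at $0$ of $\exp\big(\sum_{k\ge 1}a_k\theta^k\big)$ depends only on $a_1,\dots,a_n$, the monomials $a_k\theta^k$ with $k>n$ contributing nothing to the $n$-th derivative at the origin; thus $n!\,b_n=\frac{d^n}{d\theta^n}\exp\big(\sum_{k=1}^n a_k\theta^k\big)\big|_{\theta=0}$. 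By the hypothesis that \eqref{e4.4} holds formally, this last quantity is exactly $E(X^n)$, so $b_n=E(X^n)/n!$ for every $n\ge 0$.

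Comparing the two expansions, $\sum_n \frac{E(X^n)}{n!}\theta^n$ and $\sum_n b_n\theta^n$ have identical coefficients, whence on $\{|\theta|<\theta_0\}$ we conclude $E[\exp(\theta X)]=\exp(g(\theta))=\exp\big(\sum_{n\ge 1}a_n\theta^n\big)$, which is the claim. The only step demanding care is the interchange of expectation with the infinite sum in the first paragraph; this is exactly where the finiteness of the exponential moment supplied by Lemma \ref{l4.5}(ii) and the domination by $\exp(|\theta X|)$ are indispensable. Everything after that is routine bookkeeping with convergent power series.
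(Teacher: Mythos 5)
Your proposal is correct and follows essentially the same route as the paper: Lemma \ref{l4.5}(ii) gives $E[\exp(|\theta X|)]<\infty$, dominated convergence justifies expanding $E[\exp(\theta X)]$ as $\sum_n E(X^n)\theta^n/n!$, and the observation that the $n$-th derivative at $0$ of $\exp\big(\sum_{k\ge 1}a_k\theta^k\big)$ depends only on $a_1,\dots,a_n$ identifies these coefficients with the Taylor coefficients of the analytic function $\exp\big(\sum_k a_k\theta^k\big)$. No gaps; your write-up just makes the coefficient comparison slightly more explicit than the paper's.
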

\begin{proof}
We assume $|\theta|<\theta_0$ throughout the proof. Since \eqref{e4.4} holds formally and $\sum_{n=1}^{\infty} a_n \theta^n$ converges, for all $n\geq 1$ we have
\begin{align*}
E(X^n)=\frac{d^n}{d\theta^n} \Bigg(\exp\Big(\sum_{k=1}^n\ a_k \theta^k \Big)\Bigg)\Bigg|_{\theta=0}=\frac{d^n}{d\theta^n} \Bigg(\exp\Big(\sum_{k=1}^\infty\ a_k \theta^k \Big)\Bigg)\Bigg|_{\theta=0}.
\end{align*}
By Lemma \ref{l4.5}(ii), for $|\theta|<\theta_0$, \[E\Big[\exp\big(|\theta X|\big)\Big]<\infty.\] Then by Dominated Convergence Theorem,
\begin{align*}
E\Big[\exp\big(\theta X\big)\Big]=\sum_{n=1}^\infty \frac{\theta^n}{n!} E (X^n)&=\sum_{n=1}^\infty \frac{\theta^n}{n!}\frac{d^n}{d\theta^n} \Bigg(\exp\Big(\sum_{k=1}^\infty\ a_k \theta^k \Big)\Bigg)\Bigg|_{\theta=0}=\exp\Big(\sum_{k=1}^{\infty} a_k \theta^k\Big).
\end{align*}
The last equality is by the Taylor expansion of the analytic function $\exp\Big(\sum_{k=1}^{\infty} a_k \theta^k\Big)$.
\end{proof}

\begin{proposition}\label{p6.6}
For any $\theta>0$ there exists some $t_0>0$ such that for any $t<t_0$, we have \[E_{\delta_0} \Big[\exp\Big (\theta \int_0^t \int \frac{1}{|y-x|}X_s(dy) ds\Big)\Big] \leq C<\infty, \  \forall x\in \R^3, \] and \[E_{\delta_0} \Big[\exp\Big (\theta \int_0^t X_s(1) ds\Big)\Big] \leq C<\infty,\]  for some  constant $C=C(t,\theta)$. 
\end{proposition}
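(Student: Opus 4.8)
The plan is to deduce both estimates from the formal moment generating function identities of Section~\ref{s4.1} combined with Lemma~\ref{l6.5}; the role of the smallness of $t$ is to ensure that the coefficient power series appearing there has radius of convergence exceeding $\theta$.

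For the first estimate I would apply \eqref{e4.62} with $\mu=\delta_0$ and $f_x(y)=1/|y-x|$, so that $X:=\int_0^t X_s(f_x)\,ds-\int_0^t\delta_0(P_sf_x)\,ds$ satisfies \eqref{e4.4} formally with $a_1=0$ and $a_n=2^{1-n}v_n^x(t,0)$ for $n\ge2$. Using the bound \eqref{e4.92}, $v_n^x(t,0)\le c_nr^nt^{(3n-2)/2}$, together with the observation that the constants $c_n$ (defined by $c_1=1$, $c_n=\sum_{k=1}^{n-1}c_kc_{n-k}$) are Catalan numbers and hence satisfy $c_n\le 4^{n-1}$, one gets $|a_n|\le \tfrac1{2t}\,(2rt^{3/2})^n$ for all $n\ge2$. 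Hence for $t$ small enough that $2rt^{3/2}<1/\theta$ the series $\sum_n a_nz^n$ has radius of convergence exceeding $\theta$, so Lemma~\ref{l6.5} turns the formal identity into the genuine identity
\[ E_{\delta_0}\Big[\exp\Big(\theta\int_0^tX_s(f_x)\,ds\Big)\Big]=\exp\Big(\theta\int_0^t\delta_0(P_sf_x)\,ds\Big)\exp\Big(\sum_{n\ge2}a_n\theta^n\Big). \]
I would then bound the deterministic factor by $\int_0^t\delta_0(P_sf_x)\,ds=v_1^x(t,0)\le r\sqrt t$ via \eqref{e4.6} (Lemma~\ref{l4.2}), and the second factor by the geometric series $\tfrac1{2t}\sum_{n\ge2}(2r\theta t^{3/2})^n<\infty$; since both bounds are uniform in $x\in\R^3$, this gives $E_{\delta_0}[\exp(\theta\int_0^tX_s(f_x)\,ds)]\le C(t,\theta)$ with $C$ independent of $x$.

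For the second estimate I would run the same argument with the bounded function $\phi\equiv1$. Since \eqref{e4.2} is stated only for compactly supported continuous $\phi$, I would first extend it to $\phi\equiv1$ exactly as in the proof of Lemma~\ref{l4.4}, approximating $\mathbf1$ by the compactly supported $C^\infty$ functions $\chi_N\uparrow\mathbf1$ of \eqref{e4.8.1} and passing to the limit by monotone and dominated convergence; this shows that $X:=\int_0^tX_s(1)\,ds-t$ has finite moments of every order and satisfies \eqref{e4.4} formally with $a_1=0$ and $a_n=2^{1-n}v_n(t,0)$, where now $v_1\equiv t$ and $v_n$ solves \eqref{e4.3} with $\phi\equiv1$. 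A straightforward induction gives $v_n(t,z)=C_nt^{2n-1}$ with $C_1=1$ and $C_n=\tfrac1{2n-1}\sum_{k=1}^{n-1}C_kC_{n-k}\le c_n\le4^{n-1}$, whence $|a_n|\le\tfrac1{2t}(2t^2)^n$. So again for $t$ small the series converges past $\theta$, and Lemma~\ref{l6.5} together with $\int_0^t\delta_0(P_s1)\,ds=t$ yields $E_{\delta_0}[\exp(\theta\int_0^tX_s(1)\,ds)]=e^{\theta t}\exp(\sum_{n\ge2}a_n\theta^n)<\infty$. Taking $t_0$ to be the minimum of the two thresholds completes the proof.

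The step I expect to be the main (though essentially bookkeeping) obstacle is the control of the iterated convolution coefficients $v_n$: one needs to see that the quadratic recursion \eqref{e4.3} produces at most exponentially growing constants $c_n$ (the Catalan bound), so that the generating series $\sum a_n\theta^n$ has a positive radius of convergence and Lemma~\ref{l6.5} is applicable, and one needs every estimate used ($v_n^x(t,0)\le c_nr^nt^{(3n-2)/2}$, $\int_0^t\delta_0(P_sf_x)\,ds\le r\sqrt t$) to be uniform over $x\in\R^3$ so that the final constant does not depend on $x$. Once the series convergence is established the upgrade from ``holds formally'' to a true exponential moment identity is immediate from Lemma~\ref{l6.5}.
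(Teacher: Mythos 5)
Your proposal is correct and follows essentially the same route as the paper: apply the formal identity of Lemma \ref{l4.4} with $\mu=\delta_0$, use the uniform bounds \eqref{e4.6} and \eqref{e4.92} to control the cumulants, and invoke Lemma \ref{l6.5} once the series converges for small $t$ (your Catalan bound $c_n\le 4^{n-1}$ is just an explicit form of the paper's use of the generating function $F(\theta)=1/2-(1/4-\theta)^{1/2}$). Your treatment of $\int_0^t X_s(1)\,ds$ fills in the details the paper dismisses as ``even easier'' and is fine.
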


\begin{proof}
Recall that $f_x(y)=1/|y-x|$. Lemma \ref{l4.4} implies the following holds formally:
\begin{equation} \label{e6.3}
E_{\delta_0} \Big[\exp{ \Big(\theta \int_0^t X_s(f_x) ds- \theta \delta_0(v_1^x(t)) \Big)} \Big]=\exp{\Big(2\sum_{n=2}^{\infty} (\frac{\theta}{2})^n \delta_0\big(v_n^x(t)\big)\Big)}.
\end{equation}
We will show that the assumption in Lemma \ref{l6.5} holds for the case of \eqref{e6.3}. By using \eqref{e4.6} and \eqref{e4.92}, for all $n\geq 1$ we have
\[2 (\frac{1}{2})^n \delta_0(v_n^{x}(t)) \leq c_n \frac{1}{2^{n-1}} t^{(3n-2)/2} r^n=2 t^{-1} c_n(\frac{r}{2} t^{3/2})^n,\]
where $r=\sqrt[]{3}$, $c_1=1$ and $c_n=\sum_{k=1}^{n-1} c_k c_{n-k} , \ n\geq 2.$ Let \[F(\theta):=\sum_{n=1}^{\infty} c_n \theta^n.\]  By the definition of $(c_n)$, we have $F(\theta)-\theta=(F(\theta))^2 $ and it gives $F(\theta)=1/2-(1/4-\theta)^{1/2}$ and that $F(\theta)$ has a positive radius of convergence. So
\[2\sum_{n=2}^{\infty} (\frac{\theta}{2})^n \delta_0\big(v_n^x(t)\big) \leq  2 t^{-1} \sum_{n=1}^{\infty} c_n(\frac{r\theta}{2} t^{3/2})^n=2 t^{-1} F(\frac{r\theta}{2} t^{3/2})<\infty \] if we pick $t$ small enough. Therefore for $t$ small, \eqref{e6.3} holds by Lemma \ref{l6.5}. Hence
\begin{align*}
E_{\delta_0} \Big[\exp{ \Big(\theta \int_0^t X_s(f_x) ds \Big)} \Big]=\exp{\Big(2\sum_{n=1}^{\infty} (\frac{\theta}{2})^n \delta_0\big(v_n^x(t)\big)\Big)}\leq \exp{\Big(2 t^{-1} F(\frac{r\theta}{2} t^{3/2})\Big)}=C(t,\theta)<\infty.
\end{align*}
The proof is even easier for $\int_0^t X_s(1) ds$.
\end{proof}

\begin{corollary}\label{c6.8}
For any $\theta>0$, there exists some $t_0>0$ such that for all $t<t_0$  \[E_{\delta_0} \Big[\exp\Big (\theta \int_0^t X_s( \bar{g}_x^2) ds \Big)\Big] \leq C<\infty, \ \forall x\in \R^3\] for some constant $C=C(t_0,\theta)>0$.
\end{corollary}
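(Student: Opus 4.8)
The plan is to reduce the claim to Proposition \ref{p6.6} by dominating $\bar g_x^2$ pointwise by a constant multiple of $f_x(y) = 1/|y-x|$. I would first recall from \eqref{e5.2} that $0 \le -\bar g_x(y) \le \log^+(1/|y-x|)$ for every $y \ne x$, so that $\bar g_x^2(y) \le \big(\log^+(1/|y-x|)\big)^2$. The next step is the elementary remark that the function $r \mapsto r\big(\log^+(1/r)\big)^2$ is bounded on $(0,\infty)$: it vanishes for $r \ge 1$, tends to $0$ as $r \downarrow 0$, and is continuous, so there is a universal constant $c_0$ with $\big(\log^+(1/r)\big)^2 \le c_0/r$ for all $r>0$. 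Taking $r = |y-x|$ yields $\bar g_x^2(y) \le c_0 f_x(y)$ for all $y \ne x$, with the same $c_0$ for every $x \in \R^3$.

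Given this, $\int_0^t X_s(\bar g_x^2)\, ds \le c_0 \int_0^t X_s(f_x)\, ds$, so by monotonicity of the exponential,
\[E_{\delta_0}\Big[\exp\Big(\theta \int_0^t X_s(\bar g_x^2)\, ds\Big)\Big] \le E_{\delta_0}\Big[\exp\Big(\theta c_0 \int_0^t X_s(f_x)\, ds\Big)\Big].\]
I would then apply Proposition \ref{p6.6} with $\theta c_0$ in place of $\theta$, which furnishes some $t_0 > 0$ (depending on $\theta c_0$) such that for every $t < t_0$ the right-hand side is at most a constant $C = C(t,\theta c_0)$ independent of $x$; this is precisely the assertion of the corollary.

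I do not expect any genuine obstacle here. The only thing to watch is uniformity in $x$, but this is already built into both ingredients: the pointwise bound $\bar g_x^2 \le c_0 f_x$ uses a single constant for all $x$, and the estimate in Proposition \ref{p6.6} is stated uniformly over $x \in \R^3$, so the two displayed steps suffice.
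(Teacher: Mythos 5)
Your proposal is correct and follows essentially the same route as the paper: dominate $\bar g_x^2(y)$ by a multiple of $1/|y-x|$ via \eqref{e5.2} and then invoke Proposition \ref{p6.6}. The paper simply uses the slightly sharper elementary fact that $\bigl(\log^+(1/r)\bigr)^2\le 1/r$ for all $r>0$ (so $c_0=1$), which avoids even the rescaling of $\theta$.
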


\begin{proof}
Recall from \eqref{e5.2} that \[|\bar{g}_x (y)|^2 \leq \Big(\log^+ \frac{1}{|y-x|}\Big)^2 \leq \frac{1}{|y-x|}.\] Then it follows immediately from Proposition \ref{p6.6}.
\end{proof}

\noindent Before proceeding to the proof of Lemma \ref{l6.1}, we state another result:

\begin{proposition}\label{c6.7}
For $x\neq 0$ in $\R^3$, there exists some constant $C>0$ such that
\begin{align}
\frac{1}{2} \int_0^t \int  \frac{1}{|y-x|^2} X_s(dy) ds  \leq X_t(\bar{g}_x)-\delta_0(\bar{g}_x)-M_t(\bar{g}_x)+ C \int_0^t X_s(1) ds \label{e6.4}
\end{align}
holds for all $t\geq 0$ $P_{\delta_0}$-a.s..
\end{proposition}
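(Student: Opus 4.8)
The plan is to read this off from Proposition~\ref{p5.1}, which already does all of the analytic work; what remains is a one-line rearrangement together with a crude bound on a bounded-kernel term. First I would apply Proposition~\ref{p5.1} with $\mu=\delta_0$: since $x\neq 0$ we have $\delta_0\big(\log^+(1/|\cdot-x|)\big)=\log^+(1/|x|)<\infty$, so the hypothesis is satisfied and we obtain, $P_{\delta_0}$-a.s.,
\[
X_t(\bar g_x)=\delta_0(\bar g_x)+M_t(\bar g_x)+\int_0^t X_s\big(\tfrac{\Delta}{2}\bar g_x\big)\,ds,\qquad\forall\,t\ge 0,
\]
with $t\mapsto X_t(\bar g_x)$ continuous and $M_t(\bar g_x)$ a continuous $L^2$-martingale. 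Here $X_t(\bar g_x)$ is well defined and finite since $0\le-\bar g_x(y)\le\log^+(1/|y-x|)\le 1/|y-x|$ by \eqref{e5.2}, together with $\int 1/|y-x|\,X_t(dy)<\infty$ $P_{\delta_0}$-a.s.\ (the argument for \eqref{e2.2}).

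Next I would make the drift term explicit. By \eqref{e5.81}, for $y\neq x$ one has $\Delta\bar g_x(y)=|y-x|^{-2}+\bar h(y-x)$ with $\bar h\in C_b(\R^3)$, and since for Lebesgue-a.e.\ $s$ we have $\int |y-x|^{-2}X_s(dy)<\infty$ (so in particular $X_s(\{x\})=0$) this identity may be integrated against $X_s(dy)$ and then in $s$, giving
\[
\int_0^t X_s\big(\tfrac{\Delta}{2}\bar g_x\big)\,ds=\frac12\int_0^t\!\int\frac{1}{|y-x|^2}X_s(dy)\,ds+\frac12\int_0^t X_s\big(\bar h(\cdot-x)\big)\,ds .
\]
Both integrals on the right are finite: the first has $E_{\delta_0}$-mean at most $2\big(\log^+(1/|x|)+1+\sqrt{3t}\big)$ by Lemma~\ref{l5.4} (with $d=3$) and $E_{\delta_0}X_s(dy)=p_s(y)\,dy$, hence is finite $P_{\delta_0}$-a.s.; the second is bounded in absolute value by $\|\bar h\|_\infty\int_0^t X_s(1)\,ds<\infty$. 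This step is in fact implicit in the proof of Proposition~\ref{p5.1}(iii).

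Combining the two displays and solving for the occupation-measure integral yields
\[
\frac12\int_0^t\!\int\frac{1}{|y-x|^2}X_s(dy)\,ds=X_t(\bar g_x)-\delta_0(\bar g_x)-M_t(\bar g_x)-\frac12\int_0^t X_s\big(\bar h(\cdot-x)\big)\,ds ,
\]
and then bounding the last term by $-\tfrac12\int_0^t X_s(\bar h(\cdot-x))\,ds\le\tfrac12\|\bar h\|_\infty\int_0^t X_s(1)\,ds$ gives \eqref{e6.4} with $C=\tfrac12\|\bar h\|_\infty$. I do not expect a genuine obstacle here: all the delicate mollification and convergence arguments are absorbed into Proposition~\ref{p5.1}, and the only points needing a sentence of justification are the a.s.\ finiteness of $\int_0^t\!\int|y-x|^{-2}X_s(dy)\,ds$ (Lemma~\ref{l5.4}) and the harmless fact that $X_s$ carries no mass at the point $x$ for a.e.\ $s$.
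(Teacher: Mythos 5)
Your proposal is correct and follows essentially the same route as the paper: apply Proposition \ref{p5.1} with $\mu=\delta_0$, then use the decomposition $\Delta\bar g_x(y)=|y-x|^{-2}+\bar h(y-x)$ with $\bar h\in C_b(\R^3)$ (i.e.\ \eqref{e5.16}/\eqref{e5.81}) to bound the error term by $\|\bar h\|_\infty\int_0^t X_s(1)\,ds$. The additional finiteness remarks you include are harmless but not needed beyond what the paper already establishes.
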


\begin{proof}
By using Proposition \ref{p5.1} with $\mu=\delta_0$, for $x\neq 0$ we have $P_{\delta_0}$-a.s. that 
\[\int_0^t X_s(\frac{\Delta}{2} \bar{g}_x)ds=X_t(\bar{g}_x)-\delta_0(\bar{g}_x)-M_t(\bar{g}_x), \ \forall t\geq 0.\]
By \eqref{e5.16} we have \[\Big|\Delta \bar{g}_x(y) -\frac{1}{|y-x|^2} \Big|=|\bar{h}(y-x)| \leq \|\bar{h}\|_\infty <\infty,\] and then the above result follows.
\end{proof}  

\noindent $\mathbf{Throughout\ the\ rest\ of\ this\ Section}$, for simplicity we write $E$ for $E_{\delta_0}$ when there is no confusion.

\subsection{Lower Bound}\label{s6.4}
Recall that $\phi_x(y)=c_{\ref{t1}}/|y-x|$. For any $\sigma \in \R$, \[\exp\Big(-\sigma M_t(\phi_x)-\frac{1}{2} \sigma^2 \int_0^t X_s(\phi_x^2) ds\Big)\] is an $\cF_t$-supermartingale and therefore  
\begin{equation}
E \Big[\exp\Big(-\sigma M_t(\phi_x)-\frac{1}{2} \sigma^2 \int_0^t X_s(\phi_x^2) ds\Big)\Big] \leq 1. \label{e6.5}
\end{equation}
As is shown later in the proof of upper bound in Section \ref{s6.5}, the above supermartingale is indeed a martingale, but we only need to address \eqref{e6.5} for our use in this Section.
Similarly we have
\begin{equation}
E \Big[\exp\Big(-\sigma M_t(\bar{g}_x)-\frac{1}{2} \sigma^2 \int_0^t X_s(\bar{g}_x^2) ds\Big)\Big] \leq 1.\label{e6.6}
\end{equation}
Set $t>0$, which will be chosen small enough below. Then for any $p,q>1$ with $1/p+1/q=1$ we have
\begin{align}
& \exp\big(-(V^\lambda(x)-\lambda \frac{c_{\ref{t1}}}{|x|})\big)=E \Big[\exp\big(-\lambda(L_{\infty}^x-\frac{c_{\ref{t1}}}{|x|})\big)\Big]\nonumber\\
\leq & E \Big[\exp\big(-\lambda(L_{t}^x-\frac{c_{\ref{t1}}}{|x|})\big)\Big] \nonumber =  E \Big[\exp\big(-\lambda(M_t(\phi_x)-X_t(\phi_x))\big)\Big] \text{ (by \eqref{e5.161}) }\\
\leq & \Big( E \big[\exp\big(-p\lambda M_t(\phi_x)\big)\big]\Big)^{1/p} \Big(E \big[\exp\big(q \lambda X_t(\phi_x)\big)\big]\Big)^{1/q}. \label{e6.7}
\end{align}
Use Corollary \ref{c6.4} with $t>0$ chosen small enough to get
\begin{equation}
E \big[\exp\big(q \lambda X_t(\phi_x)\big)\big] \leq C(q,t,\lambda)<\infty.\label{e6.8}
\end{equation}
By using \eqref{e6.5} with $\sigma=\lambda p^2$, we have
\begin{align}
 & \Big( E \big[\exp\big(-p\lambda M_t(\phi_x)\big)\big]\Big)^{1/p} \nonumber\\
=& \Big( E \big[\exp\big(-p\lambda M_t(\phi_x)-\frac{1}{2} p^3\lambda^2  \int_0^t X_s(\phi_x^2) ds\big) \cdot \exp \big( \frac{1}{2} p^3 \lambda^2   \int_0^t X_s(\phi_x^2) ds\big)\big]\Big)^{1/p} \nonumber\\
\leq & \Big( E \big[\exp\big(-p^2 \lambda  M_t(\phi_x)-\frac{1}{2} p^4  \lambda^2  \int_0^t X_s(\phi_x^2) ds\big)\big]\Big)^{1/p^2} \cdot  \Big(E \big[\exp\big(\frac{1}{2} p^3  \lambda^2 q  \int_0^t X_s(\phi_x^2) ds\big)\big]\Big)^{1/pq} \nonumber\\
\leq&  \Big(E \big[\exp\big(\frac{1}{2} p^3  \lambda^2 q \int_0^t X_s(\phi_x^2) ds\big)\big]\Big)^{1/pq}.\label{e6.9}
\end{align}
Let $k=\frac{1}{2} p^3 \lambda^2  q$. Then
\begin{align}
 & E \big[\exp\big(\frac{1}{2} p^3  \lambda^2 q \int_0^t X_s(\phi_x^2) ds\big)\big]=E \big[\exp\big(k c_{\ref{t1}}^2\int_0^t \int \frac{1}{|y-x|^2} X_s(dy) ds\big)\big] \nonumber\\
 \leq&  E \big[\exp\big(2k c_{\ref{t1}}^2 (X_t(\bar{g}_x)-\delta_0(\bar{g}_x)-M_t(\bar{g}_x) +C\int_0^t X_s(1) ds)\big)\big] \text{\  (Proposition \ref{c6.7}) } \nonumber\\
\leq & \exp \big(2k c_{\ref{t1}}^2 \log \frac{1}{|x|}\big) \cdot  E \big[\exp\big(-2k c_{\ref{t1}}^2 M_t(\bar{g}_x)+2k c_{\ref{t1}}^2  \ C  \int_0^t X_s(1) ds\big)\big]    \nonumber\\
\leq &\exp \big(2k c_{\ref{t1}}^2 \log \frac{1}{|x|}\big) \cdot  \Big(E \big[\exp\big(- 4k c_{\ref{t1}}^2 M_t(\bar{g}_x)\big)\big]\Big)^{1/2} \Big(E \big[\exp\big( 4k c_{\ref{t1}}^2 C \int_0^t X_s(1) ds\big)\big] \Big)^{1/2}.\label{e6.10}
\end{align}
The second inequality follows from $X_t(\bar{g}_x) \leq 0$ and $-\delta_0(\bar{g}_x)= \log (1/|x|)$ for $|x|$ small. Proposition \ref{p6.6} implies that with $t>0$ chosen small, we have
\begin{equation}
\Big(E \big[\exp\big( 4k c_{\ref{t1}}^2 C \int_0^t X_s(1) ds\big)\big] \Big)^{1/2}\leq C(k,t)<\infty.\label{e6.11}
\end{equation}
Let $\theta=4k c_{\ref{t1}}^2$ and use \eqref{e6.6} with $\sigma=2\theta$ to get
\begin{align}
& \Big( E \big[\exp\big(-4k c_{\ref{t1}}^2 M_t(\bar{g}_x)\big)\big]\Big)^{1/2}\nonumber\\
=& \Big( E \big[\exp\big(-\theta M_t(\bar{g}_x)- \theta^2 \int_0^t X_s(\bar{g}_x^2) ds\big)\big] \cdot \exp \big(\theta^2 \int_0^t X_s(\bar{g}_x^2) ds\big) \big] \Big)^{1/2} \nonumber\\
\leq &  \Big(E \big[\exp\big(-2\theta M_t(\bar{g}_x)- 2\theta^2 \int_0^t X_s(\bar{g}_x^2) ds\big)\big]\Big)^{1/4} \cdot \Big(E \big[\exp\big(2\theta^2 \int_0^t X_s(\bar{g}_x^2) ds\big)\big]\Big)^{1/4} \nonumber\\
\leq & \Big(E \big[\exp\big(2\theta^2 \int_0^t X_s(\bar{g}_x^2) ds\big)\big]\Big)^{1/4}\leq C(\theta,t)<\infty.\label{e6.12}
\end{align}
The last inequality follows from Corollary \ref{c6.8} with $t>0$ chosen small. Therefore \eqref{e6.9}, \eqref{e6.10}, \eqref{e6.11} and \eqref{e6.12} imply that with $t>0$ chosen sufficiently small, we have
\begin{align}
\Big( E \big[\exp\big(-p\lambda M_t(\phi_x)\big)\big]\Big)^{1/p}\leq  C  \exp \big( p^2 \lambda^2  c_{\ref{t1}}^2  \log \frac{1}{|x|}\big).\label{e6.13}
\end{align}
In conclusion, \eqref{e6.7}, \eqref{e6.8} and \eqref{e6.13} imply that for $|x|>0$ small \[\exp\big(-(V^\lambda(x)- \lambda \frac{c_{\ref{t1}}}{|x|})\big) \leq C \exp
\big(p^2 \lambda^2  c_{\ref{t1}}^2 \log \frac{1}{|x|}\big) \] for some constant $C=C(p,t, \lambda )>0$, i.e. \[V^\lambda(x)- \lambda \frac{c_{\ref{t1}}}{|x|} \geq - p^2  \lambda^2 c_{\ref{t1}}^2 \log \frac{1}{|x|} +C.\] 
Let $x\to 0$ to conclude \[\liminf_{x\to 0} \frac{V^\lambda(x)- \lambda c_{\ref{t1}}/|x|}{c_{\ref{t1}}^2 \lambda^2  \log (1/|x|)} \geq - p^2 .\] Since $p>1$ can be chosen arbitrarily close to $1$, we get 
\begin{equation}
\liminf_{x\to 0} \frac{V^\lambda(x)- \lambda c_{\ref{t1}}/|x|}{c_{\ref{t1}}^2 \lambda^2 \log (1/|x|)} \geq -1 .\label{e6.14}
\end{equation}

\subsection{Upper Bound}\label{s6.5}
Set $t>0$, which will be chosen small enough below. Then we have
\begin{align}
& \exp\big(-(V^\lambda(x)- \lambda \frac{c_{\ref{t1}}}{|x|})\big)=E \Big[\exp\big(- \lambda (L_{\infty}^x-\frac{c_{\ref{t1}}}{|x|})\big)\Big]\nonumber\\
= & E \Big[\exp\big(- \lambda (L_{\infty}^x-L_t^x)\big)\cdot \exp\big(- \lambda (M_t(\phi_x)-X_t(\phi_x))\big)\Big] \text{\ (by \eqref{e5.161})}   \nonumber\\ 
\geq & E \Big[\exp\big(- \lambda (L_{\infty}^x-L_t^x)\big)\cdot \exp\big(- \lambda M_t(\phi_x)\big)\Big]\nonumber\\
= & E \Big[E \Big(\exp\big(- \lambda (L_{\infty}^x-L_t^x)\big)\big|\cF_t\Big) \cdot \exp\big(- \lambda M_t(\phi_x)\big)\Big]\nonumber \\
= & E \Big[\exp\big(-X_t(V^\lambda_x)\big)\cdot \exp\big(- \lambda M_t(\phi_x)\big)\Big],\label{e6.15}
\end{align}
where $V^\lambda_x (y)=V^\lambda(y-x)$. We use the Markov property and \eqref{e1.6.1} with $\mu=X_t$ for the last equality. Next,
\begin{align}
& E \Big[\exp\big(-X_t(V^\lambda_x)- \lambda M_t(\phi_x)\big)\Big]\nonumber\\
=& E \Big[\exp\big(-X_t(V^\lambda_x)+\frac{1}{2} \lambda^2  \int_0^t X_s(\phi_x^2) ds \big)\cdot \exp\big(- \lambda M_t(\phi_x)-\frac{1}{2} \lambda^2  \int_0^t X_s(\phi_x^2) ds\big)\Big]\nonumber\\
= &E \Big[\exp\big(-X_t(V^\lambda_x)+c_{\ref{t1}}^2  \lambda^2 (X_t(g_x)+\log \frac{1}{|x|} -M_t(g_x)) \big)\nonumber\\
&\ \ \ \ \ \ \    \ \ \  \times \exp\big(- \lambda M_t(\phi_x)-\frac{1}{2}  \lambda^2 \int_0^t X_s(\phi_x^2) ds\big)\Big] \text{\ \ \ \ (by Proposition \ref{p1}) }\nonumber\\
=&\widetilde{E} \Big[\exp\big(-X_t(V^\lambda_x)+c_{\ref{t1}}^2  \lambda^2(X_t(g_x)+\log \frac{1}{|x|} -M_t(g_x)) \big) \Big].\label{e6.16}
\end{align}
We use Dawson's Girsanov Theorem (see Chp. IV.1 in Perkins (2002)) to change measure from $P$ to $\widetilde{P}$ with 
\[\frac{d\widetilde{P}}{dP}\Big|_{\cF_t}=R_t:=\exp\big(- \lambda M_t(\phi_x)-\frac{1}{2} \lambda^2  \int_0^t X_s(\phi_x^2) ds\big),\] where $R_t$ is a martingale by Novikov's Theorem since 
\begin{align*}
& E \Big[\exp\big(\frac{1}{2} \lambda^2 \int_0^t X_s(\phi_x^2) ds\big)\Big] <\infty
\end{align*}
by \eqref{e6.10}, \eqref{e6.11} and \eqref{e6.12} with $x\neq 0$ and $t>0$ chosen small enough. \\

Note that $\widetilde{P}\ll P$, so every thing holds $\widetilde{P}$-a.s. as long as it holds $P$-a.s.. Therefore $x\mapsto M_t(g_x)$ and $x\mapsto X_t(g_x)$ is continuous $\widetilde{P}$-a.s.. For the $X_t(V^\lambda_x)$ term, we use \eqref{e1.3} to see that there is some $\delta>0$ such that \[V^\lambda_x(y) \leq \lambda \frac{1}{|y-x|} \text{ for  } |y-x|<\delta. \] Since $V^\lambda$ is continuous on $\R^3\backslash \{0\}$ and vanishes at infinity, there is some $C_\delta>0$ such that $V^\lambda_x(y) \leq C_\delta$ if $|y-x|>\delta$. So
\[X_t(V^\lambda_x) \leq C_\delta X_t(1)+\lambda \int \frac{1}{|y-x|} X_t(dy). \] By \eqref{e2.2}, with $P$-probability one, there exist some $r_0(t,\omega)\in (0,1]$ and some constant $C>0$ such that
\[\int \frac{1}{|y-x|} X_t(dy) \leq \frac{1}{r_0}X_t(1)+C. \] Therefore 
\begin{equation}
\frac{X_t(V^\lambda_x)}{\log (1/|x|)}  \xrightarrow[]{a.s.} 0 \text{ as } x \to 0. \label{e6.17}
\end{equation}
Combining \eqref{e6.15} and \eqref{e6.16} and moving the $\log (1/|x|)$ term to the left, we get 
\begin{align*}
 \exp\big(-(V^\lambda(x)- \lambda \frac{c_{\ref{t1}}}{|x|}+ c_{\ref{t1}}^2  \lambda^2  \log \frac{1}{|x|})\big)\geq \widetilde{E} \Big[\exp\big(-X_t(V^\lambda_x)+c_{\ref{t1}}^2  \lambda^2 X_t(g_x)- c_{\ref{t1}}^2  \lambda^2 M_t(g_x)\big) \Big].
\end{align*}
Then we have
\begin{align}
&  \liminf_{x\to 0} \exp\big(-(V^\lambda(x)- \lambda \frac{c_{\ref{t1}}}{|x|}+ c_{\ref{t1}}^2 \lambda^2  \log \frac{1}{|x|})/\log \frac{1}{|x|} \big)\nonumber \\
\geq & \liminf_{x\to 0} \Bigg(\widetilde{E} \Big[\exp\big(-X_t(V^\lambda_x)+c_{\ref{t1}}^2  \lambda^2 X_t(g_x)- c_{\ref{t1}}^2 \lambda^2  M_t(g_x) \big) \Big] \Bigg)^{1/\log \frac{1}{|x|}}\nonumber \\
\geq & \liminf_{x\to 0} \widetilde{E} \Big[\exp \Big(\big(-X_t(V^\lambda_x)+c_{\ref{t1}}^2 \lambda^2 X_t(g_x)- c_{\ref{t1}}^2  \lambda^2M_t(g_x)\big)/\log \frac{1}{|x|} \Big) \Big] \nonumber \\
\geq & \widetilde{E} \ \liminf_{x\to 0}\Big[\exp \Big(\big(-X_t(V^\lambda_x)+c_{\ref{t1}}^2  \lambda^2X_t(g_x)- c_{\ref{t1}}^2  \lambda^2 M_t(g_x) \big)/\log \frac{1}{|x|} \Big) \Big] \nonumber \\
=& \widetilde{E} \exp(0)=1.\label{e6.18}
\end{align}
The second inequality is by Jensen's inequality applied to the power $\log (1/|x|)>1$ for $|x|$ small. The third inequality is by Fatou's lemma. The last line follows from \eqref{e6.17} and the continuity of $x\mapsto X_t(g_x)$ and $x\mapsto M_t(g_x)$ (see Lemma \ref{l2.7} and Lemma \ref{l2.8}). 
In  conclusion, \eqref{e6.18} implies
\begin{equation}
\limsup_{x\to 0} \frac{V^\lambda(x)- \lambda c_{\ref{t1}}/|x|}{c_{\ref{t1}}^2 \lambda^2 \log (1/|x|)} \leq -1.\label{e6.19}
\end{equation}

\appendix

\section{Proof of Lemma \ref{l3.2}}
Idea of this proof is from Theorem 1 in Chapter 2.2 of Evans (2010). 
\begin{proof}[Proof of Lemma  \ref{l3.2}]
For any fixed $s>0$, $p_{s}(y)=(2\pi s)^{-3/2} e^{-|y|^2/2s} \in C^\infty(\R^3)$  vanishes at infinity. Then we have \[\|Dp_s\|_{L^\infty(\R^3)}<\infty \text{ and }\|\Delta p_s\|_{L^\infty(\R^3)}<\infty.\] Here $Du=D_x u=(u_{x_1},u_{x_2}, u_{x_3})$ denotes the gradient of $u$ with respect to $x=(x_1,x_2,x_3).$ For any $\delta \in (0,1)$, 
\begin{align*}
 \Delta_y \int_{\R^3} p_{s}(y-z) g_x(z)dz=&  \int_{B(x,\delta)} \Delta_y p_{s}(y-z) g_x(z)dz+\int_{\R^3-B(x,\delta)} \Delta_y p_{s}(y-z) g_x(z)dz\\
=:& I_\delta+J_\delta.
\end{align*}
Now \[|I_\delta|\leq \|\Delta p_s\|_{L^\infty(\R^3)} \int_{B(x,\delta)} |g_x(z)| dz\leq C \delta^3 |\log \delta| \to 0.\]
Note that $\Delta_y p_{s}(y-z)=\Delta_z p_{s}(y-z)$. Integration by parts yields
\begin{eqnarray*}
J_\delta &=& \int_{\R^3-B(x,\delta)} \Delta_z p_{s}(y-z) g_x(z)dz\\
&=& \int_{\partial B(x,\delta)} g_x(z) \frac{\partial p_{s}}{\partial \nu}(y-z) dz-\int_{\R^3-B(x,\delta)} D_z p_{s}(y-z) D_z g_x(z)dz\\
&=:& K_\delta+L_\delta,
\end{eqnarray*}
$\nu$ denoting the inward pointing unit normal along $\partial B(x,\delta).$ So
\[|K_\delta|\leq \|Dp_s\|_{L^\infty(\R^3)} \int_{\partial B(x,\delta)} |g_x(z)|dz \leq C \delta^2 |\log \delta| \to 0.   \]
We continue by integrating by parts again in the term $L_\delta$ to find
\begin{eqnarray*}
L_\delta &=& \int_{\R^3-B(x,\delta)} p_{s}(y-z) \Delta_z g_x(z)dz-\int_{\partial B(x,\delta)} p_{s}(y-z) \frac{\partial g_x}{\partial \nu}(z) dz\\
&=:& M_\delta+N_\delta.
\end{eqnarray*}
Now $Dg_x(z)=\frac{z-x}{|z-x|^2}(z\neq x)$ and $\nu=\frac{-(z-x)}{|z-x|}=\frac{-(z-x)}{\delta}$ on $\partial B(x,\delta)$. Hence $\frac{\partial g_x}{\partial \nu}(z)=\nu \cdot Dg_x(z)=-\frac{1}{\delta} $ on $\partial B(x,\delta)$. Since $4\pi \delta^2$ is the surface area of the sphere $\partial B(x,\delta)$ in $\R^3$, we have
\[N_\delta=4\pi \delta \cdot \frac{1}{4\pi \delta^2} \int_{\partial B(x,\delta)} p_{s} (y-z)dz \to 0\cdot p_{s}(y-x) = 0 \text{ as } \delta \to 0.\]
By \eqref{e2.6}, we have $\Delta_z g_x(z)=1/|x-z|^2$ when $z \in \R^3-B(x,\delta)$. Therefore
\[M_\delta=\int_{\R^3-B(x,\delta)} p_{s}(y-z) \frac{1}{|x-z|^2}  dz.\] 
Lemma \ref{l3.3} implies  \[\int p_{s}(y-z) \frac{1}{|x-z|^2}  dz\leq C\frac{1}{|y-x|^2}<\infty \text{ for } y\neq x.\] By Dominated Convergence Theorem, we have \[M_\delta=\int p_{s}(y-z) \frac{1}{|x-z|^2} 1_{\{|z-x|\geq \delta\}} dz \to \int p_{s}(y-z) \frac{1}{|x-z|^2}  dz\]
as $\delta \to 0$. 
\end{proof}

\section{Proof of estimates in $d=3$}
(i) In this section we prove \eqref{e5.6}: there is a constant $C>0$ such that for all $y\neq x$ and $0<\varepsilon<1$,
\begin{align}\label{Ce.0}
\int p_\varepsilon(y-z) |\bar{g}_x(z) -\bar{g}_x(y)| dz \leq C|y-x|^{-1/2} \varepsilon^{1/4}.
\end{align}
Note that 
\begin{align}\label{Ce.1}
 \int p_\varepsilon(y-z) & |\bar{g}_x(z) -\bar{g}_x(y)|dz=\int p_\varepsilon(y-z) \Big|\log |z-x| \chi_{1/2} (z-x) -\log |y-x| \chi_{1/2} (y-x)\Big| dz\nonumber\\
=\int &p_\varepsilon(y-z)  \Big|\log^+ (1/ |z-x|) \chi_{1/2} (z-x) -\log^+(1/ |y-x|) \chi_{1/2} (y-x)\Big|dz\nonumber\\
\leq \int &p_\varepsilon(y-z) \log^+ (1/ |z-x|) \Big|\chi_{1/2} (z-x) - \chi_{1/2} (y-x)\Big| dz\nonumber\\
\ \ \ \  \ \ \ \ \ \ +&\int p_\varepsilon(y-z) \Big|\log^+ (1/ |z-x|) -\log^+(1/ |y-x|)\Big| \chi_{1/2} (y-x) dz :=I+J.
\end{align}
Since $\chi_{1/2}$ is a $C^\infty$ function with compact support, then
\begin{align*}
\int p_\varepsilon(y-z) \Big|\chi_{1/2} (z-x)- \chi_{1/2} (y-x)\Big|^2 dz&= E \Big(\Big|\chi_{1/2}(B_\varepsilon-(y-x))-\chi_{1/2}(y-x)\Big|^2\Big)\\
&\leq \|\nabla \chi_{1/2}\|_\infty^2 E(|B_\varepsilon|^2) \leq C\varepsilon.
\end{align*}
By Cauchy-Schwarz, for $0<\varepsilon<1$ 
\begin{align}\label{Ce.2}
I=&\int p_\varepsilon(y-z) \log^+ (1/ |z-x|) \Big|\chi_{1/2} (z-x) - \chi_{1/2} (y-x)\Big| dz\nonumber\\
\leq & \Big(\int p_\varepsilon(y-z) (\log^+ (1/ |z-x|))^2 dz\Big)^{1/2} \Big(\int p_\varepsilon(y-z) \Big|\chi_{1/2} (z-x)- \chi_{1/2} (y-x)\Big|^2 dz\Big)^{1/2}\nonumber\\
\leq &\Big(\int p_\varepsilon(y-z) \frac{1}{|z-x|} dz\Big)^{1/2} \cdot (C\varepsilon)^{1/2} \leq C|y-x|^{-1/2} \varepsilon^{1/2}\leq C|y-x|^{-1/2} \varepsilon^{1/4},
\end{align}
the third inequality by Lemma \ref{l3.3}. For $J$ in \eqref{Ce.1},  we have
\begin{align*}
J=\int p_\varepsilon(y-z)& \Big|\log^+ (1/ |z-x|) -\log^+(1/ |y-x|)\Big| \chi_{1/2} (y-x) dz\\
\leq   \int_{|z-x|<1}  &p_\varepsilon(y-z)  \Big|\log (1/ |z-x|) -\log (1/ |y-x|)\Big| \chi_{1/2} (y-x)dz\\
\ \ \ \ \ \  \ \ \ \ &+ \int_{|z-x|\geq 1} p_\varepsilon(y-z)  \log^+(1/ |y-x|) \chi_{1/2} (y-x)dz:=J_1+J_2.
\end{align*}
By \eqref{e3.7} we have \[J_1 \leq \int p_\varepsilon(y-z)  \Big|\log |z-x| -\log |y-x|\Big| dz\leq C|y-x|^{-1/2} \varepsilon^{1/4}.\]
For $J_2$ we have
\begin{align*}
J_2\leq& \log^+\frac{1}{|y-x|} \int_{|z-x|\geq 1} p_\varepsilon(y-z) dz =\log^+\frac{1}{|y-x|}  P\Big(|B_\varepsilon-(y-x)|>1\Big)\\
\leq &\log^+\frac{1}{|y-x|}  P\Big(|B_\varepsilon|>1-|y-x|\Big)\leq \frac{C\varepsilon^{1/2}}{(1-|y-x|)} \log^+\frac{1}{|y-x|} .
\end{align*}
Note that for $1/2<|y-x|<1$, \[\frac{1}{(1-|y-x|)} \log^+\frac{1}{|y-x|}=\frac{1}{(1-|y-x|)} \log (1+\frac{1-|y-x|}{|y-x|})\leq  |y-x|^{-1}\leq 2 |y-x|^{-1/2},\] and for $0<|y-x|<1/2$,\[\frac{1}{(1-|y-x|)} \log^+\frac{1}{|y-x|} \leq 2  \log^+\frac{1}{|y-x|} \leq 2  |y-x|^{-1/2}.\]
So for $0<\varepsilon<1$, we have $J_2 \leq  2C\varepsilon^{1/2} |y-x|^{-1/2}\leq C\varepsilon^{1/4} |y-x|^{-1/2}$. Therefore 
$J\leq J_1+J_2\leq C\varepsilon^{1/4} |y-x|^{-1/2}$. Combine \eqref{Ce.1} and \eqref{Ce.2} to conclude that \eqref{Ce.0} holds.\\

(ii) For any $\mu \in M_F(\R^3)$, we prove that the following are equivalent: 
\begin{enumerate}[(a)]
\item $x_0$ is a continuity point of $\int 1/|y-x| \mu(dy)$;
\item $(t_0,x_0)$ is a continuity point of $\mu q_t(x)$ for all $t_0 \geq 0$; 
\item $(t_0,x_0)$ is a continuity point of $\mu q_t(x)$ for some $t_0 \geq 0$.
\end{enumerate}

Recall that $q_t(x)=\int_0^t p_s(x) ds$ and $1/|x|=\int_0^\infty 2\pi p_s(x) ds$. Dominated Convergence Theorem implies as $x\to 0$, we have
\begin{align}\label{Ce.6}
\bar{q}_t(x):=q_t(x)-1/(2\pi |x|)=\int_t^\infty p_s(x) dy \to \int_t^\infty p_s(0) dy=C(t), \ \forall t>0.
\end{align}
Therefore $\bar{q}_t$ can be extended to be a bounded continuous function on $\R^3$ by letting $\bar{q}_t(0)=C(t)$.\\

$(a) \Rightarrow (b)$: Let $x_0$ be a continuity point of $\int 1/|y-x| \mu(dy)$. Then for any $\varepsilon>0$, there is some $\delta>0$ such that for all $|x-x_0|<\delta$, \[\Big|\int \frac{1}{2\pi |y-x|} \mu(dy)  -\int \frac{1}{2\pi |y-x_0|}  \mu(dy)\Big|<\varepsilon,\] and in particular 
\begin{align} \label{e9.6}
\int \int_0^\infty p_s(y-x) ds \mu(dy)=\int 1/(2\pi |y-x|) \mu(dy)<\infty.
\end{align}
For all $t_0\geq 0$ and $|x-x_0|<\delta$ we have
\begin{align*}
|\mu q_t(x)-\mu q_{t_0}(x_0)| \leq & |\mu q_t(x)-\mu q_{t_0}(x)|+|\mu q_{t_0}(x)-\mu q_{t_0}(x_0)|.
\end{align*}
The first term converges to $0$ if $t\to t_0$ by \eqref{e9.6} and Dominated Convergence Theorem. The second term vanishes if $t_0=0$, so we may assume $t_0>0$. Since $\bar{q}_{t_0}$ is bounded continuous for $t_0>0$, we can pick $\gamma>0$ such that $|\mu \bar{q}_{t_0}(x)-\mu \bar{q}_{t_0}(x_0)|<\varepsilon$ for all $|x-x_0|<\gamma$. Then for all $|x-x_0|<\gamma \wedge \delta$,
\begin{align*}
|\mu q_{t_0}(x)-\mu q_{t_0}(x_0)|\leq& |\mu \bar{q}_{t_0}(x)-\mu \bar{q}_{t_0}(x_0)|+\Big|\int \frac{1}{2\pi |y-x|} \mu(dy)  -\int \frac{1}{2\pi |y-x_0|}  \mu(dy)\Big|< 2 \varepsilon,
\end{align*}
and we prove that $(t_0,x_0)$ is a continuity point of $\mu q_t(x)$ for all $t_0\geq 0$.\\

$(c) \Rightarrow (a)$: If $(t_0,x_0)$ is a joint continuity point of $\mu q_t(x)$ for some $t_0>0$, since $\bar{q}_{t_0}$ is bounded continuous, it follows immediately that $x_0$ is a continuity point of $\int 1/|y-x|\mu(dy)$. If $(0,x_0)$ is a joint continuity point of $\mu q_t(x)$, then for any $\varepsilon>0$, there exists some $\delta>0$ such that for all $|x-x_0|<\delta$ and $0<t\leq \delta$, we have $\mu q_t(x)<\varepsilon$. Since $\bar{q}_\delta$ is bounded continuous, there is some $\gamma>0$ such that $|\mu \bar{q}_\delta(x)-\mu \bar{q}_\delta(x_0)|<\varepsilon$ for all $|x-x_0|<\gamma$. Then for all $|x-x_0|<\gamma \wedge \delta$, we have
\begin{align*}
\Big|\int \frac{1}{2\pi |y-x|} \mu(dy)  -\int \frac{1}{2\pi |y-x_0|}  \mu(dy)\Big|\leq |\mu \bar{q}_\delta(x)-\mu \bar{q}_\delta(x_0)|+\mu q_\delta (x)+\mu q_\delta (x_0)<3\varepsilon,
\end{align*} and we prove such an $x_0$ is a continuity point of $\int 1/|y-x|\mu(dy)$.\\

\section{Proof of estimates in $d=2$}
(i) Define 
\begin{align}\label{e9.2}
f_{\alpha}(x):=\int_{0}^{\infty} e^{-\alpha s} \frac{1}{2\pi s} e^{-\frac{|x|^2}{2s}} ds-\frac{1}{\pi} \log^+ \frac{1}{|x|} \text{\ for } x\in \R^2 \backslash\{0\}. 
\end{align}
We will prove that \[f_{\alpha}(x) \to C(\alpha) \text{\ as } x\to 0.\]  Then we can define $f_{\alpha}(0)=C(\alpha)$ to make $f_{\alpha}$ a continuous function in $\R^2$. It's clear from the definition that $f_{\alpha}(x)$ is bounded away from $x=0$. Hence $f_{\alpha}$ is a bounded continuous function on $\R^2$. 
To do this, we have
\begin{align*}
f_{\alpha}(x)=&\Big[\int_{0}^{1} \frac{1}{2\pi s} e^{-\frac{|x|^2}{2s}} ds-\frac{1}{\pi} \log^+ \frac{1}{|x|}\Big]+\int_{0}^{1} (e^{-\alpha s}-1) \frac{1}{2\pi s} e^{-\frac{|x|^2}{2s}} ds\\
+&\int_{1}^{\infty} e^{-\alpha s} \frac{1}{2\pi s} e^{-\frac{|x|^2}{2s}} ds:=I_1+I_2+I_3.
\end{align*}
Note that as $x\to 0$, 
\[I_3=\int_{1}^{\infty} e^{-\alpha s} \frac{1}{2\pi s} e^{-\frac{|x|^2}{2s}} ds \to \int_{1}^{\infty} e^{-\alpha s} \frac{1}{2\pi s} ds= C_1(\alpha)\] by Dominated Convergence Theorem.  We know that \[\Big|(e^{-\alpha s}-1) \frac{1}{2\pi s} \Big| \leq \frac{\alpha}{2\pi}, \text{\ for all } 0<s<1.\] Therefore by Dominated Convergence Theorem again, \[I_2=\int_{0}^{1}( e^{-\alpha s}-1) \frac{1}{2\pi s} e^{-\frac{|x|^2}{2s}} ds \to \int_{0}^{1}( e^{-\alpha s}-1) \frac{1}{2\pi s} ds=C_2(\alpha).\] Now we deal with $I_1$. Note that \[\int_{0}^{1} \frac{1}{2\pi s} e^{-\frac{|x|^2}{2s}} ds \overset{t=|x|^2/2s}{=} \int_{|x|^2/2}^\infty e^{-t} \frac{1}{2\pi t} dt.\] 
Then for $0<|x|<1$, 
\begin{align*}
I_1 &=\Big[\int_{|x|^2/2}^1 \frac{1}{2\pi t} dt-\frac{1}{\pi} \log \frac{1}{|x|}\Big] +\int_{|x|^2/2}^1 (e^{-t}-1) \frac{1}{2\pi t} dt+\int_{1}^\infty e^{-t} \frac{1}{2\pi t} dt\\
&:=J_1+J_2+J_3.
\end{align*}
Note that $J_3$ is integrable. For $J_2$, by similar arguments used to $I_2,$ we have \[J_2=\int_{|x|^2/2}^1 (e^{-t}-1) \frac{1}{2\pi t} dt \to \int_{0}^1 (e^{-t}-1) \frac{1}{2\pi t} dt=C \text{\ as } x\to 0.\]  Finally \[J_1=\int_{|x|^2/2}^1 \frac{1}{2\pi t} dt-\frac{1}{\pi} \log \frac{1}{|x|}=\frac{1}{2\pi} \log 2.\] To make a conclusion, \[f_{\alpha}(x) \to C(\alpha) \text{\ as } x\to 0.\]
(ii)
Recall that $q_t(x)=\int_0^t p_s(x) ds$ and we prove in (i) that 
\[I_1=q_1(x)-(1/\pi)\log^+ (1/|x|) \to C \text{\ as  } x\to 0 \] for some constant $C$. Dominated Convergence Theorem implies that for any $t>0$, $q_t(x)-q_1(x) \to q_t(0)-q_1(0)=C(t)$ as $x\to 0$ . Therefore when $x\to 0$,
\begin{align}\label{e9.5}
\tilde{q}_t(x):=q_t(x)-(1/\pi)\log^+ (1/|x|) \to C(t) , \ \forall t>0,
\end{align}
and $\tilde{q}_t$ can be extended to be a bounded continuous function on $\R^2$ by letting $\tilde{q}_t(0)=C(t)$.  Then by the similar arguments in Appendix B(ii), for any $\mu \in M_F(\R^2)$, the following are equivalent: 
\begin{enumerate}[(a)]
\item $x_0$ is a continuity point of $\int \log^+(1/|y-x|) \mu(dy)$;
\item $(t_0,x_0)$ is a continuity point of $\mu q_t(x)$ for all $t_0 \geq 0$; 
\item $(t_0,x_0)$ is a continuity point of $\mu q_t(x)$ for some $t_0 \geq 0$.
\end{enumerate}



%
%

{\ }

\noindent\text{Department of Mathematics}\\
\text{University of British Columbia}\\
\text{1984 Mathematics Road}\\
 \text{Vancouver, B.C.} \\
 \text{Canada V6T 1Z2}\\
\text{E-mail: jlhong@math.ubc.ca}\\

\end{document}